\newtheorem*{thmi}{Theorem}
\newtheorem{thm}{Theorem}[subsection]
\newtheorem{prop}[thm]{Proposition}
\newtheorem{cor}[thm]{Corollary}
\newtheorem{lm}[thm]{Lemma}
\theoremstyle{definition}
\newtheorem{df}[thm]{Definition}
\newtheorem{ex}[thm]{Example}
\newtheorem{num}[thm]{}
\theoremstyle{remark}
\newtheorem{rem}[thm]{Remark}
\numberwithin{equation}{thm}
\newcommand{\smod}[1]{#1\mkern-2mu\operatorname{-mod}}
\DeclareMathOperator{\cPic}{\mathscr Pic^\ZZ} 
\DeclareMathOperator{\Perf}{\mathscr Perf} 
\DeclareMathOperator{\Dqc}{D_{qc}}
\DeclareMathOperator{\Dpqc}{D^+_{qc}}
\newcommand{\K}[1]{\operatorname K_{#1}}
\newcommand{\KM}[1]{\operatorname K^M_{#1}}
\newcommand{\KW}[1]{\operatorname K^W_{#1}}
\newcommand{\KMW}[1]{\operatorname K^{MW}_{#1}}
\newcommand{\I}[1]{\operatorname I^{#1}}
\newcommand{\gI}[1]{\bar{\operatorname I}^{#1}}
\DeclareMathOperator{\GW}{GW}
\DeclareMathOperator{\W}{W}
\DeclareMathOperator{\rk}{rk}
\newcommand{\CHW}[1]{\widetilde{\mathrm{CH}}{}^{#1}}
\newcommand{\Cq}[1]{\widetilde{\mathrm{C}}{}^{#1}}
\newcommand{\E} {\mathbf E}
\newcommand{\F} {\mathbf F}
\newcommand{\NN} {\mathbb N}
\newcommand{\ZZ} {\mathbb Z}
\newcommand{\FF} {\mathbb F}
\newcommand{\RR} {\mathbb R}
\renewcommand{\AA} {\mathbb A}
\newcommand{\GG} {\mathbb G_m }
\newcommand{\GGx}[1] {\mathbb G_{m,k} }
\newcommand{\PP} {\mathbb P}
\newcommand{\cL}{\mathcal L} 
\newcommand{\cM}{\mathcal M}
\newcommand{\cO}{\mathcal O}
\newcommand{\cI}{\mathcal I} 
\newcommand{\C}{\mathcal C} 
\newcommand{\un}{\mathbbm 1} 
\DeclareMathOperator{\tr}{tr} 
\DeclareMathOperator{\pair}{\pi} 
\DeclareMathOperator{\copair}{\delta} 
\DeclareMathOperator{\Ker}{Ker}
\DeclareMathOperator{\Img}{Im}
\DeclareMathOperator{\Frac}{Frac}
\newcommand{\tw}[1]{\langle#1\rangle} 
\newcommand{\dtw}[1]{\langle\langle#1\rangle\rangle} 
\DeclareMathOperator{\ev}{ev} 
\DeclareMathOperator{\Tr}{Tr}
\DeclareMathOperator{\Res}{Res}
\DeclareMathOperator{\forget}{F}
\DeclareMathOperator{\hyper}{H}
\DeclareMathOperator{\thyper}{\tilde H}
\DeclareMathOperator{\Der}{\mathrm D}
\newcommand{\derL}{\mathbf{L}}
\newcommand{\derR}{\mathbf{R}}
\newcommand{\pur}{\mathfrak p} 
\DeclareMathOperator{\Id}{Id}
\newcommand{\spec}[1] {\operatorname{\mathrm{Spec}}(#1)}
\DeclareMathOperator{\Spec}{Spec}
\DeclareMathOperator{\Proj}{Proj}
\DeclareMathOperator{\Hom}{Hom}
\DeclareMathOperator{\uHom}{\underline{Hom}}
\DeclareMathOperator{\End}{End}
\DeclareMathOperator{\Ext}{Ext}
\DeclareMathOperator{\Pic}{Pic}
\DeclareMathOperator{\CH}{CH}
\DeclareMathOperator{\ord}{ord}
\DeclareMathOperator{\Div}{div}
\DeclareMathOperator{\tdiv}{\widetilde{div}}
\DeclareMathOperator{\tord}{\widetilde{ord}}
\DeclareMathOperator{\tdeg}{\widetilde{deg}}
\DeclareMathOperator{\car}{char}
\begin{document}

\title{Notes on Milnor-Witt K-theory}

\author{F.~D\'eglise}
\date{October 2025}

\begin{abstract}
These notes develop the foundations of Milnor–Witt K-theory over arbitrary fields,
 without assuming perfectness or separability, and with a systematic treatment of twists.
 Extending the works of Morel and Feld,
 we construct the four fundamental functorialities ---
 restriction, corestriction (transfer), product, and residue ---
 and prove all relations between them, making the twist isomorphisms explicit.
 Transfers are defined and computed in full generality through Grothendieck's differential trace
 and the Scheja-Storch (\emph{Bézoutian}) method,
 and shown to agree with those obtained by the Bass-Tate approach.
 We also establish refined residue and transfer formulas involving quadratic multiplicities (defect),
 together with a direct proof of the residue-corestriction relation.
 The resulting framework provides the algebraic and functorial foundations needed
 for the construction of Chow-Witt groups and quadratic cycle theories over arithmetic bases.
%
\end{abstract}

\maketitle

\setcounter{tocdepth}{3}
\tableofcontents

\section{Introduction}

\subsection{Milnor's K-theory and conjectures}\label{sec:MilnorKth}

One is retrospectively amazed by John Milnor's cornerstone work \cite{Milnor} on K-theory and quadratic forms.
 At first sight, Milnor failed to define higher K-groups of a field $K$,
 as one now knows that the K-theory he introduced coincides with higher K-theory only up to degree $2$,
 by Matsumoto's theorem.\footnote{Among
 his inspirations, Milnor cites Moore and Matsumoto's works.
 See \ref{num:MilnorKth} and \Cref{rem:Matsumoto} for a reminder.}
 On the other hand, the graded ring defined by Milnor was to be recognized more than twenty years
 later\footnote{By a famous theorem of Totaro \cite{Tot}.} as an invariant as fundamental as algebraic K-theory:
 it is the $(n,n)$-part of the motivic cohomology ring of $k$.
 So that Milnor's definition was truly the first appearance of motivic cohomology, in its symbolic guise.

Moreover, Milnor had the brilliant insight of relating his new K-theory ring to two apparently unconnected classical invariants:
 Galois cohomology of $K$ with coefficients in the $2$-torsion ring $\ZZ/2$, and the graded algebra associated with the fundamental ideal of the Witt ring of $K$.
 One motivation came from the theory of characteristic classes \cite{MilSta}\footnote{Recall
 that this book is based on Milnor's 1957 lectures!}, and in particular from the so-called
 Stiefel-Whitney classes $w_n(\xi) \in H^n(X,\ZZ/2)$ of a real vector bundle $\xi$
 over a smooth manifold $X$.
 Among the inspirations of Milnor was a very short note \cite{Delz} of Delzant defining an algebraic
 analogue of these classes, for a field of characteristic not $2$:
$$
w^D_n:\GW(K) \rightarrow H^n(G_K,\ZZ/2)
$$
where $\GW(K)$ is the Grothendieck group of quadratic $K$-vector spaces\footnote{Though Delzant
 was obviously inspired by the Witt ring introduced in \cite{Witt},
 this is the first occurrence of $\GW(K)$ in the literature. See \ref{df:GW} more generally.}
 and the right-hand side is the $\ZZ/2$-cohomology of the absolute Galois group of $K$,
 or equivalently the \'etale $\ZZ/2$-cohomology of $\Spec(K)$.

Milnor remarks that Delzant's Stiefel-Whitney classes factorize through his K-groups
 modulo $2$, thus obtaining a factorization of $w_n^D$ as:
$$
\GW(K) \xrightarrow{w_n} \KM n(K)/2 \xrightarrow{h_K} H^n(G_K,\ZZ/2)
$$
where $h_K$ is sometimes called the \emph{norm residue homomorphism} or the \emph{Galois symbol}.
 The first of the Milnor conjectures states that $h_K$
 is an isomorphism for all $n>0$.\footnote{This was later generalized by Bloch and Kato
 by replacing $2$ with an arbitrary prime. A complete proof of the Bloch-Kato conjecture is available
 in \cite{HW}, and a detailed account of the history of the Milnor conjecture can be found in Section 1.7 of
 \emph{loc. cit.}}

Even more remarkably, Milnor recognizes a way to go backward the map $w_n$,
 and formulates another conjecture\footnote{see question 4.3 of \cite{Milnor} or \Cref{thm:MilnorConj}
 of the present paper}
 relating the Witt ring with his K-theory ring.
 This question was solved more than 30 years later by Orlov, Vishik and Voevodsky in \cite{OVV},
 after Voevodsky's proof of the first Milnor conjecture and his revolutionary idea of introducing
 \emph{motivic homotopy theory}.

\subsection{Barge and Morel obstruction theory}\label{sec:BM_CHtilde}

Extending Milnor's ideas on characteristic classes,
 and building on the ideas of motivic homotopy, Jean Barge and Fabien Morel
 introduced in \cite{BM} an algebro-geometrical analogue of the Euler class
 of real oriented vector bundles (see \cite[\textsection 9, Def. p. 98]{MilSta}).
 The technical innovation of their definition is the construction of an appropriate
 algebraic analogue of the integral singular cohomology of a real manifold,
 a cohomology ring that they call the \emph{Chow groups of oriented cycles},
 which one now calls after Jean Fasel's foundational works \cite{FaselRing, FaselCW}
 the \emph{Chow-Witt groups}.

While Milnor K-theory modulo $2$ is a suitable receptacle for Stiefel-Whitney
 classes (over fields), as seen above, Barge and Morel had the idea of gluing above
 this $2$-torsion ring
 the integral information coming from the fundamental ideal $\I{}(K)$
 and the Milnor K-ring $\KM*(K)$: see \cite[Section~1]{BM}
 or \Cref{cor:fundamental_square_KMW} here.
 The resulting graded ring, denoted by $J^*(K)$ in \emph{loc. cit.}
 is now called the \emph{Milnor-Witt K-theory} of $K$ and denoted by $\KMW*(K)$.
 The study of this functor on fields is the main subject of the present
 expository notes.

Before diving into our motivations,
 let us come back briefly to the work of Barge and Morel:
 based on results of Rost \cite{Rost} and Schmid \cite{Schmid},
 they define the Chow-Witt groups of a smooth $k$-scheme
 as the cohomology of a Gersten-like complex with coefficients in $\KMW *$,
 which is now called the \emph{Rost-Schmid complex}.
 Then they define the Euler class of an \emph{oriented} algebraic vector bundle
 and make various conjectures about it (Conjecture p.~289,
 Rem.~2.4 in \emph{loc. cit.}).\footnote{The conjecture was partially solved
 by Morel in \cite[Th.~3.12]{Mor} and was pushed much further by works
 of Asok and Fasel; see \cite{AF} for a survey, and in particular \textsection 4.2
 for a review on (Barge-Morel) Euler classes.}

\subsection{Basic aim and scope of these notes}

The aim of these notes is to lay the foundations for the theory of quadratic cycles
 and Chow-Witt groups, 
 based on Feld's axiomatic approach \cite{FeldMWmod}.
 We will therefore present the theory of the Milnor-Witt ring of fields,
 with an emphasis on its functoriality.
 As motivation, the hasty reader can take a look at \Cref{sec:MW-mod},
 for the functoriality properties
 we aim to establish. We improve the theory known so far by removing any
 assumption on the fields considered,\footnote{Usually, one considers fields
 of characteristic not $2$, and one assumes that they are finitely
 generated over some perfect base field.} providing full detailed proofs,
 and extending the generality of some of Feld's formulas.

Our presentation of Milnor-Witt K-theory mixes two historical approaches.
 The first one, due to Barge and Morel, as already mentioned above,
 gives the construction as a gluing of Witt's theory of quadratic forms\footnote{Or
 rather Grothendieck-Witt theory
 of inner product spaces in order to allow fields of characteristic two;}
 and Milnor's K-ring. See \Cref{cor:fundamental_square_KMW} for the statement.
 The second one, due to Hopkins and Morel, is much closer to Milnor's viewpoint,
 and gives a beautiful presentation of the Milnor-Witt K-theory ring
 in terms of explicit generators and relations (\Cref{df:KMW}).
 In fact, the richness of the theory comes from the comparison of these
 two approaches. This is based on Milnor's conjecture: here we refer to
 \cite{GZS} in characteristic not $2$, and to \cite{Robin} in characteristic $2$.
 Except for references to these two papers, and to basics on Witt rings (\cite{MH})
 and Milnor K-theory (\cite{BT}, \cite{KatoMilnor}),
 these notes are self-contained.

As explained in the previous section,
 the justification for defining Milnor-Witt K-theory in addition to Milnor K-theory
 is to be able to develop an algebraic orientation theory.
 The concrete manifestation of this orientation theory is the existence of twists on
 the former K-theory, which we regard as part of the structure almost from the outset.
 The first phenomenon that demonstrates the need to consider 
 twists is the construction of residues associated to a discretely valued field $(K,v)$
 with residue field $\kappa_v$:
$$
\partial_v:\KMW*(K) \rightarrow \KMW*(\kappa_v,\omega_v).
$$
 Here the \emph{twist} $\omega_v$ on the right-hand side is the \emph{normal space}
 associated with the valuation $v$; see \Cref{df:specialization}
 and therein for details. In our opinion, the consideration of twists
 sheds light also to Witt's theory as explained in \Cref{ex:Witt_res}.

\subsection{Construction of transfers}

In fact, a large portion of the paper is devoted to the study of transfer maps on
 Milnor-Witt K-theory. This is no surprise, as it was a famous problem
 left open by Bass and Tate for Milnor K-theory (\cite[I.\textsection 5]{BT}), only
 resolved by Kato in \cite[\textsection 1.7, Prop. 5]{KatoNorm}.
 Given a finite field extension $E/k$, or $\varphi:k \rightarrow E$,
 the transfer map has the form (see \Cref{df:MW-transfers}):
\begin{equation}\label{eqi:traces_KMW}
\Tr_{E/k}^{MW}=\varphi^*:\KMW n (E,\omega_{E/k}) \rightarrow \KMW n(k),
\end{equation}
where $\omega_{E/k}$ is the determinant of the cotangent complex of $E/k$.
 (We call this the canonical module; see \Cref{df:can_sheaf}.)
 Here again, the twist is essential, though it is trivial
 for (and only for) separable extensions.

For finitely generated field extensions over a perfect base field of characteristic not $2$,
 these transfer maps were introduced by Morel in \cite[Chap. 4]{Mor},
 in the more general context of strongly $\AA^1$-invariant sheaves
 but mostly neglecting twists. The theory was recast for Milnor-Witt K-theory,
 still with the same restriction on fields, by Feld in \cite{FeldTohoku}.

There are two methods to define transfers on functors defined on fields.
 The first one is to follow the approach of Bass and Tate via residue maps
 and what is called after Rost the \emph{Weil reciprocity formula}
 (see \Cref{rem:Weil_RC}(2) for the case of the projective line).
 The second one is by gluing known transfer maps. This is closer to the approach 
 of Fasel for defining pushforwards on Chow-Witt groups
 (see \cite[Cor. 10.4.5]{FaselCW}).

\bigskip

\noindent\textit{Bass-Tate method.} In these notes, we exploit both approaches.
 For the Bass-Tate method,
 we have chosen to introduce Chow-Witt groups of Dedekind schemes.
 This serves both as an illustration of the theory
 and as a convenient framework to express the Weil reciprocity formula.
 In fact, we reformulate the latter as the following computation of Chow-Witt
 groups of quadratic $0$-cycles of the projective line:
\begin{thmi}(see \Cref{thm:PB1})
Let $k$ be a field, and $\cL$ be an invertible sheaf on the projective line $\PP^1_k$.
 We let $\cL_\infty$ be the restriction of $\cL$ over the point at $\infty$
 and $\omega_\infty$ be the conormal sheaf of the immersion $i_\infty:\{\infty\} \rightarrow \PP^1_k$.

 Then the Chow-Witt group
 of quadratic divisors of $\PP^1_k$ with coefficients in $\cL$
 is given by:
$$
\CHW 1(\PP^1_k,\cL) \simeq \begin{cases}
\GW(k,\omega_\infty \otimes \cL_\infty) & \deg(\cL) \text{ even,} \\
\ZZ & \deg(\cL)  \text{ odd.}
\end{cases}
$$
Explicitly, the isomorphism is given by pushforward along $i_\infty$.
\end{thmi}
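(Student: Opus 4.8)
The plan is to read $\CHW 1(\PP^1_k,\cL)$ off the Rost--Schmid complex as a cokernel, split off the contribution of the point $\infty$ using $\PP^1_k=\AA^1_k\sqcup\{\infty\}$, and then reduce the whole computation to a single residue at $\infty$; the parity of $\deg(\cL)$ will enter through a unit of the form $\langle\pi\rangle^{\deg(\cL)}$ produced by that residue.

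First I would write $\CHW1(\PP^1_k,\cL)$ as the cokernel of the only nonzero differential of the Rost--Schmid complex,
\[
\partial\colon\ \KMW1\bigl(k(t),\cL_\eta\bigr)\ \longrightarrow\ \bigoplus_{x\in(\PP^1_k)^{(1)}}\GW\bigl(\kappa(x),\omega_x\otimes\cL_x\bigr),
\]
$\eta$ denoting the generic point, the sum running over closed points and $\omega_x=\mathfrak m_x/\mathfrak m_x^2$. Splitting off $\infty$ writes the target as $\bigl(\bigoplus_{x\in(\AA^1_k)^{(1)}}\GW(\kappa(x),\omega_x\otimes\cL_x)\bigr)\oplus\GW(k,\omega_\infty\otimes\cL_\infty)$, leaves the source unchanged, and identifies the inclusion of the last summand into the target with the pushforward $i_{\infty*}$. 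The key input, which I would take from the Milnor-type exact sequence for the affine line established in these notes (which for $\KMW*$ extends the classical computations of Milnor and Morel), is that the component $\partial_{\AA^1}$ of $\partial$ into the first summand is surjective with kernel exactly the constant symbols $\KMW1(k)\hookrightarrow\KMW1(k(t),\cL_\eta)$, the embedding being normalised by a rational section $s$ of $\cL$ which is regular and nowhere zero on $\AA^1_k$ (such an $s$ exists because $\Pic(\AA^1_k)=0$). A short diagram chase then shows that $i_{\infty*}$ induces an isomorphism
\[
\GW\bigl(k,\omega_\infty\otimes\cL_\infty\bigr)\big/\,\partial_\infty\bigl(\KMW1(k)\bigr)\ \xrightarrow{\ \sim\ }\ \CHW1(\PP^1_k,\cL),
\]
so that everything comes down to the image of the residue $\partial_\infty$ at $\infty$ on constant symbols.

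To compute this, I would work in a coordinate with uniformiser $\pi=1/t$ at $\infty$ and a local generator $\sigma$ of $\cL$ at $\infty$. Since $s$ is regular and nonvanishing on $\AA^1_k$ its divisor is $d\cdot[\infty]$, and as the divisor of a rational section has degree $\deg(\cL)$ one gets $d=\deg(\cL)$, hence $s=\pi^{d}\sigma$ near $\infty$. Therefore a constant symbol $\beta\in\KMW1(k)$ written with respect to $s$ equals $\langle\pi\rangle^{d}\beta$ written with respect to $\sigma$, and its residue at $\infty$ is $\partial^\pi_\infty(\langle\pi\rangle^{d}\beta)$ tensored with the generator $\bar\pi\otimes\bar\sigma$ of $\omega_\infty\otimes\cL_\infty$, where $\partial^\pi_\infty$ is the residue associated with $\pi$. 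If $\deg(\cL)$ is even then $\langle\pi\rangle^{d}=\langle\pi^{d}\rangle=1$, so $\langle\pi\rangle^{d}\beta=\beta$ is a combination of unit symbols and $\partial^\pi_\infty(\beta)=0$; hence $\partial_\infty(\KMW1(k))=0$ and $\CHW1(\PP^1_k,\cL)\simeq\GW(k,\omega_\infty\otimes\cL_\infty)$ through $i_{\infty*}$. If $\deg(\cL)$ is odd then $\langle\pi\rangle^{d}=\langle\pi\rangle=1+\eta[\pi]$, and using that $\partial^\pi_\infty$ annihilates unit symbols, commutes with multiplication by $\eta$, and sends $[\pi][v]$ to $[v]$, one finds $\partial^\pi_\infty(\langle\pi\rangle[v])=\eta[v]=\langle v\rangle-1$ for $v\in k^\times$. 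As $v$ ranges over $k^\times$ the elements $\langle v\rangle-1$ span the fundamental ideal $\I{}(k)=\ker(\rk\colon\GW(k)\to\ZZ)$, a canonical subgroup of $\GW(k,\omega_\infty\otimes\cL_\infty)$, so $\CHW1(\PP^1_k,\cL)\simeq\GW(k,\omega_\infty\otimes\cL_\infty)/\I{}(k)\simeq\ZZ$, the last isomorphism being the rank, i.e. the degree of a quadratic $0$-cycle.

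The main obstacle is the twist bookkeeping in the residue computation: one has to track carefully the conormal sheaf $\omega_\infty$ together with the meromorphic trivialisation $s$ of $\cL$, since it is precisely the unit $\langle\pi\rangle^{\deg(\cL)}$ arising from the mismatch of trivialisations at $\infty$ that separates the two cases — dropping it would wrongly give a $\deg$-independent answer. A second, genuinely nontrivial point is to have at one's disposal the affine-line exact sequence for $\KMW*$ with twisted coefficients over an arbitrary, possibly imperfect base field of any characteristic; this is where the generality of these notes is used beyond the classical situation, and it is the input that pins down the kernel of $\partial_{\AA^1}$ as the constant symbols.
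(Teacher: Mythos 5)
Your argument is correct and follows essentially the same path as the paper: one reads off $\CHW 1(\PP^1_k,\cL)$ from the Rost–Schmid complex, uses Morel's $\AA^1$-invariance (as in \Cref{cor:htp_inv_fields}) to reduce to the residue $\partial_\infty$ on constant symbols, and then the parity of $\deg\cL$ enters through the unit $\tw{\pi}^{\deg\cL}$ arising from the mismatch of trivialisations of $\cL$ at $\infty$ — exactly as in the lemma preceding \Cref{thm:PB1}, whose diagram identifies $\partial_{Z/X}$ with either $0$ or $\gamma_\eta$. Your direct diagram chase on the cokernel replaces the paper's formal invocation of the localization long exact sequence, but the underlying computation (image of the constant symbols under $\partial_\infty$ being $0$, respectively the fundamental ideal $\I{}(k)$) is the same.
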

This theorem perfectly illustrates the role of twists in Milnor-Witt K-theory and Chow-Witt groups,
 which is the major difference with Milnor K-theory and usual Chow groups.
 It was first proved by Fasel in \cite{FaselPB}. Our proof is more direct,
 and allows to drop any restriction on the base field.
 Applying the above result when $\cL$ is the canonical sheaf $\omega=\cO(-2)$ on $\PP^1_k$
 gives the degree map on Chow-Witt groups:
$$
\deg:\CHW 1(\PP^1_k,\omega) \rightarrow \GW(k).
$$
This degree map actually encompasses all the transfer maps for Milnor-Witt K-theory
 for any monogenic field extension of $k$.\footnote{We
 have formulated here the theorem for ``$\GG$-degree $1$'' for the sake of clarity.
 It is important to note that to get transfer maps in other degrees,
 one needs to consider the whole grading on the Rost-Schmid complex,
 corresponding to the fact that Chow-Witt groups are the $0$-th $\GG$-graded part of a bigraded group
 (see \Cref{df:CHW_curves} for more details).} This is the geometric interpretation of the 
 method of Bass and Tate.\footnote{see \cite[(5.4)]{BT}, and especially diagram (3)}
 The problem with this approach is to show
 that these transfers are independent of the chosen generators, and that one can
 extend the definition to any finite field extension.

\bigskip

\noindent \textit{Gluing and differential traces.} Instead of proving this directly,
 as is done by Morel and Feld,
 we use the second mentioned approach.
 On the one hand, one has well-defined transfer maps on Milnor K-theory after Kato.
 For the Grothendieck-Witt part (based on inner product forms to deal with characteristic $2$),
 we show that one can define twisted transfer maps by directly using the trace morphism
 that follows from Grothendieck duality formalism, and which we call the differential trace map
 (see \Cref{df:diff_trace}):
$$
\Tr_{E/k}^\omega:\omega_{E/k} \rightarrow k.
$$
Using this map and a classical method of Scharlau, one deduces transfers for twisted
 Grothendieck-Witt groups. The advantage of these transfers
 is that they do not depend on any choice,
 and yet can be compared precisely to Scharlau's ones
 (see \Cref{rem:comp_Scharlau_trace}).\footnote{Note that this kind of construction
 of twisted transfers for Witt groups (of $\ZZ[\frac 1 2]$-schemes)
 has been previously considered by several authors including \cite{Gille, Nena, CaHo}.
 Our treatment is simple and direct, and is well-suited for explicit computations,
 as explained below.}
 Moreover, they can be glued appropriately to Kato's transfers
 on Milnor K-theory and induce the desired transfers \eqref{eqi:traces_KMW}
 on twisted Milnor-Witt K-theory.

The important result is to compare these glued transfers with the one obtained
 by the Bass-Tate method. According to the uniqueness property of the latter,
 this involves checking a \emph{twisted form of the quadratic reciprocity law}
 for the transfers based on the differential trace maps. This is an enhancement
 of the classical result of Scharlau, now valid in arbitrary characteristics
 (see \Cref{thm:KMW_GW-differential_Reciprocity} for the exact statement).
 As a consequence, our definition agrees with that of Morel, and we also
 reprove the independence theorem of Morel and Feld
 (see \Cref{prop:comparison_MW-transfers}).

The nice feature of the second way of defining transfers is that
 it is well-suited for computing trace maps.
 In particular, the differential trace map can be computed explicitly using
 a method of Scheja and Storch.\footnote{The historical reference for
 this method is \cite{SS} but we will use \cite{Kunz} as a reference; see \Cref{sec:G-SS-residues}
 for more details.}
 It allows us to compute trace maps in Milnor-Witt K-theory
 in a way analogous to the method of Kass and Wickelgren for computing (local) $\AA^1$-Brouwer
 degree (see \cite{KW_EKL}, \cite{BMP}).
 Indeed, trace maps can be explicitly described in terms of \emph{Bézoutians}
 (see Definitions~\ref{df:bezoutian},~\ref{df:SS_trace} for recall).
 This is especially important in the inseparable case.
 Let us illustrate this computation with the following statement:
\begin{thmi}[see \Cref{ex:compute_MW-trace}(3)]
Let $k$ be a field of positive characteristic $p>0$,
 and $a \in k^\times$ be an element which is not a power of $p$.
 Consider the inseparable extension $E=k[\sqrt[q] a]$ of $k$, $q=p^n$.

Let $\tau^\alpha_{E/k}$ be the \emph{Tate trace map}
 associated with $E/k$ and the choice of $\alpha=\sqrt[q] a$
 (see \cite[\textsection 1, (2)]{Tate} and \Cref{rem:Scheja-Storch_trace_monogeneous}).
 Let $w=dt \otimes (\overline{t^q-a})^*$ be the nonzero element of the canonical
 module $\omega_{E/k}$.
 Then for any unit $u \in E^\times$, the following formula holds in $\GW(k)$:
$$
\Tr_{E/k}^{MW}(\tw u \otimes w)=[\tau^\alpha_{E/k}(u.-)]
$$
where the right-hand side denotes the class of the inner product:
 $(x,y) \mapsto \tau^\alpha_{E/k}(uxy)$.
\end{thmi}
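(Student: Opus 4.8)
The plan is to reduce the statement, via the construction of the Milnor--Witt transfers, to an explicit computation of the differential trace map of \Cref{df:diff_trace} for the purely inseparable extension $E=k[t]/(t^q-a)$, and then to match the outcome with Tate's trace by a short Bezoutian computation.

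First note that $\tw u\otimes w$ lies in $\GG$-degree $0$, hence corresponds to an element of $\KMW 0(E,\omega_{E/k})$, the twisted Grothendieck--Witt group of $E$, namely the twisted rank-one form $\beta_u\colon E\times E\to\omega_{E/k}$, $(x,y)\mapsto uxy\cdot w$. By the construction of the transfer maps (\Cref{df:MW-transfers}), glued through the fundamental square \Cref{cor:fundamental_square_KMW}, together with the comparison with Scharlau's transfer recorded in \Cref{prop:comparison_MW-transfers} and \Cref{rem:comp_Scharlau_trace}, the restriction of $\Tr_{E/k}^{MW}$ to $\GG$-degree $0$ is the twisted Grothendieck--Witt transfer attached to the differential trace $\Tr_{E/k}^\omega\colon\omega_{E/k}\to k$. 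Concretely, $\Tr_{E/k}^{MW}(\tw u\otimes w)$ is the class in $\GW(k)$ of the $k$-bilinear form obtained by regarding $E$ as a $k$-vector space and composing $\beta_u$ with $\Tr_{E/k}^\omega$, i.e.\ $(x,y)\mapsto\Tr_{E/k}^\omega(uxy\cdot w)$. It therefore remains to establish the identity of $k$-linear functionals $\Tr_{E/k}^\omega(-\cdot w)=\tau^\alpha_{E/k}\colon E\to k$, and since both sides are $k$-linear it suffices to test it on the $k$-basis $1,\alpha,\dots,\alpha^{q-1}$ of $E$, where $\alpha=\bar t=\sqrt[q]a$.

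For the left-hand side we would use the Scheja--Storch/Bezoutian presentation of the differential trace (\Cref{df:bezoutian}, \Cref{df:SS_trace}). With $f=t^q-a$ one has in $k[X,Y]$ the factorisation $f(X)-f(Y)=X^q-Y^q=(X-Y)\bigl(\sum_{i=0}^{q-1}X^{q-1-i}Y^{i}\bigr)$ --- the vanishing $f'=qt^{q-1}=0$ being visible in $\sum_i X^{q-1-i}X^{i}=qX^{q-1}=0$ --- so the Bezoutian of $f$ is $\sum_{i=0}^{q-1}\alpha^{q-1-i}\otimes\alpha^{i}\in E\otimes_kE$, and the canonical generator of $\omega_{E/k}$ against which it is normalised is exactly $w=dt\otimes(\overline{t^q-a})^*$. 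The defining property of the Scheja--Storch trace then says that $\{\alpha^{q-1-i}\}_i$ and $\{\alpha^{i}\}_i$ are dual $k$-bases of $E$ for the pairing $(x,y)\mapsto\Tr_{E/k}^\omega(xy\cdot w)$; expanding the relation $\sum_{i=0}^{q-1}\alpha^{q-1-i}\,\Tr_{E/k}^\omega(\alpha^{i}\cdot w)=1$ in the basis $1,\alpha,\dots,\alpha^{q-1}$ and using $\alpha^q=a$ yields $\Tr_{E/k}^\omega(\alpha^{m}\cdot w)=\delta_{m,q-1}$ for $0\le m\le q-1$.

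For the right-hand side we would recall Tate's construction (\cite[\textsection 1,(2)]{Tate}, \Cref{rem:Sheja-Storch_trace_monogeneous}): since $f(t)/(t-\alpha)=t^{q-1}+\alpha t^{q-2}+\dots+\alpha^{q-1}=\sum_{i=0}^{q-1}\alpha^{q-1-i}t^{i}$ in $E[t]$, the Tate trace $\tau^\alpha_{E/k}$ is the unique $k$-linear functional making $\{\alpha^{q-1-i}\}_i$ dual to $\{\alpha^{i}\}_i$, i.e.\ $\tau^\alpha_{E/k}(\alpha^{m})=\delta_{m,q-1}$ for $0\le m\le q-1$. Hence $\Tr_{E/k}^\omega(\alpha^{m}\cdot w)=\tau^\alpha_{E/k}(\alpha^{m})$ for all $m$, the two functionals agree, and we conclude
$$
\Tr_{E/k}^{MW}(\tw u\otimes w)=\bigl[(x,y)\mapsto\Tr_{E/k}^\omega(uxy\cdot w)\bigr]=\bigl[(x,y)\mapsto\tau^\alpha_{E/k}(uxy)\bigr]=[\tau^\alpha_{E/k}(u.-)]\ \text{in}\ \GW(k).
$$
I expect the main obstacle to be the first step: pinning down normalisations so that the $\GG$-degree-$0$ component of the glued transfer is literally the Scharlau transfer of $\Tr_{E/k}^\omega$, and that $w$ is precisely the generator for which the Bezoutian formula of \Cref{df:SS_trace} is stated. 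Once this bookkeeping is settled the computation is short; it specialises, through Euler's classical identity $\sum_{\beta}\beta^{i}/f'(\beta)=\delta_{i,q-1}$ (sum over the roots of $f$), to the known étale formula of \cite[Lemma 5.8]{HoyTr}, so that the genuinely new point is the purely inseparable instance above, where $f'=0$ forces one to work with $w$ rather than with $dt/f'(\bar t)$.
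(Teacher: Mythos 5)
Your proposal is correct and follows the same route as the paper: reduce to $\GG$-degree zero where, by \Cref{cor:MW_tr_general_ppty}(1), the glued transfer $\Tr^{MW}_{E/k}$ coincides with the differential GW-transfer $\Tr^\omega_{E/k*}$, and then identify $\Tr^\omega_{E/k}(-\cdot w)$ with the Tate trace $\tau^\alpha_{E/k}$ via the Scheja--Storch comparison (\Cref{ex:GW-diff&Tate_traces}, \Cref{cor:compute_residues_SS}). Your hands-on Bezoutian computation for $f=t^q-a$ just re-derives the general fact from \Cref{ex:Sheja-Storch_trace_monogeneous} that $\tau_f=(\alpha^{d-1})^*$ in the monogenic case, so there is nothing genuinely different, and the ``main obstacle'' you flag in the normalisation step is already handled formally by \Cref{cor:MW_tr_general_ppty}(1).
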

This formula was actually one of our motivations for writing these notes.
 It once again illustrates the importance of the twists in computations
 related to Milnor-Witt K-theory. In particular, in the above formula,
 changing $w$ usually completely modifies the result of the computation.
 We give further examples of this phenomenon in \Cref{ex:compute_MW-trace},
 as well as an analogue of the degree formula for Milnor K-theory:
 \Cref{cor:degree_formula}.

\subsection{Towards Chow-Witt groups}
 As already mentioned, the development of the functorial properties
 of Milnor-Witt K-theory, as axiomatized in the notion of Milnor-Witt
 cycle modules in \cite{FeldMWmod}, gives a solid foundation for Chow-Witt
 groups \cite{FaselCW}. As an illustration, 
 we will use in \cite{DFJ} the formulas established in \Cref{sec:MW-mod} of the present work 
 to extend the original definition of Fasel \cite{FaselCW}
 to singular schemes without requiring the existence of a base field,
 thereby opening the theory to arithmetic applications.

We have taken particular care with formula (R3b), proved in \Cref{thm:R3b}.
 This formula allows one to define pushforwards on Chow-Witt groups,
 and in particular \emph{degrees} of quadratic cycles for proper varieties.
 Therefore, it is central to applications in quadratic enumerative geometry.
 To give some background on this formula, let us recall that it was first stated
 by Rost in \cite{Rost}, without giving a proof in the Milnor K-theory case
 (see \cite[7.4.3]{GSza} for one). An argument was given in \cite[Cor. 10.4.5]{FaselCW}
 to deduce from Rost's formula the case needed for Chow-Witt groups. We give
 here a direct proof of this delicate formula, based on the theory of valued fields.

Moreover, several fundamental formulas of Milnor-Witt K-theory
 can be extended to situations that involve appropriately defined \emph{quadratic multiplicities}.
 This is in particular the case for formula (R3b) that admits a refinement (R3b+),
 proved in \Cref{prop:R3b+}, involving multiplicities,
 and based on the algebraic notion of defect of discrete valuation rings (see \Cref{prop:defect}
 for a reminder on this notion).
 This refined formula is new, although it was already alluded to by Rost in \cite[Rem. (1.8)]{Rost},
 for Milnor K-theory.
 Note that defects can appear only when working in the arithmetic case: if one restricts to schemes
 essentially of finite type over a field, the discrete valuation ring that occurs as localization
 of such a scheme at a regular point of codimension one is always excellent.

\subsection{Reading guide}

For readers who appreciate an axiomatic approach,
 it is advisable to begin with the list of structural maps
 of Milnor-Witt K-theory (Section~\ref{sec:functoriality})
 and the basic rules they satisfy (Section~\ref{sec:rules}).

This work is divided into three parts, 
 each of which includes a review of the necessary background material.
 We begin by briefly reviewing this background, which will be used throughout the paper.

First, we review the theory of quadratic forms over fields,
 but more precisely inner product spaces to deal
 with arbitrary characteristics in Section~\ref{sec:GW}.
 It contains results and computations
 on Grothendieck-Witt and Witt rings relevant to our purposes.

Second, the reader will find a short recollection on cotangent complexes
 and canonical sheaves (or modules) in Section~\ref{sec:cotangent},
 directed towards explicit computations.
 
Finally, we have given in Section \ref{sec:Gduality}
 reminders on Grothendieck coherent duality theory,
 and explain the link with the work of Scheja and Storch, which
 allows one to derive explicit calculations. In particular, we define explicitly
 what we call the \emph{differential trace map},
 and give various interesting properties and formulas:
 the expression of Grothendieck residue symbols (\Cref{df:Grothendieck_residues}),
 the computations in terms of explicit presentations and Bézoutians:
 see \Cref{prop:compute_residues_SS} and \eqref{eq:compute_residues_SS}.

\bigskip

Let us now return to the main structure of the paper.

The first part concerns the definition and basic properties
 of the Milnor-Witt K-ring of a field\footnote{See \Cref{rem:generalizedKMW}
 for the case of rings.}: this is essentially Section \ref{sec:KMW}.
 As mentioned, we start with the presentation by generators and relations
 (\Cref{df:KMW}), and then relate it to the presentation in
 terms of Milnor K-theory and the fundamental ideal
 (\Cref{cor:fundamental_square_KMW}). Recall this relation is a (non-trivial)
 consequence of the second Milnor conjecture (stated in \Cref{thm:MilnorConj}).\\
 Explicit computations are given in, for instance, \ref{prop:KMW&GW},
 \ref{ex:compute_KMW1}, \ref{ex:KMW1/2}, \ref{cor:Kereta} and \ref{ex:KMW_with_sqrt}.
 The main specificity of Milnor-Witt K-theory is twists. We introduce them
 in a second step, and use them to define residue maps at the end of
 Section \ref{sec:KMW}.

The second part is devoted to transfers.
 We start by introducing in Section \ref{sec:CHW}
 the Chow-Witt groups of arbitrary Dedekind schemes,
 both as an illustration (for instance, of the use of twists)
 and as an essential tool for the Bass-Tate method.
 Recall that, compared to classical intersection theory,
 Chow-Witt groups are twisted by a line bundle, and come with
 a bigrading: the first grading is by codimension
 and the second one is a $\GG$-grading, which can be explained by the existence
 of Tate twists for motives.\footnote{Beware that $\GG$-twists refer in practice to twists
 by $\ZZ(1)[1]$ in motivic notation; moreover, the bigraded Chow-Witt groups
 do not correspond to the bigraded Milnor-Witt motivic cohomology of \cite{BCDFO}:
 it is only related to these groups through the coniveau spectral sequence.}
 The main result of the section is the computation of the twisted Chow-Witt groups of the
 projective line (partially stated above), see \Cref{thm:PB1}. The two key tools used here
 are $\AA^1$-invariance of Chow-Witt groups and the localization long exact sequence.
 The connection with usual Chow groups is discussed in \ref{num:CHW&CHbasic}
 and \ref{num:CHW&CH}.

The core study of transfers is done in \Cref{sec:transfers}.
 It follows the plan outlined above.
 We first introduce in Section~\ref{sec:quad-deg+traces} trace maps
 in the monogenic case using the quadratic degree map of \Cref{df:quad_deg}.
 We then revisit a result of Scharlau (\Cref{thm:KMW_GW-differential_Reciprocity}),
 by extending it to Grothendieck-Witt groups and incorporating twists into the statement.
 We refer to this as the \emph{quadratic reciprocity formula}. 
 This allows us to compare the traces obtained in the monogenic case
 with those built using the differential trace map in Section~\ref{sec:general-traces}
 in \Cref{df:GW-transfers}.

\bigskip

The third part of these notes, Section \ref{sec:MW-mod},
 gathers the main functorial properties of Milnor-Witt K-theory,
 following and extending the axiomatic framework of \cite{FeldMWmod, FeldTohoku}.
 We first summarize the existence of four functorialities, built in the previous sections:
 corestrictions, restrictions also called transfers, action of units (which is a particular
 case of the underlying ring structure) and residues. 
 Then we state several basic properties that can be easily derived from
 what was proved before. There is a major exception, mentioned in the preceding summary:
 formula (R3b), stated in \Cref{thm:R3b}, for which we provide a complete direct proof.
 We conclude the section with Subsection~\ref{sec:strong},
 which contains refined formulas (R1c+), (R3a+) and (R3b+),
 each involving some quadratic multiplicities.

Further applications to Chow-Witt groups can be found in \cite{DFJ}.

\subsection{Acknowledgments}

I am deeply grateful to Jean Fasel for many enlightening discussions on Chow–Witt groups and hermitian 
K-theory, and to Niels Feld for numerous exchanges during and after his PhD, and for his detailed comments on an earlier version of this work.
Discussions with Adrien Dubouloz, especially during the PhD of Clémentine Lemarié-Rieusset and in the course of our collaboration, were a constant source of stimulation and made me realize the central role of twists in the theory.
I also thank Fangzhou Jin, Baptiste Calmès, Stephen McKean, and Robin Carlier for their interest and helpful exchanges, and Fabien Morel for his early influence on my understanding of the subject.

I am indebted to the referee for a patient and detailed reading, and to the editors, Mattia Cavicchi and Jitendra Bajpai, for their patience and trust.
Finally, I wish to thank Daniele Faenzi, Adrien Dubouloz, and Ronan Terpereau for the invitation to lecture at the Spring School ``Invariants in Algebraic Geometry'', where part of these notes took shape.

The author was supported by the ANR project ``HQDIAG'' (Grant No. ANR-21-CE40-0015), and by the MSCA Doctoral Network ``ReMoLD'' (Grant No. 101168795).

\section{Milnor-Witt K-theory and Grothendieck-Witt groups} \label{sec:KMW}

\subsection{Grothendieck-Witt groups and symmetric bilinear forms}\label{sec:GW}

\begin{num}(cf. \cite{MH})\label{num:inner_spaces}
Let $K$ be a field.
 An inner product space or simply \emph{inner space} $(E,\phi)$ over $K$
 is a finite $K$-vector space $E$ with a bilinear form
$$
\phi:E \otimes_K E \rightarrow K
$$
which is symmetric and non-degenerate: $E \rightarrow E^\vee, x \mapsto \phi(x,-)$ is an isomorphism.
 The dimension of $E/K$ is called the \emph{rank} of the inner space $(E,\phi)$.
 A morphism $(E,\phi) \rightarrow (F,\psi)$ of inner spaces is
 a $K$-linear morphism $f:E \rightarrow F$ such that $\psi(f(u),f(v))=\phi(u,v)$.

The category of inner spaces admits direct sums and tensor products:
\begin{align*}
(E,\phi) \perp (F,\psi) \rightarrow (E \oplus F,\phi+\psi) \\
(E,\phi) \otimes (F,\psi) \rightarrow (E \otimes_K F,\phi.\psi).
\end{align*}
Therefore the set $I_K$ of isomorphism classes\footnote{This is indeed a set, in bijection
 with $$\sqcup_{n \geq 0} \mathrm{Sym}_n(K)/\sim$$
 where $\mathrm{Sym}_n(K)$ is the set of invertible symmetric
 $(n \times n)$-matrices with coefficients in $K$, and $\sim$ is the congruence
 relation on such matrices: $M \sim N$ if $M=PNP^t$;}
 of inner spaces over $K$
 is a commutative monoid for $\oplus$, and a commutative semi-ring for $\oplus, \otimes$.
 The following definition comes from Milnor and Husemöller \cite{MH}.
 It is a variant of the fundamental definition of Witt \cite{Witt}\footnote{now called the Witt group,
 see below},
 using the Grothendieck construction,
 that apparently first appeared in the short work of Delzant \cite{Delz}.
\end{num}
\begin{df}\label{df:GW}
The \emph{Grothendieck-Witt ring} $\GW(K)$ of $K$ is the group completion of the monoid $(I_K,\oplus)$,
 with products induced by the tensor product $\otimes$.
\end{df}
The rank of inner spaces induces a ring map:
\begin{equation}\label{eq:GWrank}
\GW(K) \xrightarrow{\ \rk\ } \ZZ.
\end{equation}

\begin{rem}
If the characteristic of $K$ is different from $2$,
 for any $K$-vector space $V$,
 there is a one-to-one correspondence between symmetric bilinear forms $\phi$ on $V$
 and quadratic forms $q$.\footnote{$q(x)=\phi(x,x)$, $\phi(x,y)=\frac 1 2(q(x+y)-q(x)-q(y))$ !}
 Then the Grothendieck-Witt ring can be defined in terms of isomorphism classes of quadratic forms.

This is no longer true in characteristic $2$, but the definition based on inner spaces
 is the correct one for $\AA^1$-homotopy. Nevertheless, one abusively uses terms such as
 \emph{quadratic} intersection theory, in any characteristic.
\end{rem}

\begin{ex}\label{ex:GW_<>}
\begin{enumerate}
\item Let $u$ be a unit in $K$.
 Then $K \otimes K \rightarrow K, (x,y) \mapsto u.xy$ is an inner space of rank $1$.
 Its class in the Grothendieck-Witt ring is denoted by $\tw u$.
 Obviously, $\tw{uv^2}=\tw{u}$. Therefore, one has a canonical map:
$$
Q(K):=K^\times/(K^\times)^2 \rightarrow \GW(K).
$$
The group $Q(K)$ will be called the group of \emph{quadratic classes} of $K$.
\item Given units $u_i \in K^\times$, we put $\tw{u_1,\hdots,u_n}=\tw{u_1}+\hdots+\tw{u_n}$.

A bilinear form on a framed $K$-vector space is defined by a symmetric invertible matrix.
 The above element of $\GW(K)$ is represented by the $K$-vector space $K^n$ and the diagonal matrix
 with coefficients $u_i$.
\end{enumerate}
\end{ex}

\begin{ex}\label{ex:GW}
\begin{enumerate}
\item If $K$ is an algebraically closed field the rank map $\rk:\GW(K) \rightarrow \ZZ$ is an isomorphism.\footnote{This
 is obvious in characteristic not $2$, as any inner space admits an orthogonal base, and every element in $K$ is a square.} 
 More generally, $\rk$ is an isomorphism whenever every unit is a square in $K$ (see \Cref{ex:Witt_square}).
\item It is well-known that a quadratic form over a real vector space is determined
 by its signature. In other words, any $\sigma \in \GW(\RR)$ can be
 uniquely written as $\sigma=p.\tw{1}+q.\tw{-1}$, $\rk(\sigma)=p+q$ and the signature of $\sigma$
 is defined as the pair $(p,q)$.
 The map $\GW(\RR) \rightarrow \ZZ \oplus \ZZ, \sigma \mapsto (p,q)$ is an isomorphism.
\item Let $K=\FF_q$ be a finite field, $q=p^n$.
 Then the following sequence of abelian groups is exact:
\begin{align*}
0 \rightarrow Q(\FF_q) & \rightarrow \GW(\FF_q) \xrightarrow{\rk}  \ZZ \rightarrow 0 \\
\bar u & \mapsto 1-\tw{u}
\end{align*}
where $Q(\FF_q)$ is the group of quadratic classes of $\FF_q$ (\Cref{ex:GW_<>}(1)).
Note that this fits with item (1) above!

The preceding sequence is obviously split. Moreover, the abelian group $\FF_q^\times$ is cyclic of order $(q-1)$.
 In particular, Lagrange's theorem implies that $Q(\FF_q)$ is zero if $q$ is even, and $\ZZ/2$ if $q$ is odd.
 Consequently:
$$
\GW(\FF_q)=\begin{cases}\ZZ & q \text{ even,} \\ \ZZ/2 \oplus \ZZ & q \text{ odd.} \end{cases}
$$
\end{enumerate}
\end{ex}

Consider the notations of \Cref{ex:GW_<>}.
 The element $h=\tw{1,-1}$ is called the (class of the) \emph{hyperbolic form}.
 One can recover the following famous definition (and extension in arbitrary characteristic) of Witt.
\begin{df}
One defines the Witt ring of a field $K$ as the quotient ring:
$$
\W(K)=\GW(K)/(h).
$$
\end{df}
The hyperbolic form being of rank $2$, the map \eqref{eq:GWrank} induces a morphism of rings:
$$
\W(K) \rightarrow \ZZ/2
$$
which is again called the \emph{rank map}.

\begin{rem}\label{rem:GW_ring}
\Cref{df:GW}, as well as the previous one,
 can be extended to an arbitrary (commutative) ring $A$ instead of a field $K$
 (see \cite[I.\textsection 4, Prop. 1]{Kneb} for the Grothendieck-Witt ring,
 and \cite[I. 7.1]{MH} for the Witt ring): instead of finite dimensional $K$-vector spaces,
 one considers finitely generated projective $R$-modules $M$ equipped with a
 non-degenerate symmetric bilinear form
$$
\phi:M \otimes_R M \rightarrow R \mid \phi':M \xrightarrow \sim \Hom_R(M,R)=M^\vee
$$
and considers the Grothendieck group $\GW(R)$ associated with the monoid of isomorphism classes
 of $(M,\phi)$.

It follows from Minkowski's convex body theorem (see \cite[Chap. II, 4.4]{MH})
 that one can define an isomorphism of rings, called the \emph{signature},
$$
\sigma:\W(\ZZ) \xrightarrow \sim \ZZ.
$$
The map $\sigma$ associates to the class of $(M,\phi)$
 the signature of $(M \otimes_\ZZ \RR,\phi \otimes_\ZZ \RR)$.

As $h$ is non-$\ZZ$-torsion in $\GW(\ZZ)$
 (because it is not so in $\GW(\RR)$), one obtains that
 $\GW(\ZZ)$ is a free rank $2$ abelian group, and:
$$
\GW(\ZZ)=\ZZ.\tw 1 \oplus \ZZ.h.
$$
To get a presentation as a ring, we consider the element: $\epsilon=-\tw{-1}$.
 Then one deduces from the above isomorphism an isomorphism of rings:
$$
\GW(\ZZ)=\ZZ[\epsilon]/(\epsilon^2-1).
$$
We will retain that this ring always acts (by functoriality) on rings 
 of the form $\GW(K)$,
 and more generally on the invariants of $\AA^1$-homotopy theory
 such as the Milnor-Witt K-theory.
\end{rem}

\begin{df}\label{df:fundamental_I}
We define the \emph{fundamental ideal} of $\W(K)$ as:
$$
\I{}(K):=\Ker\big(\rk:\GW(K) \rightarrow \ZZ\big)
 \simeq \Ker\big(\rk:\W(K) \rightarrow \ZZ/2\big).
$$
Typical elements of $\I{}(K)$ are given by the following
 \emph{Pfister forms} associated with $u \in K^\times$:
$$
\dtw u:=1-\tw u.
$$
\end{df}

\begin{rem}
\begin{enumerate}[wide]
\item According to \cite[3.3]{MH}, $\I{}(F)$ is the only (prime) ideal
 of $\W(F)$ with residue field $\FF_2$.
\item This ideal is of fundamental (historical) importance
 as it is central to the Milnor conjecture on quadratic forms:
 see \Cref{thm:MilnorConj}.
\end{enumerate}
\end{rem}

\begin{ex}
Consider the case of a finite field $K=\FF_q$, $q=p^n$.
According to \Cref{ex:GW}, one gets that 
$$
\I{}(\FF_q)=Q(\FF_q)=\begin{cases}
0 & q \text{ even,} \\
\ZZ/2 & q \text{ odd.}
\end{cases}
$$
So if $q$ is even, $\W(\FF_q)=\ZZ/2$, via the rank morphism.
 If $q$ is odd, one has (applying again the preceding example) a short exact sequence:
$$
0 \rightarrow \ZZ/2 \rightarrow \W(\FF_q) \xrightarrow{\rk} \ZZ/2 \rightarrow  0
$$
which is split if $q=1 \mod 4$, and non-split if $q=3 \mod 4$.
 In fact, as a ring, one gets more precisely:
$$
\W(\FF_q)=\begin{cases}
\ZZ/2 & q \text{ even,} \\
\ZZ/2[t]/(t-1)^2 & q=1 \mod 4, \\
\ZZ/4 & q=3 \mod 4.
\end{cases}
$$
In any case, one deduces that $\I n(\FF_q)=0$ if $n>1$.
\end{ex}

The following result is an elaboration of Witt's theorems
 on quadratic forms (see \cite{Witt}).
\begin{thm}\label{thm:GW}
The abelian group $\GW(K)$ admits a presentation whose generators are given by symbols $\tw{u}$
 for $u \in K^\times$ (mapping to the elements of \Cref{ex:GW_<>}) with relations:
\begin{itemize}
\item[(GW1)] $\tw{uv^2}=\tw{u}$,
\item[(GW2)] $\tw{u,v}=\tw{u+v,(u+v)uv}$, $u+v \neq 0$,
\end{itemize}
where we have used the notation $\tw{u,v}:=\tw{u}+\tw{v}$.

Moreover, the relation (GW2) implies the following one:
\begin{itemize}
\item[(GW3)] $\tw{u,-u}=\tw{1,-1}$.
\end{itemize}
\end{thm}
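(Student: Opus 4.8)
The strategy is to compare two presentations of the group $\GW(K)$: the well-known one coming from Witt's cancellation and chain-equivalence theorems, and the one proposed in the statement. Let $G$ denote the abelian group presented by the symbols $\tw u$ for $u \in K^\times$ subject to (GW1) and (GW2). There is an obvious surjection $G \twoheadrightarrow \GW(K)$, since the elements $\tw u \in \GW(K)$ satisfy (GW1) (this is \Cref{ex:GW_<>}(1)) and satisfy (GW2) because the binary forms $\langle u,v\rangle$ and $\langle u+v, (u+v)uv\rangle$ are isometric over $K$ whenever $u+v \neq 0$ — both represent $u$ and have the same determinant $uv$, so they are isometric by the classification of binary inner spaces. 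The content is the injectivity, i.e. that these relations already generate \emph{all} relations.

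First I would reduce to the Witt group level, or rather handle rank and the kernel of rank separately. Every element of $G$ is a sum of symbols, so it can be written as $\tw{u_1,\dots,u_n} - \tw{v_1,\dots,v_m}$, and using that $\tw u - \tw u = 0$ one reduces modulo the obvious relations to a normal form; the rank map $G \to \ZZ$ sending each $\tw u \mapsto 1$ is well-defined (both relations preserve rank) and splits off a copy of $\ZZ$ via $\tw 1$. So it suffices to show that two diagonal forms $\tw{u_1,\dots,u_n}$ and $\tw{v_1,\dots,v_n}$ of the \emph{same} rank that become equal in $\GW(K)$ are already equal in $G$. By Witt cancellation in $\GW(K)$, equality of $\langle u_1,\dots,u_n\rangle$ and $\langle v_1,\dots,v_n\rangle$ in $\GW(K)$ is equivalent to these two diagonal forms being isometric as inner spaces. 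Now invoke Witt's chain equivalence theorem: two isometric diagonal forms are connected by a finite chain of ``simple'' transformations, each of which only alters two consecutive entries $\langle a, b\rangle$ by replacing them with an isometric binary diagonal form $\langle a', b'\rangle$. So the whole problem collapses to the rank-two case.

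The heart of the matter — and the step I expect to be the main obstacle — is therefore: show that if $\langle a,b\rangle \cong \langle a',b'\rangle$ as binary inner spaces over $K$ (equivalently $a' = ax^2+by^2$ for some $x,y$ with $a'b' \equiv ab$ in $Q(K)$), then $\tw a + \tw b = \tw{a'} + \tw{b'}$ holds in $G$, using only (GW1) and (GW2). The relation (GW2) itself is exactly a substitution of this type with the specific witness $a' = u+v$, $x=y=1$. One must bootstrap from this special case to the general binary substitution. The idea is: given $a' = ax^2 + by^2$, first use (GW1) to clear squares and rescale, reducing to the situation $a' = a + b$ (after replacing $a$ by $ax^2$, $b$ by $by^2$, which are $G$-equal to $\tw a$, $\tw b$), then apply (GW2) directly. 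One has to be a little careful about the degenerate cases $x=0$ or $y=0$ (then the substitution is trivial or just a square rescaling, handled by (GW1)) and about the case $a+b=0$, which does not arise here because $a'\neq 0$. This is precisely the computation Witt carries out; the point is simply to check it goes through with the group generated by the symbols. Finally, (GW3) follows from (GW2): take $v = -u$ is not allowed since $u+v=0$, so instead apply (GW2) with a suitable pair — e.g. from $\tw{u,-u}$, rescale and use (GW2) with $u' = u$, $v' $ chosen so that $u'+v'$ and the product land on $\tw 1$ and $\tw{-1}$; concretely $\langle u,-u\rangle$ represents $1$ (namely $u\cdot 1^2 + (-u)\cdot$ something) and has determinant $-u^2 \equiv -1$, so it is isometric to $\langle 1,-1\rangle$, and the binary case just established gives $\tw{u,-u} = \tw{1,-1}$ in $G$.
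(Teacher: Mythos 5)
Your overall plan — establish the surjection, split off the rank, then use Witt cancellation plus Witt's chain-equivalence theorem to reduce the injectivity to the binary case and verify that case by hand — is the classical approach, and it does work when $\operatorname{char}(K)\neq 2$. The paper itself gives no argument at all but defers to \cite[Lem.~1.1]{MH} (for the Witt group) and to \cite[Th.~1.6]{Robin} for the Grothendieck--Witt version in all characteristics, so there is nothing direct to compare with at that level.

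The genuine problem is characteristic $2$, which the paper goes out of its way to cover (it works with symmetric bilinear inner spaces precisely to include $\operatorname{char}=2$). Witt cancellation fails for symmetric bilinear inner spaces in characteristic $2$: for instance $\tw{1}\perp H\cong\tw{1,1,1}\cong\tw{1}\perp\tw{1,1}$ while $H\not\cong\tw{1,1}$ (the Gram matrix of the hyperbolic plane cannot have nonzero diagonal after any congruence in char $2$). The standard proof of chain equivalence for diagonal forms also runs through cancellation. So the reduction you describe is not available in general, and this is exactly why the paper refers to a ``direct proof'' in \cite{Robin} rather than the cancellation/chain-equivalence route. To make your argument complete you would need either to prove a form of cancellation restricted to diagonal forms valid in char $2$, or to replace the chain-equivalence step with something that avoids it; as written there is a gap.

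Your derivation of (GW3) has the same characteristic-$2$ defect, independently of the above. You assert that $\langle u,-u\rangle$ represents $1$, i.e.\ $u(x-y)(x+y)=1$ has a solution; solving for $x,y$ requires dividing by $2$. In char $2$ the form $\langle u,-u\rangle=\langle u,u\rangle$ represents only $u\cdot(\text{squares})$, so $1$ is represented iff $u$ is a square, and the argument breaks. The paper's footnote instead derives (GW3) purely algebraically from (GW2), valid in any characteristic: for $u\neq -1$, the two instances of (GW2)
$$\tw{-u,\,u+1}=\tw{1,\,-u(u+1)},\qquad \tw{-1,\,1+u}=\tw{u,\,-u(1+u)}$$
subtract to give $\tw{-u}-\tw{-1}=\tw{1}-\tw{u}$, i.e.\ (GW3). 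That derivation is worth adopting in place of yours, since it stays entirely inside the group $G$ and makes no appeal to representability of $1$.
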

The analogous presentation for the Witt group is well-known
 (see \cite[Lem. 1.1]{MH}). We refer the reader to \cite[Th. 1.6]{Robin} for a direct
 proof (see also \cite[Th. 4.7]{EKM}).\footnote{Here is the trick to get relation (GW3)
 from (GW2). One can assume $u \neq -1$, and one writes using (GW2):
$$
\tw{-u,u+1}=\tw{1,-u(u+1)}, \tw{-1,1+u}=\tw{u,-u(1+u)}.
$$
Subtracting these two equalities yields (GW3).}

\begin{rem}\label{rem:presentations_GW&W}
\begin{enumerate}[wide]
\item If one wants a presentation of $\GW(K)$ as a ring,
 one needs only to add the relation $\tw{uv}=\tw u \tw v$.
\item One can take as generators of the abelian group $\GW(K)$ the symbols $\tw{\bar u}$
 where $\bar u \in Q(K)$ is a quadratic class of a unit $u \in K^\times$.
 Then one has only a single relation, given by (GW2) (with $u$, $v$ replaced by  $\bar u$, $\bar v$).
\item Modulo the hyperbolic plane, one recovers the classical presentation of the Witt group $\W(K)$:
  it is generated by symbols $\tw u$ for a unit $u \in K^\times$ subject to the relations:
\begin{itemize}
\item[(W0)] $\tw{1,-1}=0$
\item[(W1)] $\tw{uv^2}=\tw{u}$
\item[(W2)] $\tw{u,v}=\tw{u+v,(u+v)uv}$, $u+v \neq 0$.
\end{itemize}
Again, one can start with symbols $\tw{\bar u}$ of a quadratic class $\bar u \in Q(K)$,
 in which case relation (W1) is unnecessary.
\end{enumerate}
\end{rem}

\begin{ex}\label{ex:Witt_square}
One deduces from the above presentation the following remarkable property of the Witt group
 of a field $K$. The following conditions are equivalent:
\begin{enumerate}
\item the rank map $\rk:\W(K) \rightarrow \ZZ/2$ is an isomorphism;
\item $Q(K)=1$, \emph{i.e.} every unit in $K$ is a square.
\end{enumerate}
\end{ex}

\begin{num}\label{num:twisted_GW}\textit{Twists}.
In what follows, it will be crucial to consider $\cL$-valued inner spaces
 for an arbitrary invertible $K$-vector space $\cL$. These are $K$ vector spaces $V$
 with a symmetric bilinear form $V \otimes_K V \rightarrow \cL$ such that the adjoint map
 $V \rightarrow \Hom_K(V,\cL)$ is an isomorphism.

Then one can define as above, using again the orthogonal sum,
 a $\GW(K)$-module $\GW(K,\cL)$, called the \emph{$\cL$-twisted Grothendieck-Witt group of $K$}.
 After modding out by $h$, one obtains a $\W(K)$-module $\W(K,\cL)$,
 the \emph{$\cL$-twisted Witt group of $K$}.

The tensor product of $K$-vector spaces induces an action of the ring $\GW(K)$ on $\GW(K,\cL)$,
 and more generally an exterior product:
$$
\GW(K,\cL) \otimes \GW(K,\cL') \rightarrow \GW(K,\cL \otimes \cL'),
$$
and similarly for the twisted Witt ring.
\end{num}

\begin{rem}
Both abelian groups $\GW(K,\cL)$ and $\W(K,\cL)$ are \emph{non canonically} isomorphic
 to their untwisted counterparts.
 However, these twists can be interpreted as local orientations
 in the theory of Chow-Witt groups.
\end{rem}

\subsection{Definition by generators and relations}

\begin{num}\textit{Milnor K-theory}.\label{num:MilnorKth}
Let us first recall that the Milnor K-theory $\KM*(K)$ of a field $K$
 is defined as the $\ZZ$-graded algebra generated by symbols $\{a\}$ in degree $+1$ for $a \in K^\times$
 modulo the relations:
\begin{enumerate}
\item[(M1)] $\{a,1-a\}=0$
\item[(M2)] $\{ab\}=\{a\}+\{b\}$
\end{enumerate}
where we have put $\{a_1,\hdots,a_n\}=\{a_1\}\hdots\{a_n\}$.

Note in particular that: $\KM 0(K)=\ZZ$, $\KM 1(K)=K^\times$.
\end{num}

\begin{rem}\label{rem:Matsumoto}
In general, there is a canonical \emph{symbol map} with values in (Quillen) algebraic K-theory:
$$
\KM n(K) \rightarrow \K n(K)
$$
which is an isomorphism if $n \leq 2$.
 The case $n\leq 1$ is easy, but the case $n=2$ is a difficult theorem due to Matsumoto
 (see \cite{Matsumoto}).
 The cokernel of the symbol map is called the \emph{indecomposable part} of algebraic K-theory.
\end{rem}

We now have all the tools to formulate the Milnor conjecture, now a theorem due to Kato in characteristic $2$,
 and Orlov, Vishik and Voevodsky in the remaining cases:
\begin{thm}[Kato, Orlov-Vishik-Voevodsky]\label{thm:MilnorConj}
Let $K$ be an arbitrary field
 and consider the notation of \Cref{df:fundamental_I}.

Then for any $n \geq 0$, the map  $K^\times \rightarrow \I{}(K), u \mapsto \dtw u$
 induces a ring morphism:
$$
\mu:\KM *(K)/2\KM*(K) \rightarrow \oplus_{n \geq 0} \I n(K)/\I{n+1}(K)
$$
which is an isomorphism.
\end{thm}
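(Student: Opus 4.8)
This theorem is precisely the statement known as "the Milnor conjecture on quadratic forms" together with the easier surjectivity (well-definedness) part. Since the excerpt explicitly licenses us to cite \cite{GZS} (characteristic not $2$), \cite{Robin} (characteristic $2$), \cite{OVV}, and \cite{KatoMilnor}, the honest plan is a reduction-and-cite rather than a from-scratch proof. I would organize it in three stages: (1) check that $\mu$ is a well-defined ring homomorphism; (2) establish surjectivity by an elementary argument; (3) reduce injectivity to the theorems of Orlov--Vishik--Voevodsky (and its characteristic $2$ counterpart).

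**Well-definedness and surjectivity.** First I would verify that $u \mapsto \dtw u = 1 - \tw u$ sends the defining relations of $\KM *(K)$ into the ideal relations of $\oplus_n \I n/\I{n+1}$. The multiplicativity relation (M2) becomes the identity $\dtw{ab} \equiv \dtw a + \dtw b \pmod{\I 2}$, which follows from the presentation of $\GW(K)$ in \Cref{thm:GW}: expand $\tw{ab} - \tw a - \tw b + 1$ and use (GW2)/(GW3) to see it lies in $\I 2$. The Steinberg relation (M1), $\dtw a \cdot \dtw{1-a} \in \I 3$ whenever $a, 1-a \in K^\times$, is the classical computation that the Pfister form $\dtw a \dtw{1-a}$ is hyperbolic (one writes $\langle\langle a, 1-a\rangle\rangle$ as a sum of hyperbolic planes using $a(1-a) \cdot$ a suitable square argument), hence zero in $\W(K)$, a fortiori zero in $\I 2/\I 3$. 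That $\mu$ respects the grading is built into the definition. Surjectivity is then immediate: $\I{}(K)$ is additively generated by the Pfister forms $\dtw u$ (since $\GW(K)$ is generated by the $\tw u$ and $\rk$ kills $\I{}$), so $\I n(K)$ is generated by products $\dtw{u_1}\cdots\dtw{u_n}$, which are exactly the images of the symbols $\{u_1,\dots,u_n\}$; passing to $\I n/\I{n+1}$ this gives surjectivity in each degree.

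**Injectivity.** This is the hard part, and it is not elementary. I would state it as: the map $\KM n(K)/2 \to \I n(K)/\I{n+1}(K)$ is injective for all $n$. For $n \le 2$ this can be done by hand (Pfister's theory of quadratic forms of small dimension), but for general $n$ the only known route is via motivic cohomology. In characteristic not $2$, one invokes the solution of the Milnor conjecture: by Voevodsky's theorem the norm residue map $\KM n(K)/2 \to H^n_{\et}(K,\ZZ/2)$ is an isomorphism, and Orlov--Vishik--Voevodsky \cite{OVV} identify the filtration on $\W(K)$ by powers of $\I{}$ with the filtration coming from this isomorphism, so that $\gr^n_{\I{}}\W(K) \cong H^n_{\et}(K,\ZZ/2)$ compatibly with $\mu$; injectivity of $\mu$ follows. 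In characteristic $2$ one substitutes the corresponding statement of \cite{Robin} (building on Kato's work on $\KM*/2$ and differential forms in characteristic $p$, and the characteristic $2$ Milnor conjecture). So the clean structure of the writeup is: prove (1) and (2) directly from \Cref{thm:GW} and the Pfister relation, then for (3) say "this is \cite[...]{OVV} in characteristic $\neq 2$ and \cite[...]{Robin} in characteristic $2$," with a sentence indicating the compatibility that lets one transport their isomorphism along $\mu$.

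**Anticipated obstacle.** The only genuine mathematical obstacle is injectivity, and it is one we deliberately do not attempt: it is a deep theorem. The one point requiring actual care in the exposition is the characteristic $2$ case, where $\GW$ must be handled via inner spaces rather than quadratic forms (as emphasized in \Cref{num:inner_spaces}) and where the fundamental ideal and the Pfister forms still make sense but the classical Pfister calculus needs its characteristic $2$ analogue; here I would lean entirely on \cite{Robin} rather than reprove anything. A secondary, purely bookkeeping point is making sure the ring structure on $\oplus_n \I n/\I{n+1}$ is the one induced by the tensor product of forms and that $\mu$ is multiplicative for it — this is routine given that $\dtw u \dtw v$ is, by definition, the class of $\{u,v\}$.
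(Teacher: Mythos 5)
Your proposal is correct and takes essentially the same approach as the paper, whose entire ``proof'' consists of references: \cite[Question 4.3]{Milnor} for the statement of the conjecture, \cite{KatoMilnor} for the characteristic $2$ case, and \cite{OVV} (or \cite{MorelMilnor}) for the other cases. Your elementary verification of well-definedness and surjectivity is a sound elaboration of what the paper leaves implicit; the only minor deviation is that the paper cites \cite{KatoMilnor} rather than \cite{Robin} for the characteristic $2$ injectivity.
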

See \cite[Question 4.3]{Milnor} for the statement of the conjecture,
 \cite{KatoMilnor} for the proof when $K$ is of characteristic $2$ and \cite{OVV} (or \cite{MorelMilnor}) for the proof in the other cases.

\begin{num}\label{rem:graded_I}\textit{Notation}.-- 
It is customary to denote by $\I *(K)$ the $\ZZ$-graded $\W(K)$-algebra
 where we conventionally put $\I n(K)=\W(K)$ for $n\leq 0$, $\I n(K)$ for $n>0$ is the $n$-th power of the ideal $\I{}(K)$
 and the product is induced by that of $\W(K)$.

 Then $\I{}(K)$ induces an ideal in $\I*(K)$ and we denote by $\gI*(K)$ the quotient $\ZZ$-graded $\W(K)$-algebra,\footnote{One also
 finds the notation $i^n(K)$ for $\gI n(K)$;}
 so that $\gI n(K)=\I n(K)/\I{n+1}(K)$ if $n \geq 0$, and $0$ otherwise.
 Note that it is clear that the action of $\W(K)$ factors through the rank map so that $\gI*(K)$
 is actually a $\ZZ/2$-algebra.

With this notation and the previous theorem,
 the morphism $\mu$ defined by Milnor takes the form of an isomorphism of $\ZZ$-graded algebras over $\ZZ/2$:
$$
\mu:\KM*(K)/2 \rightarrow \gI*(K).
$$
\end{num}

\begin{ex}\label{ex:discriminant}
The case $n=0$ is trivial. In the case $n=1$, the map takes the form $\mu_1:Q(K) \rightarrow \gI 1(K)=\I{}(K)/\I2(K)$,
 where $Q(K)$ is the group of quadratic classes. Then an explicit inverse is given by the discriminant map
$$
d: \gI 1(K) \rightarrow Q(K), [(E,\phi)] \mapsto (-1)^{r(r-1)/2}.\det(M_\phi)
$$
where $(E,\phi)$ is an inner space of even rank $r$, and $M_\phi$ is any matrix that represents it.
 See \cite[Th. 4.1]{Milnor} and \cite[Def. p.~77]{MH}.
 For the case $n=2$, and the interpretation of $\gI 2(K)$ in terms of Clifford invariant of quadratic forms,
 we refer the reader to \cite{MH}, Theorem~III.5.8 and its proof.
\end{ex}

\begin{num}
The following definition, due to Hopkins and Morel (see \cite[Section 5]{MorelW}),
 gives an extension of Milnor's theory which mixes generators and relations of Milnor K-theory and of the Grothendieck-Witt ring:
\end{num}
\begin{df}\label{df:KMW}
Let $K$ be any field. We define the \emph{Milnor-Witt ring},
 or \emph{Milnor-Witt K-theory}, $\KMW*(K)$ of $K$
 as the $\ZZ$-graded associative algebra with the following presentation.

Generators are given by symbols $[a]$ of degree $+1$ for $a \in K^\times$,
 and a symbol $\eta$ of degree $-1$ called the \emph{Hopf element}.
 Let us introduce the following notations to formulate the relations:
\begin{align*}
[a_1,\hdots,a_n]&=[a_1]\hdots[a_n] \\
h&=2+\eta[-1]
\end{align*}

Relations are given as follows, whenever they make sense:
\begin{enumerate}
\item[(MW1)] $[a,1-a]=0$
\item[(MW2)] $[ab]=[a]+[b]+\eta.[a,b]$
\item[(MW3)] $\eta[a]=[a]\eta$
\item[(MW4)] $\eta h=0$
\end{enumerate}
Obviously, Milnor-Witt K-theory is a covariant functor with respect to morphisms of fields.
 Given such a map $\varphi:K \rightarrow L$, there is an obvious morphism of $\ZZ$-graded ring (homogeneous of degree $0$):
$$
\varphi_*:\KMW*(K) \rightarrow\KMW*(L).
$$
This map is sometimes called the \emph{restriction} (\emph{e.g.}, \cite[Def. (1.1), p. 330]{Rost}).
\end{df}

\begin{rem}\label{rem:generalizedKMW}
Given any ring $A$,
 the preceding definition makes sense so that we can define the ring $\KMW*(A)$.\footnote{See
 \cite[Def. 4.10]{SchEuler} for more developments.}
 The resulting $\ZZ$-graded ring is covariantly functorial in the ring $A$. 
 This extended definition is useful for example when $A$ is a local ring
 as we will see in \Cref{thm:local_Morel}.

Note that one can directly compute this ring when $A=\ZZ$:
$$
\KMW*(\ZZ)=\ZZ\big[\epsilon,\eta,[-1]\big]/(\epsilon^2-1,\epsilon+1+\eta[-1])
$$
where $\epsilon$, $\eta$, $[-1]$ are respectively in degree $0$, $-1$ and $1$.\footnote{See also \Cref{num:epsilon_KMW}
 for other occurrences of the important symbol $\epsilon$.}
 In particular, $$\KMW0(\ZZ)=\ZZ[\epsilon]/(\epsilon^2-1)=\GW(\ZZ)$$
 according to \Cref{rem:GW_ring}.
 The ring $\KMW*(\ZZ)$ always acts on rings $\KMW*(K)$ (and more generally
 on invariants of $\AA^1$-homotopy theory).
\end{rem}

\begin{num}\textit{Relation with Milnor K-theory}.\label{num:KMW&KM}
One immediately observes that if one adds $\eta=0$ to the above relations (MW*),
 one recovers the relations (M*) of Milnor K-theory.
 In other words, sending the generators $\{a\}$ to the class of $[a]$ in $\KMW*(K)/(\eta)$
 induces an isomorphism of $\ZZ$-graded algebras:
$$
\KM *(K) \xrightarrow \sim \KMW*(K)/(\eta).
$$
In particular, for any integer $q \in \ZZ$, one deduces an exact sequence of abelian groups:
\begin{equation}\label{eq:preKMW&KM}
\KMW {q+1} (K) \xrightarrow{\gamma_\eta} \KMW q (K) \xrightarrow \forget \KM q (K) \rightarrow 0.
\end{equation}
In the other direction, one can look at the morphism of $\NN$-graded algebras
$$
(K^\times)^{\otimes,*} \mapsto \KMW * (K), u_1 \otimes \hdots \otimes u_q \mapsto h.[u_1,...,u_q] (q \geq 0).
$$
Because of relation (MW4), this map factors through relation (M1) and (M2), and therefore induces
 a well-defined morphism of $\ZZ$-graded algebras:
$$
\hyper:\KM * (K) \rightarrow \KMW * (K).
$$
Following \cite[Chap. 2, \textsection 1]{BCDFO}, we introduce the following terminology
 for these two maps:
\end{num}
\begin{df}\label{df:KMW&KM}
The morphisms of $\ZZ$-graded algebras\footnote{both homogeneous of degree $0$}
 $\forget:\KMW *(K) \rightarrow  \KM * (K)$ 
 and $\hyper:\KM * (K) \rightarrow \KMW * (K)$
 are respectively called the \emph{forgetful} and \emph{hyperbolic maps}.
\end{df}
\begin{num}\label{num:KMW&KM2}
By definition, each of the above maps is uniquely characterized by the following properties:
\begin{align*}
&\forget(\eta)=0, \quad \forget([a])=\{a\}. \\
&H(\{a\})=h.[a].
\end{align*}
Moreover, one deduces the following relations\footnote{Use that $h$ modulo $\eta$ is equal to $2$ in $\KMW*(K)$ for the first one}:
\begin{align*}
F \circ H&=2.\Id \\
H \circ F&=\gamma_h.
\end{align*}
\end{num}

\begin{rem}\label{rem:KMW1/2}.
In particular, one can remark that the forgetful map induces a split epimorphism:
$$
\KMW*(K)[1/2] \rightarrow \KM*(K)[1/2].
$$
This fact will be made more precise in \Cref{ex:KMW1/2}.
\end{rem}

Let us come back to the study of the general groups $\KMW * (K)$.
 One obtains the following presentation of each individual graded components, as abelian groups:
\begin{prop}\label{prop:KMW_ab_pres}
Consider an arbitrary field $K$ and an integer $n \in \ZZ$.
 Then the abelian group $\KMW n(K)$ is generated by symbols of the form:
$$
[\eta^r,a_1,\hdots,a_{n+r}], r \geq 0, a_i \in K^\times
$$
modulo the following three relations:
\begin{enumerate}
\item[\emph{(MW1ab)}] $[\eta^r,a_1,\hdots,a_{n+r}]=0$ if $a_i+a_{i+1}=1$ for some $i$
\item[\emph{(MW2ab)}] $[\eta^r,a_1,\hdots,a_ib_i,\hdots,a_{n+r}]=[\eta^r,a_1,\hdots,a_i,\hdots,a_{n+r}]+[\eta^r,a_1,\hdots,b_i,\hdots,a_{n+r}]
 +[\eta^{r+1},a_1,\hdots,a_i,b_i,\hdots,a_{n+r}]$
\item[\emph{(MW4ab)}] $[\eta^r,a_1,\hdots,-1,\hdots,a_{n+r-1}]=-2[\eta^{r-1},a_1,\hdots,\cancel{-1},\hdots,a_{n+r-1}]$ for $r\geq 2$
\end{enumerate}
\end{prop}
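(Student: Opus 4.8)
The statement is a purely algebraic reformulation: we start from the presentation of $\KMW*(K)$ by the generators $[a]$ and $\eta$ with relations (MW1)--(MW4) (Definition \ref{df:KMW}), and we want to extract, for each fixed $n \in \ZZ$, an explicit presentation of the single abelian group $\KMW n(K)$. The plan is to proceed in two directions: first, to show that the proposed symbols $[\eta^r, a_1, \hdots, a_{n+r}]$ (with $r \geq 0$, and with the convention that for $n < 0$ one needs $r \geq -n$ for the symbol to be defined) do generate $\KMW n(K)$; second, to show that the three relations (MW1ab), (MW2ab), (MW4ab) suffice, i.e.\ that every relation among these symbols that holds in $\KMW n(K)$ is a consequence of these three.

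\emph{Generation.} Since $\KMW*(K)$ is generated as a ring by the $[a]$ and $\eta$, and since $\eta$ has degree $-1$ while each $[a]$ has degree $+1$, any homogeneous element of degree $n$ is a $\ZZ$-linear combination of monomials in which the $\eta$'s and $[a]$'s alternate in some order; using relation (MW3), $\eta[a] = [a]\eta$, we may move all the $\eta$'s to the front, so every monomial of degree $n$ can be rewritten (up to sign, coming only from reordering the $[a_i]$, which does \emph{not} introduce signs since multiplication of the bracket symbols is just concatenation) as $\eta^r [a_1, \hdots, a_{n+r}]$ for a suitable $r \geq \max(0,-n)$. This gives the generating set. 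Here I would be slightly careful: $\eta^r[a_1,\hdots,a_{n+r}]$ is exactly the symbol denoted $[\eta^r, a_1, \hdots, a_{n+r}]$, so no reordering of $\eta$ past $[a]$ is needed beyond (MW3).

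\emph{Relations.} The substantive part is to check that (MW1ab), (MW2ab), (MW4ab) generate all relations. The idea is to define an abstract abelian group $A_n$ by the stated generators and relations, construct a homomorphism $A_n \to \KMW n(K)$ (well-defined because each of (MW1ab), (MW2ab), (MW4ab) is a consequence in $\KMW n(K)$ of (MW1), (MW2), (MW4) respectively --- (MW1ab) is just $\eta^r$ times (MW1); (MW2ab) is $\eta^r$ times (MW2) after reindexing; (MW4ab) is $\eta^r$ times (MW4) rewritten via $h = 2 + \eta[-1]$, noting that $h = 2 + \eta[-1]$ and $\eta h = 0$ give $\eta[-1] = -2 + (\text{stuff killed by an extra } \eta)$, i.e.\ $\eta^2[-1] = -2\eta$, which after multiplying by $\eta^{r-1}[\hdots]$ yields precisely (MW4ab) for $r \geq 2$). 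Then one builds the inverse $\KMW n(K) \to A_n$ by checking that the assignments $[a] \mapsto [a]$, $\eta \mapsto \eta$ on the ring generators extend to the ring presentation: i.e.\ one must verify that (MW1)--(MW4), multiplied by arbitrary monomials, land in the subgroup generated by (MW1ab), (MW2ab), (MW4ab). Relations (MW1) and (MW3) are immediate; (MW2) multiplied by $\eta^r$ and bracket symbols is (MW2ab); the only real work is (MW4), i.e.\ showing that $\eta h = 0$ in every degree is captured --- one writes $\eta h \cdot \eta^{r-1}[\ldots] = \eta^r[\ldots] h = 2\eta^r[\ldots] + \eta^{r+1}[-1, \ldots]$ and checks this vanishes modulo the $A_n$-relations, which is exactly a rearrangement of (MW4ab).

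\emph{Main obstacle.} The delicate bookkeeping point --- and what I expect to be the crux --- is the interplay between the multiplicative relation (MW2) and the element $h$: because (MW2) is \emph{inhomogeneous with respect to the number of brackets} (it trades two symbols $[a][b]$ for $[ab]$ plus an $\eta$-correction term $\eta[a,b]$), one must be sure that when reducing an arbitrary degree-$n$ monomial to the canonical form $[\eta^r, a_1, \hdots, a_{n+r}]$ there is no hidden obstruction, and that (MW4ab) is only needed for $r \geq 2$ and not, say, for $r=1$ (which would indicate a missing relation). Concretely I would double-check the $r=1$ case of $\eta h = 0$: this reads $2\eta[a_1,\hdots,a_{n+1}] + \eta^2[-1,a_1,\hdots,a_{n+1}] = 0$, which is precisely (MW4ab) applied with $\eta^2$ at the front --- so it \emph{is} an instance of (MW4ab) with ``$r=2$'' after absorbing; the asymmetry in the statement ($r \geq 2$) is because the symbol on the left of (MW4ab) carries $\eta^r$ with the $-1$ already among the $a_i$'s. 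Once this normalization is pinned down, the two homomorphisms are visibly mutually inverse and the proposition follows.
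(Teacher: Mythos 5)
Your proof is correct and, up to bookkeeping, it is the natural (and, I believe, the same) argument as the one in the reference \cite{Robin} that the paper cites for this statement; the paper itself does not reproduce the proof. The strategy of working in the graded tensor algebra, using (MW3) once and for all to push every $\eta$ to the front so that degree-$n$ monomials take the canonical form $\eta^r[a_1,\hdots,a_{n+r}]$ (with $r\geq\max(0,-n)$), and then observing that the degree-$n$ piece of the two-sided ideal generated by the homogeneous relations (MW1), (MW2), (MW4) is exactly spanned by elements of the form $\eta^{s}[c_1,\hdots,c_j]\cdot\rho\cdot\eta^{t}[d_1,\hdots,d_k]$, is precisely the right reduction. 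Once you write such a product in canonical form you get literally the relations (MW1ab), (MW2ab), (MW4ab), so the inclusion of ideals goes both ways and the two presentations agree. There is no hidden subtlety from ``cross-terms'': since the ideal is homogeneous and generated by homogeneous elements, its degree-$n$ part is spanned by the homogeneous monomial multiples of each generator separately.

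One point worth pinning down more carefully, because the indexing in your sketch does not quite close up: expanding
$$\eta^{s}[c_1,\hdots,c_j]\cdot(\eta h)\cdot\eta^{t}[d_1,\hdots,d_k]
 = 2\,\eta^{s+t+1}[\underline c,\underline d]\;+\;\eta^{s+t+2}[\underline c,-1,\underline d]$$
and setting $r=s+t+2\geq 2$, the relation in degree $n$ reads
$$[\eta^{r},a_1,\hdots,a_{i},-1,a_{i+1},\hdots,a_{n+r-1}] = -2\,[\eta^{r-1},a_1,\hdots,a_{n+r-1}],$$
with $n+r$ entries on the left and $n+r-1$ on the right. Your ``$\eta^2[-1]=-2\eta$, then multiply by $\eta^{r-1}[\hdots]$'' has the right idea but the exponent after multiplication comes out one higher than you state; the cleanest normalisation is the displayed one. (In passing, note that as printed the proposition's right-hand side appears to have $\eta^{r}$ rather than $\eta^{r-1}$ — take the $\cancel{-1}$ as shorthand for deleting the $-1$ \emph{and} one power of $\eta$, otherwise the degrees of the two sides do not match.) With that repaired, your two homomorphisms are visibly mutually inverse and the argument is complete.
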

See \cite{Robin} for the proof.

\begin{cor}\label{cor:KMW_ab_pres}
Assume that $n \geq 1$, then the abelian group $\KMW n (K)$ is generated by the elements $[u_1,\hdots,u_n]$
 for an $n$-uplet of units $u_i \in K^\times$.
\end{cor}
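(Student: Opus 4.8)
The plan is to start from the presentation of $\KMW n(K)$ given by \Cref{prop:KMW_ab_pres}, where generators are symbols $[\eta^r, a_1, \dots, a_{n+r}]$ with $r \geq 0$, and to show that, when $n \geq 1$, every such symbol with $r \geq 1$ can be rewritten in terms of symbols with $r = 0$, i.e. symbols of the form $[u_1, \dots, u_n]$. The key input is that we have an ``extra'' unit slot available: since $n \geq 1$, any symbol $[\eta^r, a_1, \dots, a_{n+r}]$ with $r \geq 1$ contains at least $n + r \geq r+1 \geq 2$ unit entries, and in particular at least one more unit entry than $\eta$'s. The strategy is to use relation (MW2ab) in reverse: it expresses a symbol with one more $\eta$ (the last term $[\eta^{r+1}, \dots, a_i, b_i, \dots]$) as a combination of symbols with one fewer $\eta$ together with a symbol where two unit slots $a_i, b_i$ have been merged into $a_i b_i$. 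Reading this as $\eta \cdot [\eta^r, \dots] \sim$ (differences of lower-$r$ symbols), we can trade one factor of $\eta$ against the multiplicativity defect of a pair of adjacent units.

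Concretely, I would argue by descending induction is awkward; instead I would do induction on $r$. For the base case $r = 0$ there is nothing to prove. For the inductive step, take a symbol $[\eta^r, a_1, \dots, a_{n+r}]$ with $r \geq 1$. Since there are at least two unit entries, pick the first two, $a_1$ and $a_2$, and apply (MW2ab) with $i = 1$, $a_1 \mapsto a_1$, $b_1 \mapsto a_2$ (so that the product slot reads $a_1 a_2$), but applied to the shorter symbol $[\eta^{r-1}, a_1 a_2, a_3, \dots, a_{n+r}]$: the relation then reads
\[
[\eta^{r-1}, a_1 a_2, a_3, \dots, a_{n+r}] = [\eta^{r-1}, a_1, a_3, \dots, a_{n+r}] + [\eta^{r-1}, a_2, a_3, \dots, a_{n+r}] + [\eta^{r}, a_1, a_2, a_3, \dots, a_{n+r}],
\]
which rearranges to express $[\eta^{r}, a_1, \dots, a_{n+r}]$ as a $\ZZ$-linear combination of three symbols each with $\eta$-exponent $r-1$. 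By the induction hypothesis each of those is a $\ZZ$-linear combination of symbols $[u_1, \dots, u_n]$, and we are done. One must only check that the shorter symbols appearing really are legitimate symbols in the sense of \Cref{prop:KMW_ab_pres}: they have the shape $[\eta^{r-1}, c_1, \dots, c_{n+r-1}]$ with $n + (r-1) \geq n \geq 1$ unit entries, so yes.

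The main obstacle is bookkeeping rather than conceptual: one must make sure (MW2ab) can always be applied, i.e. that whenever $r \geq 1$ there really are at least two adjacent unit slots to merge/split, which holds precisely because $n \geq 1$ forces $n + r \geq 2$ — this is exactly where the hypothesis $n \geq 1$ enters and it is worth emphasising that the statement fails for $n \leq 0$. I would also remark that relations (MW1ab) and (MW4ab) do not interfere with the argument, since we never need to \emph{impose} them, only to inherit them: the point of the Corollary is just a statement about a generating set, not a full presentation, so it suffices to produce, for each of the \Cref{prop:KMW_ab_pres}-generators, an expression as a combination of the $[u_1, \dots, u_n]$'s. If one wanted the cleaner cosmetic statement one could instead simply observe that the subgroup of $\KMW n(K)$ generated by the $[u_1, \dots, u_n]$ is stable under multiplication by $\eta$ (by MW2ab read as above) and contains $[a_1, \dots, a_n]$, hence contains every symbol $[\eta^r, a_1, \dots, a_{n+r}]$.
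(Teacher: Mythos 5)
Your argument is correct and is exactly the intended one: the paper's proof is the single sentence ``This simply follows from the previous proposition by using relation (MW2ab)'', and your induction on $r$ (or equivalently the closing remark that the subgroup generated by the $[u_1,\hdots,u_n]$ absorbs multiplication by $\eta$) spells out precisely that reduction, including the correct observation that $n\geq 1$ is what guarantees enough unit slots for (MW2ab) to apply.
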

This simply follows from the previous proposition by using relation (MW2ab).

\begin{rem}\label{rem:KMW_ab_pres}
In particular, the abelian group $\KMW 1 (K)$ is generated by symbols $[u]$ for $u \in K^\times$.
 However, beware that the map $\iota:K^\times \rightarrow \KMW 1 (K), u \mapsto [u]$ is not a morphism of groups, except when $K=\FF_2$.
 Indeed, one can express the addition law in $\KMW 1 (K)$ by the formula:
$$
[u]+[v]=[uv]-\eta.[u,v],
$$
and $\eta.[u,v]$ is not zero in general.
Note also that the forgetful map
$$
\KMW 1 (K) \xrightarrow \forget \KM 1 (K)=K^\times
$$
is a surjective morphism of abelian groups, but $\iota$ is a splitting of $\forget$
 only after forgetting the group structure.
 In fact, we will give an explicit description of this group in \Cref{ex:compute_KMW1}.
\end{rem}

\begin{num}\label{num:epsilon_KMW}
Following Morel, one considers the following important element
 in Milnor-Witt K-theory:\footnote{Taking into account the isomorphism of \Cref{prop:KMW&GW},
 the latter element of $\KMW 0 (K)$ corresponds to the element $\epsilon=-\tw 1$ of $\GW(K)$
 as introduced in \Cref{rem:GW_ring} (taking into account the canonical map $\GW(\ZZ) \rightarrow \GW(K)$).
 This justifies our abuse of notation.}
$$
\epsilon=-(1+\eta.[-1]) \in \KMW 0 (K).
$$
Then relation (MW4) can be rewritten as $\epsilon.\eta=\eta$.
 Moreover, the defect of commutativity of the multiplicative structure of Milnor-Witt
 K-theory can be precisely expressed in terms of $\epsilon$ as follows.
\end{num}
\begin{prop}\label{prop:KMW-epsilon-commut}
For any field $K$, one has the following relation:
$$
\forall (\alpha,\beta) \in \KMW n(K) \times \KMW m(K),
 \alpha\beta=\epsilon^{nm}.\beta\alpha.
$$
\end{prop}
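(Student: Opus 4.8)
The claim is the graded-commutativity formula $\alpha\beta = \epsilon^{nm}\beta\alpha$ for $\alpha \in \KMW n(K)$, $\beta \in \KMW m(K)$. The plan is to reduce to the generators and then check the formula on these. By \Cref{prop:KMW_ab_pres} (or more simply by the definition \Cref{df:KMW}), the ring $\KMW*(K)$ is generated as a ring by the degree-$(+1)$ symbols $[a]$, $a \in K^\times$, and the degree-$(-1)$ element $\eta$. Since the proposed identity is bilinear in $\alpha$ and $\beta$ and compatible with products in each variable in the appropriate graded sense (if it holds for $\alpha_1,\alpha_2$ against $\beta$, it holds for $\alpha_1\alpha_2$ against $\beta$; similarly on the right), it suffices to verify it when both $\alpha$ and $\beta$ range over the generating set $\{[a] : a \in K^\times\} \cup \{\eta\}$.

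First I would dispose of the cases involving $\eta$. Relation (MW3) says $\eta[a] = [a]\eta$, so $\eta$ commutes with every $[a]$, hence with everything; and $\eta\eta = \eta\eta$ trivially. Since $\eta$ is in degree $-1$, the predicted sign is $\epsilon^{(-1)m}$ or $\epsilon^{n(-1)}$, so I must check that this equals $1$ on the relevant products — i.e. that $\epsilon$ acts trivially whenever it is paired with a power of $\eta$. This is exactly the content of $\epsilon\eta = \eta$ (the rewritten form of (MW4) noted just before the proposition), which propagates to $\epsilon^k\eta = \eta$ and hence $\epsilon^k$ acts as the identity on any monomial containing $\eta$; so the sign $\epsilon^{nm}$ is harmless precisely on the classes where one factor has negative degree. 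This reduces everything to the case $\alpha = [a]$, $\beta = [b]$, where $n = m = 1$ and the predicted sign is $\epsilon^{1} = \epsilon = -\tw{-1}$.

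The heart of the proof is therefore the single identity $[a][b] = \epsilon\,[b][a]$ in $\KMW 2(K)$, equivalently $[a][b] + \tw{-1}[b][a] = 0$. The standard route (following Morel, \cite[Lem. 3.7]{Mor} in the strongly $\AA^1$-invariant setting) is: (i) show $[a][-a] = 0$ using (MW1) — write $-a = (1-a)/(1-a^{-1})$ when $a \neq 1$, expand via (MW2) into symbols of the form $[a, 1-a]$ and $[a, 1-a^{-1}]$ plus $\eta$-corrections, and reduce the latter to the former via $[a^{-1}] = -\tw a [a]$ (which itself comes from $0 = [a\cdot a^{-1}] = [a] + [a^{-1}] + \eta[a,a^{-1}]$ combined with $[a][a] = [a][-1] + \eta[\ldots]$-type manipulations); handle $a = 1$ separately since $[1] = 0$. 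Then (ii) from $[ab][-ab] = 0$, expand both arguments by (MW2), and use $[a][-a] = [b][-b] = 0$ together with (MW3) to collapse the expansion down to $[a][b] + [b][a] + (\text{terms in }\eta)[a][b] = 0$; identifying the $\eta$-coefficient as $\eta[-1]$ via $[a][-1] = [-1][a] + \eta(\ldots)$ and recognizing $2 + \eta[-1] = h$, $\tw{-1} = 1 - \eta[1]\cdots$ — more precisely $\epsilon = -\tw{-1}$ and $\epsilon - 1 = -\eta[-1]$ from \Cref{rem:generalizedKMW} — one reads off $[a][b] = -\tw{-1}[b][a] = \epsilon[b][a]$.

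The main obstacle will be step (i)/(ii): the bookkeeping of the $\eta$-correction terms when repeatedly applying the twisted additivity (MW2), and in particular correctly deriving the auxiliary formulas $[a][a] = [a][-1]$ (mod $\eta$-lower terms) and $[a^{-1}] = -\tw a[a]$ that are needed to bootstrap $[a][-a] = 0$. These are routine but genuinely must be done by hand over $\KMW*$ rather than over $\KM*$, since the whole point is that $\eta \neq 0$; the temptation to argue "it holds mod $\eta$ by graded-commutativity of $\KM*$, and is $\eta$-linear otherwise" does not by itself close the argument because $\KMW*$ is not a free extension. Once $[a][-a]=0$ and the formula for $[a][-1]$ are in hand, the passage to the general graded-commutativity statement is purely formal via the bilinearity reduction described above.
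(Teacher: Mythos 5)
Your proof is correct and matches the paper's approach: the paper likewise reduces to the case $\alpha=[a]$, $\beta=[b]$ and then invokes a direct computation with the defining relations (delegated there to \cite[Cor.\ 1.5]{Robin}, while you follow Morel's route via $[a][-a]=0$). Your explicit verification that the $\eta$-containing cases pose no obstruction --- because $\epsilon\eta=\eta$ makes the sign $\epsilon^{nm}$ harmless whenever a factor has negative degree --- is a useful detail that the paper leaves implicit.
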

One says that the $\ZZ$-graded algebra $\KMW*(K)$ is \emph{$\epsilon$-commutative}.
 To prove this formula, one reduces to the case $\alpha=[a]$, $\beta=[b]$ for units $a$, $b$
 (see \cite[Cor. 1.5]{Robin}).

\begin{num}\textit{Quadratic multiplicities}.\label{num:quad_mult}
One associates to any integer $n \in \ZZ$ the following element of $K_0^{MW}(K)$:
$$
n_\epsilon=\begin{cases}
\sum_{i=0}^{n-1} (-\epsilon)^i & n \geq 0 \\
\epsilon.(-n)_\epsilon & n<0.
\end{cases}
$$
An equivalent computation:
$$
n_\epsilon=\begin{cases}
m.h  & n=2m \\
m.h+1 & n=2m+1
\end{cases}
$$
Beware that the induced arrow $\ZZ \rightarrow \KMW 0 (K), n \mapsto n_\epsilon$ is a monoid morphism for multiplication
$$
(nm)_\epsilon=n_\epsilon m_\epsilon
$$
but not for the addition (compute $3_\epsilon$ and  $4_\epsilon$).
\end{num}

\begin{rem}\label{rem:quad_mult}
\begin{enumerate}[wide]
\item A principle of quadratic enumerative geometry (see \cite{Levine_QE}) is that,
 under a careful choice of orientations,
 degrees of classical enumerative geometry should be replaced by $\epsilon$-degrees
 as defined above.
\item With the previous notation, relation (MW4) translates to:
$$
2_\epsilon.\eta=0
$$
This should remind the reader of the fact that the classical/topological Hopf map $\eta:S^3 \rightarrow S^2$
 induces a $2$-torsion element in the stable homotopy groups of spheres, which accounts for the isomorphism:
$$
\pi^{st}_3(S^2)=\ZZ/2.\eta
$$
where the left-hand side group is the third stable homotopy group of $S^2$.
\item In negative degree, the quadratic multiplicities $n_\epsilon$ become drastically simpler!
 Indeed, modulo $h$ or equivalently in $\W(K)$,
$$
n_\epsilon=\begin{cases}
1 & n \text{ odd} \\
0 & n \text{ even.}
\end{cases}
$$
\end{enumerate}
\end{rem}

\subsection{Relations with quadratic forms}

Using the presentation obtained in the lemma just above,
 together with the presentation of Grothendieck-Witt groups \Cref{thm:GW},
 Morel deduces the following computation (for full details, see \cite[Prop. 1.9, Lem. 1.3]{Robin}):
\begin{prop}\label{prop:KMW&GW}
The following map is well-defined
$$
\GW(K) \rightarrow \KMW0(K), \tw{a} \mapsto 1+\eta.[a]
$$
and induces an isomorphism of rings.

For any $n>0$, the multiplication map: $\KMW 0(K) \xrightarrow{\eta^n} \KMW{-n}(K)$
 induces an isomorphism:
$$
\W(K)=\GW(K)/(h) \rightarrow \KMW{-n}(K).
$$

Finally, for any $n \geq 0$, the abelian group $\KMW n (K)$ is generated by symbols
 of the form $[a_1,\hdots,a_n]$ for units $a_i \in K^\times$.
\end{prop}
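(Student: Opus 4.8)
The statement packages three facts; the content is in the first, and the other two follow formally from it together with \Cref{prop:KMW_ab_pres}. For the ring isomorphism $\GW(K)\simeq\KMW0(K)$ the plan is to present the asserted map $\phi\colon\tw a\mapsto 1+\eta[a]$ as an isomorphism by building a two-sided inverse from the abelian-group presentation of \Cref{prop:KMW_ab_pres}. In degree $0$ that presentation has generators $[\eta^r,a_1,\dots,a_r]$, and I would set
\[
\psi\colon\KMW0(K)\longrightarrow\GW(K),\qquad [\eta^r,a_1,\dots,a_r]\longmapsto\prod_{i=1}^r\bigl(\tw{a_i}-\tw1\bigr),
\]
then check that its three defining relations are sent to identities valid in $\GW(K)$ by \Cref{thm:GW}. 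This is where \Cref{thm:GW} does the work: relation (MW2ab) becomes the multiplicativity $\tw{a_ib_i}=\tw{a_i}\tw{b_i}$, rewritten as $\tw{a_ib_i}-\tw1=(\tw{a_i}-\tw1)+(\tw{b_i}-\tw1)+(\tw{a_i}-\tw1)(\tw{b_i}-\tw1)$; relation (MW1ab) becomes the Steinberg identity $(\tw1-\tw a)(\tw1-\tw{1-a})=0$ in $\GW(K)$, which is precisely (GW2) with $u+v=1$ (so $\tw{a,1-a}=\tw{1,a(1-a)}$); and relation (MW4ab) — which trades a unit entry $-1$ in a symbol carrying at least two copies of $\eta$ against the scalar $-2$ — reduces, on the surviving factor $\tw b-\tw1$, to $h(\tw b-\tw1)=0$, i.e. $\tw{b,-b}=\tw{1,-1}$, which is (GW3).

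Next I would verify that $\phi$ is well defined and that $\phi$ and $\psi$ are mutually inverse. Granting well-definedness, $\phi$ is \emph{automatically} a ring homomorphism, since $(1+\eta[a])(1+\eta[b])=1+\eta[a]+\eta[b]+\eta^2[a][b]=1+\eta[ab]$ by (MW2); then $\psi\phi(\tw a)=\tw1+(\tw a-\tw1)=\tw a$, while $\phi\psi([\eta^r,a_1,\dots,a_r])=\prod_i\eta[a_i]=\eta^r[a_1,\dots,a_r]$ (using the centrality (MW3) of $\eta$ to collect the $\eta$'s), so $\phi$ is an isomorphism. The only non-formal input, and the step I expect to be the main obstacle, is the well-definedness of $\phi$: one must check that relations (GW1) and (GW2) lift to the identities $\eta[uv^2]=\eta[u]$ and $\eta[u]+\eta[v]=\eta[u+v]+\eta[(u+v)uv]$ in $\KMW0(K)$. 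These become tautologies only once one knows $\KMW0(K)=\GW(K)$, so they must be extracted by hand from (MW1)--(MW4); this is Morel's computation — resting on the basic symbol relations such as $[u,u]=[u,-1]$ and $[u^{-1}]=-\tw u[u]$ together with rearrangements of $\eta h=0$ (e.g. $\eta^2[-1]=-2\eta$) — and I would import it as the key lemma, referring to \cite[Lem.~1.3]{Robin}.

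For the negative degrees, observe that $\phi$ sends the hyperbolic form to $2+\eta[-1]=h$ and that $\eta^nh=0$ for $n\geq1$ by (MW4); hence the composite $\GW(K)\xrightarrow{\phi}\KMW0(K)\xrightarrow{\eta^n}\KMW{-n}(K)$ is $\GW(K)$-linear, kills the ideal $(h)$, and factors through $\bar\phi_n\colon\W(K)\to\KMW{-n}(K)$. Surjectivity is immediate from \Cref{prop:KMW_ab_pres}, since every generator $[\eta^r,a_1,\dots,a_{r-n}]$ with $r\geq n$ equals $\eta^n$ times $\eta^{r-n}[a_1,\dots,a_{r-n}]\in\KMW0(K)$. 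For injectivity I would construct a retraction, again from \Cref{prop:KMW_ab_pres}: put $\rho_n\colon\KMW{-n}(K)\to\W(K)$, $[\eta^r,a_1,\dots,a_{r-n}]\mapsto\prod_{i=1}^{r-n}(\tw{a_i}-\tw1)\bmod h$, where the relations are checked exactly as for $\psi$, relation (MW4ab) now being automatic because $h=0$ in $\W(K)$; then $\rho_n(\bar\phi_n\tw u)=\rho_n(\eta^n)+\rho_n(\eta^{n+1}[u])=\tw1+(\tw u-\tw1)=\tw u$, so $\rho_n\bar\phi_n=\mathrm{id}$ and $\bar\phi_n$ is an isomorphism. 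Finally, the remaining assertion — that $\KMW n(K)$ is generated by the symbols $[a_1,\dots,a_n]$ with $a_i\in K^\times$ — is \Cref{cor:KMW_ab_pres} for $n\geq1$ and is trivial for $n=0$, the empty symbol being the unit; so no further work is needed here.
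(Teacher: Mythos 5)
Your argument is correct and matches the paper's approach, which is to confront the abelian-group presentation of \Cref{prop:KMW_ab_pres} with the Witt-style presentation of $\GW(K)$ from \Cref{thm:GW}; the paper defers the details to \cite[Prop. 1.9, Lem. 1.3]{Robin}, and you reconstruct exactly those details by building an explicit inverse $\psi$ and a retraction $\rho_n$ and checking each defining relation. You also correctly isolate the one non-formal input --- verifying that (GW1) and (GW2) lift to identities in $\KMW0(K)$ using only (MW1)--(MW4) --- and import it from Robin, which is precisely what the paper does.

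One small point on the final assertion. You write that it is ``trivial for $n=0$, the empty symbol being the unit,'' but that would say $\KMW0(K)$ is generated by $1$ alone, i.e.\ $\KMW0(K)\cong\ZZ$; this fails whenever $\GW(K)\neq\ZZ$ (e.g.\ $K=\RR$). The generation claim only holds for $n\geq 1$, where it is exactly \Cref{cor:KMW_ab_pres}; the ``$n\geq 0$'' in the proposition should be read as $n\geq 1$, rather than rationalized at $n=0$.
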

As a consequence, we will view the elements of $\GW(K)$ as elements in degree $0$ of Milnor-Witt K-theory.
 Note moreover that $\GW(K)$ lands in the center of the ring $\KMW*(K)$.

\begin{ex}
The notation $h \in \KMW 0(K)$ in relation (MW4) was therefore justified, as it corresponds
 to the hyperbolic form in $\GW(K)$. Note that relation (GW3) in \Cref{thm:GW} can be written as:
\begin{equation}\label{eq:GW_fixes_h}
\forall u \in K^\times, \tw{u}.h=h.
\end{equation}
Remark also that $h^2=2.h$ (direct computation).
\end{ex}

Recall from \Cref{thm:MilnorConj} that given a unit $u \in K^\times$,
 one defines the \emph{Pfister form} associated with $u$ as the element $\dtw u=1-\tw u$ of $\W(K)$.
\begin{cor}\label{cor:KMW_eta-inversed}
Let $\W(K)[t,t^{-1}]$ be the $t$-periodic $\ZZ$-graded algebra with $t$ a formal variable 
 in degree $1$.

Then there exists a unique morphism of $\ZZ$-graded algebra
$$
\phi:\KMW * (K)[\eta^{-1}] \rightarrow \W(K)[t,t^{-1}], [u] \mapsto -\dtw u.t, \eta \mapsto t^{-1}
$$
and it is an isomorphism.
\end{cor}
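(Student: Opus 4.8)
\emph{Proof proposal.} The plan is to first construct $\phi$ from the presentation of $\KMW*(K)$ given in \Cref{df:KMW}, and then prove that it is an isomorphism by computing $\KMW*(K)[\eta^{-1}]$ one graded piece at a time, using \Cref{prop:KMW&GW}.

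First I would define a degree-preserving assignment on the generators of $\KMW*(K)$ by $[u]\mapsto -\dtw u\,t$ and $\eta\mapsto t^{-1}$, and check that it kills the relations (MW1)--(MW4); since $\W(K)[t,t^{-1}]$ is commutative, this produces a morphism of graded rings $\tilde\phi\colon\KMW*(K)\to\W(K)[t,t^{-1}]$. Relation (MW3) is automatic because the target is commutative. For (MW2) one computes that $\tilde\phi([a]+[b]+\eta.[a,b])=(-\dtw a-\dtw b+\dtw a\dtw b)\,t$ equals $-\dtw{ab}\,t=\tilde\phi([ab])$, which is precisely the identity $\dtw a+\dtw b-\dtw a\dtw b=\dtw{ab}$, a one-line consequence of $\tw a\tw b=\tw{ab}$ in $\W(K)$. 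For (MW4), the image of $h=2+\eta.[-1]$ is $2+t^{-1}(-\dtw{-1}\,t)=1+\tw{-1}$, the class of the hyperbolic form, hence $0$ in $\W(K)$, so $\tilde\phi(\eta h)=0$. The one substantive check is (MW1): its image is $\dtw a\,\dtw{1-a}\,t^2$, so one needs $\dtw a\,\dtw{1-a}=0$ in $\W(K)$, which is the classical Steinberg relation for Witt rings (see \cite{MH}). Since $\tilde\phi(\eta)=t^{-1}$ is a unit, $\tilde\phi$ extends uniquely to $\phi\colon\KMW*(K)[\eta^{-1}]\to\W(K)[t,t^{-1}]$; this also disposes of uniqueness, as any morphism of graded rings out of $\KMW*(K)[\eta^{-1}]$ is determined by the images of the $[u]$ and of $\eta$.

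Next I would show that $\phi$ is bijective. As $\eta$ is invertible on the source and $\phi(\eta)=t^{-1}$, multiplication by $\eta$ (respectively by $t^{-1}$) identifies the degree-$d$ piece with the degree-$(d-1)$ piece on both sides compatibly with $\phi$; hence it suffices to treat degree $0$. By \Cref{prop:KMW&GW}, for $n\geq 1$ the map $\eta^n\colon\KMW 0(K)=\GW(K)\to\KMW{-n}(K)$ factors as the canonical surjection $\GW(K)\twoheadrightarrow\W(K)$ followed by an isomorphism; cancelling the surjection, this forces every transition map $\eta\colon\KMW{-n}(K)\to\KMW{-n-1}(K)$ with $n\geq 1$ to be an isomorphism. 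Therefore
\[
\KMW 0(K)[\eta^{-1}]=\operatorname{colim}\big(\KMW 0(K)\xrightarrow{\eta}\KMW{-1}(K)\xrightarrow{\eta}\KMW{-2}(K)\xrightarrow{\eta}\cdots\big)\simeq\W(K),
\]
the structural map $\GW(K)=\KMW 0(K)\to\W(K)$ being the canonical surjection. Under this identification, and using again \Cref{prop:KMW&GW} (i.e. $\tw u=1+\eta.[u]$ in $\KMW 0(K)$), the degree-$0$ component of $\phi$ is the map $\W(K)\to\W(K)$ induced by $\tw u\mapsto 1-\dtw u=\tw u$, that is, the identity. Hence $\phi$ is bijective in degree $0$, and so in every degree.

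The main obstacle is modest: once the presentation of \Cref{df:KMW} and the description of the negative-degree Milnor--Witt K-groups from \Cref{prop:KMW&GW} are available, the rest is bookkeeping. The two genuinely external inputs are the Steinberg relation $\dtw a\,\dtw{1-a}=0$ in $\W(K)$, needed to verify (MW1), and \Cref{prop:KMW&GW} itself, which supplies both the ring isomorphism $\GW(K)\simeq\KMW 0(K)$ used to identify $\phi$ in degree $0$ and the vanishing $\KMW{-n}(K)\simeq\W(K)$ that makes the $\eta$-tower stabilize.
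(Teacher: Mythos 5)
Your proof is correct and follows essentially the same route as the paper's: verify the defining relations of $\KMW*(K)$ map to relations in $\W(K)[t,t^{-1}]$ (with the Steinberg identity $\dtw a\,\dtw{1-a}=0$ doing the work for (MW1), and $\tw a\tw b=\tw{ab}$ for (MW2)), then use \Cref{prop:KMW&GW} to identify the localized groups with $\W(K)$ and check $\tw u\mapsto\tw u$. The paper instead checks the isomorphism in negative degree (where $\KMW*(K)\to\KMW*(K)[\eta^{-1}]$ is already an isomorphism by \Cref{prop:KMW&GW}) rather than computing the colimit $\KMW 0(K)[\eta^{-1}]\simeq\W(K)$ in degree $0$ as you do — a cosmetic difference, since both reduce to the same stabilization of the $\eta$-tower. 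You are slightly more explicit than the paper in verifying that $h$ maps to the hyperbolic form, which is a point the paper leaves implicit.
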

\begin{proof}
The uniqueness of $\phi$ is obvious.
 We need to show that it is well-defined.
 First note that relation (MW4) implies that $h=0$ in $\KMW*(K)[\eta^{-1}]$. 
 Thus, it suffices to show that the elements $-\dtw u.t$ and $t^{-1}$ of $\W(K)[t,t^{-1}]$ satisfy the relations (MW1), (MW2) and (MW3).
 Relation (MW1) follows from relation (W2) in the Witt ring (see \Cref{rem:presentations_GW&W}(2)).
 Relation (MW2) follows from the rule $\tw u \tw v=\tw{uv}$ in the Witt ring, while relation (MW3) is obvious.
 
Finally, the preceding proposition shows that multiplication by $\eta$ induces an isomorphism on the negative
 part of the $\ZZ$-graded algebra $\KMW*(K)$. In particular, the canonical map $\KMW*(K) \rightarrow \KMW*(K)[\eta^{-1}]$
 is an isomorphism in negative degree.
 On the other hand, $\phi(1+\eta[u])=1-t^{-1}\dtw u.t=1-(1-\tw u)=\tw u$.
 Therefore, applying again the preceding proposition, one deduces that $\phi$ is an isomorphism in negative degree.
 As both the source and target of $\phi$ are $\ZZ$-periodic, one deduces that $\phi$ is an isomorphism in all degrees.
\end{proof}

\begin{num}\label{num:KW&graded_I}
As in \cite{MorelW}, one can define the Witt K-theory of $K$ as the quotient $\ZZ$-graded algebra:
$$
\KW *(K)=\KMW *(K)/(h).
$$
Indeed, the relations (MW*) correspond to the relations of \emph{loc. cit.}, Definition 3.1.

On the other hand, one can consider the subalgebra $\I*(K)$
 of $\W(K)[t,t^{-1}]$ generated by $\I{}(K).t$ (recall \Cref{df:fundamental_I}
 and the notation of \Cref{rem:graded_I}).
 The main result of \emph{loc. cit.} is that \Cref{thm:MilnorConj} implies
 the following finer comparison result.
\end{num}
\begin{thm}\label{thm:KW&graded_I}
Consider the preceding notation.
 Then there exists a unique morphism $\psi$ of $\ZZ$-graded algebras that
 fits into the commutative diagram
$$
\xymatrix@R=14pt@C=30pt{
\KW * (K)\ar^{\psi}[r]\ar_\nu[d] & \I*(K)\ar[d] \\
\KMW*(K)[\eta^{-1}]\ar^\phi[r] & \W(K)[t,t^{-1}]
}
$$
where $\nu$ is the canonical map (use relation (MW4)),
 and the right-hand vertical one is the obvious inclusion.

Moreover, $\psi$ is an isomorphism.
\end{thm}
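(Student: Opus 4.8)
The plan is to take $\psi$ to be the morphism induced by the isomorphism $\phi$ of \Cref{cor:KMW_eta-inversed}, so that all the content lies in its bijectivity, which will rest on the Milnor conjecture \Cref{thm:MilnorConj}. First I would construct $\psi$. Since $\eta h=0$ by relation (MW4), the element $h$ maps to $0$ in $\KMW*(K)[\eta^{-1}]$, so the localization $\KMW*(K)\to\KMW*(K)[\eta^{-1}]$ kills $(h)$ and factors through the canonical map $\nu\colon\KW*(K)\to\KMW*(K)[\eta^{-1}]$. The composite $\phi\circ\nu$ sends the algebra generators to $[u]\mapsto-\dtw u\,t$ and $\eta\mapsto t^{-1}$, and carries $\KMW0(K)=\GW(K)$ into $\W(K)$ in degree $0$. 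As $\dtw u\in\I{}(K)$, as $t^{-1}=1\cdot t^{-1}$ with $1\in\W(K)=\I{-1}(K)$, and as $\W(K)=\I0(K)$, all these images lie in the subalgebra $\I*(K)$; hence $\phi\circ\nu$ corestricts to a morphism of $\ZZ$-graded algebras $\psi\colon\KW*(K)\to\I*(K)$ making the square commute by construction. Uniqueness is automatic, since the right vertical map is injective and $\phi$ is bijective.

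Surjectivity is elementary. In degrees $n\le 0$ one has $\KW n(K)=\W(K)=\I n(K)$: for $n=0$, $\psi_0(\overline{\tw u})=1-\dtw u=\tw u$, so $\psi_0=\mathrm{id}$; for $n<0$, \Cref{prop:KMW&GW} identifies $\KW n(K)$ with $\W(K)$ via powers of $\eta$, compatibly with $\psi_0$. In degree $n\ge 1$, the abelian group $\I n(K)$ is additively generated by the $n$-fold Pfister forms $\dtw{a_1}\cdots\dtw{a_n}$, each of which is $\pm\psi_n([a_1,\dots,a_n])$; so $\psi$ is onto. Equivalently, the image of the localization map $\nu_n\colon\KW n(K)\to\KW n(K)[\eta^{-1}]\cong\W(K)$ is exactly $\I n(K)$.

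The hard part is injectivity. Composing with $\phi^{-1}$, it is equivalent to injectivity of $\nu$; and since $(h)$ dies after inverting $\eta$, $\nu$ is just the localization map $\KW*(K)\to\KW*(K)[\eta^{-1}]$, whose kernel is the $\eta$-power-torsion. So I must show $\KW*(K)$ is $\eta$-torsion-free, that is, that $\eta\colon\KW{n+1}(K)\to\KW n(K)$ is injective for all $n$. A diagram chase using the exact sequences $\KW{n+1}(K)\xrightarrow{\eta}\KW n(K)\xrightarrow{\bar F}\KM n(K)/2\to 0$ — obtained by reducing \eqref{eq:preKMW&KM} modulo $h$, with $F(h)=2$ — compared with $0\to\I{n+1}(K)\to\I n(K)\to\gI n(K)\to 0$ and Milnor's isomorphism $\gI n(K)\cong\KM n(K)/2$ of \Cref{thm:MilnorConj}, shows that $\psi$ induces isomorphisms on the associated graded of the $\eta$-adic and $I$-adic filtrations, hence $\ker\psi\subseteq\bigcap_{m\ge 0}\eta^m\KW{*+m}(K)$. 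The genuine input — where \Cref{thm:MilnorConj} is indispensable — is the vanishing of this intersection; one gets it from the fundamental cartesian square \Cref{cor:fundamental_square_KMW}: for $n\ge 0$, $\KMW n(K)\cong\KM n(K)\times_{\KM n(K)/2}\I n(K)$, in which $h$ is the element $(2,0)$, so $h\KMW n(K)=2\KM n(K)\times\{0\}$ and projection to the second factor induces $\KW n(K)\xrightarrow{\ \sim\ }\I n(K)$, under which $\eta$ becomes the visibly injective inclusion $\I{n+1}(K)\hookrightarrow\I n(K)$ (and for $n<0$, $\eta$ acts invertibly on $\KW n(K)=\W(K)$). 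Thus $\ker\psi=0$ and $\psi$ is an isomorphism.

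In summary, the whole argument is formal apart from the injectivity, which repackages (the hard half of) the Milnor conjecture on quadratic forms: without it one is left only with the surjection $\KW 1(K)\twoheadrightarrow\I{}(K)$ rather than the equality $\KW 1(K)=\I{}(K)$, and the needed vanishing of $\bigcap_m\eta^m\KW{*+m}(K)$ — a Witt-theoretic analogue of the Arason--Pfister Hauptsatz — lies beyond what the presentation by generators and relations yields on its own.
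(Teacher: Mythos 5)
Your construction of $\psi$, the uniqueness argument, the surjectivity in all degrees, and the diagram-chase showing $\ker\psi\subseteq\bigcap_{m\ge 0}\eta^m\KW{*+m}(K)$ are all correct (modulo the signs $(-1)^n$ implicit in the comparison, which you should track but which do not affect bijectivity). The fatal problem is the last step. To show the intersection vanishes, you invoke \Cref{cor:fundamental_square_KMW}, i.e.\ the cartesian square $\KMW n(K)\cong\KM n(K)\times_{\gI n(K)}\I n(K)$. But in this paper that corollary is a \emph{consequence} of the present theorem: its proof compares the two exact rows and concludes precisely \emph{because} $\psi$ is already known to be an isomorphism. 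Using it to establish the injectivity of $\psi$ is therefore circular.

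The underlying difficulty is real: the vanishing of $\bigcap_m\eta^m\KW{*+m}(K)$ — equivalently, the absence of $\eta$-power torsion in $\KW*(K)$ — is the substantive content of the theorem and is not a formal consequence of \Cref{thm:MilnorConj} alone. You identify it correctly as a Witt-theoretic analogue of the Arason--Pfister Hauptsatz, but that theorem controls $\bigcap_n\I n(K)$, not $\KW*(K)$; transporting it across $\psi$ presupposes what you are trying to prove. Notice that the paper itself gives no direct argument: \Cref{thm:KW&graded_I} is attributed to \cite{MorelW} (in characteristic $\neq 2$, with \cite{GZS} removing perfectness assumptions) and \cite{Robin} (in characteristic $2$), where injectivity is obtained by a genuinely nontrivial analysis of the relations in $\KW*(K)$, not by a formal reduction to Milnor's isomorphism of associated graded rings. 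Your proof reduces everything cleanly to a single input, but that input is logically downstream of the statement itself, so the argument as written does not close.
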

This theorem was first proved in \cite{MorelW} when the characteristic of $K$ is different from $2$.
 We refer the reader to \cite[Th. 3.8]{GZS} for a proof in the latter case,
 and to \cite{Robin} for the proof in the characteristic $2$ case.

\begin{num}\label{num:Morel-Milnor_map}
As an application of the previous theorem, one deduces a canonical map:
$$
\mu'_n:\KMW n(K) \longrightarrow \KW n (K) \xrightarrow{(-1)^n.\psi_n} \I n(K)
$$
which can be uniquely characterized, as a morphism $\mu':\KMW*(K) \rightarrow \I*(K)$ of $\ZZ$-graded algebras,
 as the map which sends $[u]$ to the Pfister form $\dtw u \in \I 1(K)$ and the element $\eta$
 to the class $\tw 1$ in $\I{-1}(K)=\W(K)$.
\end{num}
\begin{cor}\label{cor:fundamental_square_KMW}
The following commutative square of $\ZZ$-graded algebras is cartesian:
$$
\xymatrix@=14pt{
\KMW*(K)\ar^F[r]\ar_{\mu'}[d] & \KM*(K)\ar^\mu[d] \\
\I*(K)\ar^\pi[r] & \gI*(K)
}
$$
Here $F$ is the forgetful map (\Cref{df:KMW&KM}) and $\mu$ is the map defined by Milnor
 (\Cref{thm:MilnorConj} and \Cref{rem:graded_I}).\footnote{recall
 it sends a generator $\{u\}$ to the class of the Pfister form $\dtw u \in \I 1(K)$ modulo $\I 2(K)$}
\end{cor}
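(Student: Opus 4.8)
The plan is to check that the square becomes cartesian already after passing to $\ZZ$-graded abelian groups: the fibre product of $\ZZ$-graded rings is formed degree by degree, and the canonical comparison map $\KMW{*}(K)\to\KM{*}(K)\times_{\gI{*}(K)}\I{*}(K)$ is visibly a ring homomorphism, so it suffices to prove it is bijective in each degree $n\in\ZZ$. For this I would invoke the elementary principle that a commutative square of abelian groups whose top horizontal arrow is surjective is cartesian precisely when the map induced on the kernels of its two horizontal arrows is an isomorphism. In our square the top arrow in degree $n$ is $F\colon\KMW{n}(K)\to\KM{n}(K)$, surjective by the exact sequence \eqref{eq:preKMW&KM}, and the bottom arrow is $\pi\colon\I{n}(K)\to\gI{n}(K)$, surjective with kernel $\I{n+1}(K)$ by the conventions of \ref{rem:graded_I}. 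So everything reduces to showing that $\mu'$ carries $\Ker(F)$ isomorphically onto $\I{n+1}(K)$.

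The next step is to unwind that restricted map. By exactness of \eqref{eq:preKMW&KM} one has $\Ker(F)=\eta\cdot\KMW{n+1}(K)$, so a typical element is $\eta y$, sent to $\mu'(\eta)\mu'(y)$. Here $\mu'(\eta)=\tw 1\in\I{-1}(K)=\W(K)$ by \ref{num:Morel-Milnor_map}, and — using the convention $\I{m}(K)=\W(K)$ for $m\le0$ together with the realization of $\I{*}(K)$ inside $\W(K)[t,t^{-1}]$ recalled in \ref{num:KW&graded_I} — multiplication by $\tw 1$ in degree $-1$ is exactly the inclusion $\I{n+1}(K)\hookrightarrow\I{n}(K)$. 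Thus the restricted map is identified with $\eta y\mapsto\mu'(y)\in\I{n+1}(K)$.

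It then remains to prove this map is bijective. For surjectivity I would use that $\mu'\colon\KMW{n+1}(K)\to\I{n+1}(K)$ is onto: by \ref{num:Morel-Milnor_map} it factors as the quotient $\KMW{n+1}(K)\to\KW{n+1}(K)$ followed by (a sign times) the isomorphism $\psi$ of \Cref{thm:KW&graded_I}; hence any $\beta\in\I{n+1}(K)$ lifts to some $y$, and then $\eta y\in\Ker(F)$ maps to $\beta$. For injectivity, the same factorization identifies $\Ker\bigl(\mu'\colon\KMW{n+1}(K)\to\I{n+1}(K)\bigr)$ with the degree-$(n+1)$ part of the ideal $(h)$, that is, with $h\cdot\KMW{n+1}(K)$ (the element $h$ lying in the centre $\GW(K)$ of $\KMW{*}(K)$ by \Cref{prop:KMW&GW}); so if $\mu'(y)=0$ then $y=hz$ for some $z$, and $\eta y=(\eta h)z=0$ by relation (MW4). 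This yields the desired isomorphism on kernels, hence the cartesian square.

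The main obstacle is not located in this diagram chase, which uses only \eqref{eq:preKMW&KM} and the relation $\eta h=0$; it is entirely absorbed into \Cref{thm:KW&graded_I} — equivalently the second Milnor conjecture \Cref{thm:MilnorConj} — which is precisely what makes $\psi$ an isomorphism and thereby both guarantees the surjectivity of $\mu'$ and pins down $\Ker(\mu')$ as $(h)$. The one point that needs care is the grading bookkeeping for $\I{*}(K)$: one must verify that multiplication by $\mu'(\eta)=\tw 1$ in degree $-1$ implements the inclusions $\I{n+1}(K)\subseteq\I{n}(K)$ and is injective there. With that settled, the argument runs uniformly, including the degenerate degrees $n\le0$, where in degree $0$ it recovers in particular the classical fibre product description $\GW(K)\cong\ZZ\times_{\ZZ/2}\W(K)$.
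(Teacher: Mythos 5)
Your proof is correct and follows essentially the same route as the paper's: both reduce the cartesian claim to showing that the map induced on the kernels of the two horizontal arrows is an isomorphism, and both ultimately derive that from Theorem \ref{thm:KW&graded_I}. The paper packages this as a morphism of short exact sequences with an isomorphism $\KW{n+1}(K)\to\I{n+1}(K)$ in the left column, whereas you unwind the same kernel map by hand using $\Ker F=\eta\cdot\KMW{n+1}(K)$, $\Ker(\mu')=(h)$, and $\eta h=0$ — a more explicit rendering of the identical argument.
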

\begin{proof}
Indeed, the above square in degree $n$ fits into the commutative diagram:
$$
\xymatrix@=18pt{
0\ar[r] & \KW{n+1} (K)\ar^/-2pt/{\bar \gamma_{\eta,n}}[r]\ar_{(-1)^n.\psi_n}[d] &
\KMW n (K)\ar^{F_n}[r]\ar_{\mu'_n}[d] & \KM n (K)\ar^{\mu_n}[d]\ar[r] & 0 \\
0\ar[r] & \I{n+1}(K)\ar[r] &
 \I n(K)\ar[r] & \gI n(K)\ar[r] &0,
}
$$
 where $\bar \gamma_{\eta,n}$ is induced by multiplication by $\eta$,
 and the result follows as $\mu'$ and $F$ are surjective and $\psi$ is an isomorphism.
\end{proof}

\begin{ex}\label{ex:compute_KMW1}
Looking at degree $1$, we deduce the following explicit description of $\KMW 1(K)$, for any field $K$.
 The group $\KMW 1(K)$ is made of pairs $([\varphi],u)$ where $\varphi$ is the Witt-class of an inner space 
 $\phi:V \otimes_K V \rightarrow K$ of even rank, $u \in K^\times$ is a unit such that
 $d(\varphi)=\bar u \in Q(K)$
where $d$ is the discriminant of $\varphi$ (see \Cref{ex:discriminant}). In other words,
 an element of $\KMW 1(K)$ is given by the Witt class of an inner space over $K$ of even rank
 and a lift of its discriminant in $K^\times$.

In this description, for any unit $u \in K^\times$, the symbol $[u] \in \KMW1(K)$ is sent
 to the pair $(\dtw u,\{u\})$.
\end{ex}

\begin{ex}\label{ex:KMW1/2}.
As $\gI*(K)$ is $2$-torsion, one deduces from the previous corollary the following interesting fact
 which extends \Cref{rem:KMW1/2}. After inverting $2$, the canonical maps $F$ and $\mu'$ of the previous corollary
 induce an isomorphism of $\ZZ$-graded rings:
$$
\KMW*(K)[1/2] \xrightarrow{F \times \mu'} \KM*(K)[1/2] \times \I*(K)[1/2].
$$
\end{ex}


\begin{cor}\label{cor:Kereta}
\begin{enumerate}[wide]
\item One has an equality of ideals of $\KMW*(K)$:
$$
\Ker(\gamma_\eta)=(h)=\mathrm{Im}(\hyper)
$$
where $\gamma_\eta$ is multiplication by $\eta$.
In particular the sequence \eqref{eq:preKMW&KM} can be extended into a long exact sequence:
\begin{equation}\label{eq:KMW&KM}
\KM *(K) \xrightarrow{\hyper} \KMW *(K) \xrightarrow{\gamma_\eta} \KMW*(K) \xrightarrow F \KM*(K) \rightarrow 0
\end{equation}
which can be truncated and gives the short exact sequence:
$$
0 \rightarrow \I*(K) \xrightarrow{\bar \gamma_\eta} \KMW*(K) \xrightarrow F \KM*(K) \rightarrow 0
$$
such that $\bar \gamma_\eta$ is homogeneous of degree $-1$.
\item Moreover, the forgetful map $\forget:\KMW*(K) \rightarrow \KM*(K)$
 identifies the principal ideal $(h)$ with the principal ideal $2\KM*(K)$
 generated by $2$ in the Milnor ring of $K$.
 One deduces a short exact sequence:
$$
0 \rightarrow 2\KM*(K) \xrightarrow{\thyper} \KMW*(K) \xrightarrow{\mu'} \I*(K) \rightarrow 0
$$
where $\thyper$ sends $2 \in \KM 0(K)$ to $h$,
 and for $n>0$, sends a $2$-divisible symbol $\{a_1,\ldots,a_n\} \in \KM n(K)$
 to the element $[a_1,\ldots,a_n] \in \KMW n(K)$.

Finally, $\mu' \circ \bar \gamma_\eta$ is equal in degree $n$
 to $(-1)^n.i_n$ where $i_n:I^{n+1}(K) \rightarrow I^n(K)$
 is the canonical inclusion.
\end{enumerate}
\end{cor}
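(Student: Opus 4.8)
The plan is to reduce everything to one consequence of Morel's comparison theorem \Cref{thm:KW&graded_I}. Write $\rho\colon\KMW*(K)\to\KW*(K)$ for the quotient map, so that $\mu'_n=(-1)^n\psi_n\circ\rho_n$ with $\psi\colon\KW*(K)\xrightarrow{\sim}\I*(K)$ the comparison isomorphism. The key point I would establish first is that \emph{multiplication by $\eta$ is injective on $\KW*(K)$}: since $\psi$ sends $\eta$ to a unit of $\W(K)=\I{-1}(K)$ (it is $-\tw 1$, by the characterization of $\mu'$ in \Cref{num:Morel-Milnor_map}), the operator $\gamma_\eta$ on $\KW*(K)$ is carried by $\psi$ to the map $\I{n+1}(K)\to\I n(K)$, $x\mapsto\psi(\eta)x$, which is injective because $\psi(\eta)$ is a unit in the ring $\W(K)$. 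I would use this repeatedly.

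For part (1), the inclusions $\mathrm{Im}(\hyper)\subseteq(h)$ and $(h)\subseteq\Ker(\gamma_\eta)$ are immediate from $\hyper(\{a_1,\dots,a_n\})=h[a_1,\dots,a_n]$ and relation (MW4). For $(h)\subseteq\mathrm{Im}(\hyper)$ I would use that $h$ is central (\Cref{prop:KMW&GW}) and that $\KMW*(K)$ is generated as an abelian group by the symbols $[\eta^r,a_1,\dots]$ (\Cref{prop:KMW_ab_pres}), on which multiplication by $h$ vanishes when $r\geq1$ and equals $\hyper$ when $r=0$. For the remaining inclusion $\Ker(\gamma_\eta)\subseteq(h)$: if $\gamma_\eta(x)=0$, applying $\rho$ gives $\eta\rho(x)=0$, hence $\rho(x)=0$ by the key point, i.e.\ $x\in(h)$. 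Splicing $\Ker(\gamma_\eta)=\mathrm{Im}(\hyper)$ with \eqref{eq:preKMW&KM} yields the long exact sequence \eqref{eq:KMW&KM}; and since $\gamma_\eta$ now factors as $\KMW*(K)\xrightarrow{\rho}\KW*(K)\hookrightarrow\KMW*(K)$ with the second arrow injective and of image $\Ker(\forget)$ (again \eqref{eq:preKMW&KM}), precomposing the isomorphism $\KW*(K)\xrightarrow{\sim}\Ker(\forget)$ induced by $\gamma_\eta$ with $\psi^{-1}$ produces the degree $-1$ map $\bar\gamma_\eta\colon\I*(K)\to\KMW*(K)$ of the truncated short exact sequence (this incidentally justifies the exactness of the top row used in the proof of \Cref{cor:fundamental_square_KMW}).

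For part (2): $\forget$ is a ring homomorphism with $\forget(\eta)=0$, so $\forget(h)=2$, and surjectivity of $\forget$ gives $\forget((h))=2\KM*(K)$; moreover $\forget$ is injective on $(h)$, because $\Ker(\forget)\cap(h)=\mathrm{Im}(\gamma_\eta)\cap\Ker(\gamma_\eta)$ and any $\gamma_\eta(z)$ lying in $\Ker(\gamma_\eta)=(h)$ satisfies $\eta^2\rho(z)=0$, hence $\rho(z)=0$ (key point, applied twice), hence $\gamma_\eta(z)=0$. Thus $\forget$ restricts to an isomorphism $(h)\xrightarrow{\sim}2\KM*(K)$; its inverse $\thyper$ then satisfies $\thyper(2)=h$ and $\thyper(2\{a_1,\dots,a_n\})=h[a_1,\dots,a_n]$, since $\forget(h[a_1,\dots,a_n])=2\{a_1,\dots,a_n\}$. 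Since $\mu'_n=(-1)^n\psi_n\circ\rho_n$ is moreover surjective with kernel $(h)=\mathrm{Im}(\thyper)$ (as $\psi_n$ is an isomorphism), the second short exact sequence follows. The closing identity $\mu'_n\circ\bar\gamma_{\eta,n}=(-1)^n i_n$, with $i_n\colon\I{n+1}(K)\hookrightarrow\I n(K)$, is then a direct sign computation: $\rho\circ\bar\gamma_\eta$ is multiplication by $\eta$ on $\KW*(K)$, and combining this with $\mu'_n=(-1)^n\psi_n\circ\rho_n$ and the identification of multiplication by $\eta$ with a signed copy of $i_n$ under $\psi$ produces the claimed sign.

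The only substantial input is \Cref{thm:KW&graded_I}, i.e.\ the second Milnor conjecture, through which the crucial injectivity statement collapses to the triviality that $\I{n+1}(K)\subseteq\I n(K)$. Everything else is formal; the one point demanding care is the bookkeeping in part (2) — the direction and normalization of $\bar\gamma_\eta$ and $\thyper$, and the signs carried by $\mu'$ and $\psi$ — which is what makes the closing identity the fiddliest step.
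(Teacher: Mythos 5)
Your argument is correct and rests, like the paper's, on the single input of \Cref{thm:KW&graded_I}: you formulate it as injectivity of multiplication by $\eta$ on $\KW*(K)$, the paper as injectivity of $\nu$, but these are equivalent, and both translate the triviality $\I{n+1}(K)\subseteq\I n(K)$ through $\psi$. You supply details the paper compresses into ``by construction'' or into an appeal to \Cref{cor:fundamental_square_KMW}: the generation argument for $(h)\subseteq\mathrm{Im}(\hyper)$ using \Cref{prop:KMW_ab_pres}, and the direct verification that $\forget$ restricts to an isomorphism $(h)\to 2.\KM*(K)$ via $\Ker(\gamma_\eta)\cap\mathrm{Im}(\gamma_\eta)=0$ (injectivity of $\eta$ applied twice). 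One point to tighten: the closing sign identity is asserted rather than verified, and the sign genuinely depends on the normalization of $\bar\gamma_\eta$. With $\psi(\bar\eta)=-\tw 1$, which you correctly read off from $\mu'(\eta)=\tw 1$ and $\mu'_n=(-1)^n\psi_n\circ\rho_n$, taking $\bar\gamma_\eta$ to be the factorisation of $\gamma_\eta$ through $\KW{*}$ composed with $\psi^{-1}$ (with no extra sign) yields $(-1)^{n+1}i_n$, not $(-1)^n i_n$; the stated sign is recovered precisely because the paper identifies $\KW{n+1}(K)$ with $\I{n+1}(K)$ by the map $(-1)^n\psi_n$ appearing as the left vertical arrow in the diagram from the proof of \Cref{cor:fundamental_square_KMW}. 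Since you single out the bookkeeping as the fiddliest step, you should state which normalization of $\bar\gamma_\eta$ you are using before invoking ``direct computation.''
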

\begin{proof}
Indeed, the preceding theorem implies that $\nu$ is injective,
 which implies that $\Ker(\gamma_\eta)=(h)$ as ideals of $\KMW*(K)$.
 This concludes the first assertion as, by construction, the image of $H$ is the ideal $(h)$.
 The first two exact sequences follow directly, taking into account the
 isomorphism $\psi:\KW*(K) \rightarrow \I*(K)$.
 The last exact sequence follows from the preceding corollary.
\end{proof}

\begin{ex}\label{ex:KMW_with_sqrt}
We finish this subsection with a computation
 that easily follows from \Cref{cor:fundamental_square_KMW} and \Cref{ex:Witt_square}.
 If every unit in $K$ admits a square root, one has:
$$
\KMW n(K)=
\begin{cases}
\ZZ & n=0 \\
\ZZ/2 & n<0 \\
\KM n(K) & n>0.
\end{cases}
$$
Recall also that if $K$ is algebraically closed, 
 for all $n>1$, $\KMW n(K)=\KM n(K)$ is divisible.
 These are therefore very large groups!
\end{ex}

\subsection{Twists}

We now introduce twists on Milnor-Witt K-theory,
 along the lines of \Cref{num:twisted_GW}.
 As already mentioned (see Section \ref{sec:BM_CHtilde}),
 they will account for the local orientations that appear on quadratic cycles
 (see Section \ref{sec:quad_cycles} and in particular \Cref{num:quad_div}).
 Moreover, they are necessary to obtain canonical residue maps (see \Cref{df:specialization}).
\begin{df}\label{df:KMW_twist}
Let $K$ be a field, and $\mathcal L$ be an invertible (\emph{i.e.} of dimension one over $K$) $K$-vector space.
 Consider the set $\mathcal L^\times:=\mathcal L-\{0\}$.
 The action of $K^\times$ on $\KMW*(K)$ via the map $K^\times \rightarrow \KMW0(K), a \mapsto \tw{a}$
 (resp. on $\cL^\times$ by scalar multiplication), gives 
 a structure of $\ZZ[K^\times]$-algebra (resp. $\ZZ[K^\times]$-module) 
 on $\KMW*(K)$ (resp. $\ZZ[\mathcal L^\times]$).
 We define the \emph{$\cL$-twisted Milnor-Witt K-theory} of $K$ (or simply the Milnor-Witt K-theory
 of the pair $(K,\cL)$)
 in degree $n \in \ZZ$ as the following abelian group:
$$
\KMW n(K,\cL):=\KMW n(K) \otimes_{\ZZ[K^\times]} \ZZ[\mathcal L^\times].
$$
\end{df}
Elements of $\KMW n(K,\cL)$ are therefore formal sums of elements of the form
 $\sigma \otimes l$ where $\sigma \in \KMW n (K)$ and $l \in \cL^\times$.

\begin{num}\label{num:twists&functions}
We will identify the untwisted group $\KMW *(K)$ with $\KMW*(K,K)$ via the obvious isomorphism:
$$
\KMW *(K) \rightarrow \KMW *(K,K), \sigma \mapsto \sigma \otimes 1.
$$
Further, given any choice of $l \in \cL^\times$, we get an isomorphism of invertible $K$-vector spaces
 $\Theta_l:K \rightarrow \cL, \lambda \mapsto \lambda.l$ and therefore an isomorphism:
$$
\ev_l=(\Theta_l^{-1})_*:\KMW *(K,\cL) \rightarrow \KMW *(K,K)=\KMW *(K).
$$
According to this definition, for any $u \in K^\times$, one has:
$$
\ev_{ul}=\tw u.\ev_l.
$$
Given an element $\alpha \in \KMW*(K,\cL)$, one obtains a function:
$$
\underline \alpha:\cL^\times \rightarrow \KMW *(K), l \mapsto \ev_l(\alpha)
$$
which is $K^\times$-equivariant: $\underline \alpha(ul)=\tw u.\underline \alpha(l)$.
 In other words, one further deduces the following isomorphism\footnote{To obtain
 the inverse, choose an arbitrary $l \in \cL^\times$,
 and consider $f \mapsto f(l) \otimes l$;}
 of $\ZZ$-graded rings:
$$
\begin{array}{rcl}
\KMW * (K,\cL) & \rightarrow  & \Hom_{K^\times}\big(\cL^\times,\KMW *(K)\big) \\
\alpha & \mapsto & \underline \alpha. \\
\end{array}
$$
\end{num}

\begin{rem}
In particular, the twisted groups $\KMW *(K,\cL)$ are all abstractly isomorphic,
 but via a \emph{non-canonical} isomorphism.

In the theory of quadratic cycles, the invertible vector space $\cL$
 will be the space of local parameters (see e.g., \Cref{num:quad_div}).
 Then one has two interpretations
 of the elements of the twisted groups, in view of the preceding isomorphism:
\begin{itemize}
\item in the form $\alpha=\sigma \otimes l$, $\sigma$ is some coefficient,
 and $l$ is a choice of a local parameter;
\item in the form $\underline \alpha:\cL^\times \rightarrow \KMW*(K)$,
 we have a functional coefficient which to any choice of a local parametrization
 associates some symbol in a $K^\times$-equivariant way.
\end{itemize}
Both points of view are useful.
\end{rem}

\begin{ex}\label{ex:KMW_negative&GW}
Let $(K, \cL)$ be as above.
 Then for any $n\geq 0$, the isomorphism of \Cref{prop:KMW&GW} induces \emph{canonical} isomorphisms:
\begin{align*}
\GW(K,\cL) & \xrightarrow{\ \simeq\ } \KMW0(K,\cL) \\
\W(K,\cL) & \xrightarrow{\ \simeq\ } \KMW n(K,\cL), \text{for } n<0,
\end{align*}
where the left-hand side was defined in \Cref{num:twisted_GW}.
 Indeed, it suffices to use the isomorphism: 
$$
\GW(K) \otimes_{\ZZ[K^\times]} \ZZ[\cL^\times] \rightarrow \GW(K,\cL), [\phi] \otimes l \mapsto [\phi.l].
$$
\end{ex}

\begin{rem}\label{rem:generalizedKMWtw}
We consider again the situation of Remark \ref{rem:generalizedKMW},
 and assume that $A$ is regular and semi-local (thus noetherian).
 Let $\cL$ be an invertible\footnote{\emph{i.e.} locally free of rank $1$} $A$-module.
 As $A$ is regular semi-local, $\cL$ is trivializable (in other words, $\Pic(A)=0$).
 We let $\cL^\times$ be the subset of $\cL$ made by generators (equivalently, bases)
 of the $A$-module $\cL$. Scalar multiplication gives an action of $A^\times$
 on $\cL^\times$.
 Moreover, the definition of $\tw a=1+\eta.[a]$ in $\KMW0(A)$
 (recall notation from \Cref{rem:generalizedKMW})
 makes sense for any unit $a \in A^\times$. 
 Thus we can define:
$$
\KMW n (A,\cL)=\KMW * (A) \otimes_{\ZZ[A^\times]} \ZZ[\cL^\times]
$$
\end{rem}

\begin{num}\textit{Basic operations on twisted Milnor-Witt K-theory}.\label{num:twKMW_basic}
We have the following structure on twisted Milnor-Witt K-theory:
\begin{enumerate}
\item Products:
$$
\KMW n(K,\cL) \otimes \KMW m(K,\cL') \rightarrow \KMW {n+m}(K,\cL \otimes \cL') ,
 (\sigma \otimes l,\tau \otimes l') \mapsto (\sigma.\tau,l \otimes l').
$$
\item First functoriality: given a morphism of field $\varphi:K \rightarrow L$,
 one gets:
$$
\varphi_*:\KMW n(K,\cL) \rightarrow \KMW n(L,\cL \otimes_K L), (\sigma,l) \mapsto (\varphi_*(\sigma),l \otimes_K 1_L).
$$
\item Second functoriality: given an isomorphism of $K$-vector spaces $\theta:\cL \rightarrow \cL'$
 one gets:
$$
\theta_*:\KMW n(K,\cL) \rightarrow \KMW n(L,\cL'), (\sigma,l) \mapsto (\sigma,\Theta(l)).
$$
which is an isomorphism of abelian groups.
\end{enumerate}
\end{num}

\begin{rem}\label{rem:twisted_functoriality}
It is possible to unite the first and second functorialities. One considers the
 category of \emph{twisted fields} $\mathscr{TF}$ whose objects are pairs
 $(K,\cL)$ where $K$ is a field and $\cL$ of an invertible $K$-vector space.
 Morphisms are given by
$$
(\varphi,\Theta):(K,\cL) \rightarrow (L,\cL')
$$
where $\varphi:K \rightarrow L$ is a morphism of fields, and $\Theta:\cL \otimes_K L \rightarrow \cL'$
 is an isomorphism. Composition is defined in the obvious way.
 Then $\KMW *$ becomes a covariant functor from the category of twisted fields
 to the category of graded abelian groups.

The category of twisted fields is \emph{cofibred} over the category of fields $\mathscr{F}$.
 To interpret correctly the tensor product, via a \emph{symmetric} monoidal structure,
 one has to consider the graded category of twisted fields.
 This is obtained via the Grothendieck construction applied
 to the graded Picard category over fields (see \cite{Deligne} for this category
 and \cite{FaselLect} for the monoidal structure).
\end{rem}

\begin{ex}\label{ex:KM_trivial_tw}
Consider $(K,\cL)$ as above.
 Remark that the action of $K^\times$ on $\KMW*(K)/\eta$ via the map $u \mapsto \tw{u}$ is trivial: indeed, $\tw{u}=1 \mod \eta$.
 This implies that $\KMW*(K,\cL)/\eta$ is canonically isomorphic to $\KMW*(K)/\eta=\KM*(K)$,
 which we recall is just the Milnor K-theory of $K$.
%
\end{ex}

We further extend \Cref{df:KMW&KM} as follows.
\begin{df}\label{df:twistedKMW&KM}
Let $(K,\cL)$ be a twisted field. Then one defines the twisted forgetful (resp. hyperbolic) maps:
\begin{align*}
\forget&:\KMW * (K,\cL) \rightarrow \KM * (K), (\sigma \otimes l) \mapsto \forget(\sigma) \\
\hyper&:\KM * (K) \rightarrow \KMW * (K,\cL), \sigma \mapsto (h\sigma) \otimes l
\end{align*}
where the last formula does not depend on the choice of
 $l \in \cL^\times$, given \Cref{eq:GW_fixes_h}.
\end{df}
Obviously, the two relations of \ref{num:KMW&KM2} still hold with twists.

\begin{num}\label{num:twisted_I}
Consider a twisted field $(K,\cL)$.
 There exists an action of $K^\times$ on the graded algebra $\I *(K)$
 associated with the fundamental ideal $\I{}(K) \subset \W(K)$
 (see \Cref{rem:graded_I}), via its $\W(K)$-module structure. This allows us to define
$$
\I*(K,\cL):=\I*(K)\otimes_{\ZZ[K^\times]} \ZZ[\cL^\times]
$$
as in \Cref{df:KMW_twist}. In fact, one also has
 $\I*(K,\cL) \subset \W(K,\cL)[t,t^{-1}]$ where $t$ is a formal variable
 as in \Cref{cor:KMW_eta-inversed}.
 The isomorphism of \Cref{thm:KW&graded_I} induces an obviously defined twisted version,
 which is still an isomorphism (of $\ZZ$-graded $\W(K)$-algebras):
$$
\KW*(K,\cL) \xrightarrow \psi \I*(K,\cL).
$$
As remarked in \Cref{rem:graded_I}, the action of $\W(K)$ on the quotient
 algebra $\gI*(K,\cL)$ is trivial. Therefore, one deduces as in \Cref{ex:KM_trivial_tw}
 a canonical identification:
$\gI{*}(K,\cL)=\gI{*}(K)$.

These considerations allow to extend \Cref{cor:fundamental_square_KMW}
 as follows:
\end{num}
\begin{prop}\label{prop:twisted_fdl_square_KMW}
The following commutative square of $\ZZ$-graded algebras is Cartesian:
$$
\xymatrix@=14pt{
\KMW*(K,\cL)\ar^-F[r]\ar_{\mu'}[d] & \KM*(K)\ar^\mu[d] \\
\I*(K,\cL)\ar^\pi[r] & \gI*(K).
}
$$
Here $F$ is the twisted forgetful map (\Cref{df:KMW&KM}),
 $\mu'$ is the $\cL$-twisted version of the map defined in \Cref{num:Morel-Milnor_map},
 and $\mu$ is the map defined by Milnor (\Cref{thm:MilnorConj}).
\end{prop}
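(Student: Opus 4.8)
The plan is to deduce the statement from its untwisted version \Cref{cor:fundamental_square_KMW} by base change along $\ZZ[K^\times]\to\ZZ[\cL^\times]$. The one structural input needed is that $\cL^\times$ is a torsor under $K^\times$: the choice of any $l\in\cL^\times$ yields a $K^\times$-equivariant bijection $K^\times\xrightarrow{\sim}\cL^\times$, $u\mapsto u\cdot l$, so $\ZZ[\cL^\times]$ is free of rank one over $\ZZ[K^\times]$ and the functor $-\otimes_{\ZZ[K^\times]}\ZZ[\cL^\times]$ is exact.

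First I would apply this functor, in each degree $n$, to the commutative diagram of short exact sequences used in the proof of \Cref{cor:fundamental_square_KMW}. By exactness the two rows remain short exact, and I identify the six terms: on the one hand $\KMW n(K)\otimes_{\ZZ[K^\times]}\ZZ[\cL^\times]=\KMW n(K,\cL)$, and likewise $\KW{n+1}(K)$, $\I{n+1}(K)$, $\I n(K)$ become $\KW{n+1}(K,\cL)$, $\I{n+1}(K,\cL)$, $\I n(K,\cL)$ by the very definitions (\Cref{df:KMW_twist}, \Cref{num:twisted_I}); on the other hand $\KM n(K)$ and $\gI n(K)$ are left unchanged, because $K^\times$ acts trivially on $\KM n(K)$ (through $u\mapsto\tw u=1\bmod\eta$) and on $\gI n(K)$ (the $\W(K)$-action factoring through the rank), so that tensoring merely computes the coinvariants, which for the torsor $\cL^\times$ is the identity. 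It then remains to check that the transported vertical arrows are exactly the twisted maps $\mu'$ and $F$ of \Cref{df:twistedKMW&KM} together with Milnor's untwisted $\mu$, and that the left-hand vertical arrow becomes $(-1)^n$ times the twisted comparison isomorphism $\psi$ of \Cref{num:twisted_I}.

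Then I would conclude exactly as in the untwisted case: the twisted maps $F$ and $\mu'$ are surjective — apply the same exact functor to the two short exact sequences of \Cref{cor:Kereta}, whose right-hand terms $\KM*(K)$ and $\I*(K,\cL)$ are again correctly identified — and $\psi$ is an isomorphism; hence the resulting morphism of short exact sequences is an isomorphism on sub-objects, which forces the square of the statement to be cartesian.

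The only point requiring genuine care — the main obstacle — is the middle step: checking that base change really does kill the twist on $\KM*$ and on $\gI*$ (so that the cartesian square has a truly untwisted right-hand column, as asserted) and that all these identifications are compatible with $F$, $\mu'$, $\mu$ and $\psi$. One can shortcut part of this by fixing $l\in\cL^\times$ and using the isomorphisms $\ev_l$ of \Cref{num:twists&functions} to assemble a commutative cube whose top and bottom faces are the twisted and the untwisted squares and whose four vertical edges are $\ev_l$ on the two left-hand corners and the identity on the two right-hand corners — the commutativity amounting to the identities $F\circ\ev_l=F$, $\mu'\circ\ev_l=\mu'$ together with the triviality of the actions on $\KM*$ and $\gI*$. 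Since all four vertical edges are then isomorphisms, the top face is cartesian if and only if the bottom one is, and the latter is precisely \Cref{cor:fundamental_square_KMW}.
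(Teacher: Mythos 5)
Your proof is correct and follows essentially the same route the paper takes: the paper gives no separate argument for \Cref{prop:twisted_fdl_square_KMW} but derives it from the discussion in \Cref{num:twisted_I} (twisted $\psi$ is an isomorphism, the $\W(K)$-action on $\gI*$ is trivial, and likewise for $\KM*$ by \Cref{ex:KM_trivial_tw}), which is precisely the base-change-along-$\ZZ[K^\times]\to\ZZ[\cL^\times]$ argument you spell out. Your cube variant via $\ev_l$ is a clean way of making explicit the ``routine check'' the paper implicitly relies on.
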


\subsection{Residues}

\begin{num}\label{num:residueMilnor}
Residues are a famous part of the functoriality of Milnor K-theory (see \cite[\textsection 4]{BT}).
 A \emph{discretely valued field} will be a pair $(K,v)$ of a field $K$ with a discrete valuation $v$.
 We let $\cO_v$ be its ring of integers, $\cM_v$ the maximal ideal of $\cO_v$
 and $\kappa_v=\cO_v/\cM_v$ its residue field.
 
 Given a valuation $v:K^\times \rightarrow \ZZ$, with residue field $\kappa_v$, one deduces for
 any $n>0$ a canonical morphism:
$$
\partial_v:\KM {n}(K) \rightarrow \KM{n-1}(\kappa_v)
$$
uniquely characterized by the property:
$$
\partial_v(\{u_1,\hdots,u_n\})=m.\{\overline{u_2},\hdots,\overline{u_n}\}
$$
for units $u_i \in K^\times$ such that $v(u_1)=m$ and for $i>1$, $v(u_i)=0$, $\overline{u_i}$ being the residue class of $u_i$.

The analogous construction exists on Milnor-Witt K-theory, but the twists are now necessary.
\end{num}
\begin{thm}\label{thm:residue}
Consider as above a discretely valued field $(K,v)$.
 The $\kappa_v$-space $\C_v:=\cM_v/\cM_v^2$ is the \emph{conormal cone} associated with $(K,v)$.
 It is an invertible $\kappa_v$-space (\emph{i.e.} of dimension $1$)
 and we let $\omega_v:=(\cM_v/\cM_v^2)^\vee$ be its $\kappa_v$-dual --
 in other words, the \emph{normal cone} associated with $(K,v)$.

Then for any integer $n\in \ZZ$, there exists a unique morphism of abelian groups:
$$
\partial_v:\KMW n(K) \rightarrow \KMW{n-1}(\kappa_v,\omega_v)
$$
satisfying the two following properties:
\begin{enumerate}
\item[(Res1)] $\partial_v(\eta.\sigma)=\eta.\partial_v(\sigma)$, for all $\sigma \in \KMW{n+1}(K)$.
\item[(Res2)] For any uniformizer $\pi \in K$ and any units $u_1,\hdots,u_n \in K^\times$ such that 
 $u_1=v_1\pi^m$, $v(v_1)=0$, and $v(u_i)=0$ for $i>1$, one has:
$$
\partial_v([u_1,u_2,\hdots,u_{n}])=m_\epsilon\tw{\overline{v_1}}[\overline{u_2},\hdots,\overline{u_n}] \otimes \bar \pi^*
$$
where $\bar \pi^*$ is the dual vector of $\bar \pi$,
 where the latter is seen as a basis of the $\kappa_v$-vector space $\cM_v/\cM_v^2$.
\end{enumerate}
\end{thm}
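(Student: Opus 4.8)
\emph{Uniqueness.} The plan is to reduce everything to the generators of \Cref{prop:KMW_ab_pres}. An arbitrary element of $\KMW n(K)$ is a $\ZZ$-linear combination of symbols $[\eta^r,a_1,\hdots,a_{n+r}]$, and iterating (Res1) expresses $\partial_v$ of such a symbol through its values on symbols $[a_1,\hdots,a_m]$ with all $a_i\in K^\times$. Fixing a uniformizer $\pi$ and writing $a_i=w_i\pi^{m_i}$ with $v(w_i)=0$, one expands by means of (MW2) and collapses the repeated occurrences of $\pi$ using the relations of \Cref{df:KMW} (a Bass--Tate-style normalization, exactly as for Milnor K-theory, the extra $\eta$-corrections being absorbed via (Res1) and the relation (MW4)), so as to present $[a_1,\hdots,a_m]$ as a $\ZZ[\eta]$-combination of symbols whose entries, apart possibly from the first, are units of valuation $0$. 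On such a symbol all of whose entries have valuation $0$, (Res2) forces the value $0$ since $0_\epsilon=0$; on the remaining ones (Res2) prescribes the value outright. In particular, for $n\geq 1$ uniqueness is immediate from \Cref{cor:KMW_ab_pres}, and the general case follows by the $\eta$-action.

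\emph{Existence.} For existence I would build $\partial_v$ from the cartesian square of \Cref{prop:twisted_fdl_square_KMW}. For the trivially twisted field it identifies $\KMW *(K)$ with $\KM *(K)\times_{\gI *(K)}\I *(K)$, and for $(\kappa_v,\omega_v)$ it identifies the target $\KMW{*-1}(\kappa_v,\omega_v)$ with $\KM{*-1}(\kappa_v)\times_{\gI{*-1}(\kappa_v)}\I{*-1}(\kappa_v,\omega_v)$, using the canonical identification $\gI *(\kappa_v,\omega_v)=\gI *(\kappa_v)$ of \Cref{num:twisted_I}. So it is enough to construct residues on the two factors that agree over $\gI *$. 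On Milnor K-theory I take the classical residue $\partial^M_v$ of \Cref{num:residueMilnor}. On the powers of the fundamental ideal I take the classical second residue homomorphism $\W(K)\to\W(\kappa_v)$ (see \cite{MH}); passing to the $\omega_v$-twisted target makes it independent of the choice of uniformizer, and since it is compatible with the filtration by powers of the fundamental ideal, it restricts to a degree $-1$ map $\partial^I_v\colon\I n(K)\to\I{n-1}(\kappa_v,\omega_v)$. Each of $\partial^M_v$, $\partial^I_v$ induces a residue on $\gI *$ --- one by reduction modulo $2$, the other by reduction modulo the fundamental ideal --- and I claim these coincide; granting this, the universal property of the fibre product yields the unique map $\partial_v\colon\KMW n(K)\to\KMW{n-1}(\kappa_v,\omega_v)$ with $\forget\circ\partial_v=\partial^M_v\circ\forget$ and $\mu'\circ\partial_v=\partial^I_v\circ\mu'$, and this map is choice-free because all its ingredients are.

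\emph{The axioms.} Since $\KMW{*-1}(\kappa_v,\omega_v)$ embeds into the product of its two factors, each property can be verified after applying $\forget$ and after applying $\mu'$. Property (Res1) follows because $\forget(\eta\sigma)=0$, because $\mu'(\eta)=\tw 1\in\I{-1}(K)$ acts as the inclusion $\I *(K)\hookrightarrow\I{*-1}(K)$, and because $\partial^M_v$ (trivially) and $\partial^I_v$ (being a restriction of the single map $\W(K)\to\W(\kappa_v,\omega_v)$) commute with these operations. For (Res2), $\forget$ carries both sides to $m\,\{\overline{u_2},\hdots,\overline{u_n}\}$, using $\{v_1\pi^m,\hdots\}=\{v_1,\hdots\}+m\{\pi,\hdots\}$ on the left and $\rk(m_\epsilon)=m$, $\rk\tw{\overline{v_1}}=1$ on the right; while $\mu'$ carries both sides to the same product of Pfister forms, using the standard formula for the second residue of a Pfister form together with the value of $n_\epsilon$ in $\W$ recalled in \Cref{rem:quad_mult}(3).

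\emph{Main obstacle.} The only point that is not formal bookkeeping is the claim that, over $\gI *$, the Milnor residue agrees with the second residue on powers of the fundamental ideal, i.e.\ that the isomorphism $\mu$ of \Cref{thm:MilnorConj} intertwines the two; I expect to prove this by an explicit computation on symbols, starting from $\mu\{u\}=\dtw u\bmod\I 2$. If one instead took the hands-on route --- defining a $\pi$-dependent residue on the generators of \Cref{df:KMW}, checking the relations (MW1)--(MW4), then twisting by $\bar\pi^*$ to obtain independence of $\pi$ --- the corresponding difficulty would be the verification of the multiplicativity relation (MW2).
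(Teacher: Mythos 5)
Your proposal takes a genuinely different route from the paper. The paper follows Morel's direct construction: one fixes a uniformizer $\pi$, defines a morphism of graded rings $\Theta_\pi\colon\KMW*(K)\to A_*$ into $A_*=\KMW*(\kappa_v)[\xi]/(\xi^2-[-1]\xi)$ by sending $[u]=[a\pi^m]\mapsto[\bar a]+m_\epsilon\tw{\bar a}\xi$ and $\eta\mapsto\eta$, verifies once and for all that this respects (MW1)--(MW4), and then reads off the residue as the $\xi$-component, twisting by $\bar\pi^*$ to remove the dependence on the uniformizer. Uniqueness and existence are handled simultaneously, and the construction automatically hands over the specialization map $s_v^\pi$ as the degree-zero component of $\Theta_\pi$ --- a map the paper needs later (\Cref{df:specialization}, the splitting in \Cref{thm:htp_inv_fields}).

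Your route instead glues the classical Milnor residue with the second residue on Witt groups via the cartesian square of \Cref{prop:twisted_fdl_square_KMW}. The idea is sound, but two things should be weighed. First, it imports much heavier input: the square of \Cref{prop:twisted_fdl_square_KMW} is a consequence of the Milnor conjecture \Cref{thm:MilnorConj} (Orlov--Vishik--Voevodsky / Kato), whereas Morel's $\Theta_\pi$ construction is elementary and would be available before any of that machinery; it also requires the Arason-type fact that the second residue on $\W$ preserves the filtration $\I n\to\I{n-1}$. Second --- and you flag this yourself --- the argument has a genuine gap: you need the two induced residues on $\gI *$ to agree, i.e.\ that the Milnor map $\mu$ intertwines $\partial^M_v\bmod 2$ with $\partial^I_v\bmod\I{}$. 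This is exactly the kind of compatibility that the paper verifies only \emph{after} having built $\partial_v$ by the elementary route (\Cref{rem:KMW&KM_residues} calls it ``a routine check''), so using it as the foundation of the construction inverts the logical order and the check has to be done from scratch; it is doable (an explicit computation on symbols, using that the action of $\W$ on $\gI*$ factors through the rank so $\tw{\bar v_1}$ drops out), but until it is written down the existence part is incomplete. Your uniqueness argument via Bass--Tate normalization on the generators of \Cref{prop:KMW_ab_pres} is morally right but stated very loosely --- the ``$\eta$-corrections absorbed via (Res1) and (MW4)'' is precisely where the bookkeeping is delicate, since $[ab]\neq[a]+[b]$ in $\KMW1$; this is the content that Morel's $\Theta_\pi$ encapsulates cleanly by being a ring morphism. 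As a whole your construction would work once the $\gI*$-compatibility is supplied, but the paper's approach is both lighter and more productive for what follows.
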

%

\begin{proof} (See \cite[3.15, 3.21]{Mor}) We first choose some uniformizing parameter
 $\pi \in \cM_v$ of $v$. Then we introduce the following quotient ring of the indicated polynomial algebra:
$$
A_*=\KMW*(\kappa_v)[\xi]/(\xi-[-1].\xi)
$$
which we view as a graded ring by putting $\xi$ in degree $1$.
Then the proof reduces to showing that the canonical map:
$$
K^\times \rightarrow A_*, (u=a\pi^m) \mapsto [\bar a]+m_\epsilon\tw{\bar a}.\xi
$$
extends uniquely to a morphism of $\ZZ$-graded rings
$$
\Theta_\pi:\KMW *(K) \rightarrow A_*
$$
such that $\Theta_\pi(\eta)=\eta$.

Then given $\sigma \in \KMW n(K)$, one can write uniquely:
$$
\Theta_\pi(\sigma)=s_v^\pi(\sigma)+\partial_v^\pi(\sigma).\xi
$$
so that we get two maps
\begin{align*}
s_v^\pi:\KMW *(K) &\rightarrow \KMW*(\kappa_v) \\
\partial_v^\pi:\KMW *(K) &\rightarrow \KMW{*-1}(\kappa_v)
\end{align*}
such that $s_v^\pi$ is a (homogeneous) morphism of $\ZZ$-graded ring (obvious).

Both maps depend on the choice of $\pi$ in general. We then get the desired canonical map
 by the formula:
\begin{equation}\label{eq:specializeed_residue}
\partial_v(\sigma)=\partial_v^\pi(\sigma) \otimes \bar \pi^*.
\end{equation}
\end{proof}

\begin{df}\label{df:specialization}
Consider the notation of the above theorem.
 The homogeneous morphism of $\ZZ$-graded abelian groups 
$$\partial_v:\KMW*(K) \rightarrow \KMW*(\kappa_v,\omega_v)$$
 of degree $-1$ is called the residue map associated with the valued field $(K,v)$.

Given a prime $\pi$ of $(K,v)$, one also defines the residue map
 specialized at $\pi$ as the map
$$
\partial_v^\pi=\ev_{\bar \pi^*} \circ \partial_v:\KMW*(K) \rightarrow \KMW*(\kappa_v)
$$
 with the notation of \Cref{num:twists&functions}. Equivalently, this is the unique
 homogeneous morphism of $\ZZ$-graded abelian groups of degree $-1$
 such that relation \eqref{eq:specializeed_residue} holds.

Finally, one defines the specialization map
 associated with $(K,v,\pi)$
 as the  morphism of $\ZZ$-graded rings 
$$
s_v^\pi:\KMW *(K)  \rightarrow \KMW*(\kappa_v)
$$
defined in the above proof.
\end{df}

\begin{num}\label{num:basic_formula_res}
Let $(K,v)$ be a valued field, $\pi$ a prime of $v$ and $u \in \cO_v$ a unit.
 One can derive from the previous formula the following rule to compute residues
 for any symbol $\sigma \in \KMW*(K)$:
\begin{align*}
\partial_v^{u\pi}(\sigma)&=\tw u\partial_v^{\pi}(\sigma) \\
\partial_v(\tw u\sigma)&=\tw{\bar u}\partial_v(\sigma) \\
\partial_v([u]\sigma)&=\epsilon[\bar u]\partial_v(\sigma)
\end{align*}
The first statement follows from \Cref{num:twists&functions},
 and the other ones follow by using the formula of the previous theorem
 and \Cref{prop:KMW_ab_pres}.

The specialization map can be computed by the following formulas (similar proof):
$$
s_v^\pi(\sigma)=\partial_v^\pi([\pi].\sigma)-[-1]\partial_v^\pi(\sigma)=-\epsilon\partial_v^\pi([-\pi].\sigma).
$$

Consider finally a morphism of rings $R \rightarrow \cO_v$, and let $\varphi:R \rightarrow K$, $\bar \varphi:R \rightarrow \kappa_v$
 be the induced morphisms. Then, for any symbol $\alpha \in \KMW*(R)$ (notation of \Cref{rem:generalizedKMW}),
 one deduces the relation (use the same argument as for the previous relations):
$$
\partial_v(\varphi_*(\alpha)\sigma)=\bar \varphi_*(\alpha)\partial_v(\sigma).
$$
This implies that $\partial_v$ is $\KMW*(\ZZ)$-linear. In particular,
 it commutes with multiplication not only by $\eta$ but also by $\epsilon$ and $h$.
\end{num}


\begin{num}\label{num:residue}
Consider the assumptions of the previous theorem.
 One can further define, for any invertible $\mathcal O_v$-module $\cL$, a twisted version:
$$
\partial^\cL_v:\KMW n(K,\cL_K) \rightarrow \KMW{n-1}(K,\omega_v \otimes_{\kappa_v} \cL_{\kappa_v})
$$
where $\cL_E=\cL \otimes_{\cO_v} E$ for $E=K, \kappa_v$.
 The procedure is a bit intricate:
 take an element $\sigma \otimes l$ on the left-hand side:
 $\sigma \in \KMW n(K)$ and $l \in (\cL_K)^\times$. 
 By definition, there exists a generator $l_0 \in \cL^\times$ of the  $\cO_v$-module $\cL$
 and an element $a \in K^\times$ such that $l=l_0 \otimes_{\cO_v} a$.
 Then one deduces by definition:
\begin{equation}\label{eq:renormalize}
\sigma \otimes l=(\tw{a}\sigma) \otimes (l_0 \otimes_{\cO_v} 1_K).
\end{equation}
One puts:
$$
\partial_v^\cL(\sigma \otimes l)=\partial_v(\tw{a}\sigma) \otimes (l_0 \otimes_{\cO_v} 1_{\kappa_v})
$$
or simply $\partial_v$ when $\cL$ is clear from the context.
\end{num}

\begin{rem}
The necessity to ``renormalize'' the parameter, as in \eqref{eq:renormalize},
when considering residues makes the computation
 in quadratic intersection theory sometimes quite cumbersome!
 Intuitively, we will be following a given orientation from open subschemes 
 to the complementary (reduced) closed subscheme
 (see for example \Cref{num:quad_div}).
\end{rem}

\begin{ex}\label{ex:Witt_res}
We can specialize the definition of the above residue map to negative degree. 
 Then according to \Cref{prop:KMW&GW}, we get a canonical residue map:
$$
\partial_v:\W(K) \rightarrow \W(\kappa_v,\omega_v)
$$
such that
$$
\partial_v(\tw u)=\begin{cases}
0 & v(u) \text{ even}, \\
\tw{u\pi^{-v(u)}} \otimes \bar \pi^* & v(u) \text{ odd}, \pi \text{ any uniformizer.}
\end{cases}
$$
(Use \Cref{rem:quad_mult}, point (3)!)
Although untwisted, this residue map is well-known in Witt theory:
 after the choice of a prime $\pi$, one has $\partial_v^\pi=\psi^1$ in the notation
 of \cite[IV, \textsection 1]{MH}, and it is called the \emph{second residue class
 morphism}.\footnote{The first residue class morphism is defined by the formula
 $\psi^0=\psi^1 \circ \gamma_{\tw{\pi}}$.}

Note also that in degree $0$, we get a more regular formula:
$$
\partial_v:\GW(K) \rightarrow \W(\kappa_v,\omega_v),
 \partial_v(\tw u)=m_\epsilon\tw{\bar a} \otimes \bar \pi^*,  
$$
for $u=a\pi^m$, $v(a)=0$, $v(\pi)=1$.
\end{ex}

\begin{rem}\label{rem:KMW&KM_residues}
Comparing the formulas in \Cref{num:residueMilnor} and \Cref{thm:residue}, it is clear
 that the residue in Milnor-Witt K-theory ``modulo $\eta$'' coincides with the residue map in Milnor K-theory.
 One can be more precise using the maps of \Cref{df:twistedKMW&KM}.
 Given a discretely valued field $(K,v)$, and an invertible $\cO_v$-module $\cL$,
 one gets a commutative diagram:
$$
\xymatrix@R=15pt@C=20pt{
\KM * (K)\ar^-\hyper[r]\ar_{\partial^M_v}[d] & \KMW * (K,\cL_K)\ar^-\forget[r]\ar^{\partial_v}[d] & \KM * (K)\ar^{\partial^M_v}[d] \\
\KM * (\kappa_v)\ar^-\hyper[r] & \KMW * (\kappa_v,\omega_v \otimes \cL_v)\ar^-\forget[r] & \KM * (\kappa_v) 
}
$$
where, for clarity, $\partial_v^M$ is the residue on Milnor K-theory.
 The commutativity of the right-hand square was just explained, while the
 second one follows from the formula $\partial_v(h.\sigma)=h.\partial^M_v(\sigma)$
 (indeed $h$ is unramified with respect to $v$).

Similarly, the second residue morphism on (twisted) Witt $K$-theory of the previous example
 obviously induces a canonical residue map:
$$
\partial_v^{\I{}}:\I n(K,\cL_K) \rightarrow \I {n+1}(\kappa_v,\omega_v \otimes \cL_v).
$$
On the quotient ring, we get a canonical untwisted residue map:
 $\partial_v^{\gI{}}:\gI n(K) \rightarrow \gI {n+1}(\kappa_v)$
 (because of \Cref{num:twisted_I}).
 It is now a routine check to prove that all the maps of the square of \Cref{prop:twisted_fdl_square_KMW}
 are compatible with the corresponding residue maps.
\end{rem}

The following computation is an analogue of the Gersten exact sequence for Milnor K-theory
 (see \cite{Kerz}):
\begin{thm}\label{thm:local_Morel}
Let $(K,v)$ be a discretely valued field, and $\cL$ be an invertible $\cO_v$-module.
\begin{enumerate}
\item  Then the following sequence (see \Cref{rem:generalizedKMWtw} for the first term)
is exact:
$$
\KMW n(\cO_v,\cL) \xrightarrow{\nu_*} \KMW n(K,\cL_K)
\xrightarrow{\partial_v} \KMW{n-1}(\kappa_v,\omega_v \otimes_{\kappa_v} \cL_{\kappa_v}) \rightarrow 0
$$
where $\nu:\cO_v \rightarrow K$ is the obvious inclusion and $\nu_*$
is defined as in \Cref{num:twKMW_basic}(2).
\item If moreover the ring $\cO_v$ contains an infinite field of characteristic not $2$,
then the map $\nu_*$ is injective.
\end{enumerate}
\end{thm}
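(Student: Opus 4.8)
\emph{The plan.} Since $\cO_v$ is local, the invertible module $\cL$ is free of rank one (\Cref{rem:generalizedKMWtw}), so by choosing a generator and the induced compatible trivialisations of $\cL_K$ and $\cL_{\kappa_v}$, and using the second functoriality of \Cref{num:twKMW_basic}(3) together with its compatibility with residues (\Cref{num:residue}), I would reduce both assertions to the untwisted case $\cL=\cO_v$. It then remains to show that
$$
\KMW n(\cO_v)\xrightarrow{\ \nu_*\ }\KMW n(K)\xrightarrow{\ \partial_v\ }\KMW{n-1}(\kappa_v,\omega_v)\longrightarrow 0
$$
is exact, and that $\nu_*$ is injective under the extra hypothesis.

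\emph{Part (1).} Surjectivity of $\partial_v$ is immediate from (Res2): for units $u_i\in\cO_v^\times$ and a uniformiser $\pi$ one has $\partial_v([\pi,u_2,\dots,u_n])=[\bar u_2,\dots,\bar u_n]\otimes\bar\pi^*$, and these elements generate $\KMW{n-1}(\kappa_v,\omega_v)$ --- by \Cref{cor:KMW_ab_pres} when $n\geq 2$, and by \Cref{prop:KMW&GW} with the degree-$0$ formula of \Cref{ex:Witt_res} when $n\leq 1$. For $\partial_v\circ\nu_*=0$ I would argue on generators: by the ring-valued form of \Cref{prop:KMW_ab_pres} (see \Cref{rem:generalizedKMW}) the image of $\nu_*$ is spanned by $\eta^r[u_1,\dots,u_{n+r}]$ with $u_i\in\cO_v^\times$ and, in degree $0$, by $\GW(\cO_v)$; (Res1) removes the $\eta$-powers, (Res2) with $m=0$ produces the vanishing factor $0_\epsilon$, and the degree-$0$ case is \Cref{ex:Witt_res} again. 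The only real work in part (1) is exactness in the middle, for which I would follow the Bass--Tate/Morel template (\cite[\textsection 3]{Mor}): first, $\KMW*(K)$ is generated as a left $\KMW*(\cO_v)$-module by $1$ and $[\pi]$ --- relation (MW2) rewrites $[a\pi^m]$ ($a$ a unit) in terms of $[a]$, $[\pi]$ and $\eta$, while the identities $[\pi]^2=[\pi]\,[-1]$ and $[\pi^j]=c_j\,[\pi]$ with $c_j\in\KMW 0(\cO_v)$ (both consequences of $[x,x]=[x,-1]$) eliminate the rest, and \Cref{prop:KMW_ab_pres} ensures everything is a sum of such symbols. Writing $\sigma=\nu_*(\alpha)+\nu_*(\beta)\,[\pi]$ with $\partial_v\sigma=0$, the Leibniz formula $\partial_v(\nu_*(\beta)\tau)=\bar\beta\,\partial_v(\tau)$ of \Cref{num:basic_formula_res} and $\partial_v([\pi])=\tw 1\otimes\bar\pi^*$ force $\bar\beta=0$ in $\KMW*(\kappa_v)$; and the kernel of the reduction $\KMW*(\cO_v)\to\KMW*(\kappa_v)$ is generated by symbols with an entry $w\in 1+\cM_v$, for which (MW1) applied to $a=1-w$ (together with (MW2), after $\epsilon$-commuting the factor $[\pi]$ next to $[w]$) rewrites $[w]\,[\pi]$ as a $\KMW*(\cO_v)$-combination of unit symbols --- hence $\sigma\in\Img(\nu_*)$. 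This last manipulation is the one genuinely computational point, and I would carry it out exactly as in \emph{loc.\ cit.}

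\emph{Part (2).} Now $\cO_v$ contains an infinite field $k$ of characteristic different from $2$, so $2\in\cO_v^\times$. My preferred route is to reduce to known purity statements through the fundamental cartesian square. Over the regular local ring $\cO_v$ the ring-theoretic analogue of the square of \Cref{prop:twisted_fdl_square_KMW} holds (using the Milnor conjecture over regular local rings with $2$ invertible), giving $\KMW n(\cO_v)=\KM n(\cO_v)\times_{\gI n(\cO_v)}\I n(\cO_v)$. Now $\KM n(\cO_v)\to\KM n(K)$ is injective by Kerz's Gersten theorem for Milnor K-theory \cite{Kerz} (applicable since $\kappa_v\supseteq k$ is infinite), and $\I n(\cO_v)\to\I n(K)$ is injective because $\W(\cO_v)\to\W(K)$ is, which is the classical splitting $\W(K)=\W(\cO_v)\oplus\W(\kappa_v)$ for a discrete valuation ring with $1/2$ (\cite{MH}). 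Since $\forget$ and $\mu'$ commute with $\nu_*$, an element of $\KMW n(\cO_v)$ in $\Ker(\nu_*)$ has both of its components killed, hence vanishes by the pull-back description; so $\nu_*$ is injective. Alternatively, one can simply invoke Morel's identification of $\mathbf{K}^{MW}$ with a strictly $\AA^1$-invariant Nisnevich sheaf on $\Sm_k$ together with its Gersten resolution, valid over an infinite field of characteristic $\neq 2$, which already contains this injectivity.

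\emph{Main obstacle.} Part (1) is formal manipulation with the residue formulas of \Cref{thm:residue} and \Cref{num:basic_formula_res}, following Morel. The substance of the theorem is part (2): it rests entirely on importing purity for Milnor K-theory and for Witt groups (equivalently, Morel's Gersten property for strictly $\AA^1$-invariant sheaves), and this is precisely why the two extra hypotheses --- $k$ infinite and of characteristic $\neq 2$ --- are needed there and nowhere else.
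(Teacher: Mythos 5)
Your proposal follows essentially the same route as the paper. For part (1), both reduce to the untwisted case, settle surjectivity and $\partial_v\circ\nu_*=0$ by the residue formulas (Res1)--(Res2), and outsource exactness in the middle to Morel's argument (\cite[Th.\ 3.22]{Mor}) via the generation of $\KMW*(K)$ as a $\KMW*(\cO_v)$-module by $1$ and $[\pi]$ --- the paper refers directly to \emph{loc.\ cit.}\ for ``the serious part'', which is the same computation you sketch. For part (2), the paper simply cites \cite{GZS}, and your cartesian-square reduction to Kerz's Gersten theorem for $\KM$ and to the classical split injectivity $\W(\cO_v)\hookrightarrow\W(K)$ is precisely the strategy of \cite{GZS}, so the difference is only that you have unpacked the citation. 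One small caveat: the cartesian square over the regular local ring $\cO_v$ is itself one of the main results of \cite{GZS}, and establishing it requires more than ``the Milnor conjecture over regular local rings'' --- in particular the $\cO_v$-analogue of the isomorphism $\psi:\KW*\to\I*$ of \Cref{thm:KW&graded_I} --- so your phrasing slightly understates what is being imported; but the logical order you propose (Kerz and the Witt-theoretic Gersten injectivity first, then the square, then the conclusion for $\KMW$) is exactly how \cite{GZS} organizes the proof, so there is no circularity.
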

Idea of proof for (1):
the surjectivity of $\partial_v$ is obvious:
 given any (abelian) generator $\sigma=[\eta^r,v_1,\hdots,v_{n-1+r}] \otimes \bar \pi^*$
 of the right-hand side group,
 $\pi \in \omega_v^\times$, $v_i\in \kappa_v^\times$
 (see \Cref{prop:KMW_ab_pres}), there exists lifts $u_i \in \cO_v^\times$ of $v_i$,
 along the epimorphism $\cO_v \rightarrow \kappa_v$.
 Then formulas (Res1) and (Res2) implies that $[\eta^n,\pi,v_1,\hdots,v_n]$ lifts $\sigma$.

Also, (Res2) implies that $\partial_v\nu_*=0$. Therefore, one only needs to prove
 that the induced map $\Img(\nu_*) \rightarrow \Ker(\partial_v)$ is an isomorphism.
 This is the serious part! We refer the reader to the proof of \cite[Th. 3.22]{Mor}.

Point (2) is the \emph{Gersten conjecture for Milnor-Witt K-theory} and
 for the local ring $\cO_v$. This is due to Gille, Zhong and Scully:
 cf. \cite{GZS}.

\section{A detour on Chow-Witt groups of Dedekind schemes}\label{sec:CHW}

\subsection{Chow-Witt groups, quadratic divisors and rational equivalence}\label{sec:quad_cycles}

\begin{num}\label{num:residues_RostSchmid}
We let $X$ be a connected Noetherian $1$-dimensional scheme which is assumed to be normal (or equivalently regular).
 Let $\cL$ be an invertible sheaf over $X$.
 The main examples are smooth algebraic curves over a field and the spectrum of a Dedekind ring.

Let $\kappa(X)$ be the function field of $X$ and $\cL_\xi$ be the pullback to $\Spec(\kappa(X))$ seen as an invertible $\kappa(X)$-vector space.\footnote{We will also use the notation $\cL_{\kappa(X)}=\cL_\xi$ later.}
 We let $X^{(1)}$ be the set of points $x \in X$ which are closed (\emph{i.e.} of codimension $1$).
 This amounts to ask that the local ring $\cO_{X,x}$ is $1$-dimensional, and therefore a discrete valuation ring.
 In particular, $x$ uniquely corresponds to a valuation $v_x$ on $\kappa(X)$ and  we can consider the associated residue map (\Cref{thm:residue})
$$
\partial_x:\KMW *\big(\kappa(X),\cL_\xi\big) \rightarrow \KMW{*-1}(\kappa_x,\omega_{x/X} \otimes \cL_x)
$$
where $\cL_x$ is the restriction of the invertible $\cO_X$-module $\cL$ to $\kappa(x)$
 and $\omega_{x/X}$ is the normal sheaf of $(\kappa(X),v_x)$.\footnote{The notation $\omega_{x/X}$
 will take all its meaning in \Cref{df:can_sheaf}. See also \Cref{ex:can_sheaf}.}
 Explicitly:
$$
\omega_{x/X}:=\left(\cM_{X,x}/\cM_{X,x}^2\right)^\vee.
$$
Given an element $f \in \KMW*\big(\kappa(X)\big)$, we will interpret $\partial_x(f)$ as the $\KMW{}$-order of $f$ at $x$.
\end{num}
\begin{lm}\label{lm:FD_curve}
With the above notations, for any  $f \in \KMW n\big(\kappa(X)\big)$, the set:
$$
\{x \in X \mid \partial_x(f) \neq 0\}
$$
is finite.
\end{lm}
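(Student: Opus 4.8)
The plan is to reduce the finiteness statement to the analogous, well-known finiteness statement for Milnor K-theory (the divisor of a symbol has finite support), using the presentation of $\KMW n(\kappa(X))$ from \Cref{prop:KMW_ab_pres} and the multiplicativity properties of residues recorded in \Cref{num:basic_formula_res}. First I would observe that, by \Cref{prop:KMW_ab_pres}, any $f \in \KMW n(\kappa(X))$ is a finite sum of generators of the form $[\eta^r, a_1, \hdots, a_{n+r}]$ with $a_i \in \kappa(X)^\times$; since the set where $\partial_x(f) \neq 0$ is contained in the union of the corresponding sets for each summand, it suffices to treat a single such generator. Using relation (Res1), i.e. that $\partial_x$ commutes with multiplication by $\eta$, and the fact that $\eta$ acts as a fixed element of $\KMW{*}(\ZZ)$ (so $\partial_x$ is $\KMW{*}(\ZZ)$-linear by \Cref{num:basic_formula_res}), we may further reduce to $f = [a_1, \hdots, a_m]$ with all $a_i \in \kappa(X)^\times$.

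Next I would fix such an $f = [a_1, \hdots, a_m]$ and show directly that $\partial_x(f) = 0$ for all but finitely many $x \in X^{(1)}$. The key point is that $\partial_x(f)$ can only be nonzero when some $a_i$ is not a unit at $x$, i.e. when $v_x(a_i) \neq 0$ for some $i$. Indeed, if all $a_i \in \cO_{X,x}^\times$, then choosing a uniformizer $\pi$ and applying (Res2) with $m_1 = 0$ (so $m_\epsilon = 0_\epsilon = 0$) gives $\partial_x^\pi([a_1, \hdots, a_m]) = 0$, hence $\partial_x(f) = 0$ after tensoring with $\bar\pi^*$. More carefully, one should use the $\epsilon$-commutativity of $\KMW{*}$ together with the rules of \Cref{num:basic_formula_res} (in particular $\partial_v([u]\sigma) = \epsilon[\bar u]\partial_v(\sigma)$ for a $v$-unit $u$) to strip off, one at a time, those $[a_i]$ with $a_i$ a unit at $x$; if \emph{every} $a_i$ is a unit at $x$ one is left with $\partial_x$ of the empty symbol $1 \in \KMW 0$, which vanishes because $\partial_x(1) = \partial_x(\eta^0 \cdot 1)$ and $\partial_x$ lowers degree, so its value lies in $\KMW{-1}$ and is forced to be $0$ by the same unit computation (alternatively, $\partial_x$ is a group homomorphism sending the unit $1$ to an element killed by the relevant specialization, which one checks is $0$).

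Finally, I would note that the set of $x \in X^{(1)}$ at which some fixed $a_i \in \kappa(X)^\times$ fails to be a unit is precisely the support of the principal divisor $\Div(a_i)$ on the normal scheme $X$, which is finite: $a_i$ and $a_i^{-1}$ each have finitely many zeros, since $X$ is noetherian of dimension one and a nonzero rational function has nonempty vanishing locus only at closed points, a proper closed subset hence finite. Taking the union over $i = 1, \hdots, m$ and over the finitely many generating symbols of $f$ yields a finite set containing $\{x \mid \partial_x(f) \neq 0\}$. The main obstacle — and the only place requiring genuine care rather than bookkeeping — is the reduction step: justifying rigorously that $\partial_x$ of a symbol built entirely from $v_x$-units vanishes, which needs the precise interplay of (Res1), (Res2), $\epsilon$-commutativity, and the computation $0_\epsilon = 0$; everything else is the standard finiteness of principal divisors on a Dedekind scheme.
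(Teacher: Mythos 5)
Your argument follows the same route as the paper: reduce to a single additive generator $[\eta^r, a_1, \hdots, a_{n+r}]$ via \Cref{prop:KMW_ab_pres}, strip off $\eta^r$ using (Res1), note via (Res2) that the residue vanishes at any $x$ where every $a_i$ is a unit in $\cO_{X,x}$ (since the relevant multiplicity is $0_\epsilon = 0$), and conclude by finiteness of the support of each $\Div(a_i)$ on the noetherian normal one-dimensional scheme $X$. One small caution in your ``more careful'' aside: the claim that $\partial_x(1)$ must vanish because it lands in $\KMW{-1}$ is not a valid argument, as $\KMW{-1}(\kappa_x,\omega_x) \cong \W(\kappa_x,\omega_x)$ is in general nonzero; the correct reason is that the graded ring morphism $\Theta_\pi$ used in the proof of \Cref{thm:residue} is unital, so its $\xi$-component on $1$ is zero, whence $\partial_x(1)=0$. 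This does not affect your proof, since the direct application of (Res2) you give first already handles every symbol with at least one $a_i$, and the remaining generators $\eta^r$ contribute the empty set.
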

Given the definition of the residue map, and \Cref{prop:KMW_ab_pres},
 this directly follows from the (more classical) fact:
\begin{lm}
Let $u \in \kappa(X)^\times$ be a unit. Then the set $\{x \in X \mid v_x(u) \neq 0\}$ is finite.
\end{lm}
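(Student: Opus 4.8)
The plan is to reduce the statement to a classical fact about Dedekind domains. Since a Dedekind scheme is Noetherian (and this holds in any case for the main examples, smooth curves over a field and spectra of Dedekind rings), $X$ is covered by finitely many affine opens $U_1,\dots,U_N$, say $U_i=\Spec(A_i)$. As $X$ is connected, normal and of dimension one, each $A_i$ is either a field --- in which case $U_i$ has no point of codimension one --- or a one-dimensional normal Noetherian domain, i.e.\ a Dedekind domain with fraction field $\kappa(X)$. Since $\{x\in X\mid v_x(u)\neq 0\}=\bigcup_i\{x\in U_i\mid v_x(u)\neq 0\}$, it suffices to treat a single Dedekind domain $A=A_i$.

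Having reduced to such an $A$, I would write $u=a/b$ with $a,b\in A\setminus\{0\}$, which is possible since $u\in\mathrm{Frac}(A)^\times=\kappa(X)^\times$. For a closed point $x\in\Spec(A)$, with maximal ideal $\mathfrak m_x$ and associated discrete valuation $v_x$, one has $v_x(u)=v_x(a)-v_x(b)$, and this vanishes whenever $a\notin\mathfrak m_x$ and $b\notin\mathfrak m_x$. Hence $\{x\in\Spec(A)\mid v_x(u)\neq 0\}$ is contained in the closed subscheme $V(ab)=\Spec\bigl(A/(ab)\bigr)$.

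The only step needing an argument --- the classical input --- is the finiteness of $V(ab)$: since $ab\neq 0$ and $\dim A=1$, the ring $A/(ab)$ has Krull dimension zero, and being Noetherian it is therefore Artinian, so it possesses only finitely many maximal ideals. Taking the union over the finitely many charts $U_i$ then completes the proof. I do not expect any genuine obstacle here; the only mild care required is the reduction to the Noetherian affine setting, which is harmless under the standing hypotheses on $X$. (One could alternatively argue divisorially: the locus where $u$ is regular and the locus where $u^{-1}$ is regular are open subschemes covering $X$, each containing the generic point, so on each the relevant zero locus is closed of dimension zero, hence finite, and $\{x\mid v_x(u)\neq 0\}$ is contained in the union of these two zero loci.)
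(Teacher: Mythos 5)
Your argument is correct and is exactly what the paper has in mind: the paper gives no written proof, only the remark that the set in question is the support of the divisor $\Div(u)$, whose finiteness on a Noetherian normal one-dimensional scheme is classical. Your proof, reducing over a finite affine cover to the Artinian quotient $A/(ab)$ of a Dedekind domain, is precisely that classical argument.
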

 Even in our generality, the finiteness is very classical.
 The alert reader will have recognized the support of the divisor associated with the rational function $u$ of $X$
 appearing in the previous lemma!

\begin{rem}\label{rem:FD_curve}
The fact that the scheme $X$ is noetherian is essential here.
 However, in case one withdraw this assumption, everything would still be fine
 as we will obtain a locally finite subset of $X$.
 The theory of cycles, and quadratic cycles, would be fine as we will consider
 locally finite sums. This fits particularly well with the fact that Chow groups (as well as Chow-Witt groups)
 are a kind of \emph{Borel-Moore homology} in topology,
 and the latter is represented by the complex of locally finite singular chains
 (for suitable topological spaces).
\end{rem}

The following definition is a slight generalization of the known definition
 of the classical definition of Chow-Witt groups
 (see \cite{FaselLect} for Chow-Witt groups of schemes separated and of finite type over a perfect base field).
 We refer the reader to \cite[\textsection 4.1, 4.2]{DFJ} for further developments.
\begin{df}\label{df:CHW_curves}
Consider the previous notation.
 We define the \emph{quadratic divisor class} map as the following sum:
$$
\tdiv_X=\sum_x \partial_x:\KMW *\big(\kappa(X),\cL_\xi\big) \rightarrow \bigoplus_{x \in X^{(1)}} \KMW{*-1}(\kappa_x,\omega_{x/X} \otimes \cL_x)
$$
which is well-defined according to \Cref{lm:FD_curve}.
 This is a homogeneous morphism of $\ZZ$-graded abelian groups of degree $-1$.

We then define the group $\Cq p (X,\cL)_q$ for $p=0$ (resp. $p=1$) as respectively the
 source (resp. target) of $\tdiv_X$ with $*=q$ (resp. $*=q+1$), and as $0$ otherwise.
 Therefore we have obtained a complex $\Cq * (X,\cL)_q$, concentrated in cohomological
 degree $0$ and $1$.
 We call it the \emph{(cohomological) Rost-Schmid complex} of $X$.

We define the \emph{Chow-Witt group} $\CHW p(X,\cL)_q$ of codimension $p$ and $\GG$-degree $q$
 as the cohomology in degree $p$ of this complex.
 When $q=0$, we call it simply the Chow-Witt group, written $\CHW p(X,\cL)$.
\end{df}

\begin{rem}
\begin{enumerate}
\item Beware that the differentials of the Rost-Schmid complex are homogeneous of degree $-1$
 with respect to the $\GG$-grading. There are other possible conventions for the bigrading
 of $\Cq * (X,\cL)_*$ but we will not use them here.
\item Even if we are mainly interested in the Chow-Witt groups,
 the other $\GG$-degrees for $q\neq 0$ will be crucial for computations.
 See \Cref{sec:localization}.
\item The groups $\CHW p(X,\cL)_q$ are analogues of the higher Chow groups.
 However, they do not deserve the name higher Chow-Witt groups as they only contribute to some part of the latter
 (that one can interpret as the Milnor-Witt motivic Borel-Moore homology; see \cite{BY,BCDFO}).
 In fact, while the latter are represented by a full ring spectrum $\mathrm H_{\mathrm{MW}}\ZZ$,
 the former are represented by a truncation
 of $\mathrm H_{\mathrm{MW}}\ZZ$.
 On the other hand, the groups just defined satisfy the same formalism as higher Chow groups.
\item If one replaces Milnor-Witt K-theory by Milnor K-theory,
 one obtains Rost's ($\GG$-)graded Chow groups
 $\CH^p(X)_q$ defined in \cite{Rost}. This was in fact the model for the previous definition.
 We refer the reader to \Cref{num:CHW&CH} for more discussion.
\end{enumerate}
\end{rem}

\begin{ex}
In codimension $0$, $\CHW 0(X,\cL)$ is the kernel of the map in degree $0$:
$$
\GW\big(\kappa(X),\cL_\xi\big) \rightarrow \bigoplus_{x \in X^{(1)}} \W(\kappa_x,\omega_{x/X} \otimes \cL_x).
$$
A virtual inner $\cL_\xi$-space over the function field $\kappa(X)$
 which is in the kernel of this map
 is said to be \emph{unramified} (with respect to the curve $X$).
\end{ex}

\begin{num}\textit{Quadratic divisors}.\label{num:quad_div}
Let us explicit the above definition when $q=0$.
The abelian group $\CHW 1(X,\cL)$
 is the cokernel of the map in degree $1$:
$$
\tdiv:\KMW 1\big(\kappa(X),\cL_\xi\big) \rightarrow \bigoplus_{x \in X^{(1)}} \GW(\kappa_x,\omega_{x/X} \otimes \cL_x).
$$
The abelian group at the target will be called the group of \emph{quadratic divisors} (or $1$-codimensional cycles) of $(X,\cL)$.
 These are formal sums of the form
\begin{equation}
\sum_{i \in I} (\sigma_i \otimes \bar \pi_i^* \otimes l_i).x_i
\end{equation}
where:
\begin{itemize}
\item $x_i \in X$ is a closed point,
\item $\sigma_i \in \GW(\kappa_{x_i})$ is the class of an inner space over $\kappa_{x_i}$,
\item $\pi_i$ is a uniformizing parameter of the valuation ring $\cO_{X,x_i}$,
 (equivalently a \emph{local parameter} of the closed subscheme $\{x_i\} \subset X$),\footnote{the
 notation $\bar \pi_i^*$ reminds the reader that we consider the element in
 $\omega_{x/X}=(\cM_{X,x_i}/\cM_{X,x_i}^2)^\vee$ corresponding to $\pi_i$}
\item $l_i \in \cL_{x_i}$ is a nonzero element.
\end{itemize}
In practice, one can also view the coefficient $(\sigma_i \otimes \bar \pi_i^* \otimes l_i)$
 as a virtual inner $(\omega_{x_i/X} \otimes \cL_{x_i})$-space over $\kappa(x_i)$.
 Recall also from \Cref{num:twists&functions}
 the interpretation of this latter element as a $\kappa(x_i)$-equivariant
 map from the space of nonzero elements $(\omega_{x_i/X} \otimes \cL_{x_i})^\times$
 to the Grothendieck-Witt group $\GW(\kappa_{x_i})$.

As in the classical case,
 quadratic divisors which are in the image of $\tdiv$ are said to be \emph{principal}.
 Two quadratic divisors are \emph{rationally equivalent} if there difference is principal.
\end{num}

\begin{ex}
In the case $X$ is in addition local, thus the spectrum of a discrete valuation ring $\cO_v$,
 \Cref{thm:local_Morel} implies in particular:
$$
\CHW p(\cO_v)=\begin{cases}
\GW(\cO_v) & p=0, \cO_v \supset k_0 \\
0 & p=1.
\end{cases}
$$
where $k_0$ is an infinite field of characteristic not $2$.
The vanishing of $\CHW 1(\cO_v)$ can be interpreted by saying that every quadratic divisor of $X$ is \emph{principal}.
\end{ex}

\begin{num}\textit{Quadratic order of vanishing}.
One can also make explicit the definition of the quadratic divisor class map.
 Let us fix a point $x \in X^{(1)}$, and $v_x$ the corresponding discrete valuation $v_x$ on $\kappa(X)$.
 We know that the abelian group $\KMW 1 (\kappa(X))$ is generated by elements $[f]$ for a unit $f \in \kappa(X)^\times$
 (see \Cref{cor:KMW_ab_pres} and \Cref{rem:KMW_ab_pres}).
 Given a rational function $f \in \kappa(X)^\times$ on $X$, we get with the notation of the above definition:
\begin{equation}\label{eq:quadratic_order}
\partial_x([f])=m_\epsilon.\tw{\bar u} \otimes \bar \pi_x^* \in \GW(\kappa_x,\omega_{x/X})
\end{equation}
where we have chosen a local parameter $\pi_x$ of $x$ in $X$ (\emph{i.e.} a uniformizing parameter of the
 valuation ring $\cO_{X,x}$), $m=v_x(f)$ is the classical order of vanishing of $f$ at $x$,
 and $u=f.\pi_x^{-m}$ and $\bar u$ is its class in $\kappa_x=\cO_{X,x}/\cM_{X,x}$.
 The formula, as well as the fact this element does not depend on the particular choice of $\pi_x$,
 directly follows from \Cref{thm:residue}.
\end{num}
\begin{df}
Consider the above assumptions.
 We define the quadratic order of vanishing of a rational function $f \in \kappa(X)$
 as the element $\tord_x(f)=\partial_x([f])$ in $\GW(\kappa_x,\omega_{x/X})$.
\end{df}
One can rewrite the definition of the divisor class map when $q=0$ in more classical terms:
$$
\tdiv([f])=\sum_{x \in X^{(1)}} \tord_x(f).x.
$$

\begin{rem}
One should be careful that the quadratic order of vanishing, as well as the quadratic divisor class map,
 is only additive in $f$ with respect to the addition of $\KMW 1 (\kappa(X))$, which in general
 differs from the group law of $K^\times$ (see \Cref{rem:KMW_ab_pres}).
\end{rem}

\begin{num}\label{num:CHW&CHbasic}
Let us consider the previous definitions modulo $\eta$. Then we get in degree $0,1$ a map, independent of $\cL$:
$$
\kappa(X)^\times=\KM 1\big(\kappa(X)) \xrightarrow{\tdiv_X \text{ mod } \eta} \bigoplus_{x \in X^{(1)}} \KM 0(\kappa_x)=Z^1(X)
$$
where the right-hand side is the group of (ordinary!) $0$-cycles of $X$. This is precisely the divisor class map: in fact, one obviously 
 has the formula
$$
\ord_x(f)=\tord_x(f) \text{ mod } \eta,
$$
which amounts to say that the rank of the underlying inner space of $\tord_x([f])$
 is the classical order of vanishing $\ord_x(f)$ of $f$ at $x$ (use Formula \Cref{eq:quadratic_order}).
In particular, we get:
$$
\CHW p(X,\cL)/(\eta)=\begin{cases}
\ZZ^{\pi_0(X)} & p=0 \\
\Pic(X) & p=1.
\end{cases}
$$
Moreover, one can describe explicitly the image of the map:
$$
\CHW p (X,\cL) \rightarrow \CHW p (X,\cL)/(\eta) \simeq \CH^p(X)
$$
It is just induced by the rank map: in degree $0$, it sends an unramified inner $\cL$-space $\sigma$ over $\kappa(X)$
 to its rank $\rk(\sigma)$. In degree $1$, it sends a quadratic $0$-cycle
$$
\sigma:\sum_{i \in I} \sigma_i \otimes \bar \pi_i^* \otimes l_i.x_i
$$
to the $0$-cycle:
$$
\rk(\sigma)=\sum_{i \in I} \rk(\sigma_i).x_i.
$$
\end{num}

\begin{num}\label{num:CHW&CH}
We can be more precise about the relation between Chow and Chow-Witt groups,
 using the definitions of \Cref{df:twistedKMW&KM}. Indeed, \Cref{rem:KMW&KM_residues} implies
 that the following diagram is commutative:
$$
\xymatrix@R=15pt@C=25pt{
\KM *\big(\kappa(X)\big)\ar^-{H_\xi}[r]\ar_{\Div_X}[d] & \KMW *\big(\kappa(X),\cL_\xi\big)\ar^-{f_\xi}[r]\ar^{\tdiv_X}[d]
 & \KM *\big(\kappa(X)\big)\ar_{\Div_X}[d] \\
Z^1(X)\ar^-{\sum_x H_x}[r] & \Cq 1 (X,\cL)\ar^-{\sum_x f_x}[r] & Z^1(X)
}
$$
where $Z^1(X)$ denotes the group of codimension $1$ algebraic cycles
 (\emph{i.e.} the Weil divisors) of $X$.
Taking cokernel, one gets well-defined maps:
$$
\CH^1(X) \xrightarrow \hyper \CHW 1 (X,\cL) \xrightarrow \forget \CH^1 (X)
$$
whose composite is multiplication by $2$.
 We still call them respectively the \emph{hyperbolic}
 and \emph{forgetful} maps.
\end{num}


\subsection{Homotopy invariance over a field}

Our next result was first proved for Milnor K-theory by Milnor: see \cite[Th. 2.3]{Milnor}
 (and also \cite[5.2]{BT}). It was generalized by Morel in \cite[Th. 3.24]{Mor}.
\begin{thm}[Morel]\label{thm:htp_inv_fields}
Let $k$ be an arbitrary field, $X=\AA^1_k$ with function field $k(t)=\kappa(X)$.
 Let $\varphi:k \rightarrow k(t)$ be the obvious inclusion.

Then the quadratic divisor class map of $X$ fits into the following sequence
$$
0 \rightarrow \KMW q(k) \xrightarrow{\ \varphi_*\ } \KMW q\big(k(t)\big) \xrightarrow{\ \tdiv_X\ } \bigoplus_{x \in X^{(1)}} \KMW {q-1}(\kappa_x,\omega_{x/X}) \rightarrow 0
$$
which is split exact.

In particular,
$$
\CHW p(\AA^1_k)_q=\begin{cases}
\KMW q(k) & p=0 \\
0 & p=1.
\end{cases}
$$
\end{thm}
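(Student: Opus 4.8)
The plan is to carry out Milnor's argument for Milnor K-theory (\cite[Th.~2.3]{Milnor}), now keeping track of the twists, as Morel does in \cite[Th.~3.24]{Mor}. Note first that the closed points of $X=\AA^1_k$ correspond to the monic irreducible polynomials $P\in k[t]$, with $\kappa_x=k[t]/P$ and $\omega_{x/X}$ trivialized by the class $\bar P^*$ dual to the image of $P$ in $\cM_{X,x}/\cM_{X,x}^2$; in particular the point at infinity does not occur, which is ultimately why the sequence is \emph{split}. The point $t=0$ gives a discrete valuation $v_0$ on $k(t)$ with uniformizer $t$, hence (\Cref{df:specialization}) a ring homomorphism $s_{v_0}^t\colon\KMW*(k(t))\to\KMW*(k)$. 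Since it sends $[a]\mapsto[a]$ for $a\in k^\times$ and $\eta\mapsto\eta$, and $\KMW*(k)$ is generated by those symbols (\Cref{df:KMW}), we get $s_{v_0}^t\circ\varphi_*=\Id$. This yields at once the injectivity of $\varphi_*$, the identification of $\KMW q(k)$ with $\Ker(\tdiv_X)$ once exactness in the middle is established, and the desired splitting.

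The core of the proof is a filtration. For $d\ge0$ let $\mathscr L_d\subseteq\KMW*(k(t))$ be the subgroup generated by the symbols $\eta^r[p_1,\dots,p_m]$ with $p_i\in k[t]$ of degree $\le d$; then $\mathscr L_{d-1}\subseteq\mathscr L_d$, each $\mathscr L_d$ is a subring, $\mathscr L_0=\varphi_*\big(\KMW*(k)\big)$, and $\KMW*(k(t))=\bigcup_{d\ge0}\mathscr L_d$ (using the generating symbols of \Cref{prop:KMW_ab_pres} and relation (MW2), in the form $[ab]=[a]+\tw a[b]$, to reduce any rational function to monic irreducible factors). The key lemma is that for each $d\ge1$ the residues at the degree-$d$ points fit into an isomorphism of $\ZZ$-graded abelian groups
$$
\mathscr L_d/\mathscr L_{d-1}\ \xrightarrow{\ \sim\ }\ \bigoplus_{\deg x=d}\KMW{*-1}(\kappa_x,\omega_{x/X}).
$$
That $\partial_x$ kills $\mathscr L_{d-1}$ when $\deg x=d$ is immediate from (Res2) of \Cref{thm:residue}, since a polynomial of degree $<d$ is a unit at $x=(P)$ and $0_\epsilon=0$, so $\partial_x$ factors through the quotient. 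Surjectivity is also easy: by \Cref{prop:KMW_ab_pres}, $\KMW{*-1}(\kappa_x,\omega_{x/X})$ is generated by the symbols $\eta^r[\bar g_1,\dots,\bar g_{q-1+r}]\otimes\bar P^*$ with $g_i\in k[t]$ the unique degree-$<d$ representatives of elements of $k[t]/P$, and $\eta^r[P,g_1,\dots,g_{q-1+r}]\in\mathscr L_d$ has residue $\eta^r[\bar g_1,\dots,\bar g_{q-1+r}]\otimes\bar P^*$ at $x$ (by (Res2)) and residue $0$ at every other point of degree $d$ (where $P$ and the $g_i$ are units).

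The serious point — and the one I expect to be the main obstacle — is the injectivity of the displayed map, equivalently the well-definedness of the candidate inverse sending $\eta^r[\bar g_1,\dots,\bar g_{q-1+r}]\otimes\bar P^*$ to the class of $\eta^r[P,g_1,\dots,g_{q-1+r}]$ modulo $\mathscr L_{d-1}$. For this one must check that the relations (MW1ab), (MW2ab), (MW4ab) of \Cref{prop:KMW_ab_pres} for the residue field $\kappa_x$ lift to identities in $\mathscr L_d/\mathscr L_{d-1}$; this is precisely Milnor's delicate combinatorial lemma, where one repeatedly uses the Euclidean division algorithm in $k[t]$ together with relation (MW2) to rewrite products of polynomials of degree $<d$ back in degree $<d$ modulo $\mathscr L_{d-1}$, the $\eta$-correction terms being controlled by the same induction. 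I would import this from \cite[Th.~3.22--3.24]{Mor} rather than redo it.

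Granting the key lemma, the theorem follows by induction on the filtration degree. If $\tdiv_X(\alpha)=0$ with $\alpha\in\mathscr L_d$, $d\ge1$, then the image of $\alpha$ in $\mathscr L_d/\mathscr L_{d-1}\cong\bigoplus_{\deg x=d}\KMW{q-1}(\kappa_x,\omega_{x/X})$ is $(\partial_x\alpha)_{\deg x=d}=0$, so $\alpha\in\mathscr L_{d-1}$; iterating, $\alpha\in\mathscr L_0=\Img(\varphi_*)$, which is exactness in the middle. For surjectivity of $\tdiv_X$, a finitely supported family $(\tau_x)$ is supported in some bounded degree; by the lemma choose $\beta\in\mathscr L_d$ with $\partial_x\beta=\tau_x$ for all $\deg x=d$, so that $(\tau_x)-\tdiv_X(\beta)$ is supported in degrees $<d$, and conclude by induction (the case $d=0$ being vacuous). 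Since $s_{v_0}^t$ is a retraction of $\varphi_*$, the sequence is split exact. Finally, $\CHW p(\AA^1_k)_q$ is by definition the cohomology of the two-term Rost--Schmid complex $\Cq*(\AA^1_k,\cO)_q$ with differential $\tdiv_X$, so the split exact sequence gives $\CHW 0(\AA^1_k)_q=\Ker(\tdiv_X)\cong\KMW q(k)$ and $\CHW 1(\AA^1_k)_q=\operatorname{coker}(\tdiv_X)=0$.
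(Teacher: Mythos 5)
Your proposal is correct and follows essentially the same route as the paper: the splitting comes from a specialization map $s_v^\pi$ at a valuation trivial on $k$ (you pick $v_0$ at $t=0$, the paper suggests the valuation at infinity, but as the paper itself remarks any such valuation works), and the main content is Milnor's filtration of $\KMW*(k(t))$ by the subrings $\mathscr L_d$ of polynomials of degree $\le d$, with the key lemma $\mathscr L_d/\mathscr L_{d-1}\simeq\bigoplus_{\deg x=d}\KMW{*-1}(\kappa_x,\omega_{x/X})$ whose injectivity is, as in the paper, deferred to Morel's book. The identification of the residue map modulo the filtration and the induction are exactly as intended.
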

Note that a splitting is easy to get: considering the valuation $v=\deg$ on $k(t)$,
 the specialization map $s_v^t$ (\Cref{df:specialization}) gives a splitting.
 More generally, any valuation $v$ on $k(t)$ trivial on $k$ with uniformizing parameter
 $\pi$ will give a splitting $s_v^\pi$.

The proof of this proposition uses the same trick as in Milnor's proof,
 and argue inductively on the degree in $t$. The idea is to filter $\KMW*(k(t))$
 by the subring $L_d$ generated by $\eta$ and symbols of the form $[P(t)]$ where
 $P(t)$ is a polynomial of degree less or equal to $d$. We can then argue inductively
 on the $\ZZ$-graded $\KMW *(k)$-rings $L_d$ using an explicit presentation
 of the $\ZZ$-graded $\KMW *(k)$-module $L_d/L_{d-1}$.

As an example, the reader is encouraged to work out for himself
 the case of $L_1$. The hint is to use the (obvious!) exact sequence:
$$
0 \rightarrow k^\times \xrightarrow{\ \varphi_*\ } k(t)^\times  \xrightarrow{\sum_x v_x} Z^1(\AA^1_k) \rightarrow 0
$$

Given that invertible sheaves on $\AA^1_k$ are trivializable,
 one immediately deduces the twisted version of the previous theorem.
\begin{cor}\label{cor:htp_inv_fields}
Consider the notation of the previous theorem, and let $\cL$ be an
 invertible sheaf on $\AA^1_k$.
 Then the following sequence of abelian groups is exact:
$$
0 \rightarrow \KMW q(k,\cL_0) \xrightarrow{\ \varphi_*\ } \KMW q\big(k(t),\cL_{k(t)}\big)
 \xrightarrow{\ \tdiv_X\ } \bigoplus_{x \in X^{(1)}} \KMW {q-1}(\kappa_x,\omega_{x/X} \otimes \cL_x)
 \rightarrow 0
$$
where $\cL_0$ (resp. $\cL_x$) is the fiber of $\cL$ over the point $0$ (resp. a closed point $x$).
 In particular, $\cL_0=\cL \otimes_{k[t]} k$ and the morphism $\varphi_*$ is defined on twists
 by the canonical isomorphism:
$$
\cL \rightarrow \cL_0 \otimes_k k[t], l \mapsto (l \otimes_{k[t]} 1) \otimes_k 1.
$$
\end{cor}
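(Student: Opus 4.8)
The plan is to deduce this from the untwisted statement of \Cref{thm:htp_inv_fields} using that $\Pic(\AA^1_k)$ is trivial. Since $k[t]$ is a principal ideal domain, the invertible $k[t]$-module $\cL$ is free of rank one; fix a generator $l_0 \in \cL$, so that (compare \Cref{rem:generalizedKMWtw}) the map $\Theta_{l_0}:k[t] \to \cL$ is an isomorphism. Restricting $l_0$ along $k[t] \hookrightarrow k(t)$ produces a basis $(l_0)_{k(t)}$ of $\cL_{k(t)}$, restricting it along $k[t] \twoheadrightarrow k$ produces a basis $(l_0)_0$ of $\cL_0$, and restricting it along $\cO_{X,x} \twoheadrightarrow \kappa_x$ produces a basis $(l_0)_x$ of $\cL_x$ at each closed point $x$. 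Via the corresponding evaluation isomorphisms $\ev_{(l_0)_{k(t)}}$, $\ev_{(l_0)_0}$, $\ev_{\omega_{x/X}\otimes (l_0)_x}$ of \Cref{num:twists&functions}, each term of the sequence is identified with its untwisted counterpart, and one only has to check that the two maps in the sequence are carried to the maps of \Cref{thm:htp_inv_fields}.

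For $\varphi_*$ this is the content of the last displayed identification in the statement: the canonical isomorphism $\cL_0 \otimes_k k(t) \cong \cL_{k(t)}$ sends $(l_0)_0 \otimes_k 1$ to $(l_0)_{k(t)}$ (this is well defined since $k[t]^\times = k^\times$), so $\varphi_*\big(\sigma \otimes (l_0)_0\big) = \varphi_*(\sigma) \otimes (l_0)_{k(t)}$ and hence $\ev_{(l_0)_{k(t)}} \circ \varphi_* = \varphi_* \circ \ev_{(l_0)_0}$. For the divisor class map $\tdiv_X = \sum_x \partial_x$, the key point is that $l_0$ is a \emph{global} generator, hence already lies in $(\cL_{\cO_{X,x}})^\times$ over every local ring $\cO_{X,x}$; therefore in the renormalization procedure of \Cref{num:residue} one may take the auxiliary unit equal to $1$, giving $\partial_x^{\cL}\big(\sigma \otimes (l_0)_{k(t)}\big) = \partial_x(\sigma) \otimes (l_0)_x$ for every $\sigma$ and every $x$. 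Consequently $\ev_{\omega_{x/X}\otimes (l_0)_x} \circ \partial_x^\cL = \partial_x \circ \ev_{(l_0)_{k(t)}}$, so $\tdiv_X$ in the twisted sequence is identified with $\tdiv_X$ in the untwisted one.

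Assembling these two commuting squares, the sequence in the statement becomes isomorphic to the split exact sequence of \Cref{thm:htp_inv_fields}, which gives exactness (indeed split exactness, a splitting being obtained by transporting the one induced by the degree valuation on $k(t)$). The only step needing genuine care — the one I would write out in full — is the compatibility of the trivialization with the residue maps, i.e.\ confirming that transporting the twist by $l_0$ introduces no spurious unit factor $\tw a$ at any closed point; this is precisely where it matters that $l_0$ comes from a global section of $\cL$ over $\AA^1_k$ and not merely from a basis of $\cL_{k(t)}$. Everything else is formal manipulation of the functor $\KMW*$ on the category of twisted fields of \Cref{rem:twisted_functoriality}.
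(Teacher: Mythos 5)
Your proof is correct and takes exactly the approach the paper intends: since $\Pic(\AA^1_k)$ is trivial, pick a global generator $l_0$ of $\cL$ and transport the split exact sequence of \Cref{thm:htp_inv_fields} along the resulting evaluation isomorphisms. The paper dismisses this as immediate; you correctly identify the one non-formal point (that a global generator lands in $\cL^\times_{\cO_{X,x}}$ at every closed point, so the renormalization step of \Cref{num:residue} introduces no unit factor $\tw a$) and settle it.
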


\subsection{Localization exact sequences}\label{sec:localization}

In this section, we will illustrate the usefulness of considering the $\GG$-grading
 of the Rost-Schmid complex (\Cref{df:CHW_curves}).
 The aim is to compute the Chow-Witt groups of the projective line.
\begin{num}
Let again $X$ be a normal connected $1$-dimensional scheme, $\cL$ an invertible sheaf on $X$.


Consider in addition a finite subset $Z \subset X$ of closed points of $X$, seen as reduced closed subscheme, $i:Z \rightarrow X$.
 Let $\omega_{Z/X}=(\cI(Z)/\cI(Z)^2)^\vee$ be the normal sheaf of $i$, where $\cI(Z) \subset \cO_X$ is the ideal sheaf.
 Let $U=X-Z$, and $j:U \rightarrow X$ the open immersion.

There is an obvious split epimorphism:
$$
j^*:\Cq 1(X,\cL)_q \rightarrow \Cq 1(U,\cL)_q
$$
whose kernel is the finite sum:
$$
\CHW 0(Z,\omega_{Z/X} \otimes \cL_Z)_q:=\oplus_{z \in Z} \KMW q\big(\kappa_z,\omega_{z/X} \otimes \cL_z\big).
$$
Remark that this notation fits in with the previous considerations as for any point $z \in Z$, we have a \emph{canonical} isomorphism
 (this can be checked directly, or see \eqref{eq:rel_canonical})
 of invertible $\kappa_z$-vector spaces:
$$
\omega_{z/X} \otimes \cL_z \simeq (\omega_{z/Z} \otimes \omega_{Z/X}|_z) \otimes \cL_z \simeq \omega_{z/Z} \otimes (\omega_{Z/X} \otimes \cL_Z)_z.
$$
Assembling all this, we get a commutative diagram whose lines are exact:
$$
\xymatrix@=20pt{
& 0\ar[r]\ar[d] & \Cq 0(X,\cL)_{q+1}\ar@{=}[r]\ar_{\tdiv_X}[d] & \Cq 0(U,\cL)_{q+1}\ar[r]\ar^{\tdiv_U}[d] & 0 \\
0\ar[r] &\CHW 0(Z,\omega_{Z/X} \otimes \cL_Z)_q\ar^-{i_*}[r] &  \Cq 1(X,\cL)_q\ar^-{j^*}[r] & \Cq 1(U,\cL)_q\ar[r] & 0
}
$$
\end{num}
\begin{df}
Consider the previous notation. The exact sequence obtained by applying the snake lemma to the
 preceding commutative diagram:
\begin{align*}
0 \rightarrow \CHW 0(X,\cL)_{q+1} &\xrightarrow{j^*} \CHW 0(U,\cL_U)_{q+1}
 \xrightarrow{\partial_{Z/X}} \CHW 0(Z,\omega_{Z/X} \otimes \cL_Z)_q \\
 & \xrightarrow{\ i_*\ }  \CHW 1(X,\cL)_q\xrightarrow{j^*} \CHW 1(U,\cL_U)_q \rightarrow 0
\end{align*}
is called the \emph{localization exact sequence} associated with $i$.

The connecting map $\partial_{Z/X}$ is called the \emph{residue map} associated with $i$.
 It is induced by the following restriction and corestriction of the quadratic divisor class map $d_X$:
$$
\sum_{z \in Z} \partial_z:\KMW {q+1} \big(\kappa(X),\cL_\xi\big) \longrightarrow \oplus_{z \in Z} \KMW q\big(\kappa_z,\omega_z \otimes \cL_z\big).
$$
\end{df}

\subsection{Twisted Chow-Witt groups of the projective line}

\begin{num}
We now illustrate the usage of the localization exact sequence defined in the previous section. Let $k$ be an arbitrary field.

Let $\PP^1_k=\Proj(k[x,y])$ be the projective line, $\infty=[1:0]$ be the point at infinity with complementary open subscheme $\AA^1_k=\Spec(k[x])$.
 We let $i^\infty:\{\infty\} \rightarrow \PP^1_k$ be the natural closed immersion, and $j:\AA^1_k \rightarrow \PP^1_k$ the complementary open immersion.
 We fix a line bundle $\cL$ over $\PP^1_k$,  which is therefore determined up to isomorphism by its degree, $\deg(\cL)$.
 We let $\cL'$ be the restriction of $\cL$ to $\AA^1_k$.

Then the localization exact sequence of $i^\infty$, with $(X,Z)=(\PP^1_k,\{\infty\})$,
 together with Morel's homotopy invariance theorem
 (see \Cref{cor:htp_inv_fields}) gives us the following exact sequence:
\begin{align*}
0 \rightarrow \CHW 0(\PP^1_k,\cL)_{q+1}& \xrightarrow{j^*} \KMW{q+1}(k,\cL_0)
 \xrightarrow{\partial_{\infty/\PP^1}} \KMW q(k,\omega_\infty \otimes \cL_\infty) \\
 &\xrightarrow{\ i^\infty_*\ }  \CHW 1(\PP^1_k,\cL)_q \rightarrow 0
\end{align*}
where we have denoted by $\cL_0$ the restriction of $\cL'$ to the point $0$ in $\AA^1_k$,
 and put $\omega_\infty=\omega_{\infty/\PP^1_k}$ with the notation of \Cref{num:residues_RostSchmid}.
 The main problem is to determine the boundary map $\partial_{\infty/\PP^1}$.
\end{num}
\begin{lm}
Consider the above assumptions and notations.

Then if $\deg(\cL)$ is even, $\partial_{\infty/\PP^1}=0$.
 If $d=\deg(\cL)$ is odd, after choosing an isomorphism $\cL \simeq \cO(d)$,
 and using the isomorphism $\omega_\infty \simeq k$ given by the uniformizing parameter $1/x$,
 we get the following commutative diagram:
$$
\xymatrix@C=40pt@R=20pt{
\KMW{q+1}(k,\cL_0)\ar^-{\partial_{\infty/\PP^1}}[r]\ar_\sim[d] & \KMW q(k,\omega_\infty \otimes \cL_\infty)\ar^\sim[d] \\
\KMW{q+1}(k)\ar^{\gamma_\eta}[r] & \KMW q(k).
}
$$
\end{lm}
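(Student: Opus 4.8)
The plan is to unwind the definition of $\partial_{Z/X}$, reduce it to an explicit residue computation in $\KMW*\bigl(k(x)\bigr)$, and apply the formulas (Res1)--(Res2) of \Cref{thm:residue}. By \Cref{cor:htp_inv_fields} one has $\Ker(\tdiv_{\AA^1_k})=\Img(\varphi_*)$, so the identification $\CHW 0(\AA^1_k,\cL')_{q+1}\simeq\KMW{q+1}(k,\cL_0)$ occurring in the localization sequence is exactly the map $\varphi_*\colon\KMW{q+1}(k,\cL_0)\to\KMW{q+1}\bigl(k(x),\cL_{k(x)}\bigr)$; by construction of $\partial_{Z/X}$ (the restriction of $\sum_{z\in Z}\partial_z$ to $Z=\{\infty\}$), it then sends $\alpha\otimes l_0$ (with $\alpha\in\KMW{q+1}(k)$, $l_0\in\cL_0^\times$) to $\partial^\cL_\infty\bigl(\varphi_*(\alpha\otimes l_0)\bigr)$, the $\cL$-twisted residue at $\infty$. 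Everything thus reduces to computing this residue.

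Next I would fix convenient trivializations. Choosing $\cL\simeq\cO(d)$, let $\lambda$ be the rational section of $\cO(d)$ which is a generator on $\AA^1_k=D_+(y)$ (namely $y^d$ in the presentation $\PP^1_k=\Proj k[x,y]$); then $\Div_{\PP^1_k}(\lambda)=d\cdot[\infty]$, and near $\infty$ one has $\lambda=\pi^d\mu$, where $\pi=1/x$ is the chosen uniformizer and $\mu$ (namely $x^d$) is an $\cO_\infty$-generator of $\cL$. Using $\lambda|_0$ to trivialize $\cL_0$, $\mu|_\infty$ to trivialize $\cL_\infty$ and $\bar\pi^*$ to trivialize $\omega_\infty$, the canonical isomorphism $\cL\xrightarrow{\sim}\cL_0\otimes_k k[x]$ of \Cref{cor:htp_inv_fields} identifies $\varphi_*(\alpha\otimes\lambda|_0)$ with $\varphi_*(\alpha)\otimes\lambda_{k(x)}$, while a general $l_0=u\cdot\lambda|_0$ ($u\in k^\times$) merely multiplies this by $\tw u$. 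Applying the renormalization of \Cref{num:residue} to $\varphi_*(\alpha)\otimes\lambda_{k(x)}$, written in the form $\varphi_*(\alpha)\otimes\bigl(\mu\otimes_{\cO_\infty}\pi^d\bigr)$, then yields
\[
\partial^\cL_\infty\bigl(\varphi_*(\alpha)\otimes\lambda_{k(x)}\bigr)=\partial_\infty\bigl(\tw{\pi^d}\,\varphi_*(\alpha)\bigr)\otimes\mu|_\infty ,
\]
so the whole problem reduces to computing $\partial_\infty\bigl(\tw{\pi^m}\varphi_*(\alpha)\bigr)\in\KMW q(k,\omega_\infty)$.

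For this key computation I would work with the abelian generators $\alpha=[\eta^r,a_1,\dots,a_{q+1+r}]$ of \Cref{prop:KMW_ab_pres}, with $a_i\in k^\times\subset\cO_\infty^\times$. Since $\tw{\pi^m}$ is an $m$-fold product of $\tw\pi=1+\eta[\pi]$ and squares are trivial by (GW1), one has $\tw{\pi^m}=\tw 1$ for $m$ even and $\tw{\pi^m}=\tw\pi$ for $m$ odd. If $d$ is even, then $\tw{\pi^d}\varphi_*(\alpha)=\varphi_*(\alpha)$ extends to $\cO_\infty$, hence is unramified at $\infty$ and has vanishing residue (cf. the proof of \Cref{thm:local_Morel}); so $\partial_{Z/X}=0$, as claimed. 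If $d$ is odd, expand
\[
\tw\pi\,\varphi_*(\alpha)=\varphi_*(\alpha)+\eta^{r+1}[\pi,a_1,\dots,a_{q+1+r}] ;
\]
the first summand is unramified and contributes nothing, while (Res1)--(Res2) give $\partial_\infty\bigl(\eta^{r+1}[\pi,a_1,\dots,a_{q+1+r}]\bigr)=\eta^{r+1}[\overline{a_1},\dots,\overline{a_{q+1+r}}]\otimes\bar\pi^*=\eta\cdot\alpha\otimes\bar\pi^*=\gamma_\eta(\alpha)\otimes\bar\pi^*$, where we used $1_\epsilon\tw 1=1$ and $\overline{a_i}=a_i$ since $a_i\in k$. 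Reading this equality through the trivializations of $\omega_\infty\otimes\cL_\infty$ (by $\bar\pi^*\otimes\mu|_\infty$) and of $\cL_0$ (by $\lambda|_0$) produces exactly the commutative square with $\gamma_\eta$ in the bottom row; the extra factor $\tw u$ coming from a general $l_0$ is carried along identically on both sides because $\gamma_\eta$ is $\GW(k)$-linear.

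The one genuinely delicate point is the twist bookkeeping: one must keep careful track of which uniformizer and which local generators trivialize the source $\cL_0$, the target $\omega_\infty\otimes\cL_\infty$, and the auxiliary function $\pi^d$ extracted by the renormalization of \Cref{num:residue}, since an inconsistent choice would change the answer by a unit and, in particular, could fail to match $\gamma_\eta$ on the nose. Once these are pinned down as above, the residue computation itself is a direct application of (Res1)--(Res2).
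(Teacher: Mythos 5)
Your proof is correct and takes essentially the same route as the paper's: fix compatible trivializations of $\cO(d)$ on the two standard affine charts, trace $\sigma$ through the localization boundary to the renormalized residue $\partial_\infty(\tw{\pi^{\pm d}}\sigma)\otimes\mu|_\infty$, and conclude by the parity of $d$. You merely spell out the final residue computation a bit more explicitly via the abelian generators of \Cref{prop:KMW_ab_pres} and (Res1)--(Res2).
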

\begin{proof}
One reduces to the case $\cL=\cO(d)$.
 We consider $U=U_\infty$ (resp. $U_0$) the open complement of $\infty$ (resp. $0$) in $\PP^1_k$,
 so that $U_\infty=\Spec(k[x])$ and $U_0=\Spec(k[y])$.
 The gluing map $U_0 \cap U_\infty \rightarrow U_\infty \cap U_0$ is given by
 mapping $x$ to $y^{-1}$.
 Then the line bundle $\cL=\cO(d)$ is given on $U_\infty$ (resp. $U_0$) by a free module
 $\cL'_\infty=k[x].u$ (resp. $\cL'_0=k[y].v$) with a gluing map $u \mapsto y^{-d}.v$.

Note that one has preferred isomorphisms:
 $\cL_0 \simeq_u k$ and $\cL_\infty \simeq_v k$. Therefore we deduce a canonical map
$$
\KMW{q+1}(k) \simeq_{u^{-1}_*} \KMW{q+1}(k,\cL_0)
 \xrightarrow{\partial_{\infty/\PP^1}} \KMW q(k,\omega_\infty \otimes \cL_\infty) \simeq_{y_* \otimes v_*} \KMW q(k)
$$
denoted by $\partial'_{\infty/\PP^1}$.

We compute the image of $\sigma \in \KMW q(k)$ under $\partial'_{\infty/\PP^1}$.
 First, $u^{-1}_*(\sigma)=\sigma \otimes u$.
 Then we need to use the map $\varphi_*$ of \Cref{cor:htp_inv_fields}, which sends the latter to
$$
\sigma \otimes (u \otimes 1) \in \KMW q\big({k(t),\cL_0 \otimes_k k(t)}\big).
$$
In order to compute its residue at $\infty$, one needs to write it as an element
 of $\KMW q\big(k(t),\cL_\infty \otimes_k k(t)\big)$. Therefore, one uses the above change of variables:
$$
\sigma \otimes (y^{-d}v \otimes 1)=(\tw{y^{-d}}\sigma) \otimes (v \otimes 1).
$$
Now if $d$ is even, $\tw{y^{-d}}=1$ and we get: $\partial_\infty^y(\tw{y^{-d}}\sigma)=0$
 as $\sigma$ comes from $\KMW*(k)$. Thus $\partial_{\infty/\PP^1}(\sigma)=0$.

If on the contrary, $d$ is odd, $\tw{y^{-d}}=\tw y$.
 Therefore 
$$
\partial_\infty^y(\tw{y^{-d}}\sigma)=\partial_\infty^y(\tw{y}\sigma)=\eta.\sigma
$$
and one deduces that $\partial'_{\infty/\PP^1}(\sigma)=\eta.\sigma$ as expected.
\end{proof}

\begin{num}\label{num:PB1}
Let $d=\deg(\cL)$. The lemma and the localization exact sequence gives the following possibilities:
\begin{enumerate}
\item if $d$ is even, one gets isomorphisms:
\begin{align*}
j^*&:\CHW 0(\PP^1_k,\cL)_{q} \xrightarrow{\ \sim \ } \CHW 0(\AA^1_k,\cL)_{q} \simeq \KMW{q}(k,\cL_0) \\
i^\infty_*&:\KMW q(k,\omega_\infty \otimes \cL_\infty) \xrightarrow{\ \sim \ }  \CHW 1(\PP^1_k,\cL)_q.
\end{align*}
\item If $d$ is odd, and after the choices indicated in the above lemma, we get an exact sequence:
\begin{align*}
0 \rightarrow \CHW 0(\PP^1_k,\cL)_{q+1}& \xrightarrow{j^*} \KMW{q+1}(k)
 \xrightarrow{\gamma_\eta} \KMW q(k) \xrightarrow{\ i^\infty_*\ }  \CHW 1(\PP^1_k,\cL)_q \rightarrow 0
\end{align*}
\end{enumerate}
\end{num}
Recall that the cokernel of $\gamma_\eta$ is $\KM q(k)$ (see \Cref{num:KMW&KM}),
 and its kernel is $2.\KM q(k)$, that is the $q$-th graded part of the ideal
 generated by $2$ in the ring $\KM*(k)$ (see \Cref{cor:Kereta}).
To summarize, we have obtained the following computation of (graded) Chow-Witt groups,
 first proved by Jean Fasel for a perfect base field of characteristic not $2$ (see \cite{FaselPB}):
\begin{thm}\label{thm:PB1}
Consider the above assumption: $k$ is an arbitrary field, $\cL$ an invertible sheaf over $\PP^1_k$
 of degree $d$. Then
\begin{align*}
\CHW 0(\PP^1_k,\cL)_q &\simeq \begin{cases}
\KMW{q}(k,\cL_0) & \qquad \quad d \text{ even} \\
2\KM q(k) & \qquad\quad d \text{ odd}
\end{cases} \\
\CHW 1(\PP^1_k,\cL)_q &\simeq \begin{cases}
\KMW q(k,\omega_\infty \otimes \cL_\infty) & d \text{ even} \\
\KM q(k) & d \text{ odd}
\end{cases}
\end{align*}
Recall finally from \Cref{rem:quad_mult} that: $\Ker(\eta)=2_\epsilon\KM q (k)$ when $\car(k) \neq 2$.
\end{thm}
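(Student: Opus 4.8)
The plan is to deduce \Cref{thm:PB1} by feeding the computation of the connecting map from the preceding lemma into the localization exact sequence for the closed immersion $i^\infty:\{\infty\}\hookrightarrow\PP^1_k$, whose middle terms have already been pinned down by Morel's homotopy invariance theorem (\Cref{cor:htp_inv_fields}). After the analysis recorded in \Cref{num:PB1}, everything reduces to reading off kernels and cokernels from the resulting exact sequences, organized according to the parity of $d=\deg(\cL)$.

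First I would treat the case $d$ even. Here the lemma gives $\partial_{Z/X}=0$, so the localization sequence recalled in \Cref{num:PB1} breaks into the two isomorphisms of \Cref{num:PB1}(1). Composing $j^*$ with the homotopy-invariance identification $\CHW 0(\AA^1_k,\cL)_q\simeq\KMW q(k,\cL_0)$ produces $\CHW 0(\PP^1_k,\cL)_q\simeq\KMW q(k,\cL_0)$, and $i^\infty_*$ directly yields $\CHW 1(\PP^1_k,\cL)_q\simeq\KMW q(k,\omega_\infty\otimes\cL_\infty)$; these are the $d$ even cases of both displayed formulas.

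Second, for $d$ odd I would use that, after the trivializations fixed in the lemma, the connecting map becomes multiplication by $\eta$, so that the localization sequence collapses to the four-term exact sequence $0\to\CHW 0(\PP^1_k,\cL)_{q+1}\to\KMW{q+1}(k)\xrightarrow{\gamma_\eta}\KMW q(k)\to\CHW 1(\PP^1_k,\cL)_q\to 0$ of \Cref{num:PB1}(2). Then $\CHW 1(\PP^1_k,\cL)_q$ is the cokernel of $\gamma_\eta$, which is $\KM q(k)$ by the exact sequence \eqref{eq:preKMW&KM} (\Cref{num:KMW&KM}), while $\CHW 0(\PP^1_k,\cL)_{q+1}$ is the kernel of $\gamma_\eta$ in degree $q+1$, which by \Cref{cor:Kereta} is the degree $q+1$ part of the ideal $(h)=\mathrm{Im}(\hyper)$, identified by the forgetful map with $2.\KM{q+1}(k)$. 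Reindexing $q+1\mapsto q$ gives $\CHW 0(\PP^1_k,\cL)_q\simeq 2.\KM q(k)$, and the final sentence of the statement, rewriting $2.\KM q(k)$ as $2_\epsilon.\KM q(K)$ when $\car(k)\neq 2$, is then the reformulation afforded by \Cref{rem:quad_mult}.

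The step I expect to be the real content is the residue computation carried out in the lemma just above, which the three steps above merely exploit: one must trace an element $\sigma\otimes u$ (with $\sigma\in\KMW q(k)$ and $u$ a chosen generator of $\cL_0$) through $\varphi_*$ into $\KMW q(k(t),\cL_0\otimes_k k(t))$, rewrite it in the chart at infinity via the gluing $u\mapsto y^{-d}v$ as $(\tw{y^{-d}}\sigma)\otimes(v\otimes 1)$, and apply $\partial_\infty^y$; the parity of $d$ enters only through $\tw{y^{-d}}$, which is trivial for $d$ even and equals $\tw y$ for $d$ odd, so the residue is $0$ or $\eta\sigma$ accordingly. Everything else is formal manipulation of the exact sequences, and none of the ingredients (\Cref{cor:htp_inv_fields}, \Cref{num:KMW&KM}, \Cref{cor:Kereta}) imposes any hypothesis on $k$, which is precisely what allows one to drop the perfectness and characteristic restrictions present in Fasel's original result.
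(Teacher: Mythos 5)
Your proof is correct and follows essentially the same route as the paper: the localization exact sequence for $i^\infty$, Morel's homotopy invariance (\Cref{cor:htp_inv_fields}), the lemma computing $\partial_{Z/X}$ by parity of $d$, and then reading off kernel and cokernel of $\gamma_\eta$ via \Cref{num:KMW&KM} and \Cref{cor:Kereta}, exactly as in \Cref{num:PB1}. Your identification of the residue computation in the lemma as the one genuine step, with everything else formal, matches the structure of the argument in the text.
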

Let us draw the picture for Chow-Witt groups:
$$
\CHW p(\PP^1_k,\cL)=\begin{cases}
\GW(k,\cL_0) & p=0, \deg(\cL)=2r, \\
\GW(k,\omega_\infty \otimes \cL_\infty) & p=1, \deg(\cL)=2r, \\
\ZZ & p=0, 1, \deg(\cL)=2r+1.
\end{cases}
$$
In particular, these groups depend on the twist $\cL$ when $\GW(k)$ is non-trivial!

\section{Transfers}\label{sec:transfers}

\subsection{Cotangent complexes and canonical sheaves}\label{sec:cotangent}

Recall for convenience (and completeness) the following definition.
\begin{df}\label{df:lci}
Let $f:X \rightarrow S$ be a morphism of schemes.
\begin{enumerate}
\item $f$ is \emph{smoothable} if there exists a factorization 
$$
f:X \xrightarrow i P \xrightarrow p S
$$
 such that $p$ is smooth and $i$ is a closed immersion.
\item $f$ is a \emph{complete intersection} if there exists a factorization 
$$
f:X \xrightarrow i P \xrightarrow p S
$$
 such that $p$ is smooth and $i$ is a regular closed immersion.
\item $f$ is a \emph{local complete intersection} if any point $x \in X$ admits an open neighborhood $V$ such that
 the restriction $f|_V$ is a complete intersection. \\
Following a classical abuse, we will simply say that $f$ is lci.
\end{enumerate}
\end{df}

\begin{rem}\begin{enumerate}[wide]
\item The second definition first appeared in \cite[VIII, 1.1]{SGA6}.
 For the first definition, we also refer the reader to \cite[Tag 068E]{Stack}.
\item A morphism $f$ is a complete intersection if and only if it is smoothable and lci (see \cite[VIII, 1.2]{SGA6}).
\end{enumerate} 
\end{rem}

\begin{num}\label{num:cotangent&canonical}\textit{Cotangent Complexes}.
For a scheme $X$, we let $\Der(\cO_X)$ be the derived category of $\cO_X$-modules.
 We can view this category as a stable $\infty$-category (see \cite[Section 1]{LurieHA}).
 However, we will not use this higher structure and we will only need the associated
 homotopy category, with its canonical triangulated structure.

 Let $f:X \rightarrow S$ be a morphism of schemes.
 Recall that one can associate to $f$ its cotangent complex $\cL_{X/S}$ (see \cite[III, 1.2.3]{Illusie}),
 a canonically defined object of $\Der(\cO_X)$ --- 
 it is the derived functor of the K\"ahler differential functor evaluated at $\cO_X/f^{-1}\cO_S$.

If $f$ is a complete intersection, choosing a factorization as in \Cref{df:lci}(2),
 one can explicitly compute its cotangent complex $\cL_{X/S}$.
 It is quasi-isomorphic to a complex concentrated in two degrees
$$
\C_{X/P} \rightarrow \Omega_{P/S}|_X
$$
where $\C_{X/P}=\cI_i/\cI_i^2$ is the \emph{conormal sheaf} associated with $i$, placed in homological degree $+1$,
 and $\Omega_{P/S}$ is the \emph{cotangent sheaf} of $P/S$ (the conormal sheaf of the diagonal of $P/S$) placed in degree $0$
 (see \cite[VIII, 3.2.7]{Illusie}).
 This obviously implies that if $f:X \rightarrow S$ is only assumed to be lci,
 then its cotangent complex is Zariski locally in $X$ quasi-isomorphic
 to a complex concentrated in degree $0$ and $1$ and whose terms are free.\footnote{One says
 that $\cL_{X/S}$ has perfect homological amplitude in $[0,1]$.} In particular, $\cL_{X/S}$ is perfect.\footnote{A complex $\mathcal K$ of $\cO_X$-modules
 is perfect if any point of $X$ admits an open neighborhood $U$ such that $\mathcal K|_{U}$ is quasi-isomorphic to a bounded complex $\mathcal L$
 such that for all integers $n$ the coherent sheaf $\mathcal L^n$ is a direct factor of a finite free $\cO_U$-module.
 See \cite[Def. 20.44.1/08C4]{Stack}.}

The interest of the cotangent complex lies in being compatible with composition
 in the following sense.
 Consider a commutative diagram
\begin{equation}\label{eq:triangle_morph_schemes}
\xymatrix@R=12pt@C=18pt{
X\ar^f[rr]\ar[rd] &\ar@{}|/-4pt/\Delta[d]& Y\ar^g[ld] \\
& S &
}
\end{equation}
of morphisms of schemes. Then one has a canonical distinguished triangle in $\Der(\cO_X)$
 (induced by an exact sequence in the underlying stable $\infty$-category):
\begin{equation}\label{eq:cotangent_htp_seq}
(f^{*}\cL_{Y/S}) \rightarrow \cL_{X/S} \rightarrow \cL_{X/Y}.
\end{equation}
\end{num}

\begin{num}
Recall from \cite[Ex. 4.13]{Deligne} that one associates to a perfect complex $C$ of $\cO_X$-modules
 its \emph{rank} $\rk(C)$ which is a locally constant function $X \rightarrow \ZZ$
 and its determinant  $\det(C)$ which is a well-defined invertible sheaf over
 $X$.
\footnote{The couple $(\det,\rk)$ is actually the left Kan extension, as an 
 $\infty$-functor, from the $\infty$-category of perfect complexes
 to the $\infty$-groupoid of graded line bundles,
$$
\Perf(X) \rightarrow \cPic(X)
$$
 of the functor sending a locally free $\cO_X$-module to its rank and its maximal exterior power
 (see \cite[\textsection 5]{LODef}).
 It is also obtained by restriction of the canonical functor from the Thomason-Trobaugh $K$-theory space $K(X)$
 to $\cPic(X)$ (see \cite{BS} in the affine case).}
\end{num}
\begin{df}\label{df:can_sheaf}
Let $f:X \rightarrow S$ be a morphism whose cotangent complex is perfect (e.g., lci).
 One associates to $f$ its \emph{canonical sheaf}:
$$
\omega_{X/S}=\det(\cL_{X/S}).
$$
We will also say that $f$ is of (virtual) relative dimension $d=\rk(\cL_{X/S})$.
\end{df}
When $X/S$ is the spectrum of a ring extension $B/A$,
 the canonical sheaf $\omega_{X/S}$ is determined by its global sections.
 We will denote by $\omega_{B/A}$ the $B$-module of its global sections,
 and call it the \emph{canonical module} associated with $B/A$.

\begin{ex}\label{ex:can_sheaf}
\begin{enumerate}[wide]
\item If $f:X \rightarrow S$ is smooth, the above definition coincides with the classical definition
 of the canonical sheaf: the cotangent sheaf of $f$ is locally free $\Omega_{X/S}$,
 and $\omega_{X/S}$ is the maximal exterior power of $\Omega_{X/S}$ as a $\cO_X$-module.

Note in particular that when $f$ is \'etale, one has an equality: $\omega_{X/S}=\cO_X$.
 This is really an identity, and not just an isomorphism.
\item If $f=i:Z \rightarrow X$ is a regular closed immersion of pure codimension $1$,
 then $\omega_{Z/S}=\C_{Z/X}^\vee$, the dual of the conormal sheaf.
\item A morphism $f:X \rightarrow S$ of schemes which is flat, of finite presentation and lci is called
 \emph{syntomic} after Fontaine and Messing. Syntomic morphisms are stable under composition and base change
 (\cite[Tags 01UH, 01UI]{Stack}). In this case the virtual relative dimension of $X/S$ equals the
 dimension of fibers functions, which to a point $s \in S$ associates the dimension of $X_s=f^{-1}(\{s\})$.
 This can be seen by reducing to the case where $S$ is the spectrum of a field as the cotangent complex of $f$
 is stable under (naive) pullbacks.
\end{enumerate}
\end{ex}

\begin{num}\label{eq:can_iso_can_sheaf}
Let us consider a commutative diagram \eqref{eq:triangle_morph_schemes}
 such that the cotangent complexes of all three morphisms are perfect (for example, $f$ and $g$ are lci).
 Then the above homotopy exact sequence translates into a canonical isomorphism
 of invertible sheaves over $X$:
\begin{equation}\label{eq:rel_canonical}
\psi_\Delta:\omega_{X/S} \simeq \omega_{X/Y} \otimes (f^{*}\omega_{Y/S})
\end{equation}
\end{num}

\begin{rem}\label{rem:can_iso_can_sheaf}
It is also useful to consider commutative squares:
$$
\xymatrix@=15pt{
Y\ar^g[r]\ar_q[d]\ar@{}|\Theta[rd] & X\ar^p[d] \\
T\ar_f[r] & S.
}
$$
Dividing the square into two commutative triangles, and applying
 the preceding isomorphism for both triangles, one gets a canonical isomorphism:
$$
\psi_\Theta:\omega_{Y/X} \otimes (g^*\omega_{X/S}) \simeq \omega_{Y/T} \otimes (q^*\omega_{T/S}).
$$
When the preceding square is affine corresponding to a commutative square of rings:
$$
\xymatrix@=15pt{
D\ar@{}|\Theta[rd] & C\ar[l] \\
B\ar[u] & A\ar[u]\ar[l].
}
$$
one gets the following simpler form, an isomorphism of invertible $D$-modules:
$$
\psi_\Theta:\omega_{D/C} \otimes_C \omega_{C/A} \simeq \omega_{D/B} \otimes_B \omega_{B/A}
$$
where the tensor product on the left (resp. right) is taken with respect to
 the induced structure of $C$-module on $\omega_{D/C}$ (resp. $B$-module on $\omega_{D/B}$).
\end{rem}

\begin{ex}\label{ex:affine_complete_intersection}
Let us consider a finitely generated lci $A$-algebra $B$.
 We assume that there exists a smooth $A$-algebra $R$ and a regular ideal $I \subset R$
 such that $B \simeq R/I$ as an $A$-algebra
 so that we get an epimorphism $\varphi:R \rightarrow B$.\footnote{By Noether normalization,
 this will automatically be the case if $A$ is a field;
 moreover we can choose $R$ to be a polynomial $k$-algebra.}

Assume $\Spec(A)$ and $\Spec(R)$ are irreducible and let $n$ be the rank of the $A$-algebra $R$,
 $m$ be the height of $I$.
Then one can compute the canonical module of $B/A$ as:
$$
\Theta:\omega_{B/A} \simeq \omega_{B/R} \otimes_B (\omega_{R/A} \otimes_R B)
 \simeq \Lambda^m_B(I/I^2)^\vee \otimes_R \Omega^n_{R/A}.
$$
Indeed, as $I$ is regular, $I/I^2$ is a locally free $B$-module of
 constant rank $m$.

In general, we have an exact sequence of $B$-modules:
\begin{equation}\label{eq:differential_seq}
0 \rightarrow N \rightarrow I/I^2 \xrightarrow \psi \Omega_{R/A} \otimes_R B
 \xrightarrow{\varphi_*} \Omega_{B/A} \rightarrow 0
\end{equation}
where $N=\Ker(\psi)$, $\psi$ is induced by the composition
$$
I \hookrightarrow R \xrightarrow{d_R} \Omega_{R/A} \rightarrow \Omega_{R/A} \otimes_R B
$$
and the last map is induced by $\varphi:R \rightarrow B$.
 As recalled in \Cref{num:cotangent&canonical}, the cotangent complex
 $\cL_{B/A}$, that we view as a complex of $B$-modules,
 is concentrated in homological degree $[0,1]$ and one deduces from the above
 exact sequence and the homotopy exact sequence \eqref{eq:cotangent_htp_seq}
 the following isomorphisms:
\begin{align*}
H_0(\cL_{B/A}) & \simeq \Omega_{B/A}, \\
H_1(\cL_{B/A}) & \simeq N.
\end{align*}

When $B/A$ is \'etale, one gets $\Omega_{B/A}=N=0$, and $n=m$.
 So $\omega_{B/A}=B$ (this is really an identity),
 and the isomorphism 
 $\Theta:B \simeq \Lambda^n(I/I^2)^\vee \otimes_B \Lambda^n(\Omega_{R/A} \otimes_R B)$
 is obtained by transposing the isomorphism $\psi$.
\end{ex}

\begin{ex}\label{ex:field_extension_complete_inter}
 We consider a particular case of the preceding example,
 that of a finite field extension $L/K$.
 We can choose a set of generators $(\alpha_1,...,\alpha_n)$, $L=K[\alpha_1,...,\alpha_n]$.
 If we consider the polynomial $K$-algebra $R=K[t_1,...,t_n]$, then one can write
 $L=R/I$, and $I=(f_1,...,f_n)$
 where $f_i$ is a polynomial in the variables $t_1,...,t_i$, monogenic in $t_i$,
 which is a lift of the characteristic polynomial of the algebraic element $\alpha_i$
 of $L/K[\alpha_1,...,\alpha_{i-1}]$. Thus, $I$ is regular.
 Then we get from the previous example a canonical isomorphism:
\begin{equation}\label{eq:canonical_mod_field_ext}
\Theta:\omega_{L/K} \simeq \Lambda^n_L(I/I^2)^\vee \otimes_R \Omega^n_{R/K}.
\end{equation}
We then get an explicit basis of the invertible $B$-module $\omega_{B/A}$,
 given by the element:
\begin{equation}\label{eq:canonical_base_field_ext}
(\bar f_1 \wedge \hdots \wedge \bar f_n)^* \otimes (dt_1 \wedge \hdots \wedge dt_n).
\end{equation}

If $L/K$ is \underline{separable}, as explained at the end of the previous example,
 $\Omega^1_{L/K}=0$, so $\omega_{L/K}=L$. According to the description
 of $\psi$, one obtains that the element
 \eqref{eq:canonical_base_field_ext} goes under $\Theta^{-1}$ to the unit:
$$
\big(f'_1(\alpha_1)f'_2(\alpha_1,\alpha_2)
 \hdots f'_n(\alpha_1,\hdots,\alpha_n)\big)^{-1} \in L^\times.
$$
We have to take the inverse of the obvious element as $\Theta$ is obtained after transposition,
 as seen in the end of the previous example.

Let us assume on the contrary that $L/K$ is \underline{totally inseparable}.
 Let $p>0$ be the characteristic of $K$.
 Then $\alpha_i=(a_i)^{1/q_i}$, $a_i \in K-K^p$. Moreover, in the sequence
 \eqref{eq:differential_seq} with $B/A=L/K$, one obtains that $\psi=0$.
 In other words, one gets isomorphisms:
\begin{align*}
\varphi_*:\Omega_{R/K} \otimes_R L \xrightarrow \sim \Omega_{L/K}, \\
N \simeq I/I^2.
\end{align*}
In particular, $(d\alpha_1,...,d\alpha_n)$, which is the image of $(dt_1,...,dt_n)$ by 
the isomorphism $\varphi_*$, is an $L$-basis of $\Omega_{L/K}$,
 which can be identified to $H_0(\cL_{L/K})$.
 Similarly, $N \simeq I/I^2$ can be identified with $H_1(\cL_{L/K})$,
 and an $L$-basis is given by $(\bar f_1,...,\bar f_n)$
 --- each $\bar f_i$ goes to zero in $\Omega_{R/K} \otimes_R L$.
\end{ex}

\begin{rem}
In the case of a totally inseparable extension $L/K$, $\FF_p \subset K$,
 one defines the \emph{imperfection module} $\Gamma_{L/K}$ of $L/K$
 by the following short exact sequence:
$$
0 \rightarrow \Gamma_{L/K} \rightarrow \Omega_{K/\FF_p} \otimes_K L
 \rightarrow \Omega_{L/\FF_p} \rightarrow \Omega_{L/K} \rightarrow 0.
$$
One deduces that $H_1(\cL_{L/K}) \simeq \Gamma_{L/K}$. In particular,
 with the notations of the previous paragraph, the imperfection module $\Gamma_{L/K}$
 is an $n$-dimensional $L$-vector space which is isomorphic
 to $I/I^2$.
\end{rem}

\subsection{The quadratic degree map}\label{sec:quad-deg+traces}

\begin{num}\label{num:homological_quad-cycle_P1}
We will now come back to \Cref{thm:PB1} and give its fundamental application to build transfers
 on Milnor-Witt K-theory.

Let $k$ be an arbitrary field,
 and $\omega=\omega_{\PP^1_k/k}$ be the canonical sheaf on $\PP^1_k$ (\Cref{df:can_sheaf}),
 and let $\infty$ (resp. $\eta$) be the point at infinity (resp. generic point) of $\PP^1_k$.
 We first rewrite the quadratic divisor class map in \emph{homological} conventions.
 Consider a point $x \in \PP^1_k$ with residue field $\kappa_x$.
 Note that $\kappa_x/k$ is not necessarily separable so the canonical sheaf $\omega_{\kappa_x/k}$ can be non-trivial. 
 Nevertheless, the commutative diagram
$$
\xymatrix@=5pt{
\Spec(\kappa_x)\ar^{x}[rr]\ar[rd] && \PP^1_k\ar[ld] \\
& \Spec(k) &
}
$$
gives a canonical isomorphism $\psi^x:\omega_{\kappa_x/k} \simeq \omega_{x/\PP^1_k} \otimes \omega|_x$ --- see \eqref{eq:rel_canonical}.

In particular, the quadratic divisor class map for $\PP^1_k/k$ in $\GG$-degree $q \in \ZZ$
 and with twists $\omega$ can be rewritten as:
$$
\tdiv:\KMW{q+1}\big(k(t),\omega_{k(t)/k}\big)
 \longrightarrow \bigoplus_{x \in \PP^1_{k,(0)}} \KMW q(\kappa_x,\omega_{\kappa_x/k})=:\tilde C_0(\PP^1_k)_q.
$$
Recall that $\tdiv$ is the sum of the residue maps
 $\partial_x:\KMW{q+1}\big(k(t),\omega_{k(t)/k}\big) \rightarrow \KMW q(\kappa_x,\omega_{\kappa_x/k})$
 for $x$ a closed point in $\PP^1_k$, corresponding to a valuation $v_x$ on $k(t)$ with residue field $\kappa_x$.

The cokernel of $\tdiv$ equals the Chow-Witt group $\CHW 1(\PP^1_k,\omega)$ and,
 as the line bundle $\omega$ has even degree, \Cref{thm:PB1} and paragraph \ref{num:PB1}
 tells us that the pushforward map
$$
i^\infty_*:\KMW q(k) \rightarrow \CHW 1(\PP^1_k,\omega)_q
$$
is an isomorphism. Let us introduce the following definition.\footnote{This is the mother case
 of the degree map on Chow-Witt groups. See \cite[1.4.1, Ex. 4.1.6]{DFJ}.}
\end{num}
\begin{df}\label{df:quad_deg}
Using the above notation, we denote by
$$
\tdeg_q:\CHW 1 (\PP^1_k,\omega)_q \rightarrow \KMW q(k)
$$
the inverse of the isomorphism $i^\infty_*$
 and call it the \emph{quadratic degree map} in $\GG$-degree $q$ (associated with $\PP^1_k$).
\end{df}
In degree $q=0$, we therefore get a map:
$$
\tdeg:\CHW 1 (\PP^1_k,\omega) \rightarrow \GW(k).
$$

Following Bass and Tate (\cite[I.5.4]{BT}) and Morel (\cite[\textsection 4.2]{Mor}),
 we can be more precise about this notion of quadratic degree.
\begin{prop}\label{prop:BT}
Consider the above assumptions and notation.
 Then there exists a \emph{unique} family of maps
$$
\Tr^{MW}_{\kappa_x/k}:\KMW q(\kappa_x,\omega_{\kappa_x/k}) \rightarrow \KMW q (k), x \in \PP^1_{k,(0)}
$$
which fits into the following commutative diagram
$$
\xymatrix@R=16pt@C=80pt{
& \KMW q(k)\ar@{^(->}_{i^\infty_*}[d]\ar@{=}[rd] & \\
\KMW{q+1}\big(k(t),\omega_{k(t)/k}\big)\ar_-{\tdiv}[r] & \tilde C_0(\PP^1_k)_q\ar_-{\sum_x \Tr^{MW}_{\kappa_x/k}}[r]
 & \KMW q(k) 
}
$$
in such a way that the composition of the horizontal maps is zero.
\end{prop}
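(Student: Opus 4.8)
The plan is to obtain the statement as a formal consequence of \Cref{thm:PB1}. Write $\pi\colon \tilde C_0(\PP^1_k)_q\twoheadrightarrow \CHW 1(\PP^1_k,\omega)_q$ for the canonical projection onto the cokernel of $\tdiv$, and recall from \Cref{num:homological_quad-cycle_P1} that, because $\omega=\cO(-2)$ has even degree, the pushforward $i^\infty_*\colon \KMW q(k)\to \CHW 1(\PP^1_k,\omega)_q$ is an isomorphism whose inverse is the quadratic degree map $\tdeg_q$ of \Cref{df:quad_deg}; on the level of cycles this $i^\infty_*$ factors as $\pi$ precomposed with the inclusion $\iota_\infty\colon \KMW q(k)=\KMW q(\kappa_\infty,\omega_{\kappa_\infty/k})\hookrightarrow \tilde C_0(\PP^1_k)_q$ of the component indexed by the rational point $\infty$.

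First I would establish existence. Set $\Phi:=\tdeg_q\circ\pi\colon \tilde C_0(\PP^1_k)_q\to \KMW q(k)$ and, for each closed point $x$, let $\Tr^{MW}_{\kappa_x/k}:=\Phi\circ\iota_x$ be the restriction of $\Phi$ to the $x$-component, where $\iota_x$ is the corresponding inclusion into the direct sum. Then $\sum_x\Tr^{MW}_{\kappa_x/k}=\Phi$ by the universal property of the direct sum. The composite of the two horizontal maps in the diagram is $\Phi\circ\tdiv=\tdeg_q\circ(\pi\circ\tdiv)=0$, since $\pi\circ\tdiv=0$ by the definition of $\pi$; and the upper triangle commutes because $\Phi\circ\iota_\infty=\tdeg_q\circ(\pi\circ\iota_\infty)=\tdeg_q\circ i^\infty_*=\Id$, which in particular gives $\Tr^{MW}_{\kappa_\infty/k}=\Id$.

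Then I would prove uniqueness. If a family $\{\Tr^{MW}_{\kappa_x/k}\}$ makes the horizontal composite vanish, then $\sum_x\Tr^{MW}_{\kappa_x/k}$ kills $\Img(\tdiv)$, hence factors as $\bar\Phi\circ\pi$ for a unique $\bar\Phi\colon \CHW 1(\PP^1_k,\omega)_q\to \KMW q(k)$. Commutativity of the upper triangle then reads $\bar\Phi\circ i^\infty_*=\Id$, and since $i^\infty_*$ is an isomorphism this forces $\bar\Phi=\tdeg_q$; hence $\sum_x\Tr^{MW}_{\kappa_x/k}=\tdeg_q\circ\pi=\Phi$, and each $\Tr^{MW}_{\kappa_x/k}=\Phi\circ\iota_x$ is the map constructed above.

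The genuine content of the proposition is thus carried entirely by \Cref{thm:PB1}; what remains is diagram chasing, with no real obstacle. The only point requiring some care is the bookkeeping of twists and of notation around $\infty$: one has to check that the two maps denoted $i^\infty_*$ --- the cycle-level inclusion of the $\infty$-component appearing in the diagram, and the isomorphism onto $\CHW 1(\PP^1_k,\omega)_q$ of \Cref{num:PB1} --- differ precisely by the quotient $\pi$, using the canonical identification $\omega_{\kappa_\infty/k}\simeq\omega_\infty\otimes\omega|_\infty\simeq k$ at the $k$-rational point $\infty$ coming from \eqref{eq:rel_canonical}.
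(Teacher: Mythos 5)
Your proposal is correct and is precisely the formal diagram chase the paper has in mind (the paper states the result citing Bass--Tate and Morel, then immediately records the cycle-level description $\tdeg_q=\sum_x \Tr^{MW}_{\kappa_x/k}$ as a consequence). You correctly identify that the content is carried by \Cref{thm:PB1}/\Cref{num:PB1}, i.e.\ the fact that $i^\infty_*$ is an isomorphism onto the cokernel of $\tdiv$, and that a map out of the direct sum $\tilde C_0(\PP^1_k)_q$ is determined by its components; the attention you draw to the conflation of $i^\infty_*$ at the cycle level with the induced isomorphism on $\CHW 1$ is exactly the one bookkeeping point worth making explicit.
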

In particular, the quadratic degree map is defined at the level of cycles:
$$
\tdeg_q:=\sum_{x \in \PP^1_{k,(0)}} \Tr^{MW}_{\kappa_x/k}:\tilde C_0(\PP^1_k)_q \rightarrow \KMW q(k).
$$
The last condition in the above statement can be translated by saying that
 the quadratic $0$-cycles of degree $0$ on $\PP^1_k$
 are exactly the principal (\emph{i.e.} rationally trivial) quadratic divisors
 (using the terminology of \Cref{num:quad_div}).

\begin{rem}\label{rem:Weil_RC}
\begin{enumerate}[wide]
\item Note that the commutative triangle corresponds to the normalization
 property: $\Tr^{MW}_{\kappa_\infty/k}=\Id$.
\item The formula $\tdeg \circ \tdiv=0$ is the quadratic analogue of the \emph{Weil reciprocity formula}.
 Given the preceding normalization property, it can be restated
 as the following equation:
$$
\sum_{x \in (\AA^1_k)_{(0)}} \Tr^{MW}_{\kappa_x/k} \circ \partial_x=-\partial_\infty.
$$
\end{enumerate}
\end{rem}

\begin{num}\label{num:quad&classic_deg}\textit{Quadratic and classical degree}.
As the pushforward morphism $i^\infty_*$ is compatible with the forgetful and hyperbolic maps of \Cref{num:CHW&CH},
 we get by definition the following commutative diagram:
$$
\xymatrix@R=14pt@C=28pt{
\CH^1(\PP^1_k)\ar^-\hyper[r]\ar_{\deg}[d] & \CHW 1 (\PP^1_k,\omega) \ar^-\forget[r]\ar^{\tdeg}[d] & \CH^1(\PP^1_k)\ar^{\deg}[d] \\
\ZZ\ar^-\hyper[r] & \GW(k)\ar^-{\rk}[r] & \ZZ.
}
$$
Specializing at a point $x \in \PP^1_k$ with residue field $\kappa_x$ as before,
 one gets the following computation:
\begin{align}
&\forall n \in \ZZ, \Tr^{MW}_{\kappa_x/k}(n.h)=(d_xn).h, \\
&\forall \sigma \in \GW(\kappa_x,\omega_{\kappa_x/k}), \rk\big(\Tr^{MW}_{\kappa_x/k}(\sigma)\big)=d_x.\rk(\sigma).
\end{align}
where $d_x=[\kappa_x:k]$.
\end{num}

\begin{rem}
Notice in particular that every quadratic cycle
 which comes from the hyperbolic map will have a degree of the form $n.h$
 for $n \in \ZZ$ and $h$ the hyperbolic form.
\end{rem}

\begin{num}\label{num:MWtr-monogenic}\textit{Transfers in the monogenic case}.
Let $E/k$ be a monogenic finite field extension. Giving a generator $\alpha \in E$ is equivalent to giving a closed embedding
 $x:\Spec(E) \rightarrow \AA^1_k$, corresponding to the (monogenic) minimal polynomial of $\alpha$ in $E$.
 Therefore, the preceding proposition gives for any integer $q \in \ZZ$ a well-defined transfer map:
$$
\Tr^{MW,\alpha}_{E/k}:\KMW q(E,\omega_{E/k}) \rightarrow \KMW q (k),
$$
which a priori depends on the chosen parameter $\alpha$.

We also define an $\cL$-twisted version, for an invertible $k$-vector space $\cL$, as follows:
$$
\begin{array}{rcl}
\Tr^{MW,\alpha}_{E/k}:\KMW q(E,\omega_{E/k} \otimes \cL_E) &\rightarrow &\KMW q(k,\cL), \\
 \sigma \otimes w \otimes l &\mapsto& \Tr^{MW,\alpha}_{E/k}\big(\tw u\sigma \otimes w\big) \otimes l'
\end{array}
$$
where $\sigma \in \KMW q(E)$,
 $w \in \omega_{E/k}^\times$, $l \in \cL_E^\times$ and we have written: $l=l' \otimes u$ for $l' \in \cL^\times$, $u \in E^\times$
 (as according to our notation $\cL_E=\cL \otimes_k E$).
\end{num}

\begin{rem}
\begin{enumerate}
\item We will see in \Cref{prop:comparison_MW-transfers} that the above transfers are independent of the generator $\alpha$
 (and extend its definition to the non necessarily monogenic case).
\item Our construction is a variation on Morel's one, as done in \cite[\textsection 4.2, 5.1]{Mor}.
 The main difference is that one uses appropriate twists (by canonical sheaves)
 which allows us to work over an arbitrary base field $k$, in particular allowing inseparable extensions from the start.
\end{enumerate}
\end{rem}

\begin{ex}\label{ex:algorithm-norms}
One can derive from \Cref{prop:BT} the following way to compute the above trace map,
 for a monogenic extension $E/k$ and an explicit presentation $E=k[t]/(f)$,
 where $f$ is the minimal polynomial of the chosen generator $\alpha$.
 Let $v_f$ be the valuation on $k(t)$ corresponding to $f$.

Consider an element $\sigma \in \KMW n(E,\omega_{E/k})$.
 According to \Cref{thm:htp_inv_fields}, there exists an element $\varphi \in \KMW{n+1}(k(t),\omega_{k(t)/k})$
 such that for any maximal ideal of $k[t]$, corresponding to a valuation $v$,
$$
\partial_v(\varphi)=\begin{cases}
\sigma & v=v_f, \\
0 & \text{otherwise.}
\end{cases}
$$
Then, one deduces from \Cref{rem:Weil_RC}(2) that
$$
\Tr^{MW,\alpha}_{E/k}(\sigma)=-\partial_\infty(\varphi)
$$
where $\partial_\infty$ is the residue map corresponding
 to the place at infinity of $k(t)$.
\end{ex}

\begin{num}
Bass and Tate method, already mentioned, was applied to Milnor K-theory (\cite[I.5.4]{BT}).
 They constructed the transfer map on Milnor K-theory for monogenic finite extensions,
 and later, Kato proved that these transfers extend to arbitrary finite extensions $E/k$
 (\cite[\textsection 1.7, Prop. 5]{KatoNorm}),
 giving a transfer map\footnote{Kato called this the norm homomorphism}:
$$
\Tr^M_{E/k}:\KM *(E) \rightarrow \KM *(k).
$$
In particular, when $E/k$ is monogenic, this map coincides with Bass-Tate morphism
 for any choice of generator $\alpha$ of $E/k$.
 As the (twisted) hyperbolic and forgetful maps
 (\Cref{df:twistedKMW&KM}) are compatible with residues (\Cref{rem:KMW&KM_residues})
 we easily derive from the above construction the following compatibility lemma
 (extending \Cref{num:quad&classic_deg}).
\end{num}
\begin{lm}\label{lm:mono_transf_F&H}
Let $E/k$ be a monogenic finite extension with generator $\alpha \in E$.
 Then the following diagram is commutative:
$$
\xymatrix@R=22pt@C=30pt{
\KM*(E)\ar^-{\hyper}[r]\ar_{\Tr^M_{E/k}}[d]
 & \KMW*(E,\omega_{E/k}) \ar^-{\forget}[r]\ar^{\Tr^{MW,\alpha}_{E/k}}[d]
 & \KM*(E)\ar^{\Tr^M_{E/k}}[d] \\
\KM*(k)\ar^-{\hyper}[r] & \KMW*(k)\ar^-{\forget}[r] & \KM*(k).
}
$$
\end{lm}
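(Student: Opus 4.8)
The plan is to deduce the lemma from the compatibility, established in \Cref{num:CHW&CH}, between the Rost--Schmid complex of $\PP^1_k$ twisted by $\omega=\omega_{\PP^1_k/k}$ and the divisor complex for Milnor K-theory, together with the fact, recorded in \Cref{num:quad&classic_deg}, that the pushforward $i^\infty_*$ intertwines the hyperbolic and forgetful maps on $\PP^1_k$ and on $\Spec(k)$. The starting point is that both transfers in the statement are, by construction, single components of a degree map on $\PP^1_k$. Write $x\colon\Spec(E)\to\AA^1_k$ for the closed point cut out by the minimal polynomial of $\alpha$; then $\Tr^{MW}_{E/k,\alpha}=\Tr^{MW}_{\kappa_x/k}$ is the $x$-summand of $\tdeg_q=\sum_y\Tr^{MW}_{\kappa_y/k}$ by \Cref{num:MWtr-monogenic} and \Cref{prop:BT}, and by the theorems of Bass--Tate and Kato (\cite[I.5.4]{BT}, \cite[\textsection 1.7, Prop. 5]{KatoNorm}) the Milnor transfer $\Tr^M_{E/k}=\Tr^M_{\kappa_x/k}$ is likewise the $x$-summand of the classical degree map $\deg_q=\sum_y\Tr^M_{\kappa_y/k}$, this degree map being the inverse of $i^\infty_*$ on $\CH^1(\PP^1_k)_q$ exactly as $\tdeg_q$ is the inverse of $i^\infty_*$ on $\CHW 1(\PP^1_k,\omega)_q$.

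First I would note that the commutative diagram of \Cref{num:CHW&CH} holds in every $\GG$-degree $q$, not only in degree $0$: it commutes term by term over each closed point because the twisted hyperbolic and forgetful maps commute with residues (\Cref{rem:KMW&KM_residues}), and the left-hand column commutes because $\hyper$ and $\forget$ are graded ring maps compatible with corestriction along $\varphi\colon k\to k(t)$. Passing to cokernels and invoking the compatibility of $i^\infty_*$ with $\hyper$ and $\forget$ from \Cref{num:quad&classic_deg} (again in arbitrary $\GG$-degree), together with $\tdeg_q=(i^\infty_*)^{-1}$ and its Milnor analogue, one obtains a commutative diagram
$$
\xymatrix@R=14pt@C=26pt{
\CH^1(\PP^1_k)_q\ar^-{\hyper}[r]\ar_{\deg_q}[d] & \CHW 1(\PP^1_k,\omega)_q\ar^-{\forget}[r]\ar^{\tdeg_q}[d] & \CH^1(\PP^1_k)_q\ar^{\deg_q}[d] \\
\KM q(k)\ar^-{\hyper}[r] & \KMW q(k)\ar^-{\forget}[r] & \KM q(k).
}
$$

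Next I would lift this to the level of cycles. Since $\tdeg_q=\sum_y\Tr^{MW}_{\kappa_y/k}$ and $\deg_q=\sum_y\Tr^M_{\kappa_y/k}$ are already defined on the whole cycle groups $\tilde C_0(\PP^1_k)_q$ and $Z^1(\PP^1_k)_q$, vanish on principal (quadratic) divisors by \Cref{prop:BT}, and since $\sum_y\hyper_y$ and $\sum_y\forget_y$ carry principal divisors to principal divisors by \Cref{num:CHW&CH}, the two squares above already hold at the level of cycle groups. Restricting to the single summand indexed by the point $x$ attached to $\alpha$ --- legitimate because all four maps are direct sums of their components over closed points --- and identifying that summand of $\tilde C_0(\PP^1_k)_q$ with $\KMW q(E,\omega_{E/k})$ via the canonical isomorphism $\psi^x$ of \Cref{num:homological_quad-cycle_P1}, together with $\forget_x$, $\hyper_x$ with the twisted maps of \Cref{df:twistedKMW&KM}, the two squares become precisely the commutative diagram asserted in the lemma.

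The only delicate points are bookkeeping ones rather than anything deep: one must check that the component at $x$ of the degree map on $\PP^1_k$ for Milnor K-theory is genuinely Kato's norm homomorphism $\Tr^M_{E/k}$ (this is Bass--Tate's definition in the monogenic case, together with Kato's theorem that it is independent of the generator and extends to all finite extensions), and that the twist identifications $\psi^x$ are applied consistently on both sides --- harmlessly on the Milnor side, where $\KM*(\kappa_x,-)=\KM*(\kappa_x)$ canonically as in \Cref{ex:KM_trivial_tw}. Once these identifications are fixed, the verification is purely formal.
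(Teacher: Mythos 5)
Your argument is correct and implements exactly what the paper leaves as an ``easy derivation'': since $\hyper$ and $\forget$ commute with residues (\Cref{rem:KMW&KM_residues}), they intertwine the divisor class maps of $\PP^1_k$ on the Milnor-Witt and Milnor sides, and the two monogenic transfers are the $x$-components of the corresponding degree maps, uniquely pinned down by \Cref{prop:BT} and the Bass--Tate/Kato normalization. Your passage through the cycle-level squares --- observing that both composites kill principal divisors, hence factor through the Chow-group quotient where they coincide with $(i^\infty_*)^{-1}$ and its Milnor analogue --- is a sound and faithful expansion of the paper's intended (but unwritten) uniqueness argument.
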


%

\subsection{A variation on Scharlau's quadratic reciprocity property}

\begin{num}\label{num:differential_MW_transfer}
Let $E/k$ be a finite extension field.
 Recall that Scharlau has defined in \cite{SchRec} (Definition p. 79) a notion of transfer maps for Witt groups,
 depending on the choice of a $k$-linear map $s:E \rightarrow k$.\footnote{These maps are denoted by $s^*:W(E) \rightarrow W(k)$
 in \emph{loc. cit.}, but we will prefer the notation $s_*$ (for obvious reasons).
 Scharlau originally considered fields of characteristic not $2$ but the definition makes sense in arbitrary characteristic.
 Moreover, one can replace non-degenerate quadratic forms by non-degenerate symmetric bilinear forms.}

Using the differential trace map $\Tr^\omega_{E/k}:\omega_{E/k} \rightarrow k$
 (see \Cref{df:diff_trace}), it is possible to give a uniform definition, which does not
 depend on such a choice. Moreover, we will see that it coincides with the trace maps $\Tr^{MW,\alpha}_{E/k}$
 just defined in degree $q \leq 0$.

The definition is very similar to Scharlau's definition, but motivated by the form
 of MW-transfers (see \ref{num:MWtr-monogenic}), we use $\mathcal L$-valued inner product spaces (see \Cref{num:twisted_GW}).
  Given an arbitrary $\omega_{E/k}$-valued inner product space $\Phi:V \otimes_E V \rightarrow \omega_{E/k}$,
 one can consider the composite map 
$$
\Tr^\omega_{E/k} \circ \Phi:V \otimes_k V
 \rightarrow V \otimes_E V \xrightarrow{\ \Phi\ } \omega_{E/k}
 \xrightarrow{\Tr^\omega_{E/k}} k,
$$
which is again a non-degenerate symmetric bilinear $k$-form.
 It is compatible with isomorphisms and orthogonal sums,
 therefore it induces a well-defined map:
$$
\begin{array}{rcl}
\Tr^\omega_{E/k*}:\GW(E,\omega_{E/k}) & \rightarrow & \GW(k) \\
\lbrack \Phi \rbrack & \mapsto & \lbrack \Tr^\omega_{E/k} \circ \Phi \rbrack.
\end{array}
$$
As, by definition, the map $\Tr^\omega_{E/k}:\omega_{E/k} \rightarrow k$ is $k$-linear,
 one deduces that $\Tr^\omega_{E/k*}$ is $\GW(k)$-linear (recall the $\GW(k)$-action
 on both sides from \Cref{num:twisted_GW}). 
\end{num}
\begin{df}\label{df:GW-transfers}
Let $E/k$ be an arbitrary finite extension fields.
 We call the $\GW(k)$-linear morphism $\Tr^\omega_{E/k*}:\GW(E,\omega_{E/k}) \rightarrow \GW(k)$ just defined the \emph{(differential) GW-transfer map}.

Modding out by the ideal $(h)$, one gets a (differential) W-transfer map 
 that we still denote: $\Tr^\omega_{E/k*}:\W(E,\omega_{E/k}) \rightarrow \W(k)$.
\end{df}

\begin{ex}\label{ex:GW_tr_separable}
If $E/k$ is separable, then $\omega_{E/k}=E$ and $\Tr^\omega_{E/k}$
 is just the usual trace map: $\Tr_{E/k}:E \rightarrow k$ (see \Cref{cor:Tr^omega_etale}).
 In particular, $\Tr^\omega_{E/k*}=\Tr_{E/k*}:\GW(E) \rightarrow \GW(k), \W(E) \rightarrow \W(k)$
 is the usual Scharlau transfer associated with the trace ``form'' $\Tr_{E/k}$.

In the inseparable case on the contrary, $\Tr_{E/k}=0$. The link
 with Scharlau traces will be explained in \Cref{rem:comp_Scharlau_trace}.
\end{ex}

\begin{ex}\label{ex:GW-diff&Tate_traces}
One can compute the GW-differential transfer maps more explicitly.

Consider a monogenic field extension $E/k$ of degree $d$,
 written as $E=k[\alpha]$.
 Let $f$ be the minimal polynomial of $\alpha$, so that for $I=(f)$, $E=k[t]/I$.
 Then, as explain in \Cref{ex:field_extension_complete_inter},
 $\omega_{E/k} \simeq (I/I^2)^\vee \otimes \omega_{k[t]/k}$.
 In particular, the invertible $k$-vector space $\omega_{E/k}$ admits an explicit base
 given by the element $\bar f^* \otimes dt$.
 In particular, any $\omega_{E/k}$-valued inner product space 
$$
\Phi:V \otimes_E V \rightarrow \omega_{E/k}
$$
can be written as $(x,y) \mapsto \phi(x,y) \otimes_k (\bar f^* \otimes dt)$
 where $\phi:V \otimes_E V \rightarrow E$ is an inner product space.

With this notation, \Cref{cor:compute_residues_SS} gives the following computation:
$$
\Tr^\omega_{E/k} \circ \big(\phi \otimes (\bar f^* \otimes dt)\big)=
 \tau_{E/k}^\alpha \circ \phi=:\tau_{E/k*}^\alpha(\phi),
$$
where we recall that the \emph{Tate trace map} $\tau_{E/k}^\alpha:E \rightarrow k$ is the $k$-linear form
 associated with the element $\alpha^{d-1}$ of the $k$-base $(1,\alpha,\hdots,\alpha^{d-1})$ of $E$.

When $E/k$ is non monogenic, one writes $E=k[\alpha_1,\hdots,\alpha_n]=k[t_1,\hdots,t_n]/(f)$,
 where $f=(f_1,...,f_n)$ for monic polynomials $f_i \in k[t_1,\hdots,t_n]$.
 Then combining the notation \Cref{ex:field_extension_complete_inter} and \Cref{prop:compute_residues_SS},
 one gets the formula:
$$
\Tr^\omega_{E/k} \circ \Big(\phi \otimes \big((\bar f_1 \wedge \hdots \wedge \bar f_n)^* \otimes (dt_1 \wedge \hdots dt_n)\big)\Big)=
 \tau_f \circ \phi,
$$
where $\tau_f$ is the Scheja-Storch trace map (\Cref{df:SS_trace}) associated with the presentation $f$ of $E/k$.
\end{ex}

\begin{rem}\textit{Comparison with Scharlau transfer}.\label{rem:comp_Scharlau_trace}
A particular case of Grothendieck duality (see \Cref{num:trivial_duality}) gives the following
 isomorphism:
$$
\omega_{E/k} \xrightarrow \sim \Hom_k(E,k), w \mapsto s_w:=\Tr^\omega_{E/k}(-.w).
$$
According to \Cref{ex:KMW_negative&GW}, one has a canonical identification
 $\GW(E,\omega_{E/k}) \simeq \GW(E) \otimes_{\ZZ[E^\times]} \ZZ[\omega_{E/k}^\times]$.
 With this notation, one can see that the above transfers incorporate all Scharlau's transfer maps
 at once: for $ \sigma \in \GW(E)$ and a non-zero $w \in \omega_{E/k}$, one gets:
$$
\Tr^\omega_{E/k*}(\sigma \otimes w)=s_{w*}(\sigma).
$$
\end{rem}

\begin{num}\label{num:basic_GW-transfer}
One easily derives from the previous definition the following basic properties of
 the differential GW-trace map, for a finite extension $\varphi:k \rightarrow E$ of degree $d$:
\begin{enumerate}[wide]
\item For any $\sigma \in \GW(E,\omega_{E/k})$, one has
 $\rk(\Tr^\omega_{E/k}(\sigma))=d.\rk(\sigma)$.
\item If $L/E$ and $E/k$ are finite extensions, $\Tr^\omega_{L/E*} \circ \Tr^\omega_{E/k*}=\Tr^\omega_{L/k*}$
 where we have hidden the canonical isomorphism $\omega_{L/k} \simeq \omega_{L/E} \otimes_L \omega_{E/k}$
 (\Cref{eq:can_iso_can_sheaf}).\footnote{This follows from the functoriality of the differential trace map.}
\item For $\sigma \in \GW(E,\omega_{E/k})$, $\sigma' \in \GW(k)$, one has:
 $\Tr^\omega_{E/k}(\sigma.\varphi_*(\sigma'))=\Tr^\omega_{E/k}(\sigma).\sigma'$.\footnote{This is generically
 called the projection formula, and more specifically \emph{Frobenius reciprocity}
 in the theory of quadratic forms (\cite[p. 80]{SchRec}).}
\end{enumerate}
\end{num}

The main result for the GW-differential transfer map is the following \emph{quadratic reciprocity formula}
 which extends to the Milnor-Witt case a formula due to Scharlau first proved in \cite[Th. 4.1]{SchRec}, with a similar proof.
\begin{thm}\label{thm:KMW_GW-differential_Reciprocity}
Let $k$ be an arbitrary field.
 Then the following formula holds:
\begin{equation}\label{eg:KMW_GW-differential_Reciprocity}
\sum_{x \in (\PP^1_k)_{(0)}} \Tr^\omega_{\kappa_x/k*} \circ \partial_x=0,
\end{equation}
as maps $\KMW 1 \big(k(t),\omega_{k(t)/t}\big) \rightarrow \GW(k)$.
 Here, the map $$\partial_x:\KMW{1}\big(k(t),\omega_{k(t)/k}\big) \rightarrow \GW(\kappa_x,\omega_{\kappa_x/k})$$
 stands for the residue map associated with the discrete valuation on $k(t)$ associated to the closed point $x \in \PP^1_k$
 (see \Cref{num:homological_quad-cycle_P1}).
\end{thm}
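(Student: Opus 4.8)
The plan is to reduce the formula to the cartesian square of \Cref{prop:twisted_fdl_square_KMW} and there to two reciprocity statements, one elementary and one classical. Throughout write $\Phi:=\sum_{x\in(\PP^1_k)_{(0)}}\Tr^\omega_{\kappa_x/k*}\circ\partial_x\colon\KMW 1\big(k(t),\omega_{k(t)/k}\big)\longrightarrow\GW(k)$ for the map to be shown to vanish.

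First I would observe that the target $\GW(k)=\KMW 0(k)$ is, by \Cref{cor:fundamental_square_KMW} in $\ZZ$-degree $0$, the fibre product $\KM 0(k)\times_{\gI 0(k)}\I 0(k)=\ZZ\times_{\ZZ/2}\W(k)$; in particular the pair $(F_0,\mu'_0)$ embeds $\GW(k)$ into $\ZZ\times\W(k)$, so it is enough to prove $F_0\circ\Phi=0$ and $\mu'_0\circ\Phi=0$ separately. This requires knowing that both the residues $\partial_x$ and the differential transfers $\Tr^\omega_{\kappa_x/k*}$ are compatible with $F$ and with $\mu'$: the residues commute with $F$ and with $\mu'$ by \Cref{rem:KMW&KM_residues} (the twists being irrelevant on Milnor K-theory, \Cref{ex:KM_trivial_tw}); the differential $\W$-transfer is by construction the reduction modulo $h$ of the differential $\GW$-transfer (\Cref{df:GW-transfers}), so the latter is intertwined with the former by the projections to the Witt ring; and in $\ZZ$-degree $0$ one has $F_0\circ\Tr^\omega_{\kappa_x/k*}=\Tr^M_{\kappa_x/k}\circ F_0$ because both sides are multiplication by $[\kappa_x:k]$ on ranks (\Cref{num:basic_GW-transfer}(1)).

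Granting these compatibilities, $F_0\circ\Phi=\big(\sum_x\Tr^M_{\kappa_x/k}\circ\partial^M_x\big)\circ F_1$, and this vanishes already on the generator $\{f\}$ of $\KM 1\big(k(t)\big)=k(t)^\times$, since $\sum_x[\kappa_x:k]\,v_x(f)=\deg\big(\Div_{\PP^1_k}(f)\big)=0$: this is the most elementary instance of the reciprocity law for Milnor K-theory (\cite[I.5.4]{BT}). On the other side, $\mu'_0\circ\Phi$ factors as $\mu'_1$ followed by the transfer-of-second-residue operator $\W\big(k(t),\omega_{k(t)/k}\big)\to\W(k)$ built from the second residue maps of \Cref{ex:Witt_res}; by \Cref{rem:comp_Scharlau_trace} this last operator is precisely Scharlau's transfer-of-residue operator for the compatible family of $k$-linear forms cut out on the residue fields $\kappa_x$ by the canonical differential $dt$, so its vanishing is the content of \cite[Th.~4.1]{SchRec}. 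I would reprove that vanishing by Scharlau's argument, taking care to phrase everything in terms of symmetric bilinear forms so as to remain valid in arbitrary characteristic: using \Cref{thm:GW}, the group $\W\big(k(t),\omega_{k(t)/k}\big)$ is generated by the classes $\tw f\otimes dt$ with $f\in k(t)^\times$; the relation $\tw{fg^2}=\tw f$ lets one assume $f=c\prod_i p_i$ with $c\in k^\times$ and the $p_i$ distinct monic irreducibles; and an induction on the number of irreducible factors reduces to $f=c$ (where every residue vanishes, as $v_x(c)=0$ for all closed $x$) and to $f=p$ a single monic irreducible polynomial (where only the residues at the place $(p)$ and at $\infty$ survive).

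The main obstacle is this last, monic-irreducible case: one must check that the two surviving contributions cancel, which comes down to a careful bookkeeping of the twisting invertible $\kappa_x$-spaces through the canonical isomorphisms $\omega_{\kappa_x/k}\simeq\omega_{x/\PP^1_k}\otimes\omega_{\PP^1_k/k}|_x$ of \eqref{eq:rel_canonical}, together with the fact — recorded in \Cref{ex:GW-diff&Tate_traces} through the Sheja-Storch/Tate trace description — that the differential traces at $(p)$ and at $\infty$ are the local components at those two places of one and the same residue pairing attached to $dt$. Once this cancellation is in place, the two identities $F_0\circ\Phi=0$ and $\mu'_0\circ\Phi=0$ force $\Phi=0$ by the injectivity of $(F_0,\mu'_0)$, which completes the argument.
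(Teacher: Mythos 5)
Your organizing idea is sound and, in substance, matches what the paper does: the paper first checks that the \emph{rank} of the total expression vanishes (which is precisely $F_{0}\circ\Phi=0$, the degree-zero Milnor reciprocity of Bass–Tate) and then reduces to the vanishing of its class in $\W(k)$ (which is $\mu'_{0}\circ\Phi=0$). Your reformulation via the cartesian square of \Cref{prop:twisted_fdl_square_KMW} and the injection $(F_{0},\mu'_{0}):\GW(k)\hookrightarrow\ZZ\times\W(k)$ makes this decomposition explicit, and the compatibilities you invoke (residues with $F$ and $\mu'$ from \Cref{rem:KMW&KM_residues}, rank of the $\GW$-transfer from \Cref{num:basic_GW-transfer}(1), the $\W$-transfer as the mod-$h$ reduction of the $\GW$-transfer) are all correctly cited and available.

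The gap is in the Witt-theoretic half, and it is more than a routine bookkeeping. Once you reduce to $f=c\prod_i p_i$ squarefree, the residue of $[f]\otimes dt$ at the place $(p_i)$ is \emph{not} $\tw 1\otimes\bar\pi_i^*\otimes dt$: by \eqref{eq:quadratic_order} it carries the cross-unit $f_i(\alpha_i)=\prod_{j\neq i}p_j(\alpha_i)$, so the local terms at distinct finite places are coupled, and an ``induction on the number of irreducible factors'' does not decouple them into single-irreducible problems. This is exactly the point where the paper's proof departs from the naive route: after the squarefree reduction it identifies the sum $\sum_i\Tr^\omega_{\kappa_i/k*}\bigl(\tw{f_i(\alpha_i)}\otimes dt\otimes\bar\pi_i^*\bigr)$ with the \emph{single} Grothendieck trace $\Tr^\omega_{A/k*}(\tw 1\otimes dt\otimes\bar f^*)$ for $A=k[t]/(f)$, via the Chinese remainder decomposition and the additivity of the differential trace (\Cref{rem:diff_trace}), and then computes that trace directly: the Tate-trace bilinear form on $A$ in the monomial basis $(1,\alpha,\dots,\alpha^{d-1})$ is the anti-diagonal matrix, whose class in $\GW(k)$ is $d_\epsilon$, matching $-\partial_\infty$ on the nose (this is also \Cref{rem:degree_diff_tr-mono}). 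That one computation simultaneously handles all finite places, whereas your sketch leaves the central cancellation unproved and replaces it with an induction that, as stated, does not close. If you swap your ``induction on irreducible factors'' for the CRT step — which is within reach given that you already cite \Cref{ex:GW-diff&Tate_traces} and the Sheja--Storch/Tate trace machinery — then your $(F,\mu')$-decomposition becomes a correct and slightly more modular presentation of the same argument.
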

\begin{proof}
The abelian group $\KMW 1 \big(k(t),\omega_{k(t)/k}\big)$ is generated by elements
 of the form $[f] \otimes dt$ where $f \in k(t)^\times$ is a rational function on $\PP^1_k$.
 So we need only to check the vanishing on these particular elements.

Consider the prime decomposition of $f$:
\begin{equation}\label{eq:KMW_GW-differential_Reciprocity1}
f=u.\pi_1^{m_1}\hdots \pi_r^{m_r},
\end{equation}
where $u \in k^\times$, $m_i \in \ZZ$ and $\pi_i$ is an irreducible monic polynomial in $k[t]$.
 Each polynomial $\pi_i$ corresponds to a closed point $x_i$ in $\AA^1_k \subset \PP^1_k$,
 with residue field $\kappa_i=\kappa(x_i)=k[t]/(\pi_i)$. With this notation,
 we will write $\alpha_i \in \kappa_i$ for the obvious generator of $\kappa_i/k$
 (\emph{i.e.} corresponding to $t$).

We first remark that, computing the quadratic order of vanishing of at $\infty$ using the uniformizer
 $1/t$, we find in $\GW(k)$:\footnote{Note that $\omega_{\kappa_\infty/k}=k$ so that
 we identify $\GW(\kappa_\infty,\omega_{\kappa_\infty/k})$ with $\GW(k)$. With this identification,
 the GW-differential trace map $\Tr_{\kappa_\infty/k}^\omega$ is just the identity.}
$$
\partial_\infty([f] \otimes dt)=-d_\epsilon\tw u \in \GW(k)
$$
where $d=\deg_t(f)=\sum_i m_i$.

Let us write $f_i=\prod_{j \neq i} f_j^{m_j}$, so that $f=uf_i.\pi_i^{m_i}$.
 Applying formula \eqref{eq:quadratic_order} (with the added twist $dt$), one gets:
$$
\sum_{x \in (\PP^1_k)_{(0)}} \Tr^\omega_{\kappa_x/k*} \circ \partial_x([f] \otimes dt)=
 \sum_{i=1}^r (m_i)_\epsilon\tw u\Tr^\omega_{\kappa_i/k*}\big(\tw{f_i(\alpha_i)} \otimes dt \otimes \bar \pi_i^*\big)
 -d_\epsilon\tw u.
$$
Let us denote by $(*)$ the right-hand side,
 so that we need to show that $(*)$ is $0$ in $\GW(k)$.
 The (virtual) rank of $(*)$ is
$$
\sum_{i=1}^r m_i.\deg(\pi_i)-d
$$
which is obviously zero --- according to relation \eqref{eq:KMW_GW-differential_Reciprocity1}.

Therefore, one needs only to show that the class of $(*)$ is zero in $\W(k)$.
 Obviously, one can assume that $u=1$. Moreover, as $n_\epsilon=0$ for $n$ even in $\W(k)$,
 one can assume that $m_i=1$ for all $i$.

Let us consider the monogenic $k$-algebra $A=k[t]/(f)$, and write $\alpha$ its generator.
 Recall that $A$ is a finite $k$-vector space with basis $\mathcal B=(1,\alpha,\hdots,\alpha^{d-1})$.
 The Chinese remainder lemma gives an isomorphism of $k$-algebras:
$$
\Theta:A \xrightarrow{\ \sim\ } \prod_{i=1}^r \kappa_i, g \mapsto \big(f_i(\alpha_i)g(\alpha_i)\big)_{1 \leq i \leq r}.
$$
Applying \Cref{rem:diff_trace},
 one deduces that
$$
\sum_{i=1}^r \Tr^\omega_{\kappa_i/k*}\big(\tw{f_i(\alpha_i)} \otimes dt \otimes \bar \pi_i^*\big)
=\Tr^\omega_{A/k*}\big(\tw 1 \otimes dt \otimes \bar f^*\big).
$$
We are now reduced to show the following equality in $\W(k)$:
\begin{equation}\label{eq:KMW_GW-differential_Reciprocity2}
\Tr^\omega_{A/k*}\big(\tw 1 \otimes (dt \otimes \bar f^*)\big)=d_\epsilon.
\end{equation}
One can apply \Cref{cor:compute_residues_SS} (see \Cref{ex:GW-diff&Tate_traces}) to compute
 the left-hand side: if one denotes by $\tau_{A/k}^\alpha:A \rightarrow k$
 the Tate trace map associated with $A/k$ and its generator $\alpha$,
 --- that is the linear form associated with $\alpha^{d-1}$ in the basis $\mathcal B$ ---
 this inner product on the $k$-vector space $A$ is given by the formula:
$$
A \otimes_k A \rightarrow k, (g,g') \mapsto \tau_{A/k}^\alpha(gg').
$$
One easily computes the form of the symmetric $(d \times d)$-matrix of this symmetric bilinear in the basis $\mathcal B$ as:
$$
\left(\raisebox{0.5\depth}{\xymatrix@=10pt{
\ar@{}|0[rd] && 1\ar@{-}[lldd] \\
 && \\
1 && \ar@{}|{*}[lu]
}}
\right).
$$
But the class of the corresponding inner product space in $\GW(k)$ is $d_\epsilon=\tw{1,-1,\hdots}$
 as it has a totally isotropic subspace of rank $n$ spanned by $(1,\hdots,\alpha^{n-1})$
 if $d=2n$ or $d=2n+1$, and its determinant is $(-1)^{d-1}$.
 This proves \eqref{eq:KMW_GW-differential_Reciprocity2}.
\end{proof}

\begin{rem}\label{rem:degree_diff_tr-mono}
It is interesting to note that the end of the previous proof also shows the following degree formula,
 for any finite degree $d$ extension $E/k$:
$$
\Tr^\omega_{E/k*}(\tw 1 \otimes dt)=d_\epsilon.
$$
\end{rem}

\begin{rem}
Multiplying by $\eta$, and looking modulo the hyperbolic form h
 (granted the $\GW(k)$-linearity of each involved operator),
 the equation \eqref{eg:KMW_GW-differential_Reciprocity} gives a twisted version of Scharlau's quadratic
 reciprocity formula: for any class $\sigma \in \W(k(t),\omega_{k(t)/k})$
 of a $\omega_{k(t)/k}$-valued  inner product space over $k(t)$, one has:
$$
\sum_{x \in \PP^1_{k,(0)}} \Tr^\omega_{\kappa_x/k*}(\partial_x(\sigma))=0.
$$
In fact, using \Cref{ex:GW-diff&Tate_traces} and applying this equality to $\sigma=\sigma_0 \otimes dt$,
 one gets back precisely Scharlau's formula (see also \cite[\textsection 2, Satz]{GHKS}
 in the characteristic not $2$ case).
\end{rem}

The main application of the previous theorem,
 taking into account the uniqueness statement of \Cref{prop:BT} is the following comparison result
 between the two transfer maps we have introduced.
\begin{cor}\label{cor:trMW&omega_monogen}
Let $E/k$ be a monogenic finite extension field.
 Then for any generator $\alpha$ of $E/k$, 
 and any $q<0$, one has commutative diagrams
$$
\xymatrix@R=10pt@C=32pt{
\KMW 0(E,\omega_{E/k})\ar^-{\Tr^{MW,\alpha}_{E/k}}[r] & \KMW 0 (k)
& \KMW q (E,\omega_{E/k})\ar^-{\Tr^{MW,\alpha}_{E/k}}[r] & \KMW q (k) \\
\GW(E,\omega_{E/k})\ar_-{\Tr^\omega_{E/k*}}[r]\ar^\sim[u]  & \GW(k)\ar_{\sim}[u]
 & \W(E,\omega_{E/k})\ar_-{\Tr^\omega_{E/k*}}[r]\ar^\sim[u]  & \W(k)\ar_{\sim}[u]
}
$$
where the vertical isomorphisms come from \Cref{ex:KMW_negative&GW}.
\end{cor}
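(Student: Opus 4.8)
The plan is to identify the two families of transfer maps by invoking the uniqueness statement in \Cref{prop:BT}. Fix $q\le 0$. Using the canonical isomorphisms of \Cref{ex:KMW_negative&GW} together with, at each closed point $x\in\PP^1_{k,(0)}$, the canonical identification $\psi^x\colon\omega_{\kappa_x/k}\simeq\omega_{x/X}\otimes\omega|_x$ from \Cref{num:homological_quad-cycle_P1} (with $X=\PP^1_k$), the differential transfers of \Cref{df:GW-transfers} define a family of homomorphisms
$$
\widetilde{\Tr}_{\kappa_x/k}\colon\KMW q(\kappa_x,\omega_{\kappa_x/k})\longrightarrow\KMW q(k),\qquad x\in\PP^1_{k,(0)},
$$
which in degree $0$ is $\Tr^\omega_{\kappa_x/k*}$ transported from $\GW$ to $\KMW 0$, and in degrees $q<0$ is its reduction modulo $(h)$ transported from $\W$ to $\KMW q$. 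I will check that this family satisfies the two conditions characterising $\{\Tr^{MW}_{\kappa_x/k}\}_x$ in \Cref{prop:BT}; uniqueness then forces $\widetilde{\Tr}_{\kappa_x/k}=\Tr^{MW}_{\kappa_x/k}$ for every $x$, and specialising to the point $x\colon\Spec(E)\to\AA^1_k$ attached to a generator $\alpha$ of $E/k$ yields precisely the asserted commutative squares, since $\widetilde{\Tr}_{\kappa_x/k}$ is by construction the composite of $\Tr^\omega_{E/k*}$ with the isomorphisms of \Cref{ex:KMW_negative&GW}.

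The first condition is the normalisation $\widetilde{\Tr}_{\kappa_\infty/k}=\Id$. Since $\kappa_\infty=k$ and $k/k$ is étale, $\omega_{k/k}=k$ and, by \Cref{ex:GW_tr_separable}, the differential trace $\Tr^\omega_{k/k}$ is the ordinary trace $\Tr_{k/k}=\Id_k$; hence $\Tr^\omega_{k/k*}=\Id$ on $\GW(k)$ and, modulo $(h)$, on $\W(k)$, which under \Cref{ex:KMW_negative&GW} gives the required normalisation in each degree $q\le 0$. The second condition is the reciprocity relation $\sum_x\widetilde{\Tr}_{\kappa_x/k}\circ\partial_x=0$ on $\KMW{q+1}(k(t),\omega_{k(t)/k})$, where $\partial_x$ lands in $\KMW q(\kappa_x,\omega_{\kappa_x/k})$. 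For $q=0$ this is, after transport along $\KMW 0\simeq\GW$, literally \Cref{thm:KMW_GW-differential_Reciprocity}.

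For $q<0$ I would deduce the reciprocity relation either by quoting the Witt-theoretic form of that reciprocity formula recorded in the remark following \Cref{thm:KMW_GW-differential_Reciprocity} --- here one uses that for $q\le-1$ the target $\KMW q(\kappa_x,\omega)$ of $\partial_x$ is a Witt group, so $\partial_x$ annihilates $(h)$ and the relation descends along $\KMW{q+1}(k(t),\omega)\to\W(k(t),\omega)$ --- or, equivalently, by multiplying the $q=0$ case by $\eta$: multiplication by $\eta$ commutes with residues by relation (Res1) and, via \Cref{prop:KMW&GW} and \Cref{ex:KMW_negative&GW}, realises the quotient $\GW\twoheadrightarrow\W$ and the isomorphisms $\KMW{-n}(K,\cL)\xrightarrow{\sim}\KMW{-n-1}(K,\cL)$ for $n\ge 1$, while $\Tr^\omega_{\kappa_x/k*}$ is $\GW(k)$-linear and the $\W$-transfer is the $\GW$-transfer modulo $(h)$, so each $\widetilde{\Tr}_{\kappa_x/k}$ commutes with multiplication by $\eta$. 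Once conditions one and two are in place, \Cref{prop:BT} closes the argument.

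I expect the main obstacle to be the compatibility bookkeeping underlying the second condition: one must verify carefully that the residue maps and twist identifications entering \Cref{prop:BT} --- with its homological convention and the isomorphisms $\psi^x$ --- match, after transport along the isomorphisms of \Cref{ex:KMW_negative&GW}, the data appearing in \Cref{thm:KMW_GW-differential_Reciprocity} and in \Cref{ex:Witt_res}. The genuine mathematical input, namely the quadratic reciprocity law itself, is already available from \Cref{thm:KMW_GW-differential_Reciprocity}, so once these identifications are pinned down the argument is purely formal.
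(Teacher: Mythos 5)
Your proposal follows exactly the route the paper itself takes: the corollary is obtained by combining \Cref{thm:KMW_GW-differential_Reciprocity} with the uniqueness clause of \Cref{prop:BT}, after checking normalisation at $\infty$ for the differential transfers. The one imprecision is in your second route for $q<0$: multiplying the $q=0$ relation by $\eta$ only gives the vanishing of $\sum_x\Tr^\omega_{\kappa_x/k*}\circ\partial_x$ on the image of $\gamma_\eta\colon\KMW 1(k(t),\omega)\to\KMW 0(k(t),\omega)$, which is the fundamental ideal $\I(k(t),\omega)$, not all of $\GW(k(t),\omega)$; the complementary rank-one part, represented by $1\otimes dt$, has to be handled by the (trivial) observation that $\partial_x(\langle 1\rangle\otimes dt)=0$ for every $x$, which your first route via the remark's Witt-theoretic reciprocity absorbs cleanly.
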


\subsection{General trace maps}\label{sec:general-traces}

\begin{num}
Let $E/k$ be a finite extension  with canonical module $\omega_{E/k}$.

We have already seen (\Cref{df:GW-transfers}) how the Grothendieck differential trace map
 induces a transfer map on twisted Grothendieck-Witt and Witt groups.
 We now show how to extend these transfers to Milnor-Witt K-theory
 using Morel's fundamental square from \Cref{cor:fundamental_square_KMW}.
 We first need a lemma.
\end{num}
\begin{lm}
Consider the above notation.
 For any integer $n \in \ZZ$, one has:
$$
\Tr^\omega_{E/k*}\big(\I n(E,\omega_{E/k})\big) \subset \I n(k)
$$
where we have used notation \Cref{num:twisted_I} for $\I n$
 and the transfer map on Witt groups was defined in \Cref{df:GW-transfers}.
\end{lm}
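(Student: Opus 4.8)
The plan is to reduce the statement to the compatibility of the differential trace with the filtration by powers of the fundamental ideal, using the fact that $\I{n}$ is multiplicatively generated by Pfister forms and the Frobenius reciprocity property \ref{num:basic_GW-transfer}(3). First I would recall that, by definition of $\I{*}$ via the isomorphism $\psi:\KW{*}(E,\omega_{E/k}) \simeq \I{*}(E,\omega_{E/k})$ of \Cref{thm:KW&graded_I} (in its twisted form, \Cref{num:twisted_I}), the subgroup $\I{n}(E,\omega_{E/k}) \subset \W(E,\omega_{E/k})$ for $n \geq 1$ is generated as a $\W(E)$-module by products $\dtw{u_1}\cdots\dtw{u_n} \otimes w$ with $u_i \in E^\times$ and $w \in \omega_{E/k}^\times$; for $n \leq 0$ there is nothing to prove since $\I{n} = \W$ by convention.

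Next I would observe that it suffices to prove the inclusion for $n = 1$, i.e.\ that $\Tr^\omega_{E/k*}\big(\I{1}(E,\omega_{E/k})\big) \subset \I{1}(k)$, provided one also knows $\Tr^\omega_{E/k*}$ is suitably compatible with products by elements pulled back from $k$. Indeed, a generator $\dtw{u_1}\cdots\dtw{u_n} \otimes w$ of $\I{n}(E,\omega_{E/k})$ can be written as $\dtw{u_1}\cdots\dtw{u_{n-1}} \cdot \big(\dtw{u_n} \otimes w\big)$ where the first factor lies in $\I{n-1}(E)$ (untwisted). The issue is that $\I{n-1}(E)$ is \emph{not} in general pulled back from $k$, so Frobenius reciprocity \ref{num:basic_GW-transfer}(3) does not directly apply. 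The clean way around this is an induction on $n$ using the description of $\I{n}$ as generated by the single Pfister form $\dtw{u_n}$ acting on $\I{n-1}$: one needs that $\Tr^\omega_{E/k*}(\dtw{u} \cdot \xi) \in \I{n}(k)$ for $\xi \in \I{n-1}(E,\omega_{E/k})$ mapping into $\I{n-1}(k)$ under the transfer. Writing $\dtw{u}\cdot\xi = \xi - \tw{u}\xi$ and using $\GW(E)$-linearity, this reduces to showing that $\Tr^\omega_{E/k*}(\xi) - \Tr^\omega_{E/k*}(\tw{u}\xi)$ stays in $\I{n}(k)$; modulo $\I{n}(k)$ one is comparing $\Tr^\omega_{E/k*}(\xi)$ with $\Tr^\omega_{E/k*}(\tw{u}\xi)$ in $\I{n-1}(k)/\I{n}(k) = \gI{n-1}(k)$, and here the $\W$-action factors through rank (\Cref{rem:graded_I}), so $\tw{u}$ acts trivially, giving the desired congruence.

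The base case $n=1$ is the real content: one must show that if $\xi = \sum_j \tw{a_j} \otimes w_j \in \W(E,\omega_{E/k})$ has even rank, then $\Tr^\omega_{E/k*}(\xi) \in \I{1}(k)$, i.e.\ has even rank in $\W(k)$. This follows immediately from the rank formula \ref{num:basic_GW-transfer}(1): $\rk\big(\Tr^\omega_{E/k*}(\xi)\big) = d\cdot\rk(\xi)$ with $d = [E:k]$, so if $\rk(\xi)$ is even then so is $d\cdot\rk(\xi)$. For $n \leq 0$ the inclusion is trivial, and for $n \geq 1$ the induction above, anchored at $n=1$, finishes the argument. The main obstacle I anticipate is being careful with the twists: one must track the canonical module $\omega_{E/k}$ consistently through the $\W(E)$-module structure and the definition $\I{*}(E,\omega_{E/k}) = \I{*}(E)\otimes_{\ZZ[E^\times]}\ZZ[\omega_{E/k}^\times]$ of \Cref{num:twisted_I}, and make sure the Pfister generators together with the twist element really do generate the twisted group $\I{n}(E,\omega_{E/k})$ — but this is exactly what the twisted analogue of \Cref{thm:KW&graded_I} provides, so no genuine difficulty is hidden there.
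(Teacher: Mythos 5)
Your approach differs from the paper's and contains a genuine gap at the inductive step. The base case $n=1$ via the rank formula is correct, as is the trivial case $n \leq 0$. But the inductive step is circular. By additivity of the trace you reduce to showing $\Tr^\omega_{E/k*}(\xi) - \Tr^\omega_{E/k*}(\tw u\xi) \in \I{n}(k)$ for $\xi \in \I{n-1}(E,\omega_{E/k})$ and $u \in E^\times$, and then assert that ``the $\W$-action on $\gI{n-1}(k)$ factors through rank, so $\tw u$ acts trivially.'' This does not apply: $\tw u$ for $u \in E^\times$ is an element of $\W(E)$, not of $\W(k)$, so it does not act on $\gI{n-1}(k)$; and $\Tr^\omega_{E/k*}$ is only $\GW(k)$-linear (see \Cref{num:basic_GW-transfer}), \emph{not} $\GW(E)$-linear, so $\tw u$ cannot be moved across the trace. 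In fact $\Tr^\omega_{E/k*}(\xi) - \Tr^\omega_{E/k*}(\tw u\xi) = \Tr^\omega_{E/k*}(\dtw u\xi)$ with $\dtw u\xi \in \I{n}(E,\omega_{E/k})$, so the congruence you need to establish \emph{is} the statement of the inductive step; nothing has been gained. What your argument really requires is that the transfer descends to a well-defined map $\gI{n-1}(E) \to \gI{n-1}(k)$, but establishing that descent is equivalent to the lemma itself.

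The paper avoids this circularity entirely. It first reduces to $E/k$ monogenic using the composability of GW-transfers (\Cref{num:basic_GW-transfer}(2)), and then invokes the geometric transfer $\Tr^{MW,\alpha}_{E/k}$ of \Cref{num:MWtr-monogenic}. That map is, by construction, defined degree by degree on $\KMW*$, commutes with multiplication by $\eta$ and $h$, and agrees with $\Tr^\omega_{E/k*}$ in degrees $\leq 0$ by \Cref{cor:trMW&omega_monogen}. Via the isomorphism $\psi:\KW n \simeq \I n$ of \Cref{thm:KW&graded_I}, the filtration-preservation of the differential trace is then \emph{inherited} from the grading-preservation that the Bass--Tate transfer enjoys automatically, rather than proved by a direct manipulation of Pfister generators.
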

Using \Cref{rem:comp_Scharlau_trace}, the lemma follows from \cite[Satz 3.3]{Arason}.
 At this point, one can easily deduce it from our earlier computations so we give
 a proof for completeness.
\begin{proof}
The case $n \geq 0$ is trivial. We note the case $n=1$ is easy
 (use \Cref{num:basic_GW-transfer}(1)).
 For the other cases, using the functoriality of GW-transfers \Cref{num:basic_GW-transfer}(2),
 one reduces to the case where $E/k$ is monogenic,
 with say a fixed generator $\alpha$.
 This case now follows from \Cref{cor:trMW&omega_monogen},
 \Cref{thm:KW&graded_I} and the fact $\Tr^{MW,\alpha}_{E/k}$
 (defined in \Cref{num:MWtr-monogenic}) commutes with multiplication
 by $\eta$ and $h$.
\end{proof}

In particular we get well-defined transfer maps on the algebra functor $\I*$.
 As an intermediate step, we show that these transfers are compatible
 with the monogenic transfers obtained so far on the Milnor-Witt K-theory
 functor (\Cref{num:MWtr-monogenic}).
\begin{lm}\label{lm:mono_transf_MW&I}
Let $E/k$ be a monogenic finite extension, with a generator $\alpha \in E$.
 Then the following diagram is commutative:
$$
\xymatrix@R=18pt@C=40pt{
\KMW*(E,\omega_{E/k}) \ar_{\mu'_E}[d]\ar^-{\Tr^{MW,\alpha}_{E/k}}[r]
 & \KM*(k)\ar^{\mu'_k}[d] \\
\I*(E,\omega_{E/k})\ar^-{\Tr^\omega_{E/k*}}[r] & \I*(k).
}
$$
\end{lm}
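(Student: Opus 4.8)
The plan is to prove the identity of homogeneous maps $\mu'_k\circ\Tr^{MW,\alpha}_{E/k}=\Tr^\omega_{E/k*}\circ\mu'_E\colon\KMW*(E,\omega_{E/k})\to\I*(k)$ by reducing, through $\eta$‑periodicity, to non‑positive degrees, where $\Tr^{MW,\alpha}_{E/k}$ has already been identified with the differential transfer. First I would observe that the left‑hand composite factors through $\mu'_E$. Indeed, by \Cref{num:Morel-Milnor_map} the map $\mu'$ is, up to sign, the quotient $\nu$ onto $\KW*=\KMW*/(h)$ followed by the isomorphism $\psi$ of \Cref{thm:KW&graded_I}; hence $\mu'_E$ is surjective with kernel $(h)=\Img(\hyper_E)$ (the last equality being \Cref{cor:Kereta}(1), a statement about the internal module structure which carries over to the twisted setting), and since $\Img(\hyper_k)=(h)=\ker(\nu_k)$ we also have $\mu'_k\circ\hyper_k=0$. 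Combining this with \Cref{lm:mono_transf_F&H}, which gives $\Tr^{MW,\alpha}_{E/k}\circ\hyper_E=\hyper_k\circ\Tr^M_{E/k}$, we get $\mu'_k\circ\Tr^{MW,\alpha}_{E/k}\circ\hyper_E=\mu'_k\circ\hyper_k\circ\Tr^M_{E/k}=0$, so $\mu'_k\circ\Tr^{MW,\alpha}_{E/k}$ vanishes on $\ker(\mu'_E)$ and descends uniquely to a homogeneous map $\overline T\colon\I*(E,\omega_{E/k})\to\I*(k)$ with $\overline T\circ\mu'_E=\mu'_k\circ\Tr^{MW,\alpha}_{E/k}$. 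It then suffices to prove $\overline T=\Tr^\omega_{E/k*}$.

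For that I would first compare the two maps in non‑positive degrees, where $\I^n(E,\omega_{E/k})=\W(E,\omega_{E/k})$ and, by \Cref{ex:KMW_negative&GW}, $\KMW n(E,\omega_{E/k})$ is $\W$ (resp.\ $\GW$ when $n=0$); there $\mu'_E,\mu'_k$ are the resulting identifications (resp.\ the projection to $\W$) and $\Tr^{MW,\alpha}_{E/k}$ agrees with $\Tr^\omega_{E/k*}$ by \Cref{cor:trMW&omega_monogen}. Since $\Tr^\omega_{E/k*}$ is $\W(k)$‑linear and compatible with the quotient by $(h)$ (\Cref{df:GW-transfers}), these identifications commute with it, forcing $\overline T=\Tr^\omega_{E/k*}$ in degrees $\leq 0$. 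To reach positive degrees I would use that both maps are $\eta$‑linear: for $\overline T$ this follows from $\overline T\circ\mu'_E=\mu'_k\circ\Tr^{MW,\alpha}_{E/k}$, multiplicativity of $\mu'$, and the fact that $\Tr^{MW,\alpha}_{E/k}$ commutes with multiplication by $\eta$ (already invoked in the proof of the previous lemma); for $\Tr^\omega_{E/k*}$ it follows from Frobenius reciprocity (\Cref{num:basic_GW-transfer}(3)) and the realization of $\I*$ as a graded subring of $\W[t,t^{-1}]$ (\Cref{num:KW&graded_I}, \Cref{num:twisted_I}), on which $\Tr^\omega_{E/k*}$ is simply $\Tr^\omega_{E/k*}\otimes\Id$. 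Inverting $\eta$ identifies $\I*(E,\omega_{E/k})$ with $\W(E,\omega_{E/k})[t,t^{-1}]$ and $\I*(k)$ with $\W(k)[t,t^{-1}]$ (\Cref{cor:KMW_eta-inversed}, using $h=0$ after inverting $\eta$), so $\overline T[\eta^{-1}]$ and $\Tr^\omega_{E/k*}[\eta^{-1}]$ are $\eta$‑linear maps of $\eta$‑periodic graded groups agreeing in negative degrees, hence equal; since $\eta$ is a non‑zero‑divisor in $\I*(E,\omega_{E/k})$ and in $\I*(k)$ (both embed in the above Laurent rings), restricting along $\I*\hookrightarrow\I*[\eta^{-1}]$ gives $\overline T=\Tr^\omega_{E/k*}$, which completes the proof.

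The main difficulty I anticipate is purely organizational: one must keep straight the signs and identifications relating $\mu'$, $\nu$, $\psi$ and the inclusion $\I*\hookrightarrow\W[t,t^{-1}]$ across negative, zero and positive degrees, and verify carefully that $\overline T$ and $\Tr^\omega_{E/k*}$ are $\eta$‑linear for the \emph{same} graded $\I*(k)$‑module structure, so that the periodicity argument is legitimate. All the substantive ingredients — agreement in low degrees (\Cref{cor:trMW&omega_monogen}), compatibility with hyperbolic maps (\Cref{lm:mono_transf_F&H}), and $\eta$‑linearity of $\Tr^{MW,\alpha}_{E/k}$ — are already available.
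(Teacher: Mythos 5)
Your proposal is correct and takes the same route the paper compresses into a single sentence: factor both composites through $\mu'_E$ (using $\Ker(\mu'_E)=(h)=\Img(\hyper_E)$ and \Cref{lm:mono_transf_F&H} to kill the left-hand side on $(h)$), compare on $\I^*$, obtain agreement in degrees $\leq 0$ from \Cref{cor:trMW&omega_monogen}, and propagate to positive degrees by $\eta$-injectivity of $\I^*\hookrightarrow\W[t,t^{-1}]$. The ingredients you cite are exactly those the paper points to, and the $\eta$-periodicity step you spell out is the implicit bridge needed since \Cref{cor:trMW&omega_monogen} only covers non-positive degrees.
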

Given the previous lemma, and the construction of the morphism $\mu'$ (see \Cref{num:Morel-Milnor_map})
 this statement reduces to \Cref{cor:trMW&omega_monogen}.

\begin{num}
According to \Cref{prop:twisted_fdl_square_KMW},
 $\KMW n (E,\omega_{E/k})$ can be identified with the abelian group made
 of pairs $(\sigma,\tau) \in \I n(E) \times \K n (E)$ such that
 $\pi(\sigma)=\mu(\tau)$. The following lemma is the last step needed to define
 the transfer map associated with $E/k$ on Milnor-Witt K-theory.
\end{num}
\begin{lm}
Consider the above notation. Then one has the following equality in $\gI n(k)$:
$$
\pi\big(\Tr^\omega_{E/k*}(\sigma)\big)
 = \mu\big(\Tr^M_{E/k}(\tau)\big).
$$
\end{lm}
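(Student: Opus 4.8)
The plan is to reduce to the monogenic case and then exploit Morel's cartesian square. Every finite extension $E/k$ decomposes as a tower $k=E_0\subset E_1\subset\cdots\subset E_r=E$ of monogenic extensions, and both transfers in play are transitive — the differential GW/W-transfer by \ref{num:basic_GW-transfer}(2), and Kato's norm map on Milnor K-theory by \cite[\textsection 1.7, Prop. 5]{KatoNorm} — while $\gI*$ is insensitive to twists (\Cref{num:twisted_I}). So a straightforward induction on $r$ will propagate the identity: at the $(i{+}1)$-st step the hypothesis $\pi(\sigma_i)=\mu(\tau_i)$ over $E_i$ is exactly the conclusion furnished by that step, so it feeds forward along the tower; the only delicate point is to carry the extra twist $\omega_{E_{i-1}/k}$ through the monogenic step using the canonical isomorphism $\omega_{E_i/k}\simeq\omega_{E_i/E_{i-1}}\otimes\omega_{E_{i-1}/k}$ of \eqref{eq:rel_canonical}.

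It therefore suffices to treat a monogenic extension $E=k[\alpha]$ (possibly with an additional twist). Here I would invoke \Cref{prop:twisted_fdl_square_KMW}: the square with corners $\KMW n(E,\omega_{E/k})$, $\KM n(E)$, $\I n(E,\omega_{E/k})$, $\gI n(E)$ is cartesian, so the pair $(\sigma,\tau)$ with $\pi(\sigma)=\mu(\tau)$ lifts (uniquely) to an element $\beta\in\KMW n(E,\omega_{E/k})$ with $\mu'_E(\beta)=\sigma$ and $F(\beta)=\tau$. Applying the monogenic Milnor--Witt transfer $\Tr^{MW,\alpha}_{E/k}$ to $\beta$, the forgetful compatibility \Cref{lm:mono_transf_F&H} gives $F(\Tr^{MW,\alpha}_{E/k}(\beta))=\Tr^M_{E/k}(\tau)$, while \Cref{lm:mono_transf_MW&I} gives $\mu'_k(\Tr^{MW,\alpha}_{E/k}(\beta))=\Tr^\omega_{E/k*}(\sigma)$. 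Feeding both into the cartesian identity $\pi\circ\mu'=\mu\circ F$ over $k$ (again \Cref{prop:twisted_fdl_square_KMW}) yields
\[
\pi_k\big(\Tr^\omega_{E/k*}(\sigma)\big)=\pi_k\big(\mu'_k(\Tr^{MW,\alpha}_{E/k}(\beta))\big)=\mu_k\big(F(\Tr^{MW,\alpha}_{E/k}(\beta))\big)=\mu_k\big(\Tr^M_{E/k}(\tau)\big),
\]
which is the assertion; in particular the outcome is independent of the generator $\alpha$, consistently with the uniqueness of $\Tr^M_{E/k}$.

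The main obstacle is not any deep input but the bookkeeping of twists: one must ensure that it is the $\omega_{E/k}$-twisted (and, in the tower, the further-twisted) versions of the two previous lemmas and of the cartesian square that are being used, and that the transitivity isomorphisms $\omega_{E/k}\simeq\omega_{E/F}\otimes\omega_{F/k}$ are compatible with $\Tr^\omega_*$ and with $\mu'$. The point worth double-checking is that $\mu'$ intertwines the monogenic Milnor--Witt transfer with $\Tr^\omega_*$ after twisting by an arbitrary invertible $k$-vector space — \Cref{lm:mono_transf_MW&I} is stated only for the twist $\omega_{E/k}$, but the general case follows from the $\cL$-twisted monogenic transfer of \ref{num:MWtr-monogenic} together with the $\GW$-linearity recorded there, so no new idea is needed.
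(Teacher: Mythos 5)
Your proposal is correct and follows the same route as the paper: reduce to the monogenic case via transitivity of the differential trace map and of Kato's norm, then exploit the existence of the monogenic transfer $\Tr^{MW,\alpha}_{E/k}$ together with \Cref{lm:mono_transf_F&H} and \Cref{lm:mono_transf_MW&I}. You merely spell out explicitly what the paper leaves compressed, namely the lift of $(\sigma,\tau)$ to $\beta$ via the cartesian square and the final diagram chase through $\pi\circ\mu'=\mu\circ F$.
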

\begin{proof}
By functoriality of the differential trace map (\Cref{rem:diff_trace_composition})
 and of Kato's transfer map on Milnor K-theory,
 one reduces to the case of finite monogenic extensions
 $E=k[\alpha]$.
 Then the result follows from the existence of the trace map $\Tr^{MW,\alpha}_{E/k}$,
 and its compatibility with both Kato's transfer (\Cref{lm:mono_transf_F&H})
 and the differential transfer on $\I*$ (\Cref{lm:mono_transf_MW&I}).
\end{proof}

We finally obtain the main definition of this section.
\begin{df}\label{df:MW-transfers}
Let $E/k$ be a finite extension with canonical module $\omega_{E/k}$.
 One defines the transfer map on Milnor-Witt K-theory by the following formula:
$$
\xymatrix@C=60pt@R=-4pt{
\KMW*(E,\omega_{E/k})\ar^{\Tr^{MW}_{E/k}}[r]\ar_{(\mu'_E,\F_E)}^\sim[ddddddddd]
 & \KMW*(k)\ar^{(\mu'_k,\F_k)}_\sim[ddddddddd] \\
& \\& \\& \\& \\& \\& \\& \\& \\
\I*(E,\omega_{E/k}) \times_{\gI*(E)} \KM*(E) & \I*(k) \times_{\gI*(k)} \KM*(k) \\
(\sigma,\tau)\ar[r] & \big(\Tr^\omega_{E/k*}(\sigma),\Tr^M_{E/k}(\tau)\big)
}
$$
well-defined according to the previous lemma. The vertical isomorphisms
 come from \Cref{prop:twisted_fdl_square_KMW}.

As in the end of \Cref{num:MWtr-monogenic}, one also defines for an invertible $k$-vector space $\cL$,
 an $\cL$-twisted transfers:
$$
\Tr^{MW}_{E/k}:\KMW q(E,\omega_{E/k} \otimes \cL_E) \rightarrow \KMW q(k,\cL).
$$
\end{df}
When we denote by $\varphi:k \rightarrow E$ the structural map of the extension $E/k$,
 it is customary to use the notation $\varphi^*=\Tr^{MW}_{E/k}$.
 We also call it occasionally the trace map.\footnote{Other terminologies that we prefer
 to avoid are the norm (Kato) and corestriction (Rost).}

\begin{rem}\label{rem:MW-transfer_ppties}
This trace map has all the good properties of its analog on Milnor K-theory.
 It is compatible with composition (as this is the case for $\Tr^\omega_{E/k}$ and $\Tr^M_{E/k}$).
 It satisfies the so-called projection formula:
 for $(\sigma,\beta) \in \KMW*(E,\omega_{E/k} \otimes \cL_E) \times \KMW*(k,\cM)$, one has in $\KMW*(k,\cL_k \otimes \cM)$:
$$
\varphi^*(\sigma.\varphi_*(\beta))=\varphi^*(\sigma).\beta.
$$
This follows from \Cref{num:basic_GW-transfer}(3) and the corresponding formula for Milnor K-theory
 (see \cite[formula (5), p. 378]{BT}).

Finally, we note that from a geometric point of view,
 if one denotes by $f:\spec E \rightarrow \spec k$
 the induced morphism, one can also denote: $\varphi^*=f_*$ and $\varphi_*=f^*$. In this way,
 the previous formula looks like the "classical" projection formula (for Chow groups, cohomology,...)
\end{rem}

As an immediate corollary of the previous definition, we obtain the following
 explicit description of transfers on Milnor-Witt K-theory.
\begin{cor}\label{cor:MW_tr_general_ppty}
Let $E/k$ be a finite extension with canonical module $\omega_{E/k}$,
 and $n$ an integer.
\begin{enumerate}[wide]
\item If $n=0$ (resp. $n<0$) then through the identification
$$
\KMW0(E,\omega_{E/k})=\GW(E,\omega_{E/k})
 \text{ (resp. } \KMW n(E,\omega_{E/k})=\W(E,\omega_{E/k})\text )
$$
 of \Cref{prop:KMW&GW}, one has $\Tr^{MW}_{E/k}=\Tr^\omega_{E/k*}$
 where $\Tr^\omega_{E/k}$ is the differential trace map
 (see \Cref{df:GW-transfers}).
\item If $n>0$, any element $\sigma \in \KMW n(E,\omega_{E/k})$
 can be written as a sum of elements of the form
 $([\phi] \otimes w,\sigma')$ where:
\begin{itemize}
\item  $(V,\phi:V \otimes_E V \rightarrow E)$ is an inner product space over 
 $E$, $[\phi]$ is its class in $\W(E)$ and $[\phi] \in \I n(E)$,
\item $w \in \omega_{E/k}$ is a non-zero differential $k$-form on $E$
 of maximal degree if $E/k$ is not separable, and just a unit of $E$
 if $E/k$ is separable,
\item $\sigma'=\{u_1,\hdots,u_n\}$ is a symbol in $\KM n(E)$,
 for certain units $u_i \in E^\times$.
\end{itemize}
For such an element, one has:
$$
\Tr^{MW}_{E/k}([\phi] \otimes w,\sigma')
 =\big([\Tr^\omega_{E/k} \circ (\phi.w)],\Tr^M_{E/k}(\sigma')\big)
$$
where $\Tr^\omega_{E/k} \circ (\phi.w)$ is the class in $\W(k)$
 (and in fact $\I n(k)$) 
 of the inner product space on $V$ over $k$ with bilinear form
$$
(x,y) \mapsto \Tr^\omega_{E/k}\big(\phi(x,y).w\big),
$$
and $\Tr^M_{E/k}$ is the transfer map on Milnor K-theory.
\end{enumerate} 
\end{cor}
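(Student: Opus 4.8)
The plan is to read the statement directly off \Cref{df:MW-transfers}, which expresses $\Tr^{MW}_{E/k}$ through the cartesian square of \Cref{prop:twisted_fdl_square_KMW} as the pair of maps $(\Tr^\omega_{E/k*},\Tr^M_{E/k})$ on the two factors; no new input beyond matching components will be needed, and both assertions become an unwinding of definitions.

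First I would dispose of part (1). For $n<0$ one has $\KM n(E)=0$ and $\gI n(E)=0$ (notation of \Cref{rem:graded_I}), so the square of \Cref{prop:twisted_fdl_square_KMW} degenerates to the identification $\KMW n(E,\omega_{E/k})=\I n(E,\omega_{E/k})=\W(E,\omega_{E/k})$, which is the isomorphism of \Cref{ex:KMW_negative&GW}; since $\Tr^{MW}_{E/k}$ acts as $\Tr^\omega_{E/k*}$ on this $\I^*$-factor by the very definition \Cref{df:MW-transfers}, the two maps agree. For $n=0$ the square identifies $\KMW0(E,\omega_{E/k})$ with $\W(E,\omega_{E/k})\times_{\ZZ/2}\ZZ$, which via rank and reduction modulo $h$ is $\GW(E,\omega_{E/k})$, compatibly with \Cref{prop:KMW&GW} (here $F_0=\rk$ since $F(1+\eta[u])=1$). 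I would then observe that $\Tr^{MW}_{E/k}$ has, by \Cref{df:MW-transfers}, component $\Tr^\omega_{E/k*}$ (reduced mod $h$) on the $\W$-factor and Kato's transfer — multiplication by $d=[E:k]$ — on $\KM0=\ZZ$, while $\Tr^\omega_{E/k*}\colon\GW(E,\omega_{E/k})\to\GW(k)$ reduces modulo $h$ to the $\W$-transfer of \Cref{df:GW-transfers} and has rank-component $\sigma\mapsto d\,\rk(\sigma)$ by \Cref{num:basic_GW-transfer}(1). Since both projections agree, $\Tr^{MW}_{E/k}=\Tr^\omega_{E/k*}$.

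For part (2) I would fix $n>0$ and a nonzero $w\in\omega_{E/k}$ of the nature described in \Cref{ex:field_extension_complete_inter} (a unit of $E$ when $E/k$ is separable, a top-degree differential form otherwise). Since $\omega_{E/k}^\times$ is an $E^\times$-torsor trivialized by $w$ and $\KMW n(E)$ is generated by pure symbols $[u_1,\dots,u_n]$ by \Cref{cor:KMW_ab_pres}, the group $\KMW n(E,\omega_{E/k})$ is generated by the elements $[u_1,\dots,u_n]\otimes w$. Under the isomorphism $(\mu'_E,F_E)$ of \Cref{prop:twisted_fdl_square_KMW} such an element is the pair $\big(\mu'([u_1,\dots,u_n])\otimes w,\{u_1,\dots,u_n\}\big)$, and $\mu'([u_1,\dots,u_n])=\mu'([u_1])\cdots\mu'([u_n])$ is, up to sign, the class of the $n$-fold Pfister form $\dtw{u_1}\cdots\dtw{u_n}\in\I n(E)$ (\Cref{num:Morel-Milnor_map}); this gives the claimed presentation of a general $\sigma$ as a sum of elements $([\phi]\otimes w,\sigma')$ with $[\phi]\in\I n(E)$ and $\sigma'=\{u_1,\dots,u_n\}$. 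The formula then follows from \Cref{df:MW-transfers}, which gives $\Tr^{MW}_{E/k}([\phi]\otimes w,\sigma')=\big(\Tr^\omega_{E/k*}([\phi]\otimes w),\Tr^M_{E/k}(\sigma')\big)$, together with \Cref{df:GW-transfers} (see \Cref{num:differential_MW_transfer}), by which the first coordinate is the Witt class of the $k$-bilinear form $(x,y)\mapsto\Tr^\omega_{E/k}(\phi(x,y)\,w)$; this class lies in $\I n(k)$ by the lemma preceding \Cref{df:MW-transfers}.

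I do not expect a genuine obstacle, as the corollary is essentially a bookkeeping statement; the only points deserving care are the rank/index identification at $n=0$ in part (1), and verifying via the cartesian square that the elements $([\phi]\otimes w,\sigma')$ really exhaust $\KMW n(E,\omega_{E/k})$ in part (2). If anything is delicate it is checking that the identification $\KMW0=\GW$ of \Cref{prop:KMW&GW} agrees with the one coming from $(\mu'_0,F_0)$, and that all of this is compatible with twists; this is routine but should be spelled out.
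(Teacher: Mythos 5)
Your proposal is correct and follows exactly the approach implicit in the paper, which offers no proof beyond asserting this is ``an immediate corollary of the previous definition'': one reads off everything from \Cref{df:MW-transfers} and the cartesian square of \Cref{prop:twisted_fdl_square_KMW}. The one place you flag as needing care — that the identification $\KMW 0(E,\omega_{E/k})\cong\GW(E,\omega_{E/k})$ from \Cref{prop:KMW&GW} agrees with the one given by $(\mu'_0,F_0)$ — is indeed worth making explicit, and your verification that $F_0=\rk$ and $\mu'_0$ is the projection $\GW\to\W$ settles it; the rest is the straightforward unwinding you describe.
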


\begin{ex}\label{ex:compute_MW-trace}
In general, we refer the reader to \Cref{ex:GW-diff&Tate_traces}
 and \Cref{rem:comp_Scharlau_trace} for the computation
 of the differential trace map $\Tr^\omega_{E/k*}$ on 
 the Grothendieck-Witt or Witt group.
 One can single out the following explicit computations.
\begin{enumerate}
\item If $E/k$ is separable, then $\omega_{E/k}=E$
 and $\Tr^\omega_{E/k}=\Tr_{E/k}$ is the usual trace map
 (\Cref{cor:Tr^omega_etale}).
 In particular, $\KMW 0 (E,\omega_{E/k}) \simeq \GW(E)$
 and for any unit $u \in \E^\times$,
 $\Tr^{MW}(\tw u)=[\Tr_{E/k}(u.-)]$ the $\GW$-class of the 
 \emph{scaled trace form}, $(x,y) \mapsto \Tr_{E/k}(uxy)$.
\item Let $E/k$ be a finite monogenic field extension of degree $d$, with generator $\alpha$. 

According to \Cref{rem:comp_Scharlau_trace}, one has an isomorphism
 $$\omega_{E/k} \simeq \Hom_k(E,k), w \mapsto s_w.$$
 In particular, there exists
 a unique non-zero form $w \in \omega_{E/k}$ such that $s_w$ is the $k$-linear form
 which maps $\alpha^0$ to $1$ and $\alpha^i$ to $0$ for $0<i<d$.

Then for any unit $u \in E^\times$, and for the particular choice of $w$ made above,
 one has:
$$
\Tr^{MW}_{E/k}([u] \otimes w)=[N_{E/k}(u)] \in \KMW 1 (k)
$$
where $N_{E/k}:E^\times \rightarrow k^\times$ is the usual norm of the finite extension $E/k$.
 According to the previous corollary, this follows from the \cite[VII, Cor. 2.4]{Lam} for the Witt part\footnote{the
 computation of \emph{loc. cit.} extends in characteristic $2$ as well}
 and \cite[I.\textsection 5, Th. 5.6]{BT} for the Milnor part.

This formula generalizes to arbitrary finite extension provided one chooses the correct
 differential form $w$.
\item Let $k$ be a field of characteristic $p>0$, $a \in k$ be an element which is not
 a $p$-th root and $E=k[\sqrt[q] a]=k[t]/(t^q-a)$. Put $\alpha=\sqrt[q] a \in E$.
 There exists a canonical non-zero element $w=dt \otimes (\overline{t^q-a})^*$
 of $\omega_{E/k}$ (see \Cref{ex:field_extension_complete_inter} with $n=1$).

Then for any unit $u \in E^\times$, and again for the particular choice of $w$ made above, one has:
$$
\Tr^{MW}(\tw u \otimes w)=[\tau^\alpha_{E/k}(u.-)]
$$
where $\tau^\alpha_{E/k}$ is the \emph{Tate trace map} associated with the $q$-th root $\alpha$
 (see \Cref{rem:Scheja-Storch_trace_monogeneous}), and $[\tau^\alpha_{E/k}(u.-)]$
 is the $\GW$-class of the scaled (Tate) trace form of the $k$-vector space $E$:
$$
E \otimes_k E \rightarrow k, (x,y) \mapsto \tau^\alpha_{E/k}(uxy).
$$
Note in particular that one gets the following degree formula:
$$
\Tr^{MW}(\tw 1 \otimes w)=[\tau^\alpha_{E/k}]
$$
\end{enumerate}
\end{ex}

In comparison with the last example, one gets the following more general \emph{degree formula}
 in Milnor-Witt K-theory.
\begin{cor}\label{cor:degree_formula}
Let $E/k$ be a finite extension of degree $d$.
 We consider a minimal family of
 generators $(\alpha_1,...,\alpha_n)$ and the associated presentation 
$E=k[t_1,\hdots,t_n]/(f_1,\hdots,f_n)$
 as in \Cref{ex:field_extension_complete_inter}.
 Let $w=(\bar f_1 \wedge \hdots \wedge \bar f_n)^* \otimes (dt_1 \wedge \hdots dt_n)$
 be the canonical element of $\omega_{E/k}$ as in \emph{loc. cit.}

Then one has in $\KMW 0(k)=\GW(k)$:
$$
\Tr^{MW}_{E/k}(\tw 1 \otimes w)=d_\epsilon
$$
where we have used the notation of \Cref{num:quad_mult}.
\end{cor}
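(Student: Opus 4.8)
The plan is to transfer the computation to the differential $\mathrm{GW}$-transfer and then induct on the number $n$ of generators. Because the statement takes place in $\KMW 0(k)=\GW(k)$, \Cref{cor:MW_tr_general_ppty}(1) identifies $\Tr^{MW}_{E/k}(\tw 1\otimes w)$ with the differential transfer $\Tr^\omega_{E/k*}(\tw 1\otimes w)\in\GW(k)$, so it suffices to evaluate the latter. The case $E=k$ (no generators, $d=1$) is trivial. For $n=1$ the extension is monogenic, $E=k[t]/(f)$ with $f$ of degree $d$ and $w=\bar f^*\otimes dt$; this is exactly the situation handled at the end of the proof of \Cref{thm:KMW_GW-differential_Reciprocity} and recorded in \Cref{rem:degree_diff_tr-mono}. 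Explicitly, by \Cref{ex:GW-diff&Tate_traces} the element $\Tr^\omega_{E/k*}(\tw 1\otimes w)$ is the class in $\GW(k)$ of the Tate trace form $(x,y)\mapsto\tau^\alpha_{E/k}(xy)$ on the $k$-vector space $E$, whose Gram matrix in the power basis $(1,\alpha,\dots,\alpha^{d-1})$ is anti-triangular with $1$'s on the anti-diagonal; such a form has rank $d$ (\Cref{num:basic_GW-transfer}(1)), a totally isotropic subspace of dimension $\lfloor d/2\rfloor$, and its determinant pins down the anisotropic part, so its class is $d_\epsilon$.

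For the inductive step, put $E'=k[\alpha_1,\dots,\alpha_{n-1}]$, so that $E=E'[\alpha_n]=E'[t_n]/(g_n)$, where $g_n\in E'[t_n]$ is the image of $f_n$, monic of degree $d_n=[E:E']$; write $d'=[E':k]$, so $d=d'd_n$. The first task is to check that, under the canonical isomorphism $\omega_{E/k}\simeq\omega_{E/E'}\otimes_{E'}\omega_{E'/k}$ of \eqref{eq:rel_canonical}, the canonical element $w$ corresponds to $w_n\otimes w'$, where $w_n=\bar g_n^*\otimes dt_n\in\omega_{E/E'}$ and $w'=(\bar f_1\wedge\cdots\wedge\bar f_{n-1})^*\otimes(dt_1\wedge\cdots\wedge dt_{n-1})\in\omega_{E'/k}$ are the canonical elements attached, as in \Cref{ex:field_extension_complete_inter}, to the presentations of $E/E'$ and $E'/k$; this is the multiplicativity of $\det(\cL_{-/-})$ along the conormal and K\"ahler-differential filtrations, read off the explicit bases of \Cref{ex:affine_complete_intersection} (the reordering signs produced on the two factors cancel). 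Granting this, transitivity of the differential $\mathrm{GW}$-transfer (\Cref{num:basic_GW-transfer}(2)), in its $\omega_{E'/k}$-twisted form, gives
$$\Tr^\omega_{E/k*}(\tw 1\otimes w)=\Tr^\omega_{E'/k*}\big(\Tr^\omega_{E/E'*}(\tw 1\otimes w_n)\otimes w'\big).$$
The inner transfer is the degree formula for the monogenic extension $E/E'$, hence equals $(d_n)_\epsilon\in\GW(E')$ by the case $n=1$ applied over the base $E'$. Since $(d_n)_\epsilon$ is the image under $k\to E'$ of the universal element $(d_n)_\epsilon\in\GW(k)$, the projection formula (\Cref{num:basic_GW-transfer}(3)) yields $\Tr^\omega_{E'/k*}\big((d_n)_\epsilon\cdot(\tw 1\otimes w')\big)=(d_n)_\epsilon\cdot\Tr^\omega_{E'/k*}(\tw 1\otimes w')$; by the induction hypothesis the remaining transfer is $(d')_\epsilon$, and since $m\mapsto m_\epsilon$ is multiplicative (\Cref{num:quad_mult}) we conclude $\Tr^\omega_{E/k*}(\tw 1\otimes w)=(d_n)_\epsilon\cdot(d')_\epsilon=(d_nd')_\epsilon=d_\epsilon$.

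I expect the delicate point to be the compatibility used in the inductive step: that the canonical element $w$ decomposes as $w_n\otimes w'$ under the tower isomorphism of canonical modules. Conceptually this is forced --- it is just multiplicativity of the determinant of the cotangent complex under composition --- but making it precise requires tracking the identifications of \Cref{ex:field_extension_complete_inter} through the conormal exact sequence and keeping the wedge-product sign conventions straight. Everything else is a formal consequence of material already recorded: the identification of $\Tr^{MW}$ with $\Tr^\omega$ in degree $0$, transitivity of $\Tr^\omega$, the projection formula, and the multiplicativity $n_\epsilon m_\epsilon=(nm)_\epsilon$ of the quadratic multiplicities.
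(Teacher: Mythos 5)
Your proof is correct and follows essentially the same route as the paper: reduce to the monogenic case via the tower $E/E'/k$, transitivity of the transfer and multiplicativity of $d_\epsilon$, then apply the monogenic degree formula of \Cref{rem:degree_diff_tr-mono}. You merely make explicit what the paper's phrase ``functoriality of the MW-trace map'' leaves implicit --- namely that the canonical element $w$ must decompose as $w_n\otimes w'$ under the isomorphism \eqref{eq:rel_canonical}, and the subsequent use of the projection formula --- so this is a faithful expansion rather than a different argument.
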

\begin{proof}
By multiplicativity of $d_\epsilon$ (\Cref{num:quad_mult}),
 and the functoriality of the MW-trace map,
 one reduces to the monogenic case. Then, it follows from \Cref{rem:degree_diff_tr-mono}.
\end{proof}

\begin{rem}
In general, any element of $w'=\omega_{E/k}$ can be written as $w'=u.w$.
 One should be careful however that if one replaces $w$ in the above corollary by $w'$,
 this completely changes the above result.
 For example, in the case of \Cref{ex:compute_MW-trace}(3),
 one gets
$$
\Tr^{MW}(\tw 1 \otimes w')=[\tau^\alpha_{E/k}(u.-)].
$$
\end{rem}

\begin{num}
Finally, we want to compare the previous definition of transfers on Milnor-Witt K-theory
 with the original one due to Morel for finitely generated extensions of some perfect field:
 \cite[Rem. 4.32]{Mor}.

Recall the construction of Morel, for a finite extension $E/k$.\footnote{Note that contrary to the previously known constructions, 
 we do not to assume that $E$ and $k$ are finitely generated extensions over some perfect base field.}
 We fix a finite generating family $\underline \alpha=(\alpha_1,\hdots,\alpha_n)$ of $E/k$,
 to which we associate a tower of finite monogenic extensions $\kappa_i=k[\alpha_1,\hdots,\alpha_i]$:
$$
k \subset \kappa_1 \subset \hdots \subset \kappa_n=E.
$$
Then we can define the following composite map, denoted by $\Tr^{MW,\underline \alpha}_{E/k}$:
\begin{align*}
\KMW q(E,\omega_{E/k})& \simeq \KMW q(E,\omega_{E/\kappa_{n-1}} \otimes \omega_{\kappa_{n-1}/k}|_E)
 \xrightarrow{\Tr^{MW,\alpha_n}_{E/\kappa_{n-1}}} \KMW q(\kappa_{n-1},\omega_{\kappa_{n-1}/k}) \\
& \simeq \KMW q(\kappa_{n-1},\omega_{\kappa_{n-1}/\kappa_{n-2}} \otimes \omega_{\kappa_{n-2}/k}|_{\kappa_{n-1}})
 \xrightarrow{\Tr^{MW,\alpha_{n-1}}_{\kappa_{n-1}/\kappa_{n-2}}} \hdots \\
& \hdots \KMW q(\kappa_1,\omega_{\kappa_1/k}) \xrightarrow{\Tr^{MW,\alpha_1}_{\kappa_{1}/k}} \KMW q(k)
\end{align*}
where the morphism $\Tr^{MW,x_{i}}_{\kappa_{i}/\kappa_{i-1}}$ is the $(\omega_{\kappa_{i-1}/k})$-twisted MW-transfer
 associated with $(\kappa_i/\kappa_{i-1},\alpha_i)$, as defined in \Cref{num:MWtr-monogenic}.

The main result of \cite[\textsection 4.2]{Mor} (see Th. 4.27), is that this composite map, at least for finitely generated
 extensions of some perfect base field, is independent of the chosen family of generators. This result
 has also been proved later in \cite{FeldTohoku} by direct transport of the proof of Kato (again under the same
 assumptions). Actually, given the method we have chosen, we get another proof of this theorem
 (without any restriction on the fields considered).
\end{num}
\begin{prop}\label{prop:comparison_MW-transfers}
Consider the above notation. Then one has an equality:
$$
\Tr^{MW}_{E/k}=\Tr^{MW,\underline \alpha}_{E/k}
$$
where the left-hand side was defined in \Cref{df:MW-transfers}.
\end{prop}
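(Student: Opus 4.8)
The plan is to derive the identity $\Tr^{MW}_{E/k}=\Tr^{MW,\underline\alpha}_{E/k}$ from two intermediate facts about the transfer of \Cref{df:MW-transfers}: that it is \emph{transitive} for towers of finite field extensions, and that it \emph{agrees with the monogenic transfer} of \Cref{num:MWtr-monogenic} whenever $E=k[\alpha]$ is monogenic. Once these are in hand, the conclusion is immediate: Morel's composite $\Tr^{MW,\underline\alpha}_{E/k}$ is built by construction from the monogenic transfers $\Tr^{MW,\alpha_i}_{\kappa_i/\kappa_{i-1}}$ along the flag $k\subset\kappa_1\subset\hdots\subset\kappa_n=E$, so rewriting each of these as $\Tr^{MW}_{\kappa_i/\kappa_{i-1}}$ by the second fact and composing by the first yields $\Tr^{MW}_{E/k}$. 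As a bonus, since the right-hand side of \Cref{df:MW-transfers} involves no choice, this simultaneously reproves that $\Tr^{MW,\underline\alpha}_{E/k}$ is independent of the generating family $\underline\alpha$.

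To prove transitivity, I would use that, via the natural identification $(\mu',\forget)$ of the cartesian square \Cref{prop:twisted_fdl_square_KMW}, the group $\KMW n(E,\omega_{E/k})$ becomes $\I n(E,\omega_{E/k})\times_{\gI n(E)}\KM n(E)$ and $\Tr^{MW}_{E/k}$ becomes, coordinate-wise, the differential transfer $\Tr^\omega_{E/k*}$ of \Cref{df:GW-transfers} on the $\I$-factor and Kato's norm $\Tr^M_{E/k}$ on the $\KM$-factor. Both of these are compatible with composition — the first by \Cref{num:basic_GW-transfer}(2), the second by Kato's construction — so the only thing left to verify is that the canonical-module isomorphism $\omega_{L/k}\simeq\omega_{L/E}\otimes_L\omega_{E/k}$ entering the statement is the coherent one given by \eqref{eq:rel_canonical} for the flag of triangles attached to $k\to E\to L$; this is formal.

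For the monogenic comparison, write $E=k[\alpha]$ and again use the fibre-product description. \Cref{lm:mono_transf_MW&I} gives $\mu'_k\circ\Tr^{MW,\alpha}_{E/k}=\Tr^\omega_{E/k*}\circ\mu'_E$ and \Cref{lm:mono_transf_F&H} gives $\forget_k\circ\Tr^{MW,\alpha}_{E/k}=\Tr^M_{E/k}\circ\forget_E$. Since $(\mu'_k,\forget_k)$ is an isomorphism onto the fibre product, these two equalities together show that $(\mu'_k,\forget_k)\circ\Tr^{MW,\alpha}_{E/k}$ sends a class $x$ with $(\mu'_E(x),\forget_E(x))=(\sigma,\tau)$ to $(\Tr^\omega_{E/k*}(\sigma),\Tr^M_{E/k}(\tau))$, which is precisely $(\mu'_k,\forget_k)\big(\Tr^{MW}_{E/k}(x)\big)$ by \Cref{df:MW-transfers}; hence $\Tr^{MW,\alpha}_{E/k}=\Tr^{MW}_{E/k}$. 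The $\cL$-twisted versions follow at once, since both sides are defined from the untwisted ones by the same renormalization $\sigma\otimes w\otimes a=\tw u\,\sigma\otimes w\otimes a'$ used in \Cref{num:MWtr-monogenic} and \Cref{df:MW-transfers}.

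The step I expect to be the real nuisance is the coherence bookkeeping in the transitivity argument: making sure that the transitivity squares for $\Tr^\omega_{E/k*}$, for $\Tr^M_{E/k}$, and for the cartesian square \Cref{prop:twisted_fdl_square_KMW} commute on the nose once all the twist identifications — the tower isomorphisms $\psi_\Delta$ of \eqref{eq:rel_canonical} and those implicit in the twisted forms of each functor — have been inserted. There is no conceptual difficulty here, but it is the one place where care is required; every other part of the argument is a direct appeal to lemmas already established.
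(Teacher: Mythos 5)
Your argument is correct and follows essentially the same route as the paper: reduce to the monogenic case via compatibility of $\Tr^{MW}_{E/k}$ with composition (which the paper cites via \Cref{rem:MW-transfer_ppties}, itself resting on the componentwise transitivity of $\Tr^\omega_{E/k*}$ and $\Tr^M_{E/k}$), and then conclude the monogenic case from \Cref{lm:mono_transf_MW&I} and \Cref{lm:mono_transf_F&H} together with the fact that $(\mu',\forget)$ is an isomorphism onto the fibre product. You have merely spelled out the details the paper leaves implicit; there is no gap.
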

In  particular, the computations given above apply to the already known
 (geometric) transfer map on Milnor-Witt K-theory.
\begin{proof}
As the transfers of \Cref{df:MW-transfers} are compatible with composition
 (\Cref{rem:MW-transfer_ppties}), one reduces to the monogenic case.
 This is then a consequence (already observed) of the definition, 
 and lemmas \ref{lm:mono_transf_F&H}, \ref{lm:mono_transf_MW&I}. 
\end{proof}

\section{Functoriality of Milnor-Witt K-theory} \label{sec:MW-mod}

We now turn to the last part of this work, where we gather the functorial properties of Milnor–Witt K-theory established in the preceding sections.
 These properties fit into the axiomatic framework of Milnor–Witt premodule theory developed by Feld in \cite{FeldMWmod},
 with several significant extensions.
 First, we work over all fields, not merely finitely generated ones over a fixed base field.
 Second, we make explicit the canonical isomorphisms arising in the twists of Milnor–Witt K-groups, which are usually left implicit.
 Finally, we formulate and prove refined versions of Feld’s structural formulas,
 and we establish all relations among the four functorialities in full generality.

Although Feld's axioms may appear somewhat intricate at first sight, a useful guiding principle is to regard Milnor–Witt 
 K-theory as a twisted cohomology theory defined on the category of integral $0$-dimensional schemes --- that is, on fields.
 Such axiomatic descriptions of functors on fields were pioneered by Rost, with his theory of cycle premodules \cite{Rost},
 and later adapted by Schmid \cite{Schmid} to the particular case of Witt groups.

\subsection{Basic maps}\label{sec:functoriality}
For any triple $(E,\cL,n)$, where $E$ is a field, $\cL$ an invertible $E$-vector space,
 and $n \in \ZZ$ an integer, one has an abelian group $\KMW n (E,\cL)$.
 It is equipped with the following basic maps, the same as in \cite[Def. 3.1]{FeldMWmod},
 except that we do not restrict to finitely generated extension fields of some base field:

\begin{enumerate}[label=(\textbf{D\arabic*}), leftmargin=*]
\item (see \Cref{num:twKMW_basic}(2)): Given any morphism
 $\varphi:E \rightarrow F$ of fields, one has a morphism of abelian groups:
$$
\varphi_*:\KMW n (E,\cL) \rightarrow \KMW n (F,\cL \otimes_E F).
$$
\item (see \Cref{df:MW-transfers}):
 Given a finite morphism $\psi:E \rightarrow F$ of fields,
 one has a \emph{transfer map}:
$$
\psi^*=\Tr^{MW}_{F/E}:\KMW n (F,\omega_{F/E} \otimes_E \cL) \rightarrow \KMW n (E,\cL)
$$
where $\omega_{F/E}$ is the canonical invertible $F$-vector space
 associated with the finite field extension $F/E$ (see \Cref{df:can_sheaf}).
\item (see \Cref{num:twKMW_basic}(1)): It has a structure of a bigraded algebra.
 Given triples $(E,\cL,n)$ and $(E,\cM,m)$, one has a \emph{product}:
$$
\KMW n (E,\cL) \otimes \KMW m (E,\cM) \rightarrow \KMW {n+m} (E,\cL \otimes_E \cM).
$$
In other words, $\KMW*(E,*)$ is a bigraded ring,
 graded with respect to $\ZZ$ and to the set of isomorphism classes
 of invertible $E$-vector spaces.
\item (see \Cref{thm:residue} and \Cref{num:residue}):
 Let $(E,v)$ be a discretely valued field
 with ring of integers $\cO_v$, $\cL$ be an invertible $\cO_v$-modules
 and $n \in \ZZ$ an integer. We let $\kappa_v$ be the residue field,
 $\cL_E=\cL \otimes_{\cO_v} E$.
 One has a morphism of abelian groups, called the \emph{residue map}:
$$
\partial_v:\KMW n (E,\cL_E) \rightarrow \KMW{n-1} (\kappa_v,\omega_v \otimes_{\cO_v} \cL)
$$
where $\omega_v=(\mathcal M_v/\mathcal M_v^2)^\vee$
 --- the normal sheaf of $\Spec(\kappa_v) \rightarrow \Spec(\cO_v)$.
\end{enumerate}
There is a further functorial property hidden in the axioms of \cite{FeldMWmod}
 that we now state explicitly:
\begin{enumerate}[label=(\textbf{D\arabic*+}), leftmargin=*]
\item (see \Cref{num:twKMW_basic}(3)) Given an isomorphism
 $\theta:\cL \rightarrow \cM$
 of $E$-vector spaces, one has an isomorphism of abelian groups:
$$
\theta_*:\KMW n (E,\cL) \rightarrow \KMW n (E,\cM).
$$
\end{enumerate}

\begin{rem}
Using \Cref{rem:twisted_functoriality} and the category of twisted fields defined therein,
 one can unify the functoriality data of (D1) and (D1+).
 In fact, one can also unify (D1) and (D2) by using the appropriate category of correspondences.
 We leave the details to the interested reader.
\end{rem}

\begin{rem}\label{num:MW-premod_variants_KMW}
In fact, we have seen other examples of theories equipped with the same basic maps:
\begin{enumerate}[wide]
\item The periodized Witt ring $\W[t,t^{-1}]$, which therefore becomes a $\ZZ$-graded algebra:
 this comes from the isomorphism $\phi$ of \Cref{cor:KMW_eta-inversed}
 and the fact multiplication by $\eta$ is compatible with data D* on $\KMW*$.
 In particular, the canonical map
$$
\KMW* \rightarrow \W[t,t^{-1}]
$$
is compatible with data D*.
\item The graded algebra $\I*$ associated
 with the fundamental ideal $\I{}$ of the Witt group (see \Cref{rem:graded_I}):
 a quick way of seeing that is to use the isomorphism
 $\psi$ of \Cref{thm:KW&graded_I} and the fact multiplication by $h$ is compatible
 with all data D* on $\KMW*$.

In particular, 
 the inclusion $\I* \subset \W[t,t^{-1}]$,
 as well as the canonical map $\mu':\KMW* \rightarrow \I*$ obtained in \Cref{cor:fundamental_square_KMW};
 are compatible with all data D*.
\item The Milnor K-theory $\KM*$: a first way of seeing that is the isomorphism
 from~\Cref{num:KMW&KM}, and again the fact that multiplication by $\eta$
 is compatible with data D* on $\KMW*$. 

On the other hand, recall that in this example,
 one has a canonical isomorphism $\KM*(E,\cL) \simeq \KM*(E)$
 (\Cref{ex:KM_trivial_tw}). Or in other words, the data (D1+) is trivial for Milnor K-theory:
 for any automorphism $\theta:\cL \rightarrow \cL$ of invertible $E$-vector spaces,
 the map $\theta_*:\KM*(E,\cL) \rightarrow \KM*(E,\cL)$ is equal to the identity.
 We will say that $\KM*$ is \emph{orientable}.

In this case, the above functoriality (D*) actually corresponds to the functoriality
 of Rost cycle premodules \cite[Def. 1.1]{Rost}. Moreover,
 the hyperbolic and forgetful maps
$$
\KM* \xrightarrow H \KMW* \xrightarrow F \KMW*
$$
of \Cref{df:KMW&KM} are compatible with data (D*): (D1) and (D1+) are obvious,
 D2 comes from \Cref{df:MW-transfers},
 D3 comes from the fact both maps are morphisms of rings,
 D4 was observed in \Cref{rem:KMW&KM_residues}.
\item The graded algebra $\gI *$ (see again \Cref{rem:graded_I}):
 a quick way of seeing that is to use the Milnor conjecture
 \Cref{thm:MilnorConj} and to use the preceding point.
 One can also use the fact $\I *$ is a subalgebra of $\W[t,t^{-1}]$
 and therefore, all data D* descend to the quotient $\gI *$, as $t$ 
 on the right-hand side is compatible with all data D*.

Note that $\gI*$ is also orientable as in the previous point.
 According to the previous remarks, one sees that the canonical maps
 (\Cref{cor:fundamental_square_KMW})
$$
\KM* \xrightarrow \mu \gI*, \quad \I * \xrightarrow \pi \gI*
$$
are compatible with the data D*. 
\end{enumerate}
For more background on these different theories and their relations,
 we refer the reader to \cite[\textsection 3.4.1]{DFJ}.
\end{rem}

\subsection{Main properties}\label{sec:rules}
We now state the properties of the maps (D*) constructed in \Cref{sec:functoriality}.
 Apart the fact they are valid for any field, they correspond precisely
 to that of \cite[Def. 3.1]{FeldMWmod}, and thus we follow the same numbering.
 We will state and prove three sets of properties.

\begin{num}
We begin by stating a first list of such rules, involving only (D1) and (D2).
\begin{enumerate}[label=(\textbf{R1\alph*}), leftmargin=2.5em]
\item $(\psi \circ \varphi)_*=\psi_* \varphi_*$ for composable morphisms of fields $\varphi$, $\psi$.
\item $(\Psi \circ \Phi)^*=\Phi^* \Psi^*$ for composable finite morphisms of fields $\Phi$, $\Psi$.
\item Consider a morphism (resp. finite morphism) of fields $\psi:E \rightarrow L$ (resp. $\Phi:E \rightarrow F$).
 Assume that $\psi$ or $\Phi$ is separable. Then:
$$
\psi_*\Phi^*=\sum_{x \subset F \otimes_E L} \Phi_x^*\psi_{x*}
$$
where $x$ runs over prime ideals of $F \otimes_E L$,
 with residue field $\kappa_x=(F \otimes_E L)/x$,
 $\Phi_x:L \rightarrow \kappa_x$
 and $\psi_x:E \rightarrow \kappa_x$ are the induced maps,
 and we have used the fact $\omega_{F/E}=F$, $\omega_{\kappa_x/E}=\kappa_x$.
\end{enumerate}
Each property follows from the preceding sections. Here is a detailed justification.
 Property (R1a) is clear from \Cref{df:KMW}, while (R1b) follows from  \Cref{rem:MW-transfer_ppties}.
 To prove (R1c), we use \Cref{df:MW-transfers}. This reduces
 to proving the corresponding formulas for Milnor K-theory
 and for the differential trace map, respectively.
 The case of Milnor K-theory follows from \cite[(5.8)]{BT}
 and that of the differential trace map from \cite[3.4.1]{ConradDual},
 or can be derived from the explicit computation in \Cref{prop:compute_residues_SS}).
\end{num}

\begin{rem}
We will strengthen (R1c) in \Cref{thm:R1c+}, following an idea of \cite{FeldTohoku}.
 Note however that this formula is enough to develop
 the theory of Chow-Witt groups (especially pullbacks).
\end{rem}

\begin{num}
Let us consider again a morphism $\varphi:E \rightarrow F$ of fields,
 and a finite morphism $\Phi:E \rightarrow F$ of fields.
 In addition, one considers $\sigma, \sigma', \beta$ elements of the Milnor-Witt K-group so that the next formulas
 make sense. We now state the properties of Milnor-Witt K-theory which involves products and data (D1), (D2):
\begin{enumerate}[label=(\textbf{R2\alph*}), leftmargin=2.5em]
\item $\varphi_*(\sigma.\sigma')=\varphi_*(\sigma).\varphi_*(\sigma')$
\item $\Phi^*\big(\Phi_*(\sigma) \ .\ \beta\big)=\sigma\ .\ \Phi^*(\beta)$
\item $\Phi^*\big(\sigma \ .\ \Phi_*(\beta)\big)=\Phi^*(\sigma)\ .\ \beta$
\end{enumerate}
Given the definition of the Milnor-Witt K-theory ring by generators and relations (\Cref{df:KMW}),
 formula (R2a) is clear.
 (R2b) and (R2c) are equivalent by the $\epsilon$-commutativity of the Milnor-Witt ring (\Cref{prop:KMW-epsilon-commut}).
 Then (R2c) is proved in \Cref{rem:MW-transfer_ppties}.
\end{num}

\begin{num}
Let us finally gather the elementary properties that involve the residue map (D4).
 One considers discretely valued fields $(E,v)$, $(F,w)$,
 and $\cO_v$, $\cO_w$, (resp. $\cM_v$, $\cM_w$) the corresponding valuation rings
 (resp. maximal ideals).
 In (R3a,c,d), we consider in addition a morphism $\varphi:E \rightarrow F$
 (resp. finite morphism $\Phi:E \rightarrow F$) of fields.
\begin{enumerate}[label=(\textbf{R3\alph*}), leftmargin=2.5em]
\item Assume that $w \circ \varphi=v$.
 Thus, one has an induced morphism $\varphi:\kappa(v) \rightarrow \kappa(w)$ 
 and an induced isomorphism of invertible $\kappa(v)$-vector spaces:
\begin{align*}
\theta:\omega_v \otimes_{\kappa(v)} \kappa(w) &\rightarrow \omega_w \\
 \bar \pi^* \otimes 1 &\mapsto \overline{\varphi(\pi)}^*
\end{align*}
where $\omega_v=(\cM_v/\cM_v^2)^\vee$, $\omega_w=(\cM_w/\cM_w^2)^\vee$.
 Then: $\partial_w \circ \varphi_*=\theta_* \circ \bar \varphi_* \circ \partial_v$.
\addtocounter{enumi}{1}
\item Assume $w \circ \varphi=0$.
 Then $\partial_w \circ \varphi_*=0$.
\item Assume $w \circ \varphi=0$
 and let $\varphi:E \rightarrow \kappa(w)$
 be the morphism induced by $\varphi:F \rightarrow E$.
 Let $\pi$ be a prime of $w$, and consider the resulting trivialization
 (sending $\bar \pi^*$ to $1$):
$$
\theta^\pi:\omega_v \rightarrow E
$$
so that $\partial_w^\pi=\theta^\pi_* \circ \partial_w$,
 and:
 $s_w^\pi(\sigma)=\theta^\pi_* \circ \partial_w([\pi].\sigma)$.

Then $s_w^\pi \circ \varphi_*=\bar \varphi_*$.
\item The following formulas hold:
\begin{align*}
\partial_v([u].\sigma)&=\epsilon[\bar u].\partial_v(\sigma), \\
\partial_v(\eta.\sigma)&=\eta.\partial_v(\sigma)
\end{align*}
where $u \in \cO_v^\times$ is a unit of $v$.
\end{enumerate}
Formulas (R3a), (R3c) and (R3d) all follow from the construction of the residue map,
 and more precisely from \Cref{thm:residue}, (Res2). Finally,
  formula (R3e) is proved  in \Cref{num:basic_formula_res}.
\end{num}


\begin{num}\label{num:R3b}
We have left the property (R3b), following the numbering of \cite{Rost} and \cite{FeldMWmod},
 for the next theorem. It is certainly the most difficult fact to establish, even
 in the case of Milnor K-theory. Indeed, in the Milnor K-theory case,
 the only proof of (R3b) in the literature that we are aware of is \cite[Cor. 7.4.3]{GSza}.

Let us fix the notation.
 We consider a discretely valued field $(E,v)$, with ring of integers
 $\cO_v$. We fix a finite field extension $F/E$ and consider
 the integral closure $B$ of $\cO_v$ in $F$.
 According to the Krull-Akizuki theorem (\cite[VII, \textsection 2, n°5, Prop. 5]{BouAC57}),
 $B$ is a Dedekind ring. 
 In the next theorem, we will make the important assumption that:
\begin{equation}{\tag{F}}\label{eq:hyp-R3b}
\text{$B$ is a finite $\cO_v$-algebra.}
\end{equation}
 Notably, this condition holds if $F/E$ is separable (\cite[VI, \textsection 8, n°5, Th. 2]{BouAC57}),
 or if $\cO_v$ is Japanese (\emph{e.g.} excellent,
 essentially of finite type over a field or over $\ZZ$).
 We will show in \Cref{prop:R3b+}
 how to modify the next theorem in order to avoid this assumption.

Recall also that there is a bijection between the discrete valuations
 $w$ of $F$ extending $v$ and the non-zero ideals of $B$ (\cite[VII, \textsection 2, n°5, Prop. 6]{BouAC57}).
 For such a valuation $w$, one can consider the commutative diagram
$$
\xymatrix@=10pt{
\kappa_w\ar@{}|\Theta[rd] & \cO_w\ar[l] \\
\kappa_v\ar^-{\Phi_w}[u] & \cO_v\ar_-\Phi[u]\ar[l].
}
$$
and the canonical isomorphism of invertible $\kappa_w$-vector spaces
 (apply \Cref{rem:can_iso_can_sheaf} with $\Theta$):
$$
\theta^w:\omega_w \otimes_{\cO_w} \omega_{\cO_w/\cO_v}
 \xrightarrow{\ \sim\ } \omega_{\kappa_w/\kappa_v} \otimes_{\kappa_v} \omega_v.
$$
\end{num}
\begin{thm}[Property \textbf{(R3b)}]\label{thm:R3b}
Consider the above assumptions and notation. Then the following formula,
 involving the basic maps (D2) and (D4) of Milnor-Witt K-theory, holds:
$$
\partial_v \circ \Phi^*=\sum_{w/v} \Phi_w^* \circ \theta^w_* \circ \partial_w.
$$  
\end{thm}
Hiding the isomorphisms $\theta^w$ and using the notation of \Cref{df:MW-transfers},
 this formula can be rewritten as follows:
$$
\partial_v \circ \Tr^{MW}_{F/E}=\sum_{w/v} \Tr^{MW}_{\kappa_w/\kappa_v} \circ \partial_w.
$$
Before going into the proof, we state a lemma which, though not stated in the list of axioms
 of \cite[Def. 3.1]{FeldMWmod},
 could also be added to the list of properties of the basic maps for Milnor-Witt K-theory.
 It states an anti-commutativity of residues, analogous to \cite[Th. 4.32(3)]{Deg8}, \cite[Prop. 6.6(4)]{FeldMWmod}.
 Recall from \Cref{num:epsilon_KMW} that we have put: $\epsilon=-\tw{-1}$ in the Milnor-Witt
 K-theory of any field.
\begin{lm}\label{lm:assoc-res}
Let $R$ be a $2$-dimensional local regular ring with fraction field $E$ and residue field $k$.

Let $a$ and $b$ be regular (\emph{i.e.} non-zero) non-unit elements of $R$,
 $v$ and $w$ be respectively the $a$-adic and $b$-adic valuations on $E$.
 We assume that the ideal $(a,b)$ has height $2$: in other words,
 the intersection of the divisors defined respectively by $a$ and $b$
 is proper, concentrated on the closed point of $\Spec(R)$.
 
The rings $A=R/(a)$ and $B=R/(b)$ are $1$-dimensional local regular rings, therefore discrete valuation rings.
 Let $w'$ and $v'$ be the respective valuation on their fraction fields, $\kappa_v$ and $\kappa_w$ respectively.
 Note that the residue fields of $w'$ and $v$' are both equal to $k$.

Then the following formula, involving the basic map (D4) of Milnor-Witt K-theory, holds:
$$
\theta_a \circ \partial_{w'} \circ \partial_v=\epsilon.\theta_b \circ \partial_{v'} \circ \partial_w
$$
where we have considered the canonical isomorphisms:
\begin{align*}
&\theta_a:\omega_{A/R} \otimes_A \omega_{w'} \simeq \omega_{R/k} \\
&\theta_b:\omega_{B/R} \otimes_B \omega_{v'} \simeq \omega_{R/k}
\end{align*}
associated with the commutative squares:
$$
\xymatrix@=10pt{
R\ar[r]\ar[d]\ar[rd] & A\ar[d] \\
B\ar[r] & k
}
$$
\end{lm}
\begin{proof}
According to the presentation of Milnor-Witt K-theory, and the rules to compute
 residues from \Cref{thm:residue}, one reduces to prove the formula
 once applied to a symbol of the form
 $[a,b].\gamma$
 where $\gamma=[u_1,\ldots,u_n]$ for units $u_i \in \E^\times$ with respect to both $v$ and $w$.

Let us fix $a'$ (resp. $b'$) a uniformizer of $B$ (resp. $A$).
 By assumption $A \otimes_R B$ is an Artin local ring. We let $e$ be its length,
 which is also the intersection multiplicity of the divisors $V(a)=\Spec(A)$ and $V(b)=\Spec(B)$
 in $\Spec(R)$. One deduces that there exists units $\alpha \in A^\times$, $\beta \in B^\times$ such that:
\begin{align*}
\bar a&=\alpha.a^{\prime e} \in B, \\
\bar b&=\beta.b^{\prime e} \in A,
\end{align*}
where $\bar a$ (resp. $\bar b$) is the class of $a$ in $B=R/(b)$ (resp. $b$ in $A=R/(a)$).
In particular, letting $\bar \gamma=[\bar u_1,\ldots,\bar u_n] \in \KMW n(k)$, one can compute:
\begin{align*}
\partial_{w'} \circ \partial_v([a,b].\gamma])\stackrel{(1)}=&\tw \alpha.e_\epsilon.\bar \gamma \otimes (a \wedge b') \\
\partial_{v'} \circ \partial_w([a,b].\gamma])\stackrel{(2)}=&\partial_{v'} \circ \partial_w(\epsilon.[b,a].\gamma])
 \stackrel{(1)}=\epsilon.\tw \beta.e_\epsilon.\bar \gamma \otimes (b \wedge a').
\end{align*}
where equalities (1) follow from the relations of \Cref{thm:residue}
 and equality (2) from \Cref{prop:KMW-epsilon-commut}.
 Therefore, one can conclude using the relation
$$\theta_a(a \wedge b')=\alpha\beta^{-1}.\theta_b(b \wedge a'),
$$
which follows from the comparison of the associated basis of the $k$-vector space $\omega_{R/k}$.
\end{proof}
\begin{proof}[Proof of \Cref{thm:R3b}.]
\noindent \emph{Reduction to $F$ being local.}
Let $\cO_v^h$ be the henselization of the local ring $\cO_v$, which is again a discrete valuation ring,
 and let $E^h$ be the fraction field of $\cO_v^h)$.
 Then the $B'=B \otimes_{\cO_v} \cO_v^h$ is a finite $\cO_v^h$-algebra, according to assumption
 \eqref{eq:hyp-R3b}, and it is reduced as $\cO_v^h$ is ind-étale over $\cO_v$.
 As $\cO_v^h$ is henselian, one deduces that $B'$ is a product of henselian valuation ring,
 and in fact:
$$
B \otimes_{\cO_v} \cO_v^h=\prod_{w/v} \cO_w^h.
$$
Putting $F_w^h=\Frac(\cO_w^h)$, one further deduces that $F \otimes_E E^h=\prod_{w/v} F^h_w$.
 Using (R3a), and (R1c) applied to the separable field extension $E^h/E$ and the finite one $F/E$,
 one reduces the problem
 to the case where $E$ is henselian. In that case, there is only one valuation $w$ extending $v$,
 one has $B=\cO_w$ and $\cO_w/\cO_v$ is a finite extension of henselian discrete valuation rings.
 Fixing an element $\sigma \in \KMW n(F,\omega_{F/E})$ for the remaining of the proof, we are restricted
 to show the relation:
\begin{equation}\label{eq:thm:R3b_0}
\partial_v(\Tr_{F/E}^{MW}(\sigma))=\Tr_{\kappa_w/\kappa_v}^{MW}(\partial_w(\sigma))
\end{equation}
where we have hidden the isomorphism $\theta_w$. In fact, we will hide all similar isomorphisms
 in the following proof as they play no significant role.

\bigskip

\noindent \emph{Induction.}
We now prove \eqref{eq:thm:R3b_0} by induction on the degree of the extension of residue fields $\kappa_w/\kappa_v$.
 We first start by the inductive step. We assume this degree is positive. Then there exists an element
 $\alpha \in \cO_w$ such that $\bar \alpha \in (\kappa_w-\kappa_v)$. Put $\kappa_0=\kappa_v[\bar \alpha]$,
 seen as an intermediate field extension of $\kappa_w/\kappa_v$.
 Let $f \in \cO_v[t]$ be a monic polynomial which lifts the minimal polynomial of $\bar \alpha$
 in $\kappa_w/\kappa_v$.
 According to \cite{SerLoc}[I, \textsection 6, Prop. 15, 16],
 $f$ is irreducible in $E[t]$, $F_0=E[t]/(f)$ is a non-trivial intermediate extension of $F/E$,
 $B_0=\cO_v[t]/(f)$ is a (henselian) valuation ring which is the integral closure of $\cO_v$
 in $F_0$, and with maximal ideal $\cM_v.B_0$. The valuation $w_0$ on $F_0$ is an unramified extension of $v$,
 with residue field $\kappa_{w_0}=\kappa_0$. If $\kappa_{w_0} \subsetneq \kappa_w$,
 by compatibility of traces with composition (rule (R1b)), and induction, we are done.
 In the other case, using again rule (R1b), assuming the initial step of the induction (trivial residual extension),
 we are restricted to the case of $F_0/E$,
 unramified extension of henselian discretely valued fields, such that the corresponding
 extension of valuation rings is monogenic. 

\bigskip

\noindent \emph{The unramified monogenic case.} Let us go on with the notation
 of the previous paragraph. To simplify, we now let $F=F_0$, $\kappa_w=\kappa_0$.
 As $F=E[t]/(f)$ is monogenic,
 we can use the method of \Cref{ex:algorithm-norms} to compute $\Tr_{F/E}^{MW}(\sigma)$:
 we pick an element $\varphi \in \KMW{n+1}(E(t),\omega_{E(t)/E})$ such that
 for any irreducible polynomial $g \in E[t]$, one has
$$
\partial_g(\varphi)=\begin{cases}
\sigma & g=f, \\
0 & \text{otherwise}
\end{cases}
$$
where $\partial_g$ denotes the residue map with respect to the $g$-adic valuation on $E(t)$.
 Then one gets:
\begin{equation}\label{eq:thm:R3b_1}
\Tr_{F/E}^{MW}(\sigma)=-\partial_\infty(\varphi).
\end{equation}
Let $\pi_v$ be a uniformizer of $v$,
 and $\nu$ be the valuation on $E(t)$ corresponding to the regular element $\pi_v \in \cO_v[t]$.
 We put:
$$
\psi=\epsilon.\partial_\nu(\varphi) \in \KMW{n+1}(\kappa_v(t)).
$$
Applying \Cref{lm:assoc-res} to the ring $R=(\cO_v[t])_{(\pi_v,f)}$, with regular elements $a=\pi_v$, $b=f$,
  one deduces the relation:
$$
\partial_{\bar f}(\psi)=\epsilon.\partial_{\bar f}\partial_\nu(\varphi)=\partial_w\partial_f(\varphi)=\partial_w(\sigma).
$$
By using the above construction and same lemma but replacing $\bar f$ with any irreducible polynomial of $\kappa_v[t]$,
 one further deduces that $\partial_{\bar g}(\psi)=0$ for any irreducible polynomial $\bar g \neq \bar f$.
 One deduces from \Cref{ex:algorithm-norms} the relation:
\begin{equation}\label{eq:thm:R3b_2}
\Tr_{\kappa_w/\kappa_v}^{MW}(\partial_w(\sigma))=-\partial_\infty(\psi)=\epsilon.\partial_\infty\partial_\nu(\varphi)
\end{equation}
Then formula \eqref{eq:thm:R3b_0} follows from relations \eqref{eq:thm:R3b_1}, \eqref{eq:thm:R3b_2},
 and \Cref{lm:assoc-res} applied to the regular ring $R=(\cO_v[t^{-1}])_{(\pi_v,t^{-1})}$ and the regular elements
 $a=\pi_v$, $b=t^{-1}$.

\bigskip

\noindent \emph{The totally ramified case.} It remains to prove the initial case of the induction, when $\kappa_w=\kappa_v$.
 We have assumed that $\cO_w/\cO_v$ is finite, which implies that the ramification index $e$ of $w$ over $v$
 is equal to the degree $n=[F:E]$ (see \cite[VI, \textsection 8, n°5, Th. 2]{BouAC57}).
 According to \cite[I, \textsection 7, Prop. 18]{SerLoc}, there exists an Eisenstein polynomial $f \in \cO_v[t]$
 such that $\cO_w=\cO_v[t]/(f)$ and $F=E[t]/(f)$.

As in the previous case, we can use \Cref{ex:algorithm-norms}: one find an element $\varphi \in \KMW{n+1}(E(t),\omega_{E(t)/E})$
 satisfying the same properties as in the previous case, which guarantee that relation
 \eqref{eq:thm:R3b_1} is still valid, this time considering the Eisenstein polynomial $f$.
 We next apply \Cref{lm:assoc-res} to the ring $R=(\cO_v[t])_{(\pi_v,t)}$, with regular elements $a=\pi_v$, $b=f$.\footnote{Note
 that this time, the intersection multiplicity of the effective Cartier divisors $V(a)$ and $V(b)$ at the closed point
 of $\Spec(R)$ is $e$.}
 One deduces the relation:
$$
\partial_{t}\partial_\nu(\varphi)=\epsilon.\partial_w\partial_f(\varphi)=\epsilon.\partial_w(\sigma)
$$
where $\nu$ is again the $\pi_v$-adic valuation on $E(t)$.
 Here $\partial_t$ denotes the residue map corresponding to the $t$-adic valuation on $\kappa_w(t)=\kappa_v(t)$.
 We have generically denoted by $\partial_\infty$ the valuation at $\infty$ of $\kappa_v(t)$, that is with respect
 to the $(t^{-1})$-adic valuation. In particular, one deduces the following relation from the properties of the residue map
 (see \Cref{thm:residue}):
 $\partial_t=\epsilon.\partial_\infty$.
 In particular, we can combine the two relations just obtained and get:
\begin{equation}\label{eq:thm:R3b_3}
\partial_\infty\partial_\nu(\varphi)=\partial_w(\sigma).
\end{equation}
Then relation \eqref{eq:thm:R3b_0} follows in our case
 from relations \eqref{eq:thm:R3b_1}, \eqref{eq:thm:R3b_3} and by a last application of \Cref{lm:assoc-res},
 with the ring $R=(\cO_v[t^{-1}])_{(\pi_v,t^{-1})}$ and the elements
 $a=\pi_v$, $b=t^{-1}$.
\end{proof}

\begin{rem}
\begin{enumerate}
\item The proof of the preceding theorem, and formula (R3b), is equally valid for Milnor K-theory.
 As said before, the only proof of (R3b) for Milnor K-theory that is known to us
 is in \cite[Cor. 7.4.3]{GSza}. The proof is based on initial results  due to Bass-Tate and Kato.
 The above proof, based on structural theorems of finite extensions of discretely valued fields,
 is more direct.
\item In the case of Milnor-Witt K-theory, one can derive a proof of (R3b) for essentially smooth valuation $k$-algebra
 from \cite[Th. 5.26]{Mor}. One can also find an argument in \cite[Cor. 10.4.5]{FaselCW}, when $k$ has characteristic different from $2$.\footnote{Beware
 that the indicated corollary is claimed for arbitrary regular schemes,
 but the theorem on which the corollary is based, \emph{loc. cit.}, Th. 2.3.1 and 8.3.4, are only proved for essentially smooth $k$-schemes.} 
\end{enumerate}
\end{rem}

\begin{num}
It remains to state the last property, which is specific to Milnor-Witt K-theory.
 Let us first remark that, given an invertible $E$-vector space $\cL$,
 one has an isomorphism:
$$
E^\times \rightarrow \mathrm{Aut}_E(\cL), u \mapsto (l \mapsto u.l).
$$
Given an $E$-automorphism $\Theta$ of $\cL$, one denotes by $\delta_\Theta \in E^\times$
 the corresponding unit.
\begin{enumerate}[label=\textbf{(R4a)}, leftmargin=2.5em]
\item For any automorphism $\Theta$ of an invertible $E$-vector space $\cL$,
 and any $\sigma \in \KMW*(E,\cL)$,
 one has $\Theta_*(\sigma)=\tw{\delta_\Theta}.\sigma$.
\end{enumerate}
\end{num}

\begin{rem}
 Given a base scheme $S$, we say $S$-fields for a field $E$ together with
 a morphism $\Spec(E) \rightarrow S$ essentially of finite type.
\begin{enumerate}
\item Let $S$ be a scheme essentially of finite type over a field.
 The data and rules seen so far, except that one has to consider the slightly stronger property (R3a+)
 (see \Cref{prop:R3a+}) show that $\KMW*$ restricted to $S$-fields
 forms a MW-premodule in the sense of \cite[Def. 5.1]{FeldMWmod}.
\item Let $S$ be a Noetherian scheme equipped with a dimension function.
 Then the data and rules obtained above show that $\KMW*$ restricted to $S$-fields
 forms a cohomological MW-premodule in the sense of \cite[Def. 3.2.1]{DFJ}
\end{enumerate}
\end{rem}

\subsection{Finer properties and quadratic multiplicities}\label{sec:strong}

In this section,
 we formulate, following \cite{FeldMWmod, FeldTohoku}, stronger forms of properties (R1c) and (R3a) involving multiplicities,
 as in the theory of cycle modules \cite{Rost}.
 We also give a refinement of (R3b) which, even for Milnor K-theory,  is new.

Note that the main difficulty compared to Rost's theory
 is the necessity to describe what happens on twists. Compared to the formula given by Feld,
 we make explicit the isomorphisms needed to get coherent formulas.

\begin{num}
We start with the stronger form of (R3a).
 We consider a ramified extension $\varphi:E \rightarrow F$ of valued fields $(E,v)$, $(F,w)$
 with ramification index $e>0$: $w \circ \varphi=e.v$.
 We still denote by $\varphi:\cO_v \rightarrow \cO_w$ the induced morphism
 on the ring of integers, and by $\bar \varphi:\kappa_v \rightarrow \kappa_w$ the induced map
 on the residue fields.

Let us choose uniformizers $\pi_v \in \cO_v$, $\pi_w \in \cO_w$, respectively for $v$ and $w$.
 One deduces a canonical isomorphism of $\kappa_w$-vector spaces:
$$
\theta:\omega_v \otimes_{\kappa_v} \kappa_w\rightarrow \omega_w, \bar \pi_v^* \otimes 1 \mapsto \bar \pi_w^*
$$
where $\omega_v=(\cM_v/\cM_v^2)^\vee$ (resp. $\omega_w=(\cM_w/\cM_w^2)^\vee$)
 are the respective normal sheaves.

Note that there exists a uniquely defined unit $u \in \cO_w^\times$
 such that $\varphi(\pi_v)=u.\pi_w^e$.
\end{num}
\begin{prop}[Property \textbf{(R3a+)}]\label{prop:R3a+}
Consider the above hypothesis and notation. Then the following diagram commutes:
$$
\xymatrix@=20pt{
\KMW*(E)\ar^-{\partial_v}[r]\ar_{\varphi_*}[d] & \KMW*(\kappa(v),\omega_v)\ar^{\tw{\bar u}.e_\epsilon.(\bar \varphi_* \otimes \theta_*)}[d] \\
\KMW*(F)\ar^-{\partial_w}[r] & \KMW*(\kappa(w),\omega_w).
}
$$
Moreover, the right vertical map does not depend on the choice of uniformizers $\pi_v$ and $\pi_w$.
\end{prop}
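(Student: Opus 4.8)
The plan is to reduce the twisted formula to a statement about the \emph{specialized} residues $\partial_v^{\pi_v}$, $\partial_w^{\pi_w}$ of \Cref{df:specialization}, and then to verify that statement on a two-element generating set of $\KMW*(E)$ coming from Morel's Gersten-type sequence. \emph{Step 1 (unwinding the twists).} Using $\partial_v=\partial_v^{\pi_v}(-)\otimes\bar\pi_v^*$, $\partial_w=\partial_w^{\pi_w}(-)\otimes\bar\pi_w^*$ and $\theta(\bar\pi_v^*\otimes 1)=\bar\pi_w^*$, I would compose the claimed identity with $\ev_{\bar\pi_w^*}$ and observe that it is equivalent to the equality of maps $\KMW*(E)\to\KMW*(\kappa_w)$
$$
\partial_w^{\pi_w}\circ\varphi_*\;=\;\tw{\bar u}\cdot e_\epsilon\cdot\bar\varphi_*\circ\partial_v^{\pi_v}.
$$
Once this is known for one choice of primes, the final (independence) assertion is automatic: $\partial_v$ is surjective and prime-independent (\Cref{thm:local_Morel}(1)), while the left vertical composite $\partial_w\circ\varphi_*$ of the diagram manifestly does not involve $\pi_v,\pi_w$, so the right vertical map is forced. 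Alternatively one checks by hand that replacing $\pi_v,\pi_w$ by $a\pi_v,b\pi_w$ multiplies $\tw{\bar u}$ by $\tw{\overline{\varphi(a)}}\,\tw{\bar b}^{-e}$ and $\theta_*$ by $\tw{\overline{\varphi(a)}\,\bar b^{-1}}$, and that the net correction $\tw{\bar b}^{-e-1}$ is swallowed by $e_\epsilon$ (using $\tw c\cdot h=h$, see \eqref{eq:GW_fixes_h}, when $e$ is even, and $\tw c^{2}=1$ when $e$ is odd).

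\emph{Step 2 (reduction to two generators).} Both maps in the displayed identity are additive, and I claim both satisfy $\Psi(\nu_*(\mu)\cdot\sigma)=\bar\varphi_*(\bar\mu)\cdot\Psi(\sigma)$ for $\mu\in\KMW*(\cO_v)$, where $\nu:\cO_v\hookrightarrow E$ and $\bar\mu$ is the residue class of $\mu$: for $\partial_w^{\pi_w}\circ\varphi_*$ this follows from $\varphi_*\circ\nu_*=\nu_{w*}\circ(\varphi|_{\cO_v})_*$ together with the projection-type formula $\partial_v(\varphi_*(\alpha)\sigma)=\bar\varphi_*(\alpha)\partial_v(\sigma)$ of \Cref{num:basic_formula_res}; for $\tw{\bar u}e_\epsilon\bar\varphi_*\circ\partial_v^{\pi_v}$ from the same formula, the fact that $\bar\varphi_*$ is a ring map, and the centrality of $\GW(\kappa_w)\subset\KMW0(\kappa_w)$ (\Cref{prop:KMW&GW}). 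By \Cref{thm:local_Morel}(1) one has $\Ker(\partial_v)=\Img(\nu_*)$; combining this with the surjectivity of $\partial_v$ and the explicit lifts of its generators used in that proof, every element of $\KMW*(E)$ is a sum of terms $\nu_*(\mu)$ and $\nu_*(\mu)\cdot[\pi_v]$. Hence it suffices to verify the identity for $\sigma=1$ and $\sigma=[\pi_v]$.

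\emph{Step 3 (the two base cases).} For $\sigma=1$ both sides vanish. For $\sigma=[\pi_v]$, recall from the construction of $\partial_w$ in the proof of \Cref{thm:residue} that the ring homomorphism $\Theta_{\pi_w}$ sends a symbol $[a\pi_w^{m}]$ (with $a\in\cO_w^\times$) to $[\bar a]+m_\epsilon\tw{\bar a}\,\xi$; applied to $\varphi_*([\pi_v])=[\varphi(\pi_v)]=[u\pi_w^{e}]$ this gives $\partial_w^{\pi_w}(\varphi_*[\pi_v])=e_\epsilon\tw{\bar u}$, with no $\xi$-commutation to worry about since $e_\epsilon\tw{\bar u}$ is of degree $0$. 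On the other side $\partial_v^{\pi_v}([\pi_v])=1$ (same formula with $a=1$, $m=1$), so $\tw{\bar u}e_\epsilon\bar\varphi_*(\partial_v^{\pi_v}[\pi_v])=\tw{\bar u}e_\epsilon=e_\epsilon\tw{\bar u}$. The two agree, which finishes the proof once Steps 1 and 2 are in place.

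\emph{Expected main obstacle.} Granting \Cref{thm:residue} and \Cref{thm:local_Morel}, nothing is deep here; the points needing genuine care are, first, the generation statement in Step 2 --- extracting cleanly from Morel's exact sequence that $\KMW*(E)$ is generated over $\Img(\nu_*)$ by $1$ and $[\pi_v]$, and checking that the projection-type formulas of \Cref{num:basic_formula_res} still apply after the $\ev_{\bar\pi^*}$-twists of Step 1 have been inserted --- and, second, the sign bookkeeping ($\eta\epsilon=\eta$, $\tw c\cdot h=h$, $\epsilon$-commutativity of $\KMW*$) that makes the constant come out exactly as $e_\epsilon\tw{\bar u}$ rather than, say, $\tw{\bar u}^{e}$ or with a stray power of $\epsilon$, both in the base case of Step 3 and in the hand-check of prime-independence in Step 1.
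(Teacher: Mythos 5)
Your proof is correct, and it takes a genuinely different route from the one in the paper. The paper argues at the symbol level: it uses $\eta$-commutation to reduce to $\sigma=[u_1,\hdots,u_n]$, then relation (MW2) to reduce further to $\sigma=[\pi_v,u_2,\hdots,u_n]$ with $u_i\in\cO_v^\times$, and computes both sides of the square directly via (Res2); independence of primes is established by a direct computation boiling down to $\tw{\bar u_F}e_\epsilon=\tw{\bar u_F^e}e_\epsilon$. Your Step~2 instead performs a structural reduction: using \Cref{thm:local_Morel}(1) you identify $\Ker(\partial_v)=\Img(\nu_*)$, show that $\KMW*(E)$ is generated over $\Img(\nu_*)$ by $1$ and $[\pi_v]$, and then check the two base cases. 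This buys you a cleaner organization and lets you appeal to uniqueness (surjectivity of $\partial_v$) to deduce prime-independence for free, although it imports the full strength of the Gersten exact sequence \Cref{thm:local_Morel}(1), which the paper's elementary MW2-reduction avoids. Your hand-check of the correction factor $\tw{\bar b}^{-e-1}$ is also correct and matches the paper's \eqref{eq:strong_R3c} in substance.

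One small caution on Step~2: the projection-type formula you cite from \Cref{num:basic_formula_res}, $\Psi(\nu_*(\mu)\cdot\sigma)=\bar\varphi_*(\bar\mu)\cdot\Psi(\sigma)$, cannot be literally correct as stated in the paper, since it conflicts with the immediately preceding line $\partial_v([u]\sigma)=\epsilon[\bar u]\partial_v(\sigma)$ when $\mu=[u]$ is in positive degree. The right statement carries an $\epsilon$-power depending on the degree of $\mu$. This does not affect your argument, because the $\epsilon$-factor is identical for $\partial_w^{\pi_w}\circ\varphi_*$ and for $\tw{\bar u}e_\epsilon\bar\varphi_*\circ\partial_v^{\pi_v}$ (it comes purely from the local formula for residues, independent of the valuation), so both sides still transform the same way under left multiplication by $\nu_*(\mu)$ and the reduction to $\sigma\in\{1,[\pi_v]\}$ goes through unchanged — you correctly flag this bookkeeping as the main point needing care. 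But when writing this up you should state the twisted formula $\Psi(\nu_*(\mu)\cdot\sigma)=\epsilon^{\deg\mu}\bar\varphi_*(\bar\mu)\cdot\Psi(\sigma)$ (interpreted suitably on generators $\eta^r[u_1,\hdots,u_{n+r}]$, for which only the number of $[u_i]$-factors matters) rather than citing the uncorrected version.
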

\begin{proof}
Consider an element $\sigma \in \KMW*(E)$.
 As all maps commute with multiplication by $\eta$, one reduces to consider a symbol of the form $\sigma=[u_1,\ldots,u_n]$.
 By using relation (MW2) of Milnor-Witt K-theory, the fact $w(\varphi(\pi_v))>0$, and the properties of the residue map,
 one reduces to the case where $\sigma=[\pi_w,u_2,\ldots,u_n]$, with $u_i \in \cO_v^\times$.
 We compute the composite of the maps through the left-down right corner:
$$
\partial_w\big(\varphi_*(\sigma)\big)=\partial_w\big([u.\pi_w^e,\varphi(u_2),\ldots,\varphi(u_n)]\big)=
 \tw{\bar u}.e_\epsilon.[\bar \varphi(\bar u_2),\ldots,\bar \varphi(\bar u_n)] \otimes \bar \pi_w^*.
$$
where the last equality follows from \Cref{thm:residue}(Res2).
 Another application of \emph{loc. cit.} gives $\partial_v(\sigma)=[\bar u_2,\ldots,\bar u_n] \otimes \bar \pi_v^*$, and so the first
 assertion follows.

For the second assertion, we write $\pi'_v=u_v\pi_v$, $\pi'_w=u_w\pi_w$, with $u_v$, $u_w$ units.
 Then a straightforward computation reduces to show the equality in $\GW(\kappa_w)$:
\begin{equation}\label{eq:strong_R3c}
\tw{\bar u_w}.e_\epsilon=\tw{\bar u_w^e}.e_\epsilon.
\end{equation}
If $e$ is odd, one gets $\tw{\bar u_w^e}=\tw{\bar u_w}$ and therefore \eqref{eq:strong_R3c} is true.
 If $e=2n$ is even, $e_\epsilon=n.h$.
 But for any unit $a \in \kappa_w^\times$, one has: $\tw a.h=h$ (\Cref{thm:GW}(GW3)).
 Thus \eqref{eq:strong_R3c} holds true in that latter case.
\end{proof}

\begin{rem}
In the preceding proposition, one cannot avoid in general the presence of the correcting unit $\bar u$
 in the formula of the right vertical map.
 Using property R4a, it is possible to give a more compact definition of this map.
 Indeed, working in the abelian group 
$$
\ZZ[Hom_{\kappa_w}(\omega_v \otimes_{\kappa_v} \kappa_w,\omega_w)]
 =\ZZ[Hom_{\kappa_v}(\omega_v,\omega_w)],
$$
one can define the element:
$$
\theta_u^e=\sum_{i=0}^{e-1} \delta_{\bar u^{(-1)^i}} \circ \theta
$$
with the notation of (R4a). With that definition, the formula of the preceding proposition reads:
$$
\partial_w \circ \varphi_*=(\bar \varphi_* \otimes \theta_u^e) \circ \partial_v.
$$
This last formula agrees with the computation of the $\AA^1$-homotopical defect
 of the purity isomorphism done in \cite[Th. 2.2.2]{Feld2}.
\end{rem}

\begin{num}
The preceding formula has interesting corollaries.
 Let us set up the notation before stating the first one.

We let $\varphi:E \rightarrow F$ be an arbitrary field extension,
 $\varphi':E(t) \rightarrow F(t)$ the induced extension.
 A closed point $x \in \AA^1_{E,(0)}$ corresponds to a monic irreducible polynomial
 $\pi_x \in E[t]$ and we denote by $v_x$ the corresponding $\pi_x$-adic valuation on $E(t)$.
 One can consider the prime decomposition in $F[t]$:
$$
\varphi'(\pi_x)=\prod_{y/x} \pi_y^{e_{y/x}}.
$$
The product runs over a finite family of closed points $y \in \AA^1_{F,{(0)}}$, corresponding
 to the irreducible polynomial $\pi_y \in F[t]$, and the integers $e_{y/x}$ are some multiplicities.
 Equivalently, the $\pi_y$-adic valuations $v_y$ on $F(t)$ runs over the extensions of the
 valuation $v_x$, such that $v_y \circ \varphi'_*=e_{y/x}.v_x$.
 As for the preceding proposition, we consider $\omega_x$ and $\omega_y$ the respective
 normal sheaves associated with $v_x$ and $v_y$ respectively.
 Then one considers the isomorphism
 $\theta_y:\omega_x \otimes_{\kappa_x} \kappa_y \rightarrow \omega_y$,
 sending $\bar \pi_x^* \otimes 1$ to $\bar \pi_y^*$.
 We let $\varphi_y:\kappa(x) \rightarrow \kappa(y)$ be the induced morphism.
\end{num}
\begin{cor}
Consider the above notation. Then the following diagram commutes:
$$
\xymatrix@R=15pt@C=25pt{
0\ar[r] & \KMW*(E)\ar^{}[r]\ar_{\varphi_*}[d] & \KMW*(E(t))\ar^-{d_E}[r]\ar_{\varphi'_*}[d]\ar@{}|{(2)}[rd]
 & \bigoplus_{x \in \AA^1_{E,(0)}} \KMW*(\kappa(x),\omega_x)\ar[r]
      \ar^{\sum_{y/x} e_{y/x,\epsilon}.(\varphi_{y*} \otimes \theta_{y*})}[d]
 & 0 \\
0\ar[r] & \KMW*(F)\ar^{}[r] & \KMW*(F(t))\ar^-{d_F}[r]
 & \bigoplus_{y \in \AA^1_{F,(0)}} \KMW*(\kappa(y),\omega_y)\ar[r] & 0 
}
$$
where the two horizontal sequences are the split short exact sequences deduced
 from \Cref{thm:htp_inv_fields}.
\end{cor}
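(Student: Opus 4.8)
The plan is to reduce everything to the structural facts about $\KMW*$ already at our disposal, treating the three vertical squares separately. First I would observe that the two horizontal rows are nothing but the split short exact sequences of Theorem~\ref{thm:htp_inv_fields}, assembled over all $\GG$-degrees $q\in\ZZ$ (recall $d_E=\sum_x\partial_{v_x}$ and $d_F=\sum_y\partial_{v_y}$); so their exactness needs no new argument, and the finiteness of these sums is guaranteed by Lemma~\ref{lm:FD_curve}. The task is thus purely to check commutativity of the left square and of the right square (labelled $(2)$ together with its rightmost companion).

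For the left-hand square I would use that the monomorphism $\KMW*(E)\hookrightarrow\KMW*(E(t))$ is, by construction, the map $\varphi^E_*$ attached via datum D1 to the constant field embedding $E\hookrightarrow E(t)$ — this is exactly the canonical section appearing in Morel's homotopy invariance theorem — and likewise over $F$. Commutativity is then immediate from the evident commutative square of fields and rule R1a, giving $\varphi'_*\circ\varphi^E_*=\varphi^F_*\circ\varphi_*$.

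The content lies in the right-hand square. Since $d_E$ and $d_F$ are sums of residues, I would fix a closed point $y\in\AA^1_{F,(0)}$ and first identify which valuation, if any, $v_y$ restricts to on $E(t)$: writing an element of $E(t)^\times$ as a product of powers of monic irreducibles of $E[t]$ and applying $\varphi'$, and using that $\pi_y$ divides $\varphi'(\pi_x)$ for at most one $x$ (two distinct monic irreducibles of $E[t]$ are coprime, so a common root in $\kappa_y$ is impossible), one sees that $v_y|_{E(t)}$ is trivial unless $y$ lies over a (then unique) $x$, in which case $v_y\circ\varphi'=e_{y/x}\cdot v_x$, so $v_y$ ramifies over $v_x$ with index exactly $e_{y/x}$. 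In the trivial case, rule R3c gives $\partial_{v_y}\circ\varphi'_*=0$, matching the fact that such a $y$ contributes to no summand on the right. In the ramified case, I would invoke Property R3a+ (Proposition~\ref{prop:R3a+}) for the extension $(E(t),v_x)\to(F(t),v_y)$, using the monic primes $\pi_x$ and $\pi_y$; this yields the $y$-component of $d_F\circ\varphi'_*$ on the $x$-summand as $\langle\bar u_y\rangle\,(e_{y/x})_\epsilon\,\varphi_{y*}\otimes\theta_{y*}\circ\partial_{v_x}$, where $u_y=\prod_{y'/x,\,y'\neq y}\pi_{y'}^{e_{y'/x}}\in\cO_{v_y}^\times$ is the cofactor in $\varphi'(\pi_x)=u_y\pi_y^{e_{y/x}}$. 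Summing over all $y$ over $x$, and over all $x$, then gives the desired commutativity.

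The main obstacle I expect is the twist bookkeeping: I must verify that the correcting unit $\langle\bar u_y\rangle$ produced by Property R3a+ — genuinely present unless the primes are chosen globally compatibly — is precisely the one implicitly absorbed into the isomorphism $\theta_{y*}$ in the statement of the corollary. Concretely, using rule R4a together with the defining relation $\tau\otimes a\ell=\langle a\rangle\tau\otimes\ell$ of twisted Milnor–Witt groups, one moves $\langle\bar u_y\rangle$ through $\varphi_{y*}$ into $\theta_{y*}$, and one should then confirm that the resulting composite is independent of the choices of $\pi_x$ and $\pi_y$, exactly in the manner of the final assertion of Proposition~\ref{prop:R3a+} (which in turn rests on the identity $\tw{a}.h=h$ of Theorem~\ref{thm:GW}(GW3) for the even-index case). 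Once this identification of twist morphisms is pinned down, the rest is formal.
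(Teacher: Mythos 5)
Your strategy coincides with the paper's: handle the left square by D1 and rule R1a, and reduce the right square to a termwise application of Proposition~\ref{prop:R3a+}, followed by summation. The paper proceeds by fixing $x$ in the finite set $S$ of poles of the symbol $\sigma=[f_1,\dots,f_n]$ and applying R3a+ for each such $x$; you fix $y\in\AA^1_{F,(0)}$ first, show that $v_y$ restricts either trivially or with ramification index $e_{y/x}$ to a unique $v_x$, and dispatch the trivial case by R3c. This is the same argument reorganized, and your analysis of which $y$ lies over which $x$ (the coprimality of distinct monic irreducibles) is correct.

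You are right to single out the correcting unit $\tw{\bar u_y}$ as the delicate point; the paper's one-line proof is silent about it. But the reconciliation you sketch is not airtight. Applying Proposition~\ref{prop:R3a+} with the monic primes $\pi_x,\pi_y$ literally gives the $y$-component of $d_F\circ\varphi'_*$ on the $x$-summand as $\tw{\bar u_y}\cdot(e_{y/x})_\epsilon\cdot\varphi_{y*}\otimes\theta_{y*}\circ\partial_{v_x}$. ``Moving'' $\tw{\bar u_y}$ through by R4a produces $\,(e_{y/x})_\epsilon\cdot\varphi_{y*}\otimes\theta'_{y*}\,$ for the isomorphism $\theta'_y:\bar\pi_x^*\mapsto\bar u_y\bar\pi_y^*$, which is \emph{not} the $\theta_y$ fixed in the preamble (which sends $\bar\pi_x^*$ to $\bar\pi_y^*$). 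The final assertion of Proposition~\ref{prop:R3a+} is that the \emph{composite} $\partial_w\circ\varphi_*$ is independent of the choice of primes, not that the displayed form $\tw{\bar u}\,e_\epsilon\,\bar\varphi_*\otimes\theta_*$ is: changing primes changes $u$ and $\theta$ in tandem. And when $e_{y/x}$ is odd one has $\tw{\bar u_y}\cdot(e_{y/x})_\epsilon\neq(e_{y/x})_\epsilon$ in general --- the identity $\tw{a}h=h$ of (GW3) only rescues the even case. So your claim that $\tw{\bar u_y}$ is ``implicitly absorbed into $\theta_{y*}$'' does not hold with the preamble's definition of $\theta_y$; to pin the statement down one should either retain the factor $\tw{\bar u_y}$ in the vertical map (equivalently use the $\theta_u^e$ device from the remark following R3a+), or redefine $\theta_y$ using a prime $\pi'_y$ of $v_y$ normalized so that $\varphi'(\pi_x)=(\pi'_y)^{e_{y/x}}$ exactly. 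As written, the last step of your argument asserts a match with the stated formula that it has not established.
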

\begin{proof}
The commutativity of the left-hand square is the easy formula (R1a).
 For square (2), we consider an element $\sigma \in \KMW*(E(t))$.
 As all maps involved commute with $\eta$, one can assume
 $\sigma=[f_1,\ldots,f_n]$, $f_i \in E(t)^\times$. Let $S \subset \AA^1_{E,(0)}$ be the finite
 set of points such that the family $(\pi_x)_{x \in S}$ is exactly made of the irreducible polynomials
 appearing in the prime decomposition of the $f_i$.
 Thus, $d_E(\sigma)=\sum_{x \in S} \partial_{v_x}(\sigma)$.

Similarly, let $T \subset \AA^1_{F,(0)}$ be the finite set such that the family $(\pi_y)_{y \in T}$
 is made of the irreducible polynomials appearing in the prime decomposition of the $\varphi'(f_i)$.
 Then $d_F(\varphi'_*(\sigma))=\sum_{y \in T} \partial_{v_y}(\varphi'_*(\sigma))$.

With this notation, the conclusion comes from applying \Cref{prop:R3a+} to each point $y \in T$
 and then taking the sum of the resulting formulas.
\end{proof}

\begin{rem}
Considering the Rost-Schmid complex as defined in \Cref{df:CHW_curves},
 the right vertical map of the diagram can be seen as the definition
 of a pullback map
$$
f^*:C^1(\AA^1_E)_* \rightarrow C^1(\AA^1_F)_*
$$
 associated to the flat morphism $f:\AA^1_F \rightarrow \AA^1_E$
 (note that in this particular case, though $f$ is not of finite type,
 it is quasi-finite). In fact the commutativity of square (2) gives
 (after adding twists with a line bundle $\cL$ over $\AA^1_E$)
 a well-defined morphism of complexes, called the \emph{flat pullback}:
$$
f^*:C^*(\AA^1_E,\cL) \rightarrow C^*(\AA^1_F,f^{-1}\cL)_*
$$
The definition of pullbacks on Rost-Schmid complexes, and thus Chow-Witt groups,
 associated with smooth morphisms is well-known (see e.g., \cite{FeldMWmod}).
 Contrary to the case of Chow groups, pullbacks with respect to flat morphisms
 have been left open since the foundational work of Fasel, \cite{FaselCW}.
 It will be treated in the forthcoming paper \cite{FJ}.
\end{rem}

Using the Bass-Tate approach to transfers in the monogenic case,
 and especially the characterization obtained in \Cref{prop:BT}, one deduces from the commutativity
 of square (2) in the previous theorem the following result, which we state as a lemma
 for the next statement.
\begin{lm}\label{lm:R1c+}
Let $\varphi:E \rightarrow F$ be an arbitrary field extension,
 and consider the notation of the previous corollary.
 Then the following square is commutative:
$$
\xymatrix@R=20pt@C=65pt{
\bigoplus_{x \in \PP^1_{E,(0)}} \KMW*(\kappa(x),\omega_{\kappa(x)/E})\ar^-{\sum_x \Tr^{MW}_{\kappa(x)/E}}[r]
      \ar_{\sum_{y/x} e_{y/x,\epsilon}.(\varphi_{y*} \otimes \theta_{y*})}[d]
 & \KMW*(E)\ar^{\varphi_*}[d] \\
\bigoplus_{y \in \PP^1_{F,(0)}} \KMW*(\kappa(y),\omega_{\kappa(y)/E})\ar^-{\sum_y \Tr^{MW}_{\kappa(y)/F}}[r]
 & \KMW*(F)
}
$$
where the sum on the vertical left-hand side runs over the point $y \in \PP^1_F$
 which lies above $x \in \PP^1_E$, $e_{y/x}$ is defined as in the previous corollary
 and $e_{\infty/\infty}=1$. We have abused notation by denoting $\theta_y$ the isomorphism
 induced by the one of the previous corollary; explicitly:
\begin{align*}
\theta_y:\omega_{\kappa(x)/E}
 \simeq \omega_x \otimes \omega_{\AA^1_{\kappa(x)}/\kappa(x)}
 & \rightarrow \omega_y \otimes \omega_{\AA^1_{\kappa(y)}/\kappa(y)} \simeq \omega_{\kappa(y)/F}  \\
\bar \pi_x^* \otimes dt &\mapsto \bar \pi_y^* \otimes dt.
\end{align*}
\end{lm}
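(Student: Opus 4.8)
The plan is to use the characterization of the transfers $\Tr^{MW}_{\kappa(x)/E}$ provided by \Cref{prop:BT}, together with the cycle-level commutativity of square $(2)$ in the previous corollary. Recall that, by \Cref{prop:BT} and \Cref{rem:Weil_RC}, the sum $\sum_{x}\Tr^{MW}_{\kappa(x)/E}$ is exactly the quadratic degree map $\tdeg_E$, viewed on cycles; it is normalized by $\Tr^{MW}_{\kappa_\infty/E}=\Id$, it annihilates $\Img(\tdiv_{\PP^1_E})$ (quadratic Weil reciprocity), and via $i^\infty_{E,*}$ it identifies $\KMW q(E)$ with the cokernel of $\tdiv_{\PP^1_E}$. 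A first consequence is that $\widetilde C_0(\PP^1_E)_q$ is generated by $\Img(\tdiv_{\PP^1_E})$ and $\Img(i^\infty_{E,*})$: for any cycle $z$ one has $z-i^\infty_{E,*}(\tdeg_E(z))\in\ker(\tdeg_E)=\Img(\tdiv_{\PP^1_E})$, using $\tdeg_E\circ i^\infty_{E,*}=\Id$. Hence it suffices to check that the two composites around the square of the statement agree on each of these two families of elements.

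First I would treat $\Img(i^\infty_{E,*})$. Since $\infty$ is $E$-rational, $\kappa_\infty=E$, the twist $\omega_{\kappa_\infty/E}$ is canonically trivial and $e_{\infty/\infty}=1$, so $1_\epsilon=\tw1=1$; therefore the left vertical map sends $i^\infty_{E,*}(\tau)$ to $i^\infty_{F,*}(\varphi_*(\tau))$ for $\tau\in\KMW q(E)$. Applying $\sum_y\Tr^{MW}_{\kappa(y)/F}$ and the normalization $\Tr^{MW}_{\kappa_\infty/F}=\Id$ yields $\varphi_*(\tau)$; going the other way, $\sum_x\Tr^{MW}_{\kappa(x)/E}(i^\infty_{E,*}(\tau))=\tau$, followed by $\varphi_*$, also gives $\varphi_*(\tau)$. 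So the two composites agree here.

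Next, on $\Img(\tdiv_{\PP^1_E})$, write the element as $\tdiv_{\PP^1_E}(f)$ with $f\in\KMW{q+1}(E(t),\omega_{E(t)/E})$. Going up-then-right gives $\varphi_*\big(\tdeg_E(\tdiv_{\PP^1_E}(f))\big)=\varphi_*(0)=0$ by Weil reciprocity over $E$. Going left-then-right, one first needs the $\PP^1$-analogue of square $(2)$, namely that the base-change-with-multiplicities map intertwines $\tdiv_{\PP^1_E}$ with $\tdiv_{\PP^1_F}\circ\varphi'_*$; over $\AA^1_E$ this is precisely square $(2)$ of the previous corollary, and at the point at infinity it is the same local computation as in \Cref{prop:R3a+} with ramification index $1$ (the change of coordinates $1/t\mapsto 1/t$ being unramified), so no new input is required. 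Granting this, the left vertical applied to $\tdiv_{\PP^1_E}(f)$ equals $\tdiv_{\PP^1_F}(\varphi'_*(f))$, and then $\sum_y\Tr^{MW}_{\kappa(y)/F}(\tdiv_{\PP^1_F}(\varphi'_*(f)))=\tdeg_F(\tdiv_{\PP^1_F}(\varphi'_*(f)))=0$ by Weil reciprocity over $F$. Thus both composites vanish, and since the two families generate $\widetilde C_0(\PP^1_E)_q$, the square commutes.

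The routine-but-delicate part, and the one I expect to be the main obstacle, is the twist bookkeeping: one must verify that the isomorphism $\theta_y$ in the statement, built from the canonical identifications $\omega_{\kappa(x)/E}\simeq\omega_x\otimes\omega_{\AA^1_{\kappa(x)}/\kappa(x)}$ of \eqref{eq:rel_canonical} and the isomorphism $\omega_x\otimes_{\kappa_x}\kappa_y\simeq\omega_y$ sending $\bar\pi_x^*$ to $\bar\pi_y^*$, is literally the one occurring in square $(2)$ (and the trivial one at $\infty$). This compatibility of canonical isomorphisms is the only place requiring genuine care; once it is in place, the argument above is formal, resting solely on \Cref{prop:BT}, the $\PP^1$-extension of square $(2)$, and the normalization and reciprocity properties of the quadratic degree map.
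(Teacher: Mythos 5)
Your proof is correct and follows the same route the paper takes (its proof is the single sentence invoking the previous corollary, \Cref{prop:BT}, and (R3a) at the valuation at infinity), just spelled out in full detail. In particular, the decomposition of $\widetilde C_0(\PP^1_E)_q$ into $\Img(\tdiv_{\PP^1_E})+\Img(i^\infty_{E,*})$ and the checks on each summand using Weil reciprocity and the normalization at $\infty$ are exactly the intended way the cited ingredients assemble.
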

Indeed, it suffices to apply the preceding corollary, \Cref{prop:BT}
 together with (R3a) for the case of the valuation at infinity on $E(t)$.

\begin{num}
We are now ready to state the stronger form of axiom (R1c),
 without any assumption of separability. Namely, we consider a commutative square of rings:
$$
\xymatrix@=10pt{
E\ar^\Phi[r]\ar_\varphi[d] & L\ar[d] \\
F\ar[r] & R
}
$$
where $E$, $F$, $L$ are fields,\footnote{only the positive characteristic case is relevant}
 $\Phi$ is finite, and we have put: $R=F \otimes_E L$. Then $R$ is a not necessarily reduced ring.

We choose a presentation $L=F[t_1,\ldots,t_n]/(\pi_1,\hdots,\pi_n)$
 as in \Cref{ex:field_extension_complete_inter}. In other words, letting $\alpha_i$ be the image
 of $t_i$ in the above quotient, one has $L=F[\alpha_1,\ldots,\alpha_n]$.
 Moreover, $\pi_i$ is a polynomial with coefficients in $F$ involving only the variables
 $t_1,\ldots,t_i$, and $\pi_i(\alpha_1,\ldots,\alpha_{i-1},t_i)$ is the minimal polynomial of $\alpha_i$
 over $F[\alpha_1,\ldots,\alpha_{i-1}]$.

Then, $\pi_1$ seen as a polynomial in $t_1$ can be uniquely factored in $F$
 as:
$$
\pi_1=\prod_{j\in J_1} \pi_{1,j}^{e_{1,j}}
$$
where $\pi_{1,j}$ is an irreducible polynomial in $F[t_1]$, and $e_{1,j}$ a positive integer.
 Arguing inductively, one obtains the following presentation of $R$:
$$
R=\prod_{x \in X} F[t_1,\ldots,t_n]/\big(\pi_{1,x}^{e_{1,x}},\ldots,\pi_{n,x}^{e_{n,x}}\big)
$$
such that for any $(i,x) \in [1,n] \times X$, $\pi_{i,x}$ is a polynomial in $(t_1,\ldots,t_i)$
 such that $\pi_{i,x}(\alpha_1,\ldots,\alpha_{i-1},t_i)$ is a prime divisor of
 $\pi_i(\alpha_1,\ldots,\alpha_{i-1},t_i)$ in $F[\alpha_1,\ldots,\alpha_{i-1},t_i]$.
 Moreover, the indexing set is $X=\Spec(R)$, the set of prime ideals of $R$.

Let us fix a point $x \in X$. As a prime ideal of $R$,
 one can write $x=(\pi_{1,x},\ldots,\pi_{n,x})$.
 The local Artinian ring $R_x$ is of length:
$$
e_x:=\lg(R_x)=e_{x,1} \cdots e_{x,n}.
$$
Moreover, the residue field $\kappa(x):=R/x$ is finite over $F$ and one can define an
 isomorphism of $F$-vector spaces:
\begin{align*}
\theta_x:\omega_{L/E} \otimes_L F &\xrightarrow{\ \sim\ } \omega_{\kappa(x)/F}, \\
 (\bar \pi_1\wedge \ldots \wedge \bar \pi_n)^* \otimes (dt_1\wedge \ldots \wedge dt_n)
 & \longmapsto  (\bar \pi_{1,x}\wedge \ldots \wedge \bar \pi_{n,x})^* \otimes (dt_1\wedge \ldots \wedge dt_n)
\end{align*}
The following result gives a more precise form of \cite[Th. 3.8]{FeldTohoku}.
\end{num}
\begin{thm}[Property \textbf{(R1c+)}]\label{thm:R1c+}
Consider the above notation. Then the following diagram is commutative:
$$
\xymatrix@R=30pt@C=40pt{
\KMW*(L,\omega_{L/E})\ar^-{\Phi^*}[r]\ar_{\sum_x e_{x,\epsilon}.\varphi_{x*} \otimes \theta_{x*}}[d]
 & \KMW*(E)\ar^-{\varphi_*}[d] \\
\bigoplus_{x \in X} \KMW*(\kappa_x,\omega_{\kappa_x/F})\ar^-{\sum_x \Phi_x^*}[r] & \KMW*(F) \\
}
$$
where $x$ runs over the prime ideals of $R=L \otimes_E F$,
 and the map  $\Phi_x:F \rightarrow \kappa_x$, $\varphi_x:L \rightarrow \kappa_x$
 are induced respectively by $\Phi$, $\varphi$.

Moreover, the left-hand vertical map is independent
 of the chosen parametrization of $L/E$.
\end{thm}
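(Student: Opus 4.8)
The plan is to reduce the statement to the monogenic case, where it is essentially \Cref{lm:R1c+}, and then to deduce the general case by induction on the number $n$ of generators appearing in the chosen presentation of $L/E$.

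First I would treat the monogenic case $n=1$, writing $L=E[\alpha]=E[t]/(\pi)$, so that $R=F\otimes_E L=F[t]/(\pi)$ with $\pi$ now viewed over $F$. The prime factorisation $\pi=\prod_{y}\pi_y^{e_y}$ in $F[t]$ shows that the prime ideals $\mathfrak p$ of $R$ are exactly the closed points $y\in\AA^1_{F,(0)}$ lying over the point $x_0\in\AA^1_{E,(0)}$ determined by $\alpha$, that $e_{\mathfrak p}=\lg(R_{\mathfrak p})=e_y$, that $\kappa(\mathfrak p)=\kappa(y)$, and, after the canonical identifications \eqref{eq:rel_canonical} (equivalently the isomorphisms $\psi^{x_0}$, $\psi^{y}$ of \Cref{num:homological_quad-cycle_P1}), that $\theta_{\mathfrak p}$ agrees with the map $\theta_y$ of \Cref{lm:R1c+}, while $\Phi^*=\Tr^{MW}_{L/E}$ coincides with the transfer $\Tr^{MW}_{\kappa(x_0)/E}$ appearing in \Cref{lm:R1c+} (\Cref{prop:comparison_MW-transfers}) and $\Phi_{\mathfrak p}^*=\Tr^{MW}_{\kappa(y)/F}$. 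Hence the square in the statement is precisely the direct summand at $x_0$ of the commutative square of \Cref{lm:R1c+} (the summand at $\infty$ reducing to the tautology $\varphi_*=\varphi_*$), and commutativity follows at once.

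For general $n$ I would use the tower $E=\kappa_0\subset\kappa_1\subset\dots\subset\kappa_n=L$, $\kappa_i=\kappa_{i-1}[\alpha_i]$, and induct on $n$, the case $n=0$ being trivial. Since $\Tr^{MW}_{L/E}=\Tr^{MW}_{\kappa_{n-1}/E}\circ\Tr^{MW}_{L/\kappa_{n-1}}$ by compatibility of transfers with composition (\Cref{rem:MW-transfer_ppties}), and $R=F\otimes_E L=(F\otimes_E\kappa_{n-1})\otimes_{\kappa_{n-1}}L$, one first applies the inductive hypothesis to $\kappa_{n-1}/E$ and $\varphi$, producing a decomposition indexed by the primes $x'$ of the Artinian ring $F\otimes_E\kappa_{n-1}$, and then applies the monogenic case to the extension $L/\kappa_{n-1}$ base changed along each $\varphi_{x'}\colon\kappa_{n-1}\to\kappa(x')$; because the nilpotents of $F\otimes_E\kappa_{n-1}$ survive base change, the primes of $R$ over $x'$ are those of $\kappa(x')\otimes_{\kappa_{n-1}}L$. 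Combining the two steps requires the multiplicativity of the local lengths $e_{\mathfrak p}=\lg(R_{\mathfrak p})$ along the tower together with the multiplicativity of $m\mapsto m_\epsilon$ (\Cref{num:quad_mult}), the intermediate factors being moved across the transfers by the projection formula (\Cref{rem:MW-transfer_ppties}), as well as the coherence of the twist isomorphisms $\theta_{\mathfrak p}$, which composes correctly along the tower because the isomorphism \eqref{eq:rel_canonical} is canonical and functorial. This bookkeeping — matching the decomposition of the non-reduced rings $F\otimes_E\kappa_i$, their lengths, residue fields and canonical-module twists with the iterated monogenic formulas — is the step I expect to be the main obstacle; the genuinely geometric input, i.e. the monogenic case, is already contained in \Cref{lm:R1c+}.

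Finally, for the independence of the left vertical arrow from the parametrization of $L/E$: by the same reduction it suffices to treat the monogenic case, where that arrow is identified, through $\psi^{x_0}$ and the $\psi^y$, with the component at $x_0$ of the geometrically defined flat pullback $f^*$ of Chow-Witt cycle complexes associated with the base-change morphism $f\colon\AA^1_F\to\AA^1_E$ (see the discussion preceding \Cref{lm:R1c+}). This pullback depends only on $f$ and on the canonical isomorphisms \eqref{eq:rel_canonical}, not on the choice of generator $\alpha$; alternatively, invariance under a change of generator can be checked directly from the residue formulas (Res1) and (Res2) of \Cref{thm:residue}.
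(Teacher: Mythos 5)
Your argument follows the same two-step structure as the paper's own proof: first establish the monogenic case by recognising the square as (the $x_0$-summand of) \Cref{lm:R1c+}, then reduce the general case to the monogenic one by working along the tower $E=\kappa_0\subset\dots\subset\kappa_n=L$. The paper carries out the reduction in a single sentence, invoking ``multiplicativity of $?_\epsilon$ \ldots and compatibility of the isomorphism $\theta_x$ with the number of variables $n$''; you unpack exactly what is hiding there — compatibility of $\Tr^{MW}$ with composition, multiplicativity of the local lengths along the tower (which the paper makes explicit in the formula $e_x=e_{x,1}\cdots e_{x,n}$), multiplicativity of $m\mapsto m_\epsilon$, and coherence of the $\psi_\Delta$-isomorphisms — and you are right that this bookkeeping is the real content of the reduction. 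So on the commutativity statement your proposal is a more honest version of the same proof.

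Where you diverge from the paper is in the independence statement, and there one of your two routes is shaky. Route~(a), identifying the left vertical arrow with a component of the flat pullback $f^*$ for $\AA^1_F\to\AA^1_E$, is close to circular: the paper only defines that flat pullback \emph{by} the very formula $\sum_{y/x} e_{y/x,\epsilon}.\varphi_{y*}\otimes\theta_{y*}$, and explicitly remarks that a general theory of flat pullbacks on Chow--Witt groups is ``still missing.'' There is no presentation-free construction of $f^*$ available in the text to appeal to. Route~(b) (a direct check from (Res1)/(Res2)) is the right idea but is not spelled out. The paper's actual argument is sharper and you should use it: after reducing to the monogenic case, the independence is precisely the last assertion of \Cref{prop:R3a+}, which shows that the correcting unit arising from a change of prime is absorbed by $e_\epsilon$ via the identity $\tw{\bar u_F}.e_\epsilon=\tw{\bar u_F^e}.e_\epsilon$ (\eqref{eq:strong_R3c}, itself a consequence of $\tw{a}.h=h$). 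Replace your route~(a) with that reference and your proposal becomes a complete proof in the paper's spirit.
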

\begin{proof}
The proof follows that of \emph{loc. cit.}
 By multiplicativity of the symbol $?_\epsilon$ (see the end of \Cref{num:quad_mult}),
 and compatibility of the isomorphism $\theta_x$ with the number of variables $n$,
 one reduces to the case where $L/E$ is monogenic, \emph{i.e.} $n=1$ with our
 previous notation. To simplify the notation, we write $t$, $\alpha$, $\pi_x$, \ldots
 for $t_1$, $\alpha_1$, $\pi_1$, etc\ldots

Then the first statement to be proved is a particular case of \Cref{lm:R1c+},
 obtained by considering only the point $x' \in \AA^1_{E,(0)}$ such that
 $L=\kappa_{x'}$.

Finally, the second statement follows from (the last statement of) \Cref{prop:R3a+}.
\end{proof}

\begin{rem}
This theorem is in fact the projection formula $f^*p_*=q_*g^*$
 in (graded) Chow-Witt groups for the Cartesian square:
$$
\xymatrix@R=10pt@C=20pt{
\PP^1_F\ar^-q[r]\ar_-g[d] & \Spec(F)\ar^f[d] \\
\PP^1_E\ar^-p[r] & \Spec(E).
}
$$
This is in fact one of the main properties needed for a flat pullback.
 Compare to \cite{FeldTohoku}, we have avoided the need of a (perfect)
 base field, and we have described the isomorphisms needed to change the twists
 (\emph{i.e.}, the map $\theta_{\mathfrak p*}$).
 
Surprisingly, $f$ is induced by an arbitrary field extension,
 not necessarily finitely generated.
\end{rem}

\begin{num}
As a corollary of the preceding theorem, one can refine \Cref{thm:R3b}.
 Therefore, we consider the assumption of \Cref{num:R3b} without assuming condition (F).
 So $(E,v)$ is a discretely valued field, $F/E$ any finite field extension.
 We let $A=\cO_v$ be the ring of integers of $(E,v)$ and $B$ be the integral closure
 of $A$ in $F$. 

As recalled in \emph{loc. cit.}, the maximal ideals of $B$
 are in bijection with the set $I$ of discrete valuations $w$ on $F$ that extends $E$.
 For such a valuation $w$, we let $e_w$ be the \emph{ramification index}, such that
 $w|_E=e_w.v$. We also let $f_w$ be the \emph{residual degree}, that is the degree
 of the induced extension of residue fields $\kappa_w/\kappa_v$.

We will also consider $A^h$ (resp. $\hat A$) the henselization
 (resp. completion) of the discrete valuation ring $A$,
 and $E^h$ (resp. $\hat E$) its fraction field. To simplify,
 we abusively denote by $v$ the canonical valuation on $E^h$ (resp. $\hat E$).
 Then, as $A^h/A$ is ind-étale,
 $B \otimes_A A^h$ is reduced, semi-local,
 and a product of discrete valuation rings indexed by $I$:
$$
B \otimes_A A^h=\prod_{w/v} B^h_w
$$
where $B^h_w$ is the localization of $B \otimes_A A^h$ at the prime corresponding
 to the valuation $w/v$. We let $F^h_w$ be the fraction field of $B^h_w$.
 It follows that $F^h_w/E^h$ is finite, and that $B^h_w$ is the ring of integers
 of the unique valuation $w^h$ on $F^h_w$ that extends $v^h$
 (see e.g. \cite[(2.8) Theorem]{FV}). Note that, by considering total ring
 of fractions in the preceding identification, we also have the canonical identity:
$$
F \otimes_E E^h=\prod_{w/v} F^h_w.
$$
Moreover, $F^h_w$ coincides with the henselization of the discretely valued field
 $(F,w)$. One deduces the relation:
$$
[F:E]=\sum_{w/v} [F_w^h:E^h].
$$

On the other hand, the ring $B \otimes_A \hat A$ is non-reduced in general,
 as well as the ring $F \otimes_E \hat E$
 (see \cite[VI, \textsection 8, n°2, Prop. 2]{BouAC57}).
 It is still semi-local, and one has an identification (see \emph{loc. cit.}):
$$
B \otimes_A \hat A=\prod_{w/v} \hat B_{(w)}
$$
where $\hat B_{(w)}$ is the localization of the ring $B \otimes_A \hat A$
 at the prime corresponding to $w$.
 The reduction of $\hat B_{(w)}$ coincides with complete discrete valuation ring
 $\hat \cO_w$, completion of the ring of integers of $(F,w)$.
 Letting $\hat F_w$ be the fraction field of $\hat \cO_w$, we also get the identification:
$$
(F \otimes_E \hat E)_{red}=\prod_{w/v} \hat F_w.
$$
We can now state the following result of valuation theory,
 which enlightens property (F) of \cref{num:R3b}.
\end{num}
\begin{prop}\label{prop:defect}
Consider the above notation. Given a valuation $w$ on $F$ which extends $v$,
 there exists a unique integer $d_w=p^{r_w}$, where $p$ is
 the characteristic exponent of $E$, such that
\begin{equation}\label{eq:prop:defect1}
[F^h_w:E^h]=d_w.e_w.f_w.
\end{equation}
Moreover, the following conditions are equivalent:
\begin{enumerate}
\item[(i)] $d_w=1$.
\item[(ii)] $B_w^h$ is a finite $A^h$-algebra.
\item[(iii)] $\hat B_{(w)}$ is reduced.
\end{enumerate}
Further, one has:
\begin{equation}\label{eq:prop:defect2}
d_w=\frac{[F^h_w:E^h]}{[\hat F_w:\hat E]}=\mathrm{lg}(\hat B_{(w)}).
\end{equation}
\end{prop}
Following \cite{Kuhlmann}, we call $d_w$ the \emph{defect} of $(F,w)$ over $(E,v)$.
\begin{proof}
According to the preceding discussion, we can reduce to the case where
 $A=A^h$ is a henselian local ring, so that $(E,v)$ is a henselian
 discretely valued field and there is only one discrete valuation $w$
 on $F$ extending $v$. In particular, one has $F=F_w^h$, $E=E^h$.
 We put $\hat F=\hat F_w$, and write $e=e_w$ for the ramification index,
 $f=f_w$ for the residual degree.

We know from \cite[VI, \textsection 8, n°5, Th. 2/Cor. 2]{BouAC57} that:
 $[\hat F:\hat E]=ef$.
 Moreover, as $F \otimes_E\hat E$ is a local Artinian ring with fraction field
 $\hat F$, one gets:
$$
\dim_E(F)=\dim_{\hat E}(F \otimes_E \hat E)=\mathrm{lg}(F \otimes_E \hat E).\dim_{\hat E}(\hat F).
$$
In particular, we can put $d_w=\mathrm{lg}(F \otimes_E \hat E)$, in order to get
 relation \eqref{eq:prop:defect1}. It follows that $d_w$ is a power of the characteristic
 of $E$. Moreover, \eqref{eq:prop:defect2} follows from what was already said,
 and the fact that $F \otimes_E \hat E$ is a localization of $B \otimes_A \hat A$.
 The equivalence between (i) and (iii) is obvious according to this definition.
 The equivalence between properties (i) and (ii) follows
 from \cite[VI, \textsection 8, n°5, Th. 2]{BouAC57}.
\end{proof}

\begin{rem}
In particular, the defect $d_w$ for various $w$ over $v$,
 and various finite field extensions $F/E$
 measures the failure of the valuation ring $\cO_v$ to be Japanese.

Examples of discrete valuation rings with non-trivial defect
 are given in \cite[Ex. 2.31]{Liu}, \cite[Ex. 2.3.5]{Temkin},
 \cite[Tag 09E1]{Stack}.
\end{rem}

Based on the notion of defect of a finite extension of valuation ring,
 we can refine formula (R3b) of \Cref{thm:R3b} by removing the assumption (F).
\begin{thm}[Property \textbf{(R3b+)}]\label{prop:R3b+}
We consider a discretely valued field $(E,v)$
 and a finite field extension $F/E$. For any valuation $w$ on $F$ extending
 $v$, we let $d_w$ be its defect, as defined above.
 Then the following formula involving the basic maps (D2), (D3) and (D4) of Milnor-Witt K-theory
 holds:
$$
\partial_v \circ \Phi^*=\sum_{w/v} (d_w)_{\epsilon}.\Phi_w^* \circ \theta^w_* \circ \partial_w.
$$ 
\end{thm}
\begin{proof}
The formula is obtained by combining (R1c+) with respect to the field extensions $F/E$
 and $\hat E/E$, and (R3b) with respect to each extension $(\hat F_w,w)/(\hat E,v)$.
\end{proof}

\begin{rem}
After taking reduction modulo $\eta$, the previous formula is valid for Milnor K-theory,
 where one can replace the quadratic form $(d_w)_{\epsilon}$ by the integer $d_w$.
 In this form, it makes explicit Remark (1.8) of \cite{Rost}.
\end{rem}

\section{Appendix: coherent duality, traces and residues}\label{sec:Gduality}

\subsection{Categorical duality and traces}

\begin{num}
We recall the classical categorical framework for expressing duality.
 We refer the reader to \cite{McL}, Section XI.1 (resp. VII.7),
 for references on symmetric monoidal categories
 (resp. closed symmetric monoidal categories).
 To simplify the exposition, we will apply Mac Lane's coherence theorem (\emph{loc. cit.}, XI.1, Th. 1) and consider
 that any composite of coherence isomorphisms (\emph{i.e.} expressing unity, associativity, commutativity of the symmetric monoidal structure)
 is an identity.

The historical references for the next two definitions are \cite{DP} and \cite{Saav}.
\end{num}
\begin{df}\label{df:dualizable}
Let $(\C,\otimes,\un)$ be a symmetric monoidal category 
 and $M$ an object of $\C$.
 We say that $M$ is \emph{strongly dualizable}\footnote{Another terminology which appears in the context
 of the Tannakian formalism is \emph{rigid}.}
 if there exists an object $M^\vee$ and morphisms:
$$
\pair:M \otimes M^\vee \rightarrow \un, \copair:\un \rightarrow M^\vee \otimes M,
$$
respectively called the duality \emph{pairing} and \emph{co-pairing}, such that
 the following composite maps
\begin{align*}
M \xrightarrow{\Id \otimes \copair} M  \otimes M^\vee \otimes M \xrightarrow{\pair \otimes \Id} M \\
M^\vee \xrightarrow{\copair \otimes \Id} M^\vee \otimes M \otimes M^\vee \xrightarrow{\Id \otimes \pair} M^\vee
\end{align*}
are the identity. One says that $M^\vee$ is the \emph{dual} of $M$.
\end{df}
The triple $(M^\vee,\pair,\copair)$ uniquely determines $M^\vee$ as the dual of $M$.
 It follows from the definition that the functor $\tau_M=(M \otimes -)$ is both left and right adjoint
 to the functor $\tau_{M^\vee}=(M^\vee \otimes -)$. In particular, $\tau_{M^\vee}$ is the internal Hom functor with source $M$,
 and one gets a canonical isomorphism, bifunctorial in $M$ and $N$:
$$
M^\vee \otimes N \simeq \uHom(M,N).
$$
When the monoidal category $\C$ is closed, there is an isomorphism
 $M^\vee \simeq \uHom(M,\un)$, uniquely characterized as the evaluation at $\un$
 of the canonical isomorphism $\tau_{M^\vee} \simeq \uHom(M,-)$.

\begin{ex}[exercise]\label{ex:Amodules_dual}
Let $A$ be a (commutative) ring,
 and $\smod A$ be the closed symmetric monoidal category of $A$-modules.
 Then the following conditions are equivalent:
\begin{enumerate}[label=(\roman*)]
\item $M$ is strongly dualizable in $\smod A$;
\item $M$ is a finitely generated projective $A$-module.
\end{enumerate}
\end{ex}

\begin{ex}\label{ex:perfect}
The preceding example generalizes to a quasi-compact quasi-separated scheme $X$.
 Let $\Der(\cO_X)$ be the derived category of $\cO_X$-modules, endowed with its closed symmetric monoidal structure
 via the derived tensor product. Let $K$ be an object of $\Der(\cO_X)$. Then the following conditions are equivalent
 (see for instance \cite[Ex. 0FPC, Lem. 0FPD, Prop. 09M1]{Stack}):
\begin{enumerate}[label=(\roman*)]
\item $K$ is strongly dualizable in $\Der(\cO_X)$;
\item $K$ is a perfect complex of $\cO_X$-modules;
\item $K$ is compact.
\end{enumerate}
\end{ex}

\begin{df}
Consider the above setting and let $M$ be a strongly dualizable object with dual $(M^\vee,\pair,\copair)$.
 We define the \emph{trace} of an endomorphism $f:M \rightarrow M$ as the following element of
 the ring $\End_\C(\un)$:
$$
\tr_M(f):\un \xrightarrow{f'} M \otimes M^\vee \xrightarrow{\pair} \un
$$
where $f'$ is obtained by adjunction from $f$. This defines a map:
$$
\tr_M:\End_\C(M) \rightarrow \End_\C(\un).
$$
\end{df} 

\begin{rem}[exercise]
One can derive the following formulas for the traces just defined:
\begin{itemize}
\item $\tr_M(f \circ g)=\tr_M(g \circ f)$.
\item $\tr_{M \otimes N}(f \otimes g)=\tr_M(f) \otimes \tr_N(g)$.
\item $\tr_M(\lambda.f)=\lambda.\tr_M(f)$, $\lambda \in \End_\C(\un)$.
\end{itemize}
\end{rem}

\begin{ex}
Consider the setting of \Cref{ex:Amodules_dual}. Obviously, $\un=A$ and $\End_A(A)=A$, as a ring.

Given a strongly dualizable $A$-module $M$,
 the trace map $\Tr_M:\End_A(M) \rightarrow A$ defined above coincides
 with the classical notion in number theory.
 In particular, when $M$ admits a (global) $A$-basis $(f_1,\hdots,f_n)$, through the induced isomorphism $\End_A(M) \simeq \mathcal M_n(A)$,
 the map $\Tr_M$ is the usual trace map of matrices.
\end{ex}

\begin{num}
Consider again the abstract situation of a symmetric monoidal category $(\C,\otimes,\un)$,
 and a strongly dualizable object $M$ of $\C$ with dual $(M^\vee,\pair,\copair)$.

We remark that the trace map $\tr_M$ is induced by an internal trace map:
$$
\underline{\tr}_M:\uHom(M,M) \simeq M^\vee \otimes M \simeq M \otimes M^\vee \xrightarrow{\pi} \un.
$$
This means that $\tr_M=\uHom(\underline{\tr}_M,\un)$.

Assume now that $M$ admits a product map $\mu:M \otimes M \rightarrow M$
 (for example, $M$ is a monoid, \cite[VII.3]{McL}).
 Then one gets a $\mu$-trace morphism:
$$
\Tr^\mu_M:M \xrightarrow{\mu'} \uHom(M,M) \xrightarrow{\underline{\tr}_M} \un
$$
As a particular case, one gets back the following classical definition from algebra:
\end{num}
\begin{df}\label{df:algebraic_trace}
Let $A$ be a ring and $B$ be a commutative $A$-algebra which is projective and finitely generated as an $A$-module.

Then $B$ is a strongly dualizable $A$-module and we define the \emph{trace morphism} 
$$\Tr_{B/A}:B \rightarrow A$$
 as the $A$-linear map associated above with respect to the multiplication map
 $B \otimes_A B \rightarrow B$.
\end{df}
Concretely, the trace of an element $b \in B$ is the trace of the endomorphism
 $\gamma_b$ such that $\gamma_b(x)=b.x$. It can be computed locally by choosing bases of
 the $A$-module $B$ and using the trace of matrices. The local definitions then glue
 using faithfully flat descent.

Let us recall the following classical result.
\begin{prop}
Let $B/A$ be a finitely generated projective ring extension. Then the following conditions are equivalent:
\begin{enumerate}[label=(\roman*)]
\item $B/A$ is étale.
\item For every prime ideal $\mathfrak q$ in $B$, $\mathfrak p$ being its inverse image in $A$,
 $L=B/\mathfrak q$, $K=A/\mathfrak p$, one has: $\Tr_{L/K}\neq 0$.
\item The bilinear form $B \otimes_A B \rightarrow A, x \otimes y \mapsto \Tr_{B/A}(xy)$ is
 non-degenerate --- \emph{i.e.} induces by adjunction an isomorphism $B \rightarrow B^\vee$ of $A$-modules.
\end{enumerate}
In particular, a finite field extension $L/K$ is separable if and only if $\Tr_{L/K} \neq 0$.
\end{prop}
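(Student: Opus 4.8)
The plan is to reduce the whole equivalence to the last ``in particular'' clause, i.e.\ to the case of a finite field extension $L/K$, and to prove that case first. So the first step is the lemma: for a finite field extension $L/K$ the conditions (a) $L/K$ separable, (b) $\Tr_{L/K}\neq 0$, (c) the symmetric bilinear form $T_{L/K}\colon(x,y)\mapsto\Tr_{L/K}(xy)$ on the $K$-vector space $L$ is non-degenerate, are equivalent. For (a)$\Rightarrow$(c) I would use the primitive element theorem to write $L=K(\alpha)$ of degree $d$ and compute the Gram matrix of $T_{L/K}$ in the basis $1,\alpha,\dots,\alpha^{d-1}$: its determinant is the discriminant $\prod_{i<j}(\alpha_i-\alpha_j)^2$ of the minimal polynomial of $\alpha$, which is nonzero precisely because separability forces the roots $\alpha_i$ to be pairwise distinct. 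For $\neg$(a)$\Rightarrow\neg$(b) I would factor $L/K$ through the separable closure $L_s$ of $K$ in $L$; by transitivity of the trace it suffices to show $\Tr_{L/L_s}=0$ when $L\neq L_s$, which holds because $L/L_s$ is purely inseparable of some degree $p^m$ with $m\geq 1$, so each $x\in L$ has a single conjugate over $L_s$ counted with multiplicity $p^m$ and $\Tr_{L/L_s}(x)=p^m x=0$ in characteristic $p$. Finally (b)$\Leftrightarrow$(c) over a field is formal: (c)$\Rightarrow$(b) is trivial, and if $T_{L/K}$ is degenerate its radical is a nonzero ideal of the field $L$, hence all of $L$, forcing $\Tr_{L/K}=0$.

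The second step is to globalize. Since $B/A$ is finitely generated projective it is flat and of finite presentation over $A$, so it is étale iff it is unramified iff, for every $\mathfrak p\in\Spec A$, the fibre $\bar B_\mathfrak p:=B\otimes_A\kappa(\mathfrak p)$ is a finite separable $\kappa(\mathfrak p)$-algebra, i.e.\ a finite product of finite separable field extensions of $\kappa(\mathfrak p)$ (standard; see \cite{Stack}). On the other hand, the adjoint map $\phi\colon B\to B^\vee=\Hom_A(B,A)$, $b\mapsto\Tr_{B/A}(b\cdot{-})$, is a morphism between finite projective $A$-modules of equal rank, hence an isomorphism iff surjective iff $\phi\otimes_A\kappa(\mathfrak p)$ is surjective for every $\mathfrak p$ (Nakayama on its cokernel); and since the trace commutes with base change, $\phi\otimes_A\kappa(\mathfrak p)$ is exactly the adjoint of the trace form of $\bar B_\mathfrak p$ over $\kappa(\mathfrak p)$. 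So both (i) and (iii) reduce to the \emph{same} fibrewise statement, and it remains to prove over a field $K$: a finite $K$-algebra $\bar B$ is a product of finite separable field extensions of $K$ iff its trace form is non-degenerate. One direction combines step~1 with the fact that the trace form of $\prod_j L_j$ is the orthogonal sum of the $T_{L_j/K}$, which is non-degenerate iff each summand is. For the converse, a nonzero nilpotent $n\in\bar B$ would make every element of $n\bar B$ nilpotent, hence of trace zero (the trace of a nilpotent endomorphism vanishes), so $n$ would lie in the radical of the trace form; non-degeneracy thus forces $\bar B$ reduced, hence $\bar B\cong\prod_j L_j$ with each $L_j/K$ a finite field extension, and then step~1 applied to each summand gives separability.

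Condition (ii) follows by the same fibrewise reduction: the assertion ``$\Tr_{L/K}\neq 0$'' for the residue fields $L=B/\mathfrak q$, $K=A/\mathfrak p$, once one knows the fibre $\bar B_\mathfrak p$ is reduced, amounts to demanding that every field factor of $\bar B_\mathfrak p$ be separable over $\kappa(\mathfrak p)$, which is exactly the fibrewise condition matched with (i) and (iii); conversely (iii) restricts the non-degenerate trace form of $\bar B_\mathfrak p$ to a non-degenerate form on each of its factor fields, so the relevant $\Tr_{L/K}$ is nonzero by step~1. The final ``in particular'' clause is then just step~1 isolated, with $A=K$ and $B=L$.

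The step I expect to be the main obstacle is the fibrewise translation of non-degeneracy: one must check that ``the trace form is non-degenerate'' is genuinely a fibrewise condition, which rests on flatness of $B$ over $A$ and on $B,B^\vee$ being finite projective of equal rank (so that $\phi$ is an isomorphism once surjective, surjectivity being detectable modulo each prime of $A$). A secondary point is to pin down the reading of (ii) matching the fibrewise condition: its residue-field form captures separability of the residue fields, and in the cases of interest — finite field extensions, or finite projective $A$-algebras with reduced fibres — it coincides with (i) and (iii) via the above.
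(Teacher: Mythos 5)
The paper gives no proof of this proposition (it is ``recalled'' as classical), so there is nothing to compare against; here is an assessment of your argument on its own terms. Step~1 and the fibrewise reduction of (i)$\Leftrightarrow$(iii) are both correct: the discriminant computation via a primitive element, the factorisation through the separable closure $L_s$ (each minimal polynomial over $L_s$ has a single root, so the characteristic polynomial of multiplication by $x$ is $(t-x)^{[L:L_s]}$ and the trace vanishes), the ``radical of the trace form is an ideal of the field $L$'' observation, the detection of $B\to B^\vee$ being an isomorphism by checking surjectivity modulo each prime via Nakayama together with equality of ranks, the base-change compatibility of $\Tr$ and of $\Hom_A(B,-)$ for $B$ finite projective, and the argument that a non-degenerate trace form on a finite $K$-algebra forces reducedness all check out.

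The genuine gap is (ii)$\Rightarrow$(i). Your text establishes (i)$\Leftrightarrow$(iii)$\Rightarrow$(ii) and that (ii) \emph{together with reducedness of the fibres} implies (i), but the reducedness is never extracted from (ii); the phrase ``once one knows the fibre $\bar{B}_{\mathfrak p}$ is reduced'' flags the hole without closing it. And it cannot be closed, because (ii) as written is strictly weaker than (i) and (iii): take $A=K$ a field and $B=K[x]/(x^2)$. The unique prime $\mathfrak q=(x)$ of $B$ has $L=B/\mathfrak q=K$, so $\Tr_{L/K}$ is the identity of $K$ and (ii) holds; yet $B/K$ is not \'etale, and since $\Tr_{B/K}(x)=\Tr_{B/K}(x^2)=0$ the element $x$ lies in the radical of the trace form, which is therefore degenerate in every characteristic. (A non-field instance of the same phenomenon: $\ZZ\to\ZZ[x]/(x^2-p)$ is ramified at $p$ yet satisfies (ii) at every prime.) Condition (ii) controls only separability of the residue-field extensions, which is one of the two ingredients of unramifiedness; it misses the requirement that $\mathfrak p B_{\mathfrak q}$ be the maximal ideal of $B_{\mathfrak q}$ for each $\mathfrak q$ over $\mathfrak p$, equivalently that $\bar{B}_{\mathfrak p}$ be reduced. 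So the proposition is imprecise at (ii); your proof should say so explicitly rather than silently fold the missing hypothesis into your reading of condition (ii).
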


Therefore, the above notion of trace map is inadequate for inseparable field extensions,
 since it yields the zero map.
 This justifies the use of a finer duality theory, which was introduced by Grothendieck.
 We recall the abstract setting to end up this section.
\begin{df}
Let $(\C,\otimes,\un)$ be a closed symmetric monoidal category.
 Let $K$ be an object of $\C$, and write $\mathrm D_K(M)=\uHom(M,K)$.
 The evaluation map $M \otimes \uHom(M,K)$ induces by adjunction
 a canonical map $\omega_M:M \rightarrow \mathrm D_K \circ \mathrm D_K(M)$.

One says that $K$ is \emph{dualizing} if the natural transformation
 $\omega:\Id_\C \rightarrow \mathrm D_K \circ \mathrm D_K$ is an isomorphism.
\end{df}
In words, $\mathrm D_K(M)$ is called the weak $K$-dual of $M$,
 and the definition asks that any object $M$ is isomorphic to
 its double weak $K$-dual, by the canonical map $\omega_M$.

\begin{rem}
\begin{enumerate}
\item In the original definition of a dualizing complex (\cite[Definition~p.~258]{HartRD}),
 one had additional assumptions (finite injective dimension and lower boundedness).
 One has progressively dismissed this kind of assumptions, in order to extend
 Grothendieck's theory to other context (torsion \'etale sheaves \cite[VII, 6.1.1]{Gabber},
 constructible pro-\'etale sheaves \cite[6.7.20]{BSproet}, $D$-modules,
 motivic homotopy \cite[2.3.73]{Ayoub1} and motivic complexes \cite[4.4.24]{CD3}).
\item The seminal definition of Grothendieck has been extended in several directions.
 We refer the reader to \cite{BoyDrin} for further developments.
\end{enumerate}
\end{rem}

\begin{num}
Consider a dualizing object $K$ of $\C$ as in the above definition.
 Then one has the following basic properties:
\begin{enumerate}
\item The map $\un \rightarrow \uHom(K,K)$, deduced from $\Id_K$ by adjunction, is an isomorphism.
\item For any object $M$, $N$ in $\C$, one has an isomorphism:
$$
\mathrm D_K\big(M \otimes \mathrm D_K(N)\big) \simeq \uHom(M,N).
$$
\item An object $K'$ of $\C$ is dualizing if and only if there exists a $\otimes$-invertible object $L$ such that $K'= K \otimes L$. \\
 Moreover, in this case, one has $L \simeq \mathrm D_{K'}(K)=\uHom(K,K')$.
\item If $M$ is a strongly dualizable object in $\C$ with dual $M^\vee$, then $\mathrm D_K(M) \simeq M^\vee \otimes K$.
\item The dualizing object $K$ is strongly dualizable if and only if it is invertible.
\end{enumerate}
We give the arguments for completeness:
\begin{enumerate}
\item use the isomorphism $\un \simeq D_K(D_K(\un))$ and $\uHom(\un,-) \simeq \Id_\C$ (as a right adjoint to the functor $(\un \otimes -)$).
\item Use the sequence of isomorphisms:
$$
\mathrm D_K\big(M \otimes \mathrm D_K(N)\big)
 =\uHom\big(M \otimes \mathrm D_K(N),K\big)
 \simeq \uHom\big(M,\mathrm D_K\mathrm D_K(N)\big)
 \simeq \uHom(M,N).
$$
\item $\Leftarrow$: use $\uHom(M,N \otimes L) \simeq \uHom(M  \otimes L^{-1},N) \simeq \uHom(M,N) \otimes L$. \\
$\Rightarrow$: one reduces to proving that $M \mapsto M \otimes \mathrm D_K(K')$ is an equivalence.
 It suffices to apply the equivalence
 $\mathrm D_K$, point (2) to reduce to the fact that $D_{K'}$ is an equivalence.
\item Follows from definitions.
\item Follows from point (4).
\end{enumerate}
\end{num}

\begin{ex}
In the category of locally compact abelian groups, the unit circle $\mathbb T=\RR/\ZZ$ is a dualizing object.
 We will see more examples in the next section.
\end{ex}

\subsection{Grothendieck differential trace map and duality}

\begin{num}
Let $f:X \rightarrow S$ be a morphism of quasi-compact and quasi-separated schemes.
 We have an adjoint pair:
$$
\derL f^*:\Der(\cO_S) \leftrightarrows \Der(\cO_X):\derR f_*.
$$
We say that a complex $K$ of $\Der(\cO_X)$ is quasi-coherent if its cohomology
 sheaves are quasi-coherent. We let $\Dqc(X)$ be the full sub-category of $\Der(\cO_X)$
 made of quasi-coherent complexes.\footnote{For Noetherian schemes,
 this category is equivalent to the derived category of the abelian category
 of quasi-coherent sheaves:
 see \cite[Prop. 09T4]{Stack}.} Both functors $\derL f^*$ and $\derR f_*$ preserves
 quasi-coherent complexes (see \cite[Lemmas 08DW, 08D5]{Stack}).
 The following theorem is one of the essential part of Grothendieck's duality theory.
\end{num}
\begin{thm}
Consider the above assumptions.
\begin{enumerate}
\item The functor $\derR f_*:\Dqc(X) \rightarrow \Dqc(S)$ admits a right adjoint.
 When $f$ is \emph{in addition proper}, we denote this right adjoint by $f^!:\Dqc(S) \rightarrow \Dqc(X)$. 
\item If $f$ is proper smoothable lci with canonical sheaf $\omega_{X/S}$
 and relative dimension $d$ (see \Cref{df:can_sheaf}),
 there exists a canonical isomorphism:
$$
\pur_f:\omega_{X/S}[d] \rightarrow f^!(\cO_S)
$$
with the notation of the preceding point. It is called the \emph{purity isomorphism}
 associated to $f$.
\end{enumerate}
\end{thm}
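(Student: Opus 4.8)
For part (1) the plan is to invoke Neeman's Brown representability theorem. By \Cref{ex:perfect}, for $X$ quasi-compact and quasi-separated the triangulated category $\Dqc(X)$ is compactly generated, the compact objects being exactly the perfect complexes, and likewise for $S$. The one substantial input is that $\derR f_*\colon \Dqc(X)\to\Dqc(S)$ commutes with arbitrary small coproducts; this is local on $S$, so it reduces to the quasi-compact separated case, where it follows from a standard local computation (see \cite{Stack}). A coproduct-preserving exact functor out of a compactly generated triangulated category admits a right adjoint by Brown representability, so $\derR f_*$ acquires such an adjoint $f^{\times}$, and one sets $f^{!}:=f^{\times}$ when $f$ is proper. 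For non-proper morphisms one glues these adjoints along a Nagata compactification to obtain the twisted inverse image pseudofunctor $(-)^{!}$ on separated morphisms of finite type, which agrees with the right adjoint of $\derR f_*$ in the proper case; I will use this general pseudofunctor in part (2).

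For part (2), write $d=\rk(\cL_{X/S})$. Being smoothable and lci, $f$ is a complete intersection, so by \Cref{df:lci}(2) it admits a factorization $f=p\circ i$ with $i\colon X\hookrightarrow P$ a regular closed immersion of pure codimension $c$ and $p\colon P\to S$ smooth of pure relative dimension $d_P$. The cotangent triangle $\derL i^{*}\cL_{P/S}\to \cL_{X/S}\to \cL_{X/P}$ of \eqref{eq:cotangent_htp_seq}, together with $\rk\,\cL_{X/P}=-c$ (the conormal sheaf $\C_{X/P}$ sits in homological degree $1$), gives $d=d_P-c$. By the pseudofunctoriality $(p\circ i)^{!}\simeq i^{!}\circ p^{!}$ of the twisted inverse image, it suffices to compute $p^{!}$ on a smooth morphism and $i^{!}$ on a regular closed immersion.

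For $p$ smooth one has a canonical isomorphism $p^{!}(-)\simeq \derL p^{*}(-)\otimes\omega_{P/S}[d_P]$, whence $p^{!}(\cO_S)\simeq \omega_{P/S}[d_P]$; one reduces, via étale localization and smooth base change, to the explicit self-duality of $\PP^{n}_{S}$. For $i$ a regular closed immersion, Grothendieck's \emph{fundamental local isomorphism} supplies a canonical isomorphism $i^{!}(\cO_P)\simeq \Lambda^{c}(\C_{X/P})^{\vee}[-c]=\omega_{X/P}[-c]$, proved locally from the Koszul resolution of $\cO_X$ over $\cO_P$ (which also shows $\mathcal{E}xt^{j}_{\cO_P}(i_*\cO_X,\cO_P)$ vanishes for $j\neq c$ and equals $\Lambda^{c}(\C_{X/P})^{\vee}$ for $j=c$). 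Combining these with the standard identity $i^{!}(\derL i^{*}L\otimes K)\simeq \derL i^{*}L\otimes i^{!}(K)$ for $L$ perfect gives
\begin{align*}
f^{!}(\cO_S) &\simeq i^{!}p^{!}(\cO_S)\simeq i^{!}\bigl(\omega_{P/S}[d_P]\bigr)\\
&\simeq \derL i^{*}\omega_{P/S}[d_P]\otimes i^{!}(\cO_P)\simeq i^{*}\omega_{P/S}\otimes\omega_{X/P}[d_P-c].
\end{align*}
Finally the canonical isomorphism \eqref{eq:rel_canonical} attached to the triangle $X\to P\to S$, namely $\omega_{X/S}\simeq\omega_{X/P}\otimes i^{*}\omega_{P/S}$, together with $d=d_P-c$, produces the purity isomorphism $\mathfrak p_f\colon\omega_{X/S}[d]\xrightarrow{\ \sim\ }f^{!}(\cO_S)$.

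The remaining point is that $\mathfrak p_f$ should not depend on the factorization $(P,i,p)$: the plan is the usual cofinality argument, where any two factorizations are dominated by a common refinement (built from $P\times_S P'$), so that it is enough to compare a factorization with one obtained by postcomposing $p$ with a further smooth morphism, the requisite compatibility being the transitivity of the isomorphisms \eqref{eq:rel_canonical} recorded in \Cref{rem:can_iso_can_sheaf}, matched with the analogous transitivities of the smooth base case and of the fundamental local isomorphism. The hard part — which these notes import from the literature (\cite{HartRD}, \cite{ConradDual}, \cite{Stack}) rather than reprove — is exactly the construction of $p^{!}$ for smooth $p$ and of the fundamental local isomorphism as \emph{canonical, sign-coherent} isomorphisms compatible with composition; it is this canonicity, not the mere existence of abstract isomorphisms, that makes both the composite above and its independence of the factorization meaningful, and it is the technically delicate heart of Grothendieck duality.
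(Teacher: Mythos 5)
Your proof is correct and follows the paper's own route: factor $f = p \circ i$ with $i$ a regular closed immersion and $p$ smooth, compute $p^!(\cO_S)\simeq\omega_{P/S}[n]$ via smooth duality and $i^!(\cO_P)\simeq\omega_{X/P}[-m]$ via the fundamental local isomorphism, glue them using the projection-type isomorphism $i^!(\derL i^*L\otimes K)\simeq\derL i^*L\otimes i^!(K)$ for $L$ perfect, and conclude with the canonical identification $\omega_{X/S}\simeq\omega_{X/P}\otimes i^*\omega_{P/S}$ of \eqref{eq:rel_canonical} --- exactly the composite the paper writes out. One small remark: you correctly treat $f^!$ as the \emph{right} adjoint of $\derR f_*$ produced by Neeman's Brown representability theorem; the statement's ``left adjoint'' is a typo, since a left adjoint of $\derR f_*$ would be $\derL f^*$ and requires no such theorem.
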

\begin{proof}
The first statement is Neeman's theorem (see \cite{NeeG}, \cite[48.3.1]{Stack}).

We could not find an appropriate reference for point (2).
 However, it follows from the results of \cite{HartRD},
 with some complements brought by many years of improvement. Let us summarize
 the arguments from the literature. We now erase the symbols $\derL$ and $\derR$
 for readability.

First, we fix a factorization of $f$ as $X \xrightarrow i P \xrightarrow p S$ such that
 $i$ is a regular closed immersion and $p$ is a smooth morphism.
 We will consider the functors $i^!$ and $p^!$ restricted to $\Dpqc(X)$,
 which is legitimate thanks to \cite[Lemma 0A9I]{Stack}.

According to \cite[Lemma 0A76]{Stack}, there exists a canonical isomorphism
 (uniquely characterized by the adjoint property)
 of functors $i^! \simeq i^\flat$ where $i^\flat:\Dpqc(P) \rightarrow \Dpqc(X)$
 is the functor defined in \cite[III. \textsection 6]{HartRD}.
 As $i$ is a regular closed immersion, there exists a canonical
 isomorphism of functors by \cite[III. Cor. 7.3]{HartRD}:
$$
\pur_i:\omega_{X/P}[-m] = \omega_{X/P}[-m] \otimes i^*(\cO_P) \simeq i^\flat(\cO_P)
 \simeq i^!(\cO_P)
$$
where $m$ is the codimension of $i$.

As $p$ is proper and smooth, there exists a canonical isomorphism
 of functors, as defined in \cite[4.1.6]{NeeGSimple}:
$$
\pur_p:\omega_{P/S}[n]
 = \big(\omega_{P/S}[n] \otimes p^*(\cO_S)\big) \simeq p^!(\cO_S)
$$
where $n$ is the dimension of $p$. In particular,
 the complex $p^!(\cO_S)$ is perfect.

We now build the desired map as the following composition:
\begin{align*}
\omega_{X/S}[d] \simeq \omega_{X/P}[-m] \otimes i^*(\omega_{P/S}[n])
 \xrightarrow{\pur_i \otimes  i^*(\pur_p)} & i^!(\cO_P) \otimes  i^*(p^!(\cO_S)) \\
 & \stackrel{(*)} \simeq i^!(\cO_P \otimes  p^!(\cO_S)) \simeq f^!(\cO_S)
\end{align*}
where the isomorphism $(*)$ exists as $i$ is lci (\cite[Lemma 0A9T]{Stack}).

To justify the word ``canonical", one needs to prove that the above isomorphism
 does not depend on the choice of the factorization.
 The steps for this fact are well-known.
 The main points may be found in \cite[\textsection III]{HartRD}: 2.2, 6.2, 6.4, 8.1
 (see also the proof of Th. 3.3.2 \cite{DJK}).
\end{proof}

\begin{rem}
In fact, the purity isomorphism can be generalized in the coherent context as follows.
 For any bounded quasi-coherent complex $K$,
 one defines an isomorphism by the following composite maps:
$$
f^!(K) \xrightarrow{(*)} f^*(K) \otimes f^!(\cO_S) \xrightarrow{\pur_f} f^*(K) \otimes \omega_{X/S}[d],
$$
where the isomorphism $(*)$ follows from \cite[III, 8.8]{HartRD}. \\
Note this is specific to the coherent case. The analogue isomorphism does not hold in other
 six functors formalism such as the \'etale $\ell$-adic or motivic one, unless further
 restrictions to $f$ are assumed (e.g., $f$ is smooth, or a nil-immersion).
\end{rem}

\begin{df}\label{df:diff_trace}
Assuming $f$ is proper smoothable lci of relative dimension $d$,
 we will denote by
$$
\Tr^\omega_f:\derR f_*(\omega_{X/S})[d] \rightarrow \cO_S
$$
the map in $\Dqc(S)$ obtained by adjunction from $\pur_f$ and call it
 the \emph{differential trace map} associated with $f$.

When $f$ is finite lci, the source and target of the map $\Tr^\omega_f$
 are concentrated in degree $0$. Therefore, it gives, by taking cohomology in degree $0$,
 a canonical morphism of coherent $\cO_S$-modules, and we will consider it as such.
 Taking global sections, we will also consider
 the induced trace map:
$$
\Tr^\omega_{X/S}:\Gamma(X,\omega_{X/S}) \rightarrow \Gamma(S,\cO_S).
$$
Finally, if $X/S$ is the spectrum of a finite lci ring extension $B/A$
 the above map will be denoted by:
$$
\Tr^\omega_{B/A}:\omega_{B/A} \rightarrow A.
$$
\end{df}

\begin{rem}\label{rem:diff_trace}
The above definition clarifies several properties of the differential trace map.
 It is functorial with respect to flat base change in $X$,
 and compatible with disjoint sums in $X$.

This means in particular that if we have an isomorphism of
 finite lci $A$-algebras:
$$
\Theta:B \xrightarrow \sim \prod_{i \in I} B_i,
$$
the following diagram is commutative:
$$
\xymatrix@=15pt{
\omega_{B/A}\ar^-{\Theta_*}_-\sim[rr]\ar_{\Tr^\omega_{B /A}}[rd] && \prod_{i \in I} \omega_{B_i/A}.\ar^{\prod_i \Tr^\omega_{B_i/A}}[ld] \\
& A &
}
$$
\end{rem}

\begin{rem}\label{rem:diff_trace_composition}
The compatibility with composition of the trace map is more involved.
 Consider a factorization $X \xrightarrow g Y \xrightarrow h S$ of $f$
 by proper smoothable and lci morphisms, of respective dimensions $n$ and $m$.
 First recall that there exists a canonical isomorphism (see \Cref{eq:can_iso_can_sheaf}):
$$
\psi:\omega_{X/S} \simeq \omega_{X/Y} \otimes (f^*\omega_{Y/S}).
$$
The compatibility with composition of the differential trace map is
 expressed by the following commutative diagram (again we discard the symbols $\derR$ and
 $\derL$ for readability):
$$
\xymatrix@R=10pt@C=80pt{
f_*(\omega_{X/S})[d]\ar^-{\Tr^\omega_{X/S}}[rr]\ar^\sim_{f_*\psi}[d] & & \cO_S\ar@{=}[dd] \\
f_*\big(\omega_{X/Y} \otimes (g^*\omega_{Y/S})\big)[d]\ar_\sim[d] && \\
h_*\big(g_*(\omega_{X/Y})[n] \otimes \omega_{Y/S}\big)[m]\ar^-{h_*\big(\Tr^\omega_{X/Y}\otimes \Id\big)}[r] &
 h_*(\omega_{Y/S})[m]\ar^-{\Tr^\omega_{Y/S}}[r] & \cO_S
}
$$
The second vertical map is obtained by the so-called projection formula, which holds here
 either because $g$ is proper or even simply as $\omega_{Y/S}$ is an invertible sheaf.
 This statement follows from \cite[III, 10.5]{HartRD}
 (see also \cite[Th. 3.4.1]{ConradDual}).

In the affine case, $X=\spec C$, $Y=\spec B$, $S=\spec A$, $f$, $g$ and $h$ being finite,
 the diagram takes the following simpler form:
$$
\xymatrix@R=10pt@C=80pt{
\omega_{C/A}\ar^-{\Tr^\omega_{C/A}}[rr]\ar^\sim_\psi[d] & & A\ar@{=}[d] \\
\omega_{C/B} \otimes_B \omega_{B/A}\ar^-{\Tr^\omega_{C/B}\otimes \Id}[r] &
 \omega_{B/A}\ar^-{\Tr^\omega_{B/A}}[r] & A.
}
$$
\end{rem}

\begin{num}\textit{A particular case of duality}. \label{num:trivial_duality}
For any quasi-coherent complex $K$ over $X$,
 and any proper smoothable lci morphism $f:X \rightarrow S$,
 the adjunction property of the pair $(\derR f_*,f^!)$ gives  an isomorphism:
\begin{align*}
\Hom_{\Der(\cO_X)}(K,\omega_{X/S}[d]) & \xrightarrow \sim \Hom_{\Der(\cO_S)}(\derR f_*(K),\cO_S), \\
 \big(u:K \rightarrow \omega_{X/S}[d]\big) & \mapsto \big(\Tr_{X/S}^\omega \circ \derR f_*(u)\big).
\end{align*}
In the case of a finite lci ring extension $B/A$, and for a $B$-module $M$,
 this boils down to an isomorphism:
\begin{align*}
\Hom_B(M,\omega_{B/A}) & \xrightarrow \sim \Hom_A(M,A), \\
 \big(u:M \rightarrow \omega_{B/A}\big) & \mapsto \big(\Tr_{B/A}^\omega \circ u\big).
\end{align*}
Taking $M=B$, we get an isomorphism between $A$-linear forms on $B$
 and elements of $\omega_{B/A}$:
\begin{align*}
\omega_{B/A} & \xrightarrow \sim \Hom_A(B,A), \\
w & \mapsto \big(\psi_w:\lambda \mapsto \Tr_{B/A}^\omega(\lambda.w)\big).
\end{align*}
\end{num}

\begin{ex} We end-up this section with a few classical examples
 of duality in the case of coherent sheaves.
\begin{enumerate}[wide]
\item A concrete case of duality is obtained when $S$ is the spectrum of any field $k$,
 $X$ a proper smoothable lci $k$-scheme. In that case, the first
 isomorphism of \Cref{num:trivial_duality} applied to $K[n]$
 where $K$ is a bounded complex with coherent cohomology,
 gives an isomorphism of $k$-vector spaces:
$$
\Ext^{d-n}_{\cO_X}(K,\omega_{X/k}) \xrightarrow \sim H^n(X,K)^*
$$
The trace map $\Tr_{X/k}^\omega$ induces what I will call the \emph{Gysin map}
 associated with $f$:
$$
f_!:H^d(X,\omega_{X/k}) \rightarrow k
$$
and the above duality isomorphism is induced by the \emph{Poincaré duality} (perfect) pairing:
\begin{align*}
\Ext^{d-n}_{\cO_X}(K,\omega_{X/k}) \otimes H^n(X,K) &\rightarrow k \\
(x,y) \mapsto f_!(x.y).
\end{align*}
\item In the case of a proper smoothable lci morphism $f:X \rightarrow S$,
 one can interpret Grothendieck duality,
 for $K=\cO_X$, by saying that $\derR f_*(\cO_X)$ is strongly dualizable (\Cref{df:dualizable},
 \Cref{ex:perfect}) with dual given by
 $\derR f_*(\omega_{X/S})[d]$. One of the pairings coming from this duality is
 a relative version of the Poincaré duality pairing:
$$
\derR f_*(\cO_X) \otimes \derR f_*(\omega_{X/S})[d]
 \rightarrow \derR f_*(\omega_{X/S})[d] \xrightarrow{\Tr_{X/S}^\omega} \cO_S
$$
where the first map comes from the fact $\derR f_*$ is weakly monoidal
 (as the right adjoint of a monoidal functor).
\item Of course, the theory can be considerably generalized
 - but we will only need the case of finite field extensions!
 Indeed, Grothendieck's main objective was to obtain duality
 for any proper morphism $f:X \rightarrow \Spec(k)$. He achieved this
 by constructing a dualizing complex $K_X=\omega_{X/k}$, which is
 no longer an invertible sheaf in general (except if $X$ is \emph{Gorenstein},
 see \cite[V, 9.3]{HartRD}).
 We refer the reader to \cite{HartRD, ConradDual} or \cite[Chap. 1]{LH}.
\end{enumerate}
\end{ex}

%
%

\subsection{Grothendieck and Scheja-Storch Residues}\label{sec:G-SS-residues}

\begin{num}\label{num:scheme_conditions_res}
We consider a commutative diagram of schemes:
$$
\xymatrix@=10pt{
& P\ar^p[rd] & \\
X\ar^i[ru]\ar_f[rr] & & S
}
$$
such that $f$ is finite lci,
 $p$ is smooth of relative dimension $n$, and $i$ is a closed immersion
 with ideal sheaf $\mathcal I \subset \cO_P$.
 The hypothesis imply that $i$ is regular of codimension $n$.

Recall that we can associate to the above commutative diagram
 a canonical isomorphism (see paragraph \Cref{eq:can_iso_can_sheaf}):
$$
\Theta:\omega_{X/S} \simeq \omega_{X/P} \otimes_{\cO_X} i^*\omega_{P/S}
 \simeq \big(\Lambda^n(\cI/\cI^2)\big)^\vee \otimes_{\cO_P} \Omega^n_{P/S}.
$$
\end{num}
\begin{df}\label{df:Grothendieck_residues}
Consider a global differential $n$-form $w \in \Gamma(P,\Omega^n_{P/S})$
 and a global regular parametrization $(f_1,\hdots,f_n)$ of $\cI$.
 We define the Grothendieck residue (symbol) of $w$ at $(f_1,\hdots,f_n)$ as the element of
 $\Gamma(S,\cO_S)$:
$$
\Res_{P/S}\left\lbrack \begin{matrix}w \\ f_1 \hdots f_n\end{matrix}\right\rbrack
 = \Tr_{X/S}^\omega\big( (\bar f_1 \wedge \hdots \wedge \bar f_n)^* \otimes i^*(w) \big)
$$
where we have used the differential trace map of \Cref{df:diff_trace}
 and we have considered the element $(\bar f_1 \wedge \hdots \wedge \bar f_n)^* \otimes i^*(w)$
 as an element of $\omega_{B/A}$ via the isomorphism $\Theta$.
\end{df}
This definition agrees with that of \cite[III, \textsection 9]{HartRD}
 and that of \cite[Appendix A, (A.1.4)]{ConradDual}.\footnote{The sign in the latter
 can be explained as:
$$(\bar f_1 \wedge \hdots \wedge \bar f_n)^*
=\bar f_n^* \wedge \hdots \wedge \bar f_1^*
=(-1)^{n(n-1)/2} \bar f_1^* \wedge \hdots \wedge \bar f_n^*.$$ }

\begin{num}\label{num:affine_conditions_res}
We now explain a method of Scheja and Storch to compute the above residue,
 and therefore the differential trace map.
 Our reference is \cite[\textsection 8]{Kunz}.

We will restrict to the affine case.
 Let $B$ be a finite projective $A$-algebra: in other words,
 $B$ is strongly dualizable as an $A$-module, see \Cref{ex:Amodules_dual}.

In what follows, we will use a set of indeterminates $\underline t=(t_1,\hdots,t_n)$,
 and put $A[\underline t]=A[t_1,\hdots,t_n]$ for brevity.
 We assume that $B$ is a complete intersection $A$-algebra:
 there exist elements $\alpha=(\alpha_1,\hdots,\alpha_n) \in B^n$
 which generate $B$ as an $A$-algebra and the kernel of the surjective map
$$
A[\underline t] \rightarrow B, t_i \mapsto \alpha_i
$$
admits a regular parametrization $I=(f_1,\hdots,f_n)$ for polynomials
 $f_i \in A[\underline t]$. We say that $f=(f_1,\hdots,f_n)$ is a \emph{presentation}
 of the lci $A$-algebra $B$.

Putting $S=\Spec(A)$, $X=\Spec(B)$, $P=\AA^n_S=\Spec(A[\underline t])$,
 we are therefore in the conditions of paragraph \Cref{num:scheme_conditions_res}.
 Consider the composite map\footnote{Geometrically,
 this map corresponds to the graph $\gamma_i:X \rightarrow X \times_S P$ of the
 closed immersion $i:X \rightarrow P$. As $i$ is regular and $P/S$ is smooth,
 $\gamma_i$ is regular. Algebraically, it is just the map evaluating $t_i$ at $\alpha_i$.}:
$$
\sigma:B[\underline t] \rightarrow B \otimes_A B \xrightarrow \mu B
$$
where $\mu$ is the multiplication map, and the first arrow is the natural surjection
 coming from the identification $B \otimes_A B=B \otimes_A A[\underline t]/I=B[\underline t]/I$.
 We consider the ideals:
\begin{align*}
J&=\Ker(\mu) \subset B \otimes_A B, \\
 K&=\Ker(\sigma) \subset B[\underline t].
\end{align*}
Moreover, $K$ admits a regular parametrization, $K=(t_1-\alpha_1,\hdots,t_n-\alpha_n)$
 and one obtains the identification $J=K/I$ as obviously $I \subset K$ as ideals of $B[\underline t]$.
 Therefore, there exist polynomials $c_{ij} \in B[\underline t]$ such that
$$
\forall i \in [1,n], f_i=\sum_{j=1}^n c_{ij}.(t_j-\alpha_j).
$$
Finally, the element
$$
\Delta_f=\det\big(c_{ij}(\alpha)_{1 \leq i,j \leq n}\big) \in B \otimes_A B
$$
is independent of the chosen polynomials $c_{ij}$ (see \cite[Lemma 4.10]{Kunz}).
\end{num}
\begin{df}\label{df:bezoutian}
Consider the above notation: $B/A$ is a finite projective complete intersection
 and $f=(f_1,\hdots,f_n) \in A[\underline t]^n$ is a fixed presentation of $B/A$.
 Then the element $\Delta_f \in B \otimes_A B$ is called the \emph{Bézoutian}
 associated with the presentation $f$ of $B/A$.
\end{df}

\begin{rem}
This definition is of course an extension of the classical
 Bézoutian (or rather the determinant of the Bézout matrix) arising
 from Euler and Bézout elimination theory, which
 corresponds to the case where $A=k$ is a field and $n=2$.
\end{rem}

\begin{num}
Consider again the setting of \Cref{num:affine_conditions_res}.
 We put $B^*=\Hom_A(B,A)$, which is the (canonical) dual of the
 strongly dualizable $A$-module $B$ (see \Cref{df:dualizable} and what follows).\footnote{Following the usage,
 we identify the set of morphisms $\Hom_A$ with the internal Hom-functor in the category of $A$-modules.}
 As $B$ is strongly dualizable, the canonical map:
\begin{align*}
\Phi:B \otimes_A B & \rightarrow \Hom_A(B^*,B), \\
b \otimes b' & \mapsto (\varphi \mapsto \varphi(b).b')
\end{align*}
is an isomorphism. The following lemma is now a formality
 (see \cite[8.13]{Kunz}; beware to translate the notation:
 $I$ (resp. $\omega_{B/A}$) in \emph{loc. cit.} is what we denote by $J$ (resp. $B^*$) here.)
\begin{lm}
Consider the above notation. Recall that $J=\Ker(B \otimes_A B \xrightarrow \mu B)$,
 seen as an ideal of $B \otimes_A B$.
 Then $\Phi$ induces an isomorphism:
$$
\mathrm{Ann}(J) \xrightarrow \sim \Hom_A(B^*,B).
$$
\end{lm}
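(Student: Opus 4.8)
The plan is to identify $\mathrm{Ann}(J)$, via the canonical isomorphism $\Phi$, with the $B$-linear part of $\Hom_A(B^*,B)$: that is, to show that $\Phi$ restricts to an isomorphism $\mathrm{Ann}(J)\xrightarrow{\ \sim\ }\Hom_B(B^*,B)$, where $B^*=\Hom_A(B,A)$ carries its canonical $B$-module structure $(c\cdot\varphi)(x)=\varphi(cx)$ (this is the reading of \cite[8.13]{Kunz} under the dictionary $\omega_{B/A}\leftrightarrow B^*$ recalled above).

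First I would record the two inputs. Since $B$ is a dualizable object of $\smod A$ (\Cref{ex:Amodules_dual}), i.e. finitely generated projective, the map $\Phi:B\otimes_A B\to\Hom_A(B^*,B)$ is an isomorphism of $A$-modules, being the tensor--hom identification combined with the canonical iso $B\simeq B^{\vee\vee}$. Secondly, the kernel $J$ of $\mu:B\otimes_A B\to B$ is generated, as an ideal of the commutative ring $B\otimes_A B$, by the elements $\delta_c:=c\otimes 1-1\otimes c$ for $c\in B$: if $\sum_i b_i\otimes b_i'\in J$ then $\sum_i b_i\otimes b_i'=\sum_i(b_i\otimes 1)(1\otimes b_i'-b_i'\otimes 1)$. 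Hence $\mathrm{Ann}(J)=\{\,x\in B\otimes_A B:\ \delta_c\,x=0\ \text{for all}\ c\in B\,\}$.

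The computational core is to express the failure of $B$-linearity of $\Phi(x)$ in terms of the two $B$-actions on $B\otimes_A B$. Writing $x=\sum_i b_i\otimes b_i'$, so that $\Phi(x)(\varphi)=\sum_i\varphi(b_i)b_i'$, one checks directly that for every $c\in B$
\[
\Phi(x)(c\cdot\varphi)=\textstyle\sum_i\varphi(cb_i)b_i'=\Phi\big((c\otimes 1)x\big)(\varphi),\qquad
c\cdot\Phi(x)(\varphi)=\textstyle\sum_i\varphi(b_i)(cb_i')=\Phi\big((1\otimes c)x\big)(\varphi).
\]
Thus $\Phi(x)$ is $B$-linear if and only if $\Phi\big((c\otimes 1)x\big)=\Phi\big((1\otimes c)x\big)$ for all $c$, which by injectivity of $\Phi$ is equivalent to $\delta_c\,x=0$ for all $c$, i.e. to $x\in\mathrm{Ann}(J)$. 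Being the restriction of an isomorphism, $\Phi:\mathrm{Ann}(J)\to\Hom_B(B^*,B)$ is then an isomorphism; it is moreover $B$-linear for the (unique) $B$-module structure on $\mathrm{Ann}(J)$, since this module is killed by $J$ and the left and right $B$-actions on $B\otimes_A B$ agree modulo $J$.

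I do not expect a genuine obstacle: the argument is formal once dualizability of $B$ and the generators $\delta_c$ of $J$ are in hand. The only point that needs care is the bookkeeping of module structures --- pinning down that the ``contract-the-first-factor'' reading of $\Phi$ turns the natural $B$-action on $B^*$ into multiplication by $c\otimes 1$ and the target $B$-action into multiplication by $1\otimes c$, so that the condition produced is precisely annihilation of the generators $\delta_c$ (and not of some twisted variant). For this identification, and for the explicit relation with the Bezoutian $\Delta_f$, one may invoke \cite[8.13]{Kunz}.
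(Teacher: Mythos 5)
Your proof is correct, and you have also correctly caught what must be a typo in the printed statement: the intended target is $\Hom_B(B^*,B)$, not $\Hom_A(B^*,B)$ (with $\Hom_A$ the claim would be vacuous, since $\Phi$ is already an isomorphism onto $\Hom_A(B^*,B)$). The paper does not supply a proof, calling the lemma ``a formality'' and deferring to \cite[8.13]{Kunz}; your two ingredients --- that $J$ is generated as an ideal by the elements $c\otimes 1 - 1\otimes c$, and that $\Phi(x)$ is $B$-linear precisely when $(c\otimes 1)x=(1\otimes c)x$ for all $c$ --- are exactly the standard argument behind that reference, so this fills in what the paper leaves implicit rather than taking a different route.
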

With this lemma in hand,
 we see that there exists a unique $A$-linear map $\tau_f:B \rightarrow A$, 
 equivalently $\tau_f \in B^*$ such that:
\begin{equation}\label{eq:Bézout&trace}
\Phi(\Delta_f)(\tau_f)=1_B
\end{equation}
\end{num}
\begin{df}\label{df:SS_trace}
Consider the above notation, as in \Cref{df:bezoutian}.
 We call the $A$-linear map $\tau_f:B \rightarrow A$
 the \emph{Scheja-Storch trace map} associated with the presentation $f$ of $B/A$.
\end{df}

\begin{ex}\label{ex:Scheja-Storch_trace_monogeneous}
We consider the \emph{monogenic} case: 
$$B=A[\alpha]=A[t]/(f)$$
 where $f$ is a monic polynomial in one variable $t$:
$$
f(t)=a_0+\cdots+a_{n-1}.t^{n-1}+t^n.
$$
Thus $B$ is a free $A$-module with basis $1, \alpha, \hdots, \alpha^{n-1}$.
Then one can compute $\Delta_f$ explicitly and one finds that
$$
\tau_f=(\alpha^{n-1})^*, \alpha^i \mapsto \delta_{n-1}^i.
$$
\end{ex}

\begin{rem}\label{rem:Scheja-Storch_trace_monogeneous}
In the above example, the map $\tau_f$ does depend on the chosen generator $\alpha$ of $B/A$,
 or more explicitly on the chosen presentation of $B/A$.
 Therefore, it is sometimes customary to put:
$$
\tau_{B/A}^\alpha=\tau_f.
$$
In view of \cite[\textsection 1, (2)]{Tate}, corresponding to the case where $B/A$ 
 is an inseparable extension field, the map $\tau_f$ is sometimes
 called the \emph{Tate trace map} (cf. \cite{Kunz}).
\end{rem}

\begin{thm}
Consider the assumption of the above definition.

Then the $A$-linear map $\tau_f:B \rightarrow A$ is not $B$-torsion, 
 and in fact is a $B$-basis of $\Hom_A(B,A)$.

In other words, the symmetric bilinear form
$$
\varphi_f:B \otimes_A B \rightarrow A, b \otimes b' \mapsto \tau_f(bb')
$$
is non-degenerate: the associated map
$$
B \rightarrow B^*=\Hom_A(B,A), b \mapsto b.\tau_f=\varphi_f(b,-)
$$
is an isomorphism.
\end{thm}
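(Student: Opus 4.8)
The plan is to reduce the whole statement to the defining identity \eqref{eq:Bezout&trace} of $\tau_f$, the isomorphism $\Phi$, and the lemma preceding it, and then to finish by an elementary rank count over $A$. First I would introduce $\psi\colon B\to B^\ast=\Hom_A(B,A)$, $\lambda\mapsto \lambda.\tau_f$; this is exactly the map adjoint to the symmetric bilinear form $\varphi_f$ (symmetry being immediate from commutativity of $B$), so the asserted non-degeneracy of $\varphi_f$, the claim that $\tau_f$ is a $B$-basis of $\Hom_A(B,A)$, and the claim that $\tau_f$ is not $B$-torsion will all follow once I show that $\psi$ is an isomorphism of $B$-modules.

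Next, by the lemma $\Delta_f$ lies in $\mathrm{Ann}(J)$, so $\theta:=\Phi(\Delta_f)\colon B^\ast\to B$ is $B$-linear: writing $\Delta_f=\sum b_i\otimes b_i'$, the identity $\theta(b.\varphi)=b.\theta(\varphi)$ is precisely the assertion that $\bigl((b\otimes 1)-(1\otimes b)\bigr)\Delta_f=0$, which holds because $(b\otimes 1)-(1\otimes b)\in J$. The defining equation \eqref{eq:Bezout&trace} says $\theta(\tau_f)=1$. I would then compute, for $\lambda\in B$,
$$
\theta(\psi(\lambda))=\theta(\lambda.\tau_f)=\lambda.\theta(\tau_f)=\lambda,
$$
so that $\theta\circ\psi=\Id_B$. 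Hence $\psi$ is a split monomorphism and $\theta$ is a split epimorphism of $A$-modules.

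It then remains to upgrade "split epi/split mono" to "isomorphism". Since $B$ is a finite projective $A$-module, so is its $A$-dual $B^\ast$, and the two have the same rank function on $\Spec(A)$. Localizing at an arbitrary prime of $A$, I may assume $B$ and $B^\ast$ are free of the same finite rank $r$; a surjection $\theta\colon A^r\to A^r$ between free modules of equal finite rank admits a right inverse $s$, whence $\det(\theta)\det(s)=1$, so $\det(\theta)$ is a unit and $\theta$ is invertible. Therefore $\theta$ is an isomorphism, and so is $\psi=\theta^{-1}$. In particular $\psi$ is injective (so $b.\tau_f=0$ forces $b=0$, i.e.\ $\tau_f$ is not $B$-torsion) and surjective (so $\tau_f$ generates $B^\ast$ over $B$, i.e.\ is a $B$-basis), which is exactly the non-degeneracy of $\varphi_f$.

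The step I would be most careful about is the second paragraph: keeping straight the two distinct $B$-actions in play — the one on $B^\ast=\Hom_A(B,A)$ given by $(b.\varphi)(x)=\varphi(bx)$, and the one on $B\otimes_A B$ used to form $\mathrm{Ann}(J)$ — and verifying through them that membership of $\Delta_f$ in $\mathrm{Ann}(J)$ is precisely what makes $\Phi(\Delta_f)$ a $B$-linear map. Everything else is the cited lemma plus standard commutative algebra. One could alternatively bypass the localization in the third paragraph by invoking that $B^\ast$ is an invertible $B$-module and that $\mathrm{Ann}(J)$ is free of rank one over $B$ on the generator $\Delta_f$; but the rank count over $A$ is shorter and entirely self-contained.
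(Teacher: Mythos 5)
Your proof is correct, and it takes a genuinely different — and arguably leaner — route than the paper. The paper's proof invokes Wiebe's theorem (cited as \cite[Cor.~4.12]{Kunz}), namely that $\mathrm{Ann}(J)$ is the principal ideal generated by $\Delta_f$; combined with the lemma, this shows that $\Delta_f$ is a $B$-basis of the invertible module $\mathrm{Ann}(J)\simeq\Hom_B(B^\ast,B)$, and then the relation \eqref{eq:Bezout&trace} exhibits $\tau_f$ as the dual basis element of $B^\ast$. Your argument dispenses with Wiebe entirely: you only use the \emph{membership} $\Delta_f\in\mathrm{Ann}(J)$ (which gives $B$-linearity of $\theta=\Phi(\Delta_f)$), the defining relation $\theta(\tau_f)=1$ to produce the one-sided inverse $\theta\circ\psi=\Id_B$, and a local rank count over $A$ to promote the split epi $\theta$ to an isomorphism. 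Your careful check that $B$-linearity of $\Phi(\Delta_f)$ is exactly the vanishing of $\bigl((b\otimes 1)-(1\otimes b)\bigr)\Delta_f$ is the right reading of the lemma (whose statement, incidentally, should land in $\Hom_B(B^\ast,B)$, not merely $\Hom_A(B^\ast,B)$, since $\Phi$ is already onto the latter). What the paper's approach buys is structural information — that $\mathrm{Ann}(J)$ is free of rank one on $\Delta_f$ — which is of independent interest in the Sheja--Storch framework; what your approach buys is independence from that nontrivial input and a short, self-contained finish by determinants.
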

\begin{proof}
Using the definitions of \Cref{num:affine_conditions_res},
 one obtains that $\mathrm{Ann}(J)$ is a principal ideal generated by the Bézoutian $\Delta_f$;
 a result attributed to Wiebe, see \cite[Cor. 4.12]{Kunz}.
 According to the previous lemma, $\mathrm{Ann}(J) \simeq \Hom_A(B^*,B)$ is also an invertible $B$-module.
 So $\Delta_f$ is a $B$-basis of the $B$-module $\mathrm{Ann}(J)$.
 Relation \eqref{eq:Bézout&trace} then implies that $\tau_f$ is a $B$-basis of $B^*$ as expected.
 The other assertions are formal consequences of this fact.
\end{proof}

We are now ready to state the link between the concrete construction
 of Scheja and Storch and the theory of Grothendieck residue symbols
 (\Cref{df:Grothendieck_residues}).
\begin{prop}\label{prop:compute_residues_SS}
Recall the situation of the previous theorem and definition:
\begin{itemize}
\item $B$ is a complete intersection, finite and projective $A$-algebra
\item $f$ is a presentation of $B/A$: $f=(f_1,\hdots,f_n)$ is regular sequence of elements of $R=A[t_1,\hdots,t_n]$,
 $I=(f_1,\hdots,f_n)$ and $B=R/I$.
\end{itemize}
Recall that we have a canonical isomorphism (see \Cref{ex:affine_complete_intersection})
$$
\Theta:\omega_{B/A} \simeq \Lambda^n(I/I^2)^\vee \otimes_R \Omega^n_{R/A}.
$$
Then, for any $\lambda \in R$, with image $\bar \lambda$ in $B=R/I$, we get:
$$
\Res_{R/A}\left\lbrack \begin{matrix} \lambda.dt_1\wedge\hdots\wedge dt_n  \\ f_1 \cdots f_n\end{matrix}\right\rbrack
 =\tau_f(\bar \lambda).
$$
\end{prop}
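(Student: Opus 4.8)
The whole of \Cref{num:affine_conditions_res}--\Cref{df:SS_trace} follows Sheja--Storch as presented in \cite[\textsection 8]{Kunz}, and the statement is essentially their comparison theorem; the plan is to reorganize the two sides into a single identity of generators of the $A$-dual $B^{*}=\Hom_{A}(B,A)$ and then appeal to the defining property of $\tau_{f}$. First I would reduce to $\lambda=1$. Set $w_{0}=(\bar f_{1}\wedge\cdots\wedge\bar f_{n})^{*}\otimes(dt_{1}\wedge\cdots\wedge dt_{n})$, viewed in $\omega_{B/A}$ through the isomorphism $\Theta$ of \Cref{ex:affine_complete_intersection}; since $\Lambda^{n}(I/I^{2})^{\vee}$ and $\Omega^{n}_{R/A}\otimes_{R}B$ are free of rank one over $B$, the element $w_{0}$ is a $B$-basis of $\omega_{B/A}$, and $(\bar f_{1}\wedge\cdots\wedge\bar f_{n})^{*}\otimes(\lambda\,dt_{1}\wedge\cdots\wedge dt_{n})=\bar\lambda\cdot w_{0}$. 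Recall from \Cref{num:trivial_duality} the duality isomorphism $\omega_{B/A}\xrightarrow{\ \sim\ }B^{*}$, $w\mapsto\psi_{w}:=\Tr^{\omega}_{B/A}(-\cdot w)$; it is immediately $B$-linear (for the action of $B$ on $B^{*}$ by precomposition with multiplication) and satisfies $\Tr^{\omega}_{B/A}(w)=\psi_{w}(1)$. By \Cref{df:Grothendieck_residues} the left-hand side of the proposition is $\Tr^{\omega}_{B/A}\!\big((\bar f_{1}\wedge\cdots\wedge\bar f_{n})^{*}\otimes\lambda\,dt_{1}\wedge\cdots\wedge dt_{n}\big)=\Tr^{\omega}_{B/A}(\bar\lambda\, w_{0})=\psi_{w_{0}}(\bar\lambda)$, so the proposition is equivalent to the single identity $\psi_{w_{0}}=\tau_{f}$ in $B^{*}$.

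Next I would reduce this to a statement about the Bezoutian. Both $\psi_{w_{0}}$ and $\tau_{f}$ are $B$-bases of the free rank-one $B$-module $B^{*}$ (for $\tau_{f}$ this is the theorem preceding the proposition), hence $\psi_{w_{0}}=\bar c\,\tau_{f}$ for a unique $\bar c\in B^{\times}$. By \Cref{df:SS_trace}, $\tau_{f}$ is characterized by $\Phi(\Delta_{f})(\tau_{f})=1$, where $\Phi(\Delta_{f}):B^{*}\to B$ is the $B$-linear isomorphism attached to the Bezoutian $\Delta_{f}$ of \Cref{df:bezoutian}; $B$-linearity of $\Phi(\Delta_{f})$ then gives $\Phi(\Delta_{f})(\psi_{w_{0}})=\bar c$, so it remains to prove $\Phi(\Delta_{f})(\psi_{w_{0}})=1$. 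Writing $\Delta_{f}=\sum_{k}a_{k}\otimes b_{k}\in B\otimes_{A}B$ and unwinding $\Phi$, this is the \emph{Bezoutian trace identity}
\[
\sum_{k}\psi_{w_{0}}(a_{k})\,b_{k}=\sum_{k}\Res_{R/A}\!\left[\begin{smallmatrix}\widetilde a_{k}\,dt_{1}\wedge\cdots\wedge dt_{n}\\ f_{1}\ \cdots\ f_{n}\end{smallmatrix}\right]b_{k}=1,
\]
for arbitrary lifts $\widetilde a_{k}\in R$ of $a_{k}$.

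To prove this identity — the substance of the statement — I would either quote \cite[\textsection 8]{Kunz} or argue as follows. In $B\otimes_{A}B\cong B[t_{1},\dots,t_{n}]/(f_{1},\dots,f_{n})$ the diagonal ideal $J=\ker\mu$ is cut out by the regular sequence $(t_{i}-\alpha_{i})$, and the relations $f_{i}=\sum_{j}c_{ij}(t_{j}-\alpha_{j})$ realize $\Delta_{f}=\det(c_{ij})$ as the transition determinant between the Koszul complex on $(f_{i})$ and that on $(t_{i}-\alpha_{i})$; running the Koszul/local-cohomology description of the differential trace and using that the residue pairing is the perfect fundamental-local pairing yields the identity, once the signs in $(\bar f_{1}\wedge\cdots\wedge\bar f_{n})^{*}=(-1)^{n(n-1)/2}\bar f_{1}^{*}\wedge\cdots\wedge\bar f_{n}^{*}$ are tracked. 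Alternatively, using the tower presentation of \Cref{ex:field_extension_complete_inter}, $B/A$ is built from monogenic steps $B=C[t_{n}]/(f_{n})$ with $f_{n}$ monic over $C=A[t_{1},\dots,t_{n-1}]/(f_{1},\dots,f_{n-1})$; combining the compatibility of the differential trace with composition (\Cref{rem:diff_trace_composition}) with the multiplicativity of Bezoutians in towers reduces everything to $n=1$, where $\tau_{f}=(\alpha^{d-1})^{*}$ by \Cref{ex:Sheja-Storch_trace_monogeneous} and the claim becomes the classical $\Res_{A[t]/A}\!\left[\begin{smallmatrix}t^{j}\,dt\\ f\end{smallmatrix}\right]=\delta_{j,d-1}$, read off from the explicit differential trace of a monogenic extension (\cite[Appendix A]{ConradDual}, \cite[III, \textsection 9]{HartRD}).

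The main obstacle is precisely this last step. Whichever route one takes, the real work is matching the Grothendieck-duality normalization of $\Tr^{\omega}_{B/A}$ with the Koszul/Bezoutian bookkeeping: in the direct route a sign- and orientation-careful local-duality computation, and in the dévissage route the two auxiliary facts (multiplicativity of the Bezoutian construction under towers of complete intersections, and the explicit monogenic trace formula) together with checking that the isomorphism $\Theta$ of \Cref{ex:affine_complete_intersection} is compatible with these reductions. Everything upstream of the Bezoutian trace identity is formal manipulation with the duality isomorphism of \Cref{num:trivial_duality} and the defining property of $\tau_{f}$.
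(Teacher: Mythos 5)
Your proposal is correct in its formal reductions, but it organizes the argument quite differently from the paper's own proof. The paper disposes of the statement in two or three lines: it observes that over a field $A=k$ this is literally \cite[Prop.~8.32]{Kunz}, reduces the general case to residue fields of $\Spec(A)$ by noting that both $\Tr^{\omega}_{B/A}$ and $\tau_{f}$ are compatible with arbitrary base change (because finite syntomic morphisms are stable under base change), and then remarks as an alternative that both sides satisfy Hartshorne's axioms (R1)--(R10) for the residue symbol, which determine it uniquely. Your route instead stays over a general base: you exploit that $\omega_{B/A}$ is genuinely \emph{free} of rank one with basis $w_{0}=(\bar f_{1}\wedge\cdots\wedge\bar f_{n})^{*}\otimes(dt_{1}\wedge\cdots\wedge dt_{n})$ (not merely invertible, since $I/I^{2}$ is free on $\bar f_{1},\dots,\bar f_{n}$), so that $\psi_{w_{0}}$ and $\tau_{f}$ are both $B$-bases of $B^{*}$ and hence differ by a unit $\bar c$; the $B$-linearity of $\Phi(\Delta_{f})$ restricted to $\mathrm{Ann}(J)$ (which you check correctly using $(c\otimes 1)\Delta_{f}=(1\otimes c)\Delta_{f}$) then computes $\bar c=\Phi(\Delta_{f})(\psi_{w_{0}})$, reducing the proposition to the single ``Bezoutian trace identity'' $\Phi(\Delta_{f})(\psi_{w_{0}})=1$. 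This reorganization is a real gain: it isolates exactly where the analytic/homological content sits, it works directly over a general ring $A$ without the base-change detour, and it makes transparent that the proposition is nothing but a normalization statement about one generator of $B^{*}$. Conversely, the paper's approach is shorter and pushes all the work onto the citation. Both proofs leave the computational core to outside references (you to \cite[\S 8]{Kunz} or to a Koszul/local-duality or monogenic-dévissage computation, the paper to \cite[Prop.~8.32]{Kunz} or to \cite[III, \S 9]{HartRD}), so neither is more ``complete'' than the other; the difference is where the formal content stops and the citation begins. One small remark: your dévissage route needs, in addition to the tower compatibility of $\Tr^{\omega}$ from \Cref{rem:diff_trace_composition}, the multiplicativity of the Bezoutian under towers of complete intersections, which is true but is an additional lemma you should at least flag as borrowed from the Sheja--Storch theory.
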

In other words, if we let $w=(\bar f_1 \wedge \hdots \wedge \bar f_n)^* \otimes i^*(dt_1 \wedge\hdots \wedge dt_n)$
 seen as an element of $\omega_{B/A}$ via the isomorphism $\Theta$, for any $b \in B$, one gets:
\begin{equation}\label{eq:compute_residues_SS}
\Tr^\omega_{B/A}(b.w)=\tau_f(b).
\end{equation}
Or equivalently, with the notation of \Cref{num:trivial_duality}: $\psi_w=\tau_f$.
\begin{proof}
In the case where $A=k$ is a field (the only case we will need!), this is \cite[Prop. 8.32]{Kunz}.
 In general, one can reduce to this case by base change: we need to compare two trace maps
 associated with $f:X=\Spec(B) \rightarrow \Spec(A)=S$ which is finite and syntomic. Both
 traces $\Tr_{B/A}^\omega$ and $\tau_f$ are compatible under arbitrary base change
 (as syntomic finite morphisms are stable under base change) so that we can reduce
 to residue fields of $S$.

Alternatively, both definition of residues, via respectively Grothendieck and Scheja-Storch methods,
 satisfy the properties (R1)-(R10) of \cite[III, \textsection 9]{HartRD}
 (see respectively \cite[Appendix A]{ConradDual} and \cite{HLRes, HopkinsRes}).
 This uniquely characterizes the residue symbol.
\end{proof}

%
In view of \Cref{ex:Scheja-Storch_trace_monogeneous},
 we deduce the following comparison of the Grothendieck differential trace map
 and the Tate trace map (\Cref{rem:Scheja-Storch_trace_monogeneous}).
\begin{cor}\label{cor:compute_residues_SS}
Suppose $B/A$ is a monogenic extension ring, of the form $B=A[t]/I$ where
 $I=(f)$ for a monic polynomial $f \in A[t]$.
 We identify $\omega_{B/A}$ with the $B$-module $(I/I^2)^* \otimes_{A[t]} \Omega_{A[t]/A}$
 (via the isomorphism $\Theta$ of \Cref{ex:affine_complete_intersection}).

Then for any $b \in B$, one gets:
\begin{equation}
\Tr^\omega_{B/A}(b.\bar f^* \otimes dt)=\tau_{B/A}^\alpha(b)
\end{equation}
with the notation of \Cref{rem:Scheja-Storch_trace_monogeneous}.
\end{cor}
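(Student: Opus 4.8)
The plan is to obtain this as the case $n=1$ of \Cref{prop:compute_residues_SS}. First I would take in that proposition $R=A[t]$, the single variable $t=t_1$, and the regular sequence reduced to $f=f_1$, so that $I=(f)$ and $B=R/I=A[t]/I$; this is legitimate, since $f$ monic forces $(f)$ to be a regular ideal with $B$ finite free (of rank $\deg f$) over $A$, hence a finite projective complete intersection $A$-algebra with presentation $f$ in the sense of \Cref{num:affine_conditions_res}. The canonical isomorphism $\Theta$ of \Cref{ex:affine_complete_intersection} then reads $\omega_{B/A}\simeq (I/I^2)^\vee \otimes_{A[t]}\Omega_{A[t]/A}$, which is exactly the identification used in the statement of the corollary, and the distinguished element $w=(\bar f_1\wedge\hdots\wedge\bar f_n)^*\otimes i^*(dt_1\wedge\hdots dt_n)$ of \Cref{prop:compute_residues_SS} collapses to $w=\bar f^*\otimes dt$. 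Substituting into \eqref{eq:compute_residues_SS} gives, for every $b\in B$,
$$
\Tr^\omega_{B/A}(b.\bar f^*\otimes dt)=\tau_f(b),
$$
where $\tau_f$ is the Sheja--Storch trace map of \Cref{df:SS_trace} attached to the presentation $f$.

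It then remains only to identify $\tau_f$ with $\tau_{B/A}^\alpha$, which is essentially a matter of notation. Writing $\alpha$ for the image of $t$ in $B$ and $d=\deg f$, \Cref{ex:Sheja-Storch_trace_monogeneous} computes $\tau_f$ in the monogenic case to be the dual basis vector $(\alpha^{d-1})^*$ relative to the $A$-basis $(1,\alpha,\hdots,\alpha^{d-1})$ of $B$; by the convention fixed in \Cref{rem:Sheja-Storch_trace_monogeneous} this element is precisely what is denoted $\tau_{B/A}^\alpha$. Combining the two displayed facts yields the asserted formula.

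Since both ingredients are already established, there is no genuine obstacle here; the only care needed is bookkeeping — checking that the $n=1$ specialization of the wedge notation and of $\Theta$ in \Cref{prop:compute_residues_SS} matches the identifications made in the statement of the corollary, and that the degree parameter $d=\deg f$ in \Cref{ex:Sheja-Storch_trace_monogeneous} plays the role of the ``$n$'' occurring there (disjoint from the number of variables, which is $1$). One could alternatively bypass \Cref{prop:compute_residues_SS} entirely and argue directly, noting both sides are $A$-linear in $b$ so it suffices to evaluate on $1,\alpha,\hdots,\alpha^{d-1}$, but the route through the general proposition is shorter and already does this work.
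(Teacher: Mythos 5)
Your proof is correct and matches the paper's (unwritten but clearly implied) argument: the corollary is exactly the $n=1$ specialization of \Cref{prop:compute_residues_SS}, combined with the identification $\tau_f=\tau^\alpha_{B/A}$ coming from \Cref{ex:Sheja-Storch_trace_monogeneous} and \Cref{rem:Sheja-Storch_trace_monogeneous}. The bookkeeping you flag (the collision between the degree $d=\deg f$ and the number of variables, here $1$) is the only point that needs care, and you handle it correctly.
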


\begin{cor}\label{cor:Tr^omega_etale}
Let $B$ be a finite \'etale $A$-algebra.
 Then $\omega_{B/A}=B$ and the following diagram commutes:
$$
\xymatrix@R=-3pt@C=40pt{
\omega_{B/A}\ar@{=}[dd]\ar^{\Tr^\omega_{B/A}}[rd] & \\
& A \\
B\ar_{\Tr_{B/A}}[ru] &
}
$$
where $\Tr_{B/A}$ is the ``usual'' trace map (\Cref{df:algebraic_trace}).
\end{cor}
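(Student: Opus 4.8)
The plan is to reduce the statement to the case of a field extension via a standard localization/completion argument, and then invoke the explicit Sheja--Storch computation already available. First I would recall from \Cref{ex:can_sheaf}(1) and \Cref{ex:affine_complete_intersection} that for a finite \'etale $A$-algebra $B$ one has $\omega_{B/A}=B$ as an \emph{identity} (not merely an isomorphism): the cotangent complex $\cL_{B/A}$ vanishes, so its determinant is the unit object. Thus there is nothing to prove about the identification on the left; the content is the equality of the two $A$-linear maps $\Tr^\omega_{B/A}$ and $\Tr_{B/A}$ from $B$ to $A$.

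Next I would exploit the two compatibilities of the differential trace map with base change. Both $\Tr^\omega_{B/A}$ (functorial under arbitrary base change since finite \'etale morphisms are stable under base change, \Cref{rem:diff_trace}) and the algebraic trace $\Tr_{B/A}$ (\Cref{df:algebraic_trace}, whose formation commutes with base change by faithfully flat descent) are compatible with pulling back along $A\to A'$. Since $B$ is finite projective over $A$, it suffices to check the equality after localizing at each prime of $A$, and then after passing to the residue field; more precisely, two $A$-linear maps $B\to A$ agree if and only if they agree after base change to $\kappa(\mathfrak p)$ for every $\mathfrak p\in\Spec(A)$, because $B$ is finite projective so $\Hom_A(B,A)$ is finite projective and a section of it vanishing at all residue fields vanishes. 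Hence we reduce to the case $A=k$ a field and $B$ a finite \'etale $k$-algebra, i.e.\ a finite product of finite separable field extensions of $k$.

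By compatibility of both traces with finite products (\Cref{rem:diff_trace} for the differential trace, and the obvious additivity of $\Tr_{B/A}$ over $B=\prod B_i$), I would reduce further to the case $B=L$ a finite separable field extension of $k$. Now choose a primitive element, $L=k[\alpha]=k[t]/(f)$ with $f$ the (monic) minimal polynomial of $\alpha$; separability guarantees this and guarantees $f'(\alpha)\ne 0$. Apply \Cref{cor:compute_residues_SS}: under the identification $\omega_{L/k}\simeq (I/I^2)^*\otimes_{k[t]}\Omega_{k[t]/k}$ with $I=(f)$, for the canonical generator $w=\bar f^*\otimes dt$ one has $\Tr^\omega_{L/k}(b\cdot w)=\tau^\alpha_{L/k}(b)$, the Tate trace (\Cref{ex:Sheja-Storch_trace_monogeneous}), namely the $k$-linear form picking out the coefficient of $\alpha^{d-1}$ in the basis $1,\alpha,\dots,\alpha^{d-1}$, $d=[L:k]$. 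On the other hand, in the separable case the identification $\omega_{L/k}=L$ coming from \Cref{ex:field_extension_complete_inter} sends this same $w$ to the unit $f'(\alpha)^{-1}\in L^\times$ (the monogenic instance of the formula in \emph{loc.\ cit.}, using that $\psi$ is transposed). Therefore the identity map $\omega_{L/k}=L$ matches $w$ with $f'(\alpha)^{-1}$, so the claim $\Tr^\omega_{L/k}=\Tr_{L/k}$ under the identification $\omega_{L/k}=L$ becomes the classical Euler--Jacobi type identity
\[
\Tr_{L/k}\bigl(f'(\alpha)^{-1}\cdot b\bigr)=\tau^\alpha_{L/k}(b)\qquad\text{for all }b\in L,
\]
i.e.\ $\Tr_{L/k}(\alpha^i/f'(\alpha))=\delta_{i,d-1}$ for $0\le i\le d-1$. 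This is Euler's classical formula for the trace of the dual basis; I would either cite it (e.g.\ \cite[III, \textsection 9]{HartRD} or standard references on the different) or prove it directly by expanding $1/f(t)=\sum_\beta \bigl(f'(\beta)(t-\beta)\bigr)^{-1}$ over the roots $\beta$ of $f$ in an algebraic closure and comparing coefficients in the Laurent expansion at $t=\infty$, using that $\Tr_{L/k}(\alpha^i)=\sum_\beta \beta^i$.

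The main obstacle is essentially bookkeeping rather than depth: one must be careful that the identification $\omega_{B/A}=B$ used on the left of the diagram is \emph{the same} identification (coming from $\Theta$ in \Cref{ex:affine_complete_intersection}, via transposition of $\psi$) as the one implicit in the statement, and that the reductions (to residue fields, to field extensions, to the monogenic case) respect it; the genuinely ``classical'' input is the Euler trace identity above, which is where \Cref{cor:compute_residues_SS} does the real work of translating Grothendieck's differential trace into the elementary Tate/Sheja--Storch description.
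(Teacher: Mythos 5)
Your argument is correct and arrives at the same computational heart as the paper's proof (the identification $\Theta^{-1}(\bar f{}^*\otimes dt)=f'(\alpha)^{-1}$, \Cref{cor:compute_residues_SS} giving the Tate trace, and the Euler formula), but you take a more careful path to the monogenic case. The paper simply invokes compatibility with composition to ``reduce to the case where $B/A$ is monogenic''; over a general base ring this is not quite a reduction, since a finite \'etale $A$-algebra need not factor into a tower of monogenic extensions. Your reduction is tighter: use that both traces are $A$-linear maps between finite projective $A$-modules and commute with base change, so equality can be tested on residue fields of $A$; over a field, split into factors and apply the primitive element theorem. This not only repairs the gap but also shows exactly which compatibilities of $\Tr^\omega$ are being used (base change and products, from \Cref{rem:diff_trace}), whereas the paper relies on composition compatibility (\Cref{rem:diff_trace_composition}). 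The only caution is the bookkeeping point you already flagged: the identity $\omega_{B/A}=B$ coming from vanishing of $\cL_{B/A}$ must match the isomorphism $\Theta$ used to state \Cref{cor:compute_residues_SS}, which is exactly the content of the computation $\Theta^{-1}(\bar f{}^*\otimes dt)=f'(\alpha)^{-1}$ from \Cref{ex:field_extension_complete_inter}.
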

\begin{proof}
This is asserted without proof in \cite[Remark p. 187]{HartRD}.
 As both trace maps are compatible with composition, 
 the proof reduces to the case where $B/A$ is monogenic, $B=A[\alpha]=A[t]/(f)$,
 $f$ being a monic polynomial in one variable $t$
 such that $f'(\alpha) \in B^\times$.
 Note that under the identification 
$$
\Theta:B=\omega_{B/A} \simeq (I/I^2)^* \otimes \Omega^1_{A[t]/A}
$$
one has $\Theta^{-1}(\bar f \otimes dt)=f'(\alpha)^{-1}$
 (as explained in \Cref{ex:field_extension_complete_inter}, separable case).
 \Cref{ex:Scheja-Storch_trace_monogeneous} shows that $\tau_f=(\alpha^{n-1})^*$
 where $n$ is the degree of $f$ in $t$.

Therefore, the relation of the corollary follows from the previous corollary
 and the ``Euler formula" (see for instance \cite[Prop. 1]{NS_Euler}):
$$
\Tr_{B/A}\big(f'(\alpha)^{-1}\lambda\big)=(\alpha^{n-1})^*(\lambda).
$$ 
\end{proof}

%
%

\bibliographystyle{amsalpha}
\bibliography{quadratic}

\end{document}